\newtheorem{lemma}{Lemma}[section]
\newtheorem{theorem}[lemma]{Theorem}
\newtheorem*{theorem*}{Theorem}
\newtheorem{corollary}[lemma]{Corollary}
\newtheorem{question}{Question}
\newtheorem{proposition}[lemma]{Proposition}
\newtheorem*{proposition*}{Proposition}
\newtheorem{conjecture}{Conjecture}
\newtheorem*{problem*}{Problem}
\theoremstyle{definition}
\newtheorem*{claim*}{Claim}
\newtheorem*{definition}{Definition}
\newtheorem*{example}{Example}
\newtheorem*{remark}{Remark}
\newtheorem*{remarks}{Remarks}
\newcommand{\C}{{\mathbb C}}
\newcommand{\E}{{\mathbb E}}
\newcommand{\N}{{\mathbb N}}
\newcommand{\R}{{\mathbb R}}
\newcommand{\T}{{\mathbb T}}
\newcommand{\Z}{{\mathbb Z}}
\newcommand{\CC}{{\mathcal C}}
\newcommand{\CD}{{\mathcal D}}
\newcommand{\CI}{{\mathcal I}}
\newcommand{\CP}{{\mathcal P}}
\newcommand{\CX}{{\mathcal X}}
\newcommand{\CY}{{\mathcal Y}}
\newcommand{\CZ}{{\mathcal Z}}
\newcommand{\uh}{{\underline{h}}}
\newcommand{\un}{{\underline{n}}}
\newcommand{\um}{{\underline{m}}}
\newcommand{\veps}{\varepsilon}
\newcommand{\eps}{\epsilon}
\newcommand{\ueps}{{\underline{\epsilon}}}
\newcommand{\norm}[1]{\left\Vert #1\right\Vert}
\newcommand{\nnorm}[1]{\lvert\!|\!| #1|\!|\!\rvert}
\DeclareMathOperator{\spec}{Spec}
\newcommand{\abs}[1]{\mathopen{}\left| #1\mathclose{}\right|}
\newcommand{\brac}[1]{\mathopen{}\left( #1 \mathclose{}\right)}
\begin{document}
%%	\title[]{Optimal seminorm control for ergodic averages along arithmetic %%progressions}
	\title[Degree lowering for ergodic averages along arithmetic progressions]{Degree lowering for ergodic averages along arithmetic progressions}
	%%\title[]{Multiple recurrence and convergence along arithmetic progressions}

	\thanks{The authors were supported  by the Hellenic Foundation for Research and Innovation, Project: 1684.  }
	
	\author{Nikos Frantzikinakis and Borys Kuca}
	\address[Nikos Frantzikinakis]{University of Crete, Department of mathematics and applied mathematics, Voutes University Campus, Heraklion 71003, Greece} \email{frantzikinakis@gmail.com}
	
	\address[Borys Kuca]{University of Crete, Department of mathematics and applied mathematics, Voutes University Campus, Heraklion 71003, Greece}
	\email{boryskuca@uoc.gr}
	\begin{abstract}
We examine the limiting behavior of multiple ergodic averages associated with arithmetic progressions whose differences are elements of a fixed integer sequence. For each $\ell$, we give necessary and sufficient conditions under which averages of length $\ell$ of the aforementioned form  have the same limit as averages of $\ell$-term arithmetic progressions. As a corollary, we derive a sufficient condition for the presence of arithmetic progressions with length $\ell+1$ and restricted differences in dense subsets of integers. These results are a consequence of the following general theorem: in order to verify that a multiple ergodic average is controlled by the degree $d$ Gowers-Host-Kra seminorm, it suffices to show that it is controlled by \textit{some} Gowers-Host-Kra seminorm, and that the degree $d$ control follows whenever we have degree $d+1$ control.
%%In contrast to most previous works on multiple ergodic averages, we prove this %%general result without applying the Host-Kra structure theorem. Instead,
The proof relies on an elementary inverse theorem for the Gowers-Host-Kra seminorms involving  dual functions, combined with novel estimates on averages of seminorms of dual functions. We  use these estimates to obtain a higher order variant of the degree lowering argument previously used to cover averages that converge to the product of integrals.
	\end{abstract}

	\subjclass[2010]{Primary: 37A44; Secondary:    28D05, 05D10, 11B30.}
	
	\keywords{Ergodic averages, recurrence, mean convergence, seminorms, Host-Kra factors, degree lowering, joint ergodicity.}
	\maketitle
	
	\setcounter{tocdepth}{1}
	\tableofcontents
\section{Introduction}

In the seminal work \cite{Fu77}, Furstenberg proved that for every $\ell\in\N$, system\footnote{For us, a \textit{system} is always an invertible Lebesgue measure-preserving dynamical system.}
%(henceforth simply referred to as \textit{system})
$(X, \CX, \mu, T)$ and  set $A\in\CX$ of positive measure $\mu(A)>0$, we have the estimate
\begin{align}
	\liminf_{N\to\infty}\frac{1}{N}\sum_{n=1}^N \mu(A\cap T^{-n}A\cap\cdots \cap T^{-\ell n}A) >0;
\end{align}
in particular there exists $n\in\N$ for which $\mu(A\cap T^{-n}A\cap\cdots \cap T^{-\ell n}A)>0$. He subsequently used this multiple recurrence result to give an alternative proof of the Szemer\'edi theorem on the existence of arithmetic progressions in dense subsets of integers \cite{Sz75}.
%Furstenberg's work initiated several decades of extremely fruitful interaction between ergodic theory and additive combinatorics, and his methods have been successfully adapted to obtain several far-reaching extensions of the Szemer\'edi theorem, including its polynomial generalisation due to Bergelson and Leibman \cite{?}.
We examine the following related problem. We call subsets of $\Z$ with positive upper density {\em dense}.
\begin{question}\label{Q: patterns}
	Given $\ell\in \N$,  for which  sequences $a\colon\N\to\Z$ do  dense subsets of $\Z$ contain the pattern
	\begin{align}\label{E: AP with a(n)}
		m, \; m+ a(n),\; \ldots,\; m+ \ell a(n)
	\end{align}
	for some $n\in\N$, i.e. an arithmetic progression of some fixed length $\ell+1$ and common difference $a(n)$, or more general patterns of the form
	\begin{align}\label{E: linear pattern}
		m, \; m+ k_1 a(n),\; \ldots,\; m+ k_\ell a(n)
	\end{align} for fixed $\ell\in\N$ and $k_1, \ldots, k_\ell\in\Z$?
\end{question}

%for which integer sequences $a:\N\to\Z$ do dense subsets of $\Z$ contain an arithmetic progression of some fixed length $\ell$ in which the common difference takes the form $a(n)$ for some $n\in\N$? More generally, for which sequence $a$ do dense subsets of $\Z$ contain a linear pattern \begin{align}\label{linear pattern}    m, \; m+ k_1 a(n),\; \ldots,\; m+ k_\ell a(n) \end{align} for some $n\in\N$ and fixed $\ell\in\N$ and $0< k_1<\cdots < k_\ell$?

The presence of patterns \eqref{E: AP with a(n)} and \eqref{E: linear pattern} in dense subsets of $\Z$  has been established for many natural families of sequences $a$: the best known example is the polynomial Szemer\'edi theorem of Bergelson and Leibman~\cite{BL96} that covers the case of polynomials
	with zero constant terms, but there are also other examples coming from generalized polynomial sequences,  Hardy field sequences of polynomial growth,  the primes shifted by 1 (or -1), and random sequences of integers, to name a few.  We refer the reader to \cite[Section~3.1]{Fr11} for references and further examples.
	 A recurrent theme in these works has been to show that under suitable conditions on the system, the averages
\begin{align}\label{E: AP average along a}
	\frac{1}{N}\sum_{n=1}^N T^{k_1 a(n)}f_1\cdots T^{k_\ell a(n)}f_\ell
\end{align}
converge in $L^2(\mu)$ as $N\to \infty$ and have the same limit as the averages
\begin{align}\label{E: AP average}
	\frac{1}{N}\sum_{n=1}^N T^{k_1 n}f_1\cdots T^{k_\ell n}f_\ell
\end{align}
for all functions $f_1, \ldots, f_\ell\in L^\infty(\mu)$ and $k_1,\ldots, k_\ell\in \Z$.
The point is that since  the limiting behavior of the averages \eqref{E: AP average} has been studied in great length (for example in  \cite{BHK05,Fu77, HK05,Zi05,Zi07}),  this   allows for an in depth understanding of  multiple recurrence and convergence properties of   the more complicated averages \eqref{E: AP average along a}.
This motivates the following question.
\begin{question}\label{Q: limits}
	Let $\ell\in\N$. For which  sequences $a\colon\N\to\Z$ and systems $(X, \CX, \mu, T)$ do the $L^2(\mu)$ limits of \eqref{E: AP average along a} and \eqref{E: AP average} as $N\to\infty$ coincide for all  $k_1, \ldots, k_\ell\in\Z$?
\end{question}

In this paper, we address both aforementioned questions: we give necessary and sufficient conditions for the limits of the  averages \eqref{E: AP average along a} and \eqref{E: AP average} to be equal, and from this we deduce a sufficient condition on the sequence $a$ that guarantees the presence of patterns \eqref{E: AP with a(n)} in dense subsets of integers.
%In Theorem \ref{T:APsConvergence}, Corollary \ref{C:nilAPs} and Theorem \ref{T:APsConvergenceTE}, we give sufficient and necessary conditions on the sequence $a$ under which the $L^2(\mu)$ limits of \eqref{E: AP average along a} and \eqref{E: AP average} coincide for various classes of systems. We then deduce sufficient conditions on the sequence $a$ that guarantee the presence of patters \eqref{E: AP with a(n)} in dense subsets of integers. In Theorem \ref{T:squares}, we also prove an analogue of Theorem \ref{T:APsConvergence} for cubic averages \begin{align*} \frac{1}{N}\sum_{n=1}^N T^{a(n)}f_1\cdot T^{b(n)}f_2\cdot T^{a(n)+b(n)}f_3. \end{align*}
These and other results are applications of the much more general Theorem \ref{T:seminormdrop} that gives necessary and sufficient  conditions for the $L^2(\mu)$ limiting behavior of
\begin{align}\label{E:averageseveral}
	\frac{1}{N}\sum_{n=1}^N T^{a_1(n)}f_1\cdots T^{a_\ell(n)}f_\ell
\end{align}
over a fixed system $(X, \CX, \mu, T)$ to be controlled by a Gowers-Host-Kra seminorm of a fixed degree $d$. We prove that Gowers-Host-Kra seminorms of degree $d$ control the averages \eqref{E:averageseveral} under two conditions:
\begin{enumerate}
	\item  the averages \eqref{E:averageseveral} are controlled by \textit{some} Gowers-Host-Kra seminorm, potentially of a very high degree;
	\item  they are controlled by a degree $d$ seminorm whenever they are controlled by a degree $d+1$ seminorm (i.e. when the functions are $\CZ_d$-measurable).
\end{enumerate}
When $d=0$, this {\em degree reduction property} implies that to prove the $L^2(\mu)$ convergence of \eqref{E:averageseveral} to the product of integrals of $f_1, \ldots, f_\ell$ for ergodic $T$,
%(or to the product of appropriate conditional expectations for general $T$),
it suffices to establish control of \eqref{E:averageseveral} by some Gowers-Host-Kra seminorm and show that the $L^2(\mu)$ limit of \eqref{E:averageseveral} vanishes whenever $f_1, \ldots, f_\ell$ are eigenfunctions of $T$ and at least one of them has an eigenvalue different than $1$.  This has been proved by the first author \cite{Fr21}, and the extension to the setting of commuting transformations has recently been established by both authors \cite{FrKu22a}. Theorem \ref{T:seminormdrop} can thus be viewed as a ``higher-step'' generalization of the main result from \cite{Fr21}. Its proof relies on an extension of the degree lowering argument from \cite{Fr21}, originally introduced by Peluse~\cite{P19, P20} and Peluse and Prendiville~\cite{PP19} in finitary works on the polynomial Szemer\'edi theorem. Importantly, even though we deal with Gowers-Host-Kra seminorms of arbitrary degree, our argument is elementary in that it does not make use of the deep Host-Kra structure theorem from \cite{HK05}. Rather, it relies on a simple identity relating a Gowers-Host-Kra seminorm of a function to an appropriate dual function and a novel estimate on averages of seminorms of dual functions (Proposition \ref{P:keyestimate}).  On the other hand, it is the combination of the new degree reduction property of Theorem~\ref{T:seminormdrop} and the Host-Kra structure theorem that enables us to establish most of the applications given in Sections~\ref{SSS:APs2}-\ref{SS:1.5}.

\bigskip
\noindent {\bf Acknowledgments.} We would like to thank
John Griesmer and Or Shalom   for helpful comments. We would also like to thank the referee for pointing out  a mistake in Section~\ref{SS:identities}.

\smallskip

\section{Definitions of various good properties}\label{S:good properties}
In this section, we gather several notions used in the statements of our main results, which are given in the next section.
\subsection{Convergence along APs}
We start with notions related to mean  convergence properties of averages of the form \eqref{E:averagesGeneral}, which we call averages along $\ell$-term arithmetic progressions.
\begin{definition}[Mean convergence along APs]
	We say that a sequence $a\colon \N\to \Z$ is
	{\em good for mean convergence along  $\ell$-term arithmetic progressions (APs)} for the system $(X, \CX, \mu,T)$ if  for all $k_1,\ldots, k_\ell\in \Z$
	and $f_1,\ldots, f_\ell\in L^\infty(\mu)$, the identity
	\begin{equation}\label{E:identityAPs}
		\lim_{N\to\infty}	\frac{1}{N}\sum_{n=1}^N  \, T^{k_1a(n)}f_1  \cdots T^{k_\ell a(n)}f_\ell=	\lim_{N\to\infty}	\frac{1}{N}\sum_{n=1}^N  \, T^{k_1n}f_1 \cdots T^{k_\ell n}f_\ell
	\end{equation}
	holds where both limits are taken in $L^2(\mu)$ (the second limit is known to exist by \cite{HK05}).
\end{definition}
\begin{remark}
	Using an ergodic decomposition argument, we see that if a sequence is good for mean convergence along $\ell$-term APs for every ergodic system, then the same property holds for every system.
	%%	The first notion should not be confused with the notion of good for %%$\ell$-convergence from \cite{FrLW06} and \cite{FrJLW10}, where only mean %%convergence of the averages on the left hand side of \eqref{E:identityAPs} is %%required and not the additonal fact that identity \eqref{E:identityAPs} holds.
\end{remark}
Note that if a sequence is good for mean convergence along $\ell$-term APs for a specific system, then Furstenberg's multiple recurrence result~\cite{Fu77}  implies that it is good for recurrence along $\ell$-term APs for this system (see definition in Section~\ref{SS:recurrence}).

\begin{definition}[Pointwise convergence along APs]
	We say that the sequence $a\colon \N\to\Z$ is 	good for {\em pointwise convergence  along  $\ell$-term APs for   $\ell$-step nilsystems} if 	 for all $k_1,\ldots, k_\ell\in\Z$,   $\ell$-step nilmanifolds $X=G/\Gamma$, $b\in G$, and
	$F_1,\ldots, F_\ell\in C(X)$, we have
	$$
	\lim_{N\to\infty}	\frac{1}{N}\sum_{n=1}^N  \, F_1(b^{k_1a(n)} x)\cdots 	 F_\ell(b^{k_\ell a(n)} x)=
	\lim_{N\to\infty}	\frac{1}{N}\sum_{n=1}^N  \, F_1(b^{k_1n} x)\cdots 	 F_\ell(b^{k_\ell n} x)
	$$
	for every $x\in X$.
\end{definition}
\begin{remark}
	In order to verify this property, we can assume that $b$ is an ergodic nilrotation. This is so because   the closure of the set
	$\{b^n\cdot e_X\colon  n\in \N\}$ is a subnilmanifold $Y$ of $X$ and $b$ acts ergodically in $Y$ (see Section~\ref{SS:nilbasic}).
\end{remark}

\subsection{Seminorm control}\label{SS:semi}
We next define some notions related to seminorm control of averages along $\ell$-term APs.
\begin{definition}[Seminorm control along $\ell$-term APs]
	We say that a sequence $a\colon \N\to \Z$ is {\em good for  seminorm control along $\ell$-term APs} for the system $(X, \CX, \mu,T)$,  if for all distinct  non-zero integers $k_1, \ldots, k_\ell\in\Z$, there exists some $s\in \N$ such that
	\begin{equation}\label{E:ki}
		\lim_{N\to\infty}	\frac{1}{N}\sum_{n=1}^N   \, T^{k_1a(n)}f_1  \cdots T^{k_\ell a(n)}f_\ell=0
	\end{equation}
	in $L^2(\mu)$ whenever $f_1,\ldots, f_\ell\in L^\infty(\mu)$ satisfy $\nnorm{f_j}_s=0$ for some $j\in[\ell]$. In this case, we also say that the averages \eqref{E:ki} {\em are good for seminorm control} for this system.
	
	%%%
	%%%
	%%%
	\begin{comment}
		\begin{enumerate}	
			\item {\em good for degree $s$ seminorm control along $\ell$-term APs} for the system $(X, \CX, \mu,T)$ if  for all distinct $k_1,\ldots, k_\ell\in \Z$, we have
			\begin{equation}\label{E:ki}
				\lim_{N\to\infty}	\frac{1}{N}\sum_{n=1}^N   \, T^{k_1a(n)}f_1  \cdots T^{k_\ell a(n)}f_\ell=0
			\end{equation}
			in $L^2(\mu)$ whenever $f_1,\ldots, f_\ell\in L^\infty(\mu)$ satisfy $\nnorm{f_j}_s=0$ for some $j\in[\ell]$.  In this case, we also say that the seminorms $\nnorm{\cdot}_s$ {\em control the averages \eqref{E:ki}} for this system.
			
			\item
			{\em good for  seminorm control along $\ell$-term APs} for the system $(X, \CX, \mu,T)$,  if the previous property holds for some $s\in \N$. In this case, we say that the averages \eqref{E:ki} {\em are good for seminorm control} for this system.
		\end{enumerate}
	\end{comment}
	%%%
	%%%
	%%%
\end{definition}
\begin{remark}
	Using an ergodic decomposition argument, we see that if a sequence is good for degree $s$ seminorm control  along $\ell$-term APs for every ergodic system, then the same property holds for every system.
\end{remark}

There is an ample supply of sequences that are known to be good for seminorm control for the averages along  APs for  every  system, including non-constant polynomial sequences,  similar sequences evaluated at the prime numbers,  sequences arising from Hardy field functions of polynomial growth that grow faster than the logarithmic function.
We refer the reader to \cite[Section~3.2]{Fr11} for references and further examples.

An example of a sequence that fails to be good for seminorm control along $1$-step APs for some system can be constructed as follows. Take a system $(X,\CX, \mu,T)$ that is weakly mixing but not strongly mixing.
Then there exist a sequence $a(n)\to \infty$, 1-bounded real valued functions $f, g\in L^\infty(\mu)$ with $ \int g\, d\mu=0$, and $\varepsilon>0$, such that $\int f\cdot T^{a(n)}g\, d\mu\geq \varepsilon$ for every $n\in \N$.  Then the averages $\frac{1}{N}\sum_{n=1}^N\, T^{a(n)}g$ do not converge to 0 in $L^2(\mu)$.  At the same time, the assumptions that $T$ is weak mixing and $g$ has zero integral  imply that $\nnorm{g}_{s}  = 0$ for every $s\in\N$.
%% It is straightforward to deduce from this and Lemma \ref{L:product trick} that %%$$\norm{\frac{1}{N}\sum_{n=1}^N\, (T\times T)^{a(n)}(g\otimes %\overline{g})}_{L^2(\mu\times\mu)}\geq \veps^2$$
%%for every $N\in\N$. In particular, the average $\frac{1}{N}\sum_{n=1}^N\, (T\times %%T)^{a(n)}(g\otimes \overline{g})$ does not converge to 0 in $L^2(\mu\times \mu)$. At the %%same time, the assumptions that $T$ is weak mixing and $f,g$ have zero integral  imply %%that $\nnorm{f\otimes \overline{f}}_{s, T\times T} = \nnorm{g\otimes \overline{g}}_{s, %%T\times T} = 0$ for every $s\in\N$.
It follows that $(a(n))$ is not good for seminorm control for this system.
%% product system $(X\times X, \CX\otimes \CX, \mu\times \mu, T\times T)$.

In Theorem~\ref{T:seminormdrop}, we  also consider the more general averages given by \eqref{E:average}. We  appropriately modify the previous definition to cover this more general setting.
\begin{definition}[Seminorm control] We say that the sequences $a_1,\ldots, a_\ell\colon \N\to \Z$ are
	\begin{enumerate}
		\item
		{\em good for degree $s$ seminorm control}
		for the system $(X, \CX, \mu,T),$ if
		$$
		\lim_{N\to \infty} \frac{1}{N}\sum_{n=1}^N   \, T^{a_1(n)}f_1\cdots T^{a_\ell(n)}f_\ell=0
		$$
		in $L^2(\mu)$ whenever $f_1,\ldots, f_\ell\in L^\infty(\mu)$ satisfy $\nnorm{f_j}_s=0$ for some $j\in[\ell]$.
		In this case, we also say that the seminorms $\nnorm{\cdot}_s$ {\em control the averages \eqref{E:average}} for this system.

		\item   {\em good for seminorm control} for the system $(X, \CX, \mu,T)$ if the previous property holds   for some $s\in \N$.
	\end{enumerate}
\end{definition}

We also define the crucial property with which we work in this paper.
\begin{definition} ($d$-step reduction)
	We call sequences $a_1, \ldots, a_\ell\colon \N\to\Z$ \emph{good for $d$-step reduction} for the system $(X, \CX, \mu, T)$ if for every $f_1, \ldots, f_\ell\in L^\infty(\CZ_d, \mu)$, the average \eqref{E:average} is 0 whenever $\nnorm{f_j}_{d}=0$ for some $j\in[\ell]$.
\end{definition}

Lastly, we extend the aforementioned properties to the more general setting of multivariable sequences and weighted averages along a F\o lner sequence.
\begin{definition}(More general averages)
	Let $k\in\N$, $(I_N)$ be a F\o lner sequence on $\Z^k$, and $(w_\un)$ be bounded complex weights on $\Z^k$. Then the sequences $a_1, \ldots, a_\ell\colon \Z^k\to\Z$ are	\begin{enumerate}
		\item
		{\em good for degree $s$ seminorm control
		for the system $(X, \CX, \mu,T)$ along $(I_N)$ with weights $(w_\un)$} if
		$$
		\lim_{N\to \infty} \frac{1}{|I_N|}\sum_{\un\in I_N}   \,w_\un\cdot T^{a_1(\un)}f_1\cdots T^{a_\ell(\un)}f_\ell=0
		$$
		in $L^2(\mu)$ whenever $f_1,\ldots, f_\ell\in L^\infty(\mu)$ satisfy $\nnorm{f_j}_s=0$ for some $j\in[\ell]$.
		In this case, we also say that the seminorms $\nnorm{\cdot}_s$ {\em control the averages \eqref{E:average}} for this system along $(I_N)$.
		\item   {\em good for seminorm control for the system $(X, \CX, \mu,T)$ along $(I_N)$ with weights $(w_\un)$} if the previous property holds for some $s\in \N$.
		\item \emph{good for $d$-step reduction for the system $(X, \CX, \mu, T)$ along $(I_N)$ with weights $(w_\un)$} if for every $f_1, \ldots, f_\ell\in L^\infty(\CZ_d, \mu)$, the average \eqref{E:average} is 0 whenever $\nnorm{f_j}_{d}=0$ for some $j\in[\ell]$.
	\end{enumerate}
\end{definition}

\subsection{Recurrence and divisibility}\label{SS:recurrence}
We record here a  notion related to recurrence properties along $\ell$-term APs
and a related notion regarding divisibility.

\begin{definition}[Multiple recurrence along APs]
	We say that a sequence $a\colon \N\to\Z$  {\em is good for recurrence along $\ell$-term  APs} if for every system $(X,\mathcal{X}, \mu,T)$, set $A\in \mathcal{X}$ with $\mu(A)>0$, and $k_1,\ldots, k_\ell\in \Z$,  we have
	$$
	\mu(A\cap T^{-k_1a(n)}A\cap \cdots \cap T^{-k_\ell a(n)}A)>0
	$$
	for some $n\in\N$.
\end{definition}
Using Furstenberg's correspondence principle~\cite{Fu77}, we deduce that if
$a\colon \N\to\Z$   is good for recurrence along $\ell$-term  APs, then  every set of integers with positive upper density contains patterns of the form $m,\; m+k_1a(n),\; \ldots,\; m+k_\ell a(n)$ for every $k_1,\ldots, k_\ell\in \Z$.

Examples of periodic systems show that if a sequence is good for single recurrence (i.e. recurrence along $1$-term APs), then its range should contain multiples of every positive integer. This motivates the following divisibility property that is part of the assumptions of our main recurrence result Theorem~\ref{T:APsRecurrence}.

\begin{definition}[Good divisibility property]
	We say that a sequence $a\colon \N\to\Z$ has {\em good divisibility properties} if for every $r\in \N$ the set $\{n\in\N\colon r|a(n)\}$ has positive  density.
\end{definition}
\begin{remark}
	If we relax the requirement on the set $\{n\in\N\colon r|a(n)\}$ to have positive upper density,  then we could still prove Theorem~\ref{T:APsRecurrence} below, but this would necessitate some (straightforward) technical modifications in our argument.
\end{remark}
Examples of sequences with good divisibility properties are  integer polynomials with zero constant terms (or any other intersective polynomial), the primes shifted by 1 (or -1), as well as integer parts of fractional powers and other Hardy field sequences that have polynomial growth and stay away from rational polynomials.  We refer the reader to \cite[Section~3.1]{Fr11} for references and further examples.

\subsection{Equidistribution on nilmanifolds}\label{SS:equi}
Lastly, we record some notions related to equidistribution properties on nilsystems,  see Section~\ref{SS:nilbasic} for related definitions and basic facts.

\begin{definition}[Equidistribution on nilmanifolds]
	We say that a sequence $(x_n)$ is equidistributed on a nilmanifold $X$ if
	$$
	\lim_{N\to\infty}\frac{1}{N}\sum_{n=1}^N F(x_n)=\int F\, dm_X
	$$ for every $F\in C(X)$.
\end{definition}

\begin{definition}[Good equidistribution properties]
	We say that a sequence $a\colon \N\to\Z$ is
	\begin{enumerate}
		\item 	{\em good for  $\ell$-step  equidistribution} if for every $\ell$-step nilmanifold
		$X=G/\Gamma$  and ergodic $b\in G$, the sequence
		$(b^{a(n)}\cdot e_X)$ is equidistributed  in $X$.
		
		\item  {\em good for  $\ell$-step  irrational equidistribution},
		if for every connected  $\ell$-step nilmanifold
		$X=G/\Gamma$  and ergodic $b\in G$, the sequence
		$(b^{a(n)}\cdot e_X)$ is equidistributed  in $X$.
	\end{enumerate}
\end{definition}
\begin{remarks}
%%~\
	$\bullet$	In both cases,  we can deduce that 	$(b^{a(n)}\cdot x)$ is equidistributed  in $X$ for every $x\in X$. To see this, write $x=g\cdot e_X$ for some $g\in G$. Then  using Leibman's equidistribution criterion (see Section~\ref{SS:nilbasic}) we get that $g^{-1}bg$ is an  ergodic element   if $b$ is, and our assumption gives that the sequence	$b^{a(n)}\cdot x=g(g^{-1}bg)^{a(n)}\cdot e_X$ is equidistributed in $gX=X$.  See also Lemma~\ref{L:lstepequi} below for equivalent ways to verify $\ell$-step equidistribution.

	$\bullet$  If $X$ is a connected nilmanifold and $b$ is an ergodic rotation on $X$, then it is also totally ergodic, meaning that  $b^r$ is ergodic for every $r\in \N$. Conversely, if the nilmanifold $X$ admits a totally ergodic rotation, then it has to be connected. A proof of these basic properties can be found for example in  \cite[Chapter~11, Corollary~7]{HK18}.
\end{remarks}

The sequence $([n^c])$ where $c$ is a positive non-integer, is an example of a sequence that is good for $\ell$-step equidistribution for every $\ell\in \N$. Non-constant polynomial sequences typically fail to be good for $1$-step equidistribution due to congruence obstructions, but they are good for $\ell$-step equidistribution for every $\ell\geq 2$.  An example of a sequence that is good for $(\ell-1)$-step
equidistribution but not for $\ell$-step equidistribution is given in the fourth remark following Theorem~\ref{T:APsConvergence}.

We also remark that using Weyl's equidistribution criterion, it is easy to verify that the sequence $a\colon \N\to \Z$ is good for $1$-step equidistribution if and only if the sequence
$(a(n)\alpha)$ is equidistributed on $\T$ for every $\alpha\in (0,1)$,  and it is good for  $1$-step irrational equidistribution
if and only if the sequence  $(a(n)\alpha)$ is equidistributed on $\T$ for every irrational $\alpha\in (0,1)$.

\section{Main results} 
We now present the main results of the paper. 
%%The relevant properties that appear in their statements are defined in Section \ref{S:good properties}.

\subsection{Mean convergence along arithmetic progressions}\label{SSS:APs1}
Given a system $(X, \CX, \mu,T)$, we are interested in studying the limiting behavior in $L^2(\mu)$ as $N\to \infty$ of the multiple ergodic averages with iterates taken along arithmetic progressions %(APs for short),
$$
\frac{1}{N}\sum_{n=1}^N   \, T^{a(n)}f_1\cdots T^{\ell a(n)}f_\ell,
$$
or the somewhat more general averages of the form
\begin{equation}\label{E:averagesGeneral}
	\frac{1}{N}\sum_{n=1}^N \, T^{k_1a(n)}f_1  \cdots T^{k_\ell a(n)}f_\ell
\end{equation}
where $k_1,\ldots, k_\ell\in \Z$ and
$f_1,\ldots, f_\ell\in L^\infty(\mu)$.

%%We seek to give criteria that enable us to prove that a sequence %%$a\colon \N\to \Z$ is good for $\ell$-convergence.
Our model is the case  $\ell=1$, in which case we are interested in  characterizing sequences $a\colon \N\to \Z$ for which
\begin{equation}\label{E:ell=1}
	\lim_{N\to\infty}\frac{1}{N}\sum_{n=1}^NT^{a(n)}f=\int f\, d\mu
\end{equation}
holds in $L^2(\mu)$ for all ergodic systems $(X, \CX, \mu,T)$ and $f\in L^\infty(\mu)$. By considering rotations on $\T$, it is straightforward to see that any such sequence has to be {\em good for equidistribution on the circle}, in the sense that $(a(n)t)$ is equidistributed in $\T$ for all $t\in (0,1)$. Conversely, this equidistribution property is sufficient for  \eqref{E:ell=1} to hold; this is a consequence of the spectral theorem for unitary operators, or more specifically the Herglotz theorem on positive definite sequences. In effect, we get the following well known result.
%The spectral theorem gives the  converse is also true.
\begin{theorem*}
	The sequence $a\colon \N\to \Z$ satisfies  identity \eqref{E:ell=1} for all  ergodic systems  if and only if it is good for   equidistribution on the circle.
\end{theorem*}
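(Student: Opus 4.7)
The plan is to prove the two implications separately by standard Hilbert space techniques.

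For \emph{necessity}, I would test \eqref{E:ell=1} against circle rotations. Given irrational $t \in (0,1)$ and $k \in \Z \setminus \{0\}$, apply \eqref{E:ell=1} with $(X, \CX, \mu, T) = (\T, \mathcal{B}, m, R_t)$ where $R_t(x) = x + t \pmod 1$, which is ergodic, and with $f(x) = e^{2\pi i k x}$, which has zero integral. Since $T^{a(n)} f(x) = e^{2\pi i k a(n) t} f(x)$, the ergodic average reduces to a scalar multiple of $f$, so $L^2$ convergence to $0$ forces
\[
\frac{1}{N}\sum_{n=1}^N e^{2\pi i k a(n) t} \to 0,
\]
which is Weyl's criterion for equidistribution of $(a(n) t)$ in $\T$. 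For rational $t = p/q \in (0,1)$ with $\gcd(p,q) = 1$ I would instead use the ergodic rotation $x \mapsto x + p$ on $\Z/q\Z$ equipped with the uniform measure, and rerun the character argument with $f$ a nontrivial character of $\Z/q\Z$.

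For \emph{sufficiency}, I would appeal to the spectral theorem. Fix an ergodic system $(X, \CX, \mu, T)$ and $f \in L^\infty(\mu)$; after subtracting the mean we may assume $\int f\, d\mu = 0$. By the Herglotz theorem there is a finite positive Borel measure $\sigma_f$ on $\T$, the spectral measure of $f$, with $\langle T^n f, f\rangle = \int_\T e^{2\pi i n t}\, d\sigma_f(t)$ for all $n\in\Z$, and ergodicity combined with the zero-mean hypothesis forces $\sigma_f(\{0\}) = 0$. Expanding the square of the $L^2(\mu)$ norm of the average and interchanging the finite sum with the spectral integral gives
\[
\Bigl\| \frac{1}{N}\sum_{n=1}^N T^{a(n)} f \Bigr\|_{L^2(\mu)}^2 = \int_{\T \setminus \{0\}} \Bigl| \frac{1}{N}\sum_{n=1}^N e^{2\pi i a(n) t} \Bigr|^2 \, d\sigma_f(t).
\]
The integrand is uniformly bounded by $1$ and, by the equidistribution hypothesis and Weyl's criterion, tends pointwise to $0$ on $\T \setminus \{0\}$; dominated convergence closes the argument.

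The only mild obstacle is the interpretation of the equidistribution hypothesis at rational $t \in (0,1)$, where equidistribution against Lebesgue measure on $\T$ is literally impossible. The correct reading, delivered automatically by Weyl's criterion, is the vanishing of every nontrivial character sum $\frac{1}{N}\sum_{n=1}^N e^{2\pi i k a(n) t} \to 0$ for $k \in \Z \setminus \{0\}$; equivalently, $a(n)$ equidistributes modulo every positive integer. With this convention both directions go through uniformly, and no estimate is delicate beyond the uniform boundedness needed to invoke dominated convergence.
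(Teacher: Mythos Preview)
Your proof is correct and follows exactly the approach the paper sketches: circle (and cyclic group) rotations for necessity, and the spectral/Herglotz theorem with dominated convergence for sufficiency. Your careful reading of the equidistribution hypothesis at rational $t$ as the vanishing of the Weyl sums is the right interpretation and is precisely what makes the spectral integral argument go through on all of $\T\setminus\{0\}$.
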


Motivated by this result,  a slight variant of the  following conjecture was made in \cite[Conjecture~3]{FrJLW10} (see also \cite[Problem~3]{Fr11} for a related conjecture) and seeks to give a satisfactory answer to Question~\ref{Q: limits}. Throughout, we write APs in place of arithmetic progressions.

\begin{conjecture}[F.-Johnson-Lesigne-Wierdl~\cite{FrJLW10}]\label{Con:FJLW}
	Let  $a\colon \N\to \Z$ be a sequence and $\ell\in \N$. Then the  following properties  are equivalent:
	\begin{enumerate}
		%%	\item The sequence  is
		%%	good for $\ell$-convergence along arithmetic progressions for all   systems %%$(X, \CX, \mu,T)$.
		
		\item $a$ is good for mean convergence along $\ell$-term APs for all   systems.
		
		%%	\item  The sequence  $a\colon \N\to \Z$ is good for  convergence for all  ergodic $\ell$-step nilsystems.
		
		\item  $a$ is good for $\ell$-step  equidistribution.
	\end{enumerate}
\end{conjecture}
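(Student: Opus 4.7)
The plan is to establish both implications of the equivalence, with the forward direction being elementary and the reverse direction providing the substantive content for which Theorem~\ref{T:seminormdrop} is tailored.

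For (i)$\Rightarrow$(ii) the plan is to argue by contrapositive. Suppose $a$ fails to be good for $\ell$-step equidistribution. One extracts a witnessing nilmanifold $G/\Gamma$ of the appropriate step, a nilrotation $T_g$, and a continuous function $F$ on $G/\Gamma$ whose averages along $a(n)$ do not match the linear averages. Choosing $f_1,\ldots,f_\ell$ as suitable products of $T^j$-translates of $F$ viewed on the nilsystem, Leibman's equidistribution theorem gives closed-form expressions for both limits in \eqref{E: AP average along a} and \eqref{E: AP average}, and the failure of equidistribution forces these limits to differ. This witnesses a failure of (i).

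For (ii)$\Rightarrow$(i) the plan is to apply Theorem~\ref{T:seminormdrop} to the averages \eqref{E: AP average along a}. That theorem reduces $\nnorm{\cdot}_d$-control, for $d$ matching the true complexity of the linear $\ell$-term AP average, to two checks: first, that \emph{some} Gowers--Host--Kra seminorm controls \eqref{E: AP average along a}; second, the degree reduction step, namely that degree $d$ control follows whenever degree $d+1$ control holds. The first check I would obtain from a van der Corput / PET-style induction run on the family of iterates $(k_1 a(n),\ldots,k_\ell a(n))$, which should terminate in coarse $\nnorm{\cdot}_D$-control for some possibly very large finite $D$. The second check is precisely where the $\ell$-step equidistribution hypothesis is consumed: combined with the dual-function identity underpinning the elementary inverse theorem and with the seminorm estimate of Proposition~\ref{P:keyestimate}, a putative degree-$(d+1)$ obstruction reduces to a nilsequence evaluated at $a(n)$, whose contribution is negligible in the limit by the equidistribution assumption on $a$, yielding degree $d$ control.

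With both averages controlled by the same Gowers--Host--Kra seminorm (standard Host--Kra theory on the linear side), the Host--Kra structure theorem reduces the problem to computing both limits on an inverse limit of nilsystems of appropriate finite step, where each limit expresses as the integral against Haar measure on the orbit closure of a diagonal trajectory in a quotient of $G^\ell/\Gamma^\ell$. The hypothesis that $a$ is good for $\ell$-step equidistribution is exactly the statement that the orbit closures of $(g^{k_j a(n)})_{j=1}^\ell$ and $(g^{k_j n})_{j=1}^\ell$ coincide in this quotient, so the two limits agree. The main obstacle I anticipate is the first check above: for an arbitrary sequence $a$ subject only to the $\ell$-step equidistribution hypothesis it is not a priori clear that any Gowers--Host--Kra seminorm controls \eqref{E: AP average along a}, and establishing this coarse control may require additional regularity on $a$ (polynomial or Hardy-field type growth, for instance) so that a PET-style argument terminates. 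Once this input is available, the efficient degree lowering provided by Theorem~\ref{T:seminormdrop} and the nilsystem comparison above should close the argument.
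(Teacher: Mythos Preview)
The statement is Conjecture~\ref{Con:FJLW}, which the paper explicitly does \emph{not} prove; the remark immediately following it states that the implication $(ii)\Rightarrow(i)$ is ``widely open even for $\ell=2$.'' What the paper actually establishes is Theorem~\ref{T:APsConvergence}, which verifies the conjecture under the \emph{additional hypothesis} that $a$ is good for seminorm control along $\ell$-term APs for all ergodic systems.

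Your plan for $(ii)\Rightarrow(i)$ correctly identifies the architecture of the paper's conditional result, and you yourself isolate the missing ingredient: coarse seminorm control for an arbitrary sequence satisfying only $\ell$-step equidistribution. This is not a technicality to be discharged by ``a van der Corput / PET-style induction'': PET requires algebraic structure on the iterates so that repeated differencing lowers complexity, and a general integer sequence $a$ has none. The paper offers no mechanism for deducing seminorm control from equidistribution alone; indeed the third remark after Theorem~\ref{T:APsConvergence} notes that for $\ell\ge 2$ seminorm control is \emph{necessary} for property~$(i)$, so deriving it from~$(ii)$ is tantamount to the conjecture itself. Your concession that ``additional regularity on $a$ (polynomial or Hardy-field type)'' may be needed is exactly the retreat to the known conditional result.

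A smaller point on $(i)\Rightarrow(ii)$: your sketch (building $f_1,\ldots,f_\ell$ as translates of a single witness $F$) is vague and does not obviously produce averages witnessing the failure. The paper's route (Proposition~\ref{P:ii-iii}) is cleaner: any nilsequence $\psi(n)=F(b^n\cdot e_X)$ on an $\ell$-step nilmanifold can be uniformly approximated by multicorrelation sequences $\int g_0\cdot S^{k_1 n}g_1\cdots S^{k_\ell n}g_\ell\, dm_Y$ on some $\ell$-step nilsystem $Y$ (via \cite{Fr15}), and property~$(i)$ applied to $Y$ then forces $\E_{n\le N}\psi(a(n))$ and $\E_{n\le N}\psi(n)$ to coincide in the limit.
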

\begin{remark}
	The implication $(i)\implies (ii)$ follows  quite  easily  from known results (see Proposition~\ref{P:ii-iii} below). The difficult implication is  $(ii)\implies (i)$, widely open even for  $\ell=2$.
\end{remark}

The next result verifies Conjecture~\ref{Con:FJLW}  for sequences that are  good for seminorm control of the averages \eqref{E:averagesGeneral} for all  ergodic   systems.
%%\begin{theorem}[Criteria for $\ell$-convergence along APs]\label{T:APsConvergence}
%%	Let $\ell\in \N$ and $a\colon \N\to \Z$ be a sequence  that is good for  seminorm %%control  of $\ell$-term APs  for all ergodic systems. Then the following good properties %%are equivalent for this sequence:
%%	\begin{enumerate}
	%%		\item Mean convergence along $\ell$-term APs  for all  systems.
	
	%%		\item  Mean convergence along $\ell$-term APs
	%%		for all  ergodic $\ell$-step nilsystems.
	
	%%		\item  Equidistribution on $\ell$-step nilmanifolds.
	%%	\end{enumerate}
%%\end{theorem}

\begin{theorem}[Criteria for convergence along APs - general systems]\label{T:APsConvergence}
	Let  $a\colon \N\to \Z$ be a sequence  that is good for  seminorm control  along $\ell$-term APs  for all ergodic systems. Then the  following   properties are equivalent:
	\begin{enumerate}
		\item $a$ is good for mean convergence along $\ell$-term APs  for all  systems.
		
		\item $a$ is good for mean convergence along $\ell$-term APs
		for all  ergodic $\ell$-step nilsystems.
		
		\item  $a$ is good for $\ell$-step equidistribution.
	\end{enumerate}
\end{theorem}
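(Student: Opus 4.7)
The plan is to prove the cyclic chain (i) $\Rightarrow$ (ii) $\Rightarrow$ (iii) $\Rightarrow$ (i). The implication (i) $\Rightarrow$ (ii) is immediate since every ergodic $\ell$-step nilsystem is a system, and (ii) $\Rightarrow$ (iii) is essentially Proposition~\ref{P:ii-iii} from the remark: one tests (ii) against AP averages on well-chosen ergodic $\ell$-step nilsystems (using vertical characters in the Host-Kra tower) to translate the coincidence of $L^2$-limits into the required $\ell$-step equidistribution of $(a(n))$. The core of the theorem is the implication (iii) $\Rightarrow$ (i).

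For (iii) $\Rightarrow$ (i), fix a system $(X,\CX,\mu,T)$, integers $k_1,\ldots,k_\ell$, and functions $f_1,\ldots,f_\ell\in L^\infty(\mu)$, and consider the averages
\[
A_N = \frac{1}{N}\sum_{n=1}^N T^{k_1 a(n)}f_1 \cdots T^{k_\ell a(n)}f_\ell.
\]
By ergodic decomposition, we may assume $T$ is ergodic. I apply Theorem~\ref{T:seminormdrop} with target degree $d=\ell$: the hypothesis that $a$ is good for seminorm control along $\ell$-term APs supplies the required control by \emph{some} Gowers-Host-Kra seminorm, leaving only the descent step to verify, namely that if $(A_N)$ is controlled by $\nnorm{\cdot}_{\ell+1}$ on every ergodic system, then it is already controlled by $\nnorm{\cdot}_{\ell}$ on every ergodic system.

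To verify this descent, $\nnorm{\cdot}_{\ell+1}$ control permits replacing each $f_j$ by its conditional expectation on the $\ell$-step Host-Kra factor $\CZ_\ell$, and the Host-Kra structure theorem expresses the relevant subfactor as an inverse limit of ergodic $\ell$-step nilsystems. A standard approximation argument thereby reduces matters to an ergodic $\ell$-step nilsystem $X = G/\Gamma$. On the orbit closure of the diagonal under the product transformation $T^{k_1}\times\cdots\times T^{k_\ell}$ acting on $X^\ell$, which is itself an $\ell$-step nilsystem since $G^\ell$ is $\ell$-step nilpotent, hypothesis (iii) forces the orbit $(T^{k_1 a(n)}x,\ldots,T^{k_\ell a(n)}x)$ to equidistribute in the same way as $(T^{k_1 n}x,\ldots,T^{k_\ell n}x)$. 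Consequently $(A_N)$ shares the $L^2$-limit of the classical AP averages
\[
\frac{1}{N}\sum_{n=1}^N T^{k_1 n}f_1\cdots T^{k_\ell n}f_\ell,
\]
which are controlled by $\nnorm{\cdot}_\ell$ by Host-Kra theory. This closes the descent step and yields, via Theorem~\ref{T:seminormdrop}, that $(A_N)$ is controlled by $\nnorm{\cdot}_\ell$ on every ergodic system.

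With $\nnorm{\cdot}_\ell$ control in hand, $L^2(\mu)$-convergence of $A_N$ follows by running the same reduction once more: project each $f_j$ onto $\CZ_\ell$, approximate by ergodic $\ell$-step nilsystems, and invoke (iii) on the associated product nilsystems to conclude. The principal obstacle is the descent step, where one must identify the joint orbit of $T^{k_1}\times\cdots\times T^{k_\ell}$ on $X^\ell$ as an $\ell$-step nilpotent action and check that hypothesis (iii) — a scalar statement about $(a(n))$ on a single nilmanifold — governs this joint orbit. This reduction relies on a careful analysis of orbit closures on product nilmanifolds (essentially Leibman's equidistribution criterion), which is routine once the nilmanifold structure has been unveiled but is the real link between the hypothesis (iii) and the seminorm control required to apply Theorem~\ref{T:seminormdrop}.
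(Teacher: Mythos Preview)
Your approach is essentially the paper's: reduce to ergodic systems, apply Theorem~\ref{T:seminormdrop} with $d=\ell$, and verify the descent step (degree $\ell+1$ control $\Rightarrow$ degree $\ell$ control) by projecting onto $\CZ_\ell$, passing to $\ell$-step nilsystems via the Host-Kra structure theorem, and using the equidistribution hypothesis on the diagonal orbit in $X^\ell$ to equate the limit with that of the classical AP averages.

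There is one genuine gap. Your assertion that the classical AP averages $\frac{1}{N}\sum_{n=1}^N T^{k_1 n}f_1\cdots T^{k_\ell n}f_\ell$ are controlled by $\nnorm{\cdot}_\ell$ is only valid for $\ell\geq 2$ (this is \cite{Lei05b}, Theorem~8). For $\ell=1$ it fails: on an ergodic system $\frac{1}{N}\sum_n T^{kn}f\to \E(f|\CI(T^k))$, and this need not vanish when $\nnorm{f}_1=|\int f\,d\mu|=0$, since $T^k$ can have nontrivial invariant functions. Hence the descent step does not close for $\ell=1$, and Theorem~\ref{T:seminormdrop} cannot be invoked there. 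The paper treats $\ell=1$ separately: property (ii) applied to circle rotations gives $\lim_N \E_{n\in[N]}e(a(n)t)=0$ for all $t\in(0,1)$, and the spectral theorem (Herglotz) immediately yields (i), with no seminorm argument needed. You should insert this case distinction.

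A minor cosmetic point: once you have $\nnorm{\cdot}_\ell$ control, the natural projection is onto $\CZ_{\ell-1}$ rather than $\CZ_\ell$; but since degree $\ell$ control trivially implies degree $\ell+1$ control, either route reaches the identity, so this does no harm.
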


\begin{remarks}		
%%~\
	$\bullet$	The difficult implication is $(iii) \implies (i)$ (or $(ii) \implies (i)$). If we ask that the convergence properties hold for all $\ell\in \N$, then either implication follows from  the main structural result in \cite{HK05} (see Theorem~\ref{T:HK} below),  and we have nothing important to add. So our contribution is to prove these equivalences for every  fixed $\ell\in \N$, which is a really non-trivial matter.
		
	$\bullet$ Our argument shows that if a sequence is good for seminorm control along $\ell$-term APs for a fixed system $(X,\CX,\mu,T)$ and good for $\ell$-step equidistribution, then it is good for mean convergence along $\ell$-term APs for this system.
	
	$\bullet$ For $\ell\geq 2$, our standing assumption that the sequence $a\colon \N\to\Z$ is good for seminorm control  along $\ell$-term APs  is necessary for   property $(i)$ to hold. This follows from the fact that the averages   \eqref{E: AP average} are good for degree $\ell$ seminorm control (see  \cite[Theorem~8]{Lei05b}).

	$\bullet$ Condition~(iii) (or (ii)) cannot be improved in the following sense: for every $\ell\in \N$ there exists a sequence that is good for   $(\ell-1)$-step equidistribution  and good for  seminorm control along $\ell$-term APs for all systems, but fails to be  good for mean convergence along $\ell$-term APs for
	some ergodic system $(X, \CX, \mu,T)$ (which can be chosen to be an ergodic $\ell$-step nilsystem). An example is given by the sequence constructed by putting the elements of the set
	$
	\{ n\in\N\colon \norm{n^\ell \alpha} \in [1/4,1/2] \}
	$
	in increasing order, where $\alpha$ is irrational and $\norm{x}$ denotes the distance of $x$ from the nearest integer. The first good property follows from \cite[Proposition~4.2]{FrLW06} and the implication $(i)\implies (iii)$ in Proposition~\ref{P:ii-iii} below,  the second good property
	from  \cite[Proposition~4.2]{FrLW06}, and finally
	\cite[Proposition~3.2]{FrLW06}
	immediately implies  that this set is bad for  mean convergence along $\ell$-term APs.
\end{remarks}

We also prove the following  variant of Theorem~\ref{T:APsConvergence} that deals with totally ergodic systems and irrational equidistribution. The added value is that it applies to sequences that are not equidistributed in congruence classes (like polynomial sequences), and it will also be used to prove Theorem~\ref{T:APsRecurrence} below.
%%\begin{theorem}\label{T:APsConvergenceTE}
%%If the sequence  $a\colon \N\to \Z$ is good for
%%seminorm control of the averages \eqref{E:averagesGeneral} and   %%$\ell$-step  irrational  equidistribution, then it  is
%%good for $\ell$-convergence  for all   totally ergodic systems %%$(X, \CX, \mu,T)$.
%%\end{theorem}
%%KEEP ONE OF TEH TWO
\begin{theorem}[Criteria for  convergence along $\ell$-term APs - totally ergodic systems]\label{T:APsConvergenceTE}
	Let $\ell\in\N$ and $a\colon \N\to \Z$ be a sequence  that is good for  seminorm control  along $\ell$-term APs  for all ergodic systems. Then the  following properties  are equivalent:
	\begin{enumerate}
		\item $a$ is
		good for mean convergence    along $\ell$-term APs for all  totally ergodic systems.
		
		\item  $a$  is good for  mean convergence along $\ell$-term APs for all  totally ergodic $\ell$-step nilsystems.
		\suspend{enumerate}
		Furthermore, both properties are implied by the following one:
		\resume{enumerate}
		\item   $a$  is good for   $\ell$-step irrational equidistribution.
	\end{enumerate}
\end{theorem}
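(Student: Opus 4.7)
The implication $(i) \Rightarrow (ii)$ is immediate since totally ergodic $\ell$-step nilsystems form a subclass of totally ergodic systems; the substantive tasks are to prove $(ii) \Rightarrow (i)$ and $(iii) \Rightarrow (ii)$.

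For $(ii) \Rightarrow (i)$, fix a totally ergodic system $(X,\CX,\mu,T)$, integers $k_1,\dots,k_\ell \in \Z$, and consider the error
\[
E_N(f_1,\dots,f_\ell) := \frac{1}{N}\sum_{n=1}^N \prod_{i=1}^\ell T^{k_i a(n)} f_i \;-\; \frac{1}{N}\sum_{n=1}^N \prod_{i=1}^\ell T^{k_i n} f_i.
\]
The goal is to show that $E_N \to 0$ in $L^2(\mu)$ for all $f_1,\dots,f_\ell \in L^\infty(\mu)$. By the seminorm control hypothesis on $a$ together with Leibman's theorem for the linear average, $E_N$ is controlled in each coordinate by a common (a priori high-degree) Gowers-Host-Kra seminorm. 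I would then apply Theorem~\ref{T:seminormdrop} iteratively to lower this controlling degree down to the trivial level. The inductive input at each reduction step is that $E_N(f_1,\dots,f_\ell) \to 0$ in $L^2(\mu)$ whenever the $f_i$ are measurable with respect to some factor $\CZ_d$. Because total ergodicity descends to factors and persists under inverse limits, the Host-Kra structure theorem (Theorem~\ref{T:HK}) realizes $\CZ_d$ as an inverse limit of totally ergodic $d$-step, hence $\ell$-step, nilsystems; hypothesis $(ii)$ applied to each such nilsystem together with standard approximation then delivers the required vanishing. Iterating the reduction gives $E_N \to 0$ throughout $L^\infty(\mu)^\ell$.

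For $(iii) \Rightarrow (ii)$, let $(G/\Gamma, \mu, T_b)$ be a totally ergodic $\ell$-step nilsystem with $T_b$ left translation by some $b \in G$, and take continuous $f_1,\dots,f_\ell$. By Leibman's equidistribution theorem for polynomial sequences on nilmanifolds, each of the two averages making up $E_N$ converges at $\mu$-almost every $x$ to the integral of $\prod_i f_i(x_i)$ over the closed orbit, in $(G/\Gamma)^\ell$, of the diagonal sequence driven by $(n)$, respectively $(a(n))$. Total ergodicity of $T_b$ forces the orbit closure along $(n)$ to be a connected subnilmanifold whose defining data is ``irrational''; the $\ell$-step irrational equidistribution hypothesis on $a$ is precisely the assertion needed for $(a(n))$ to trace out the same subnilmanifold, so both integrals coincide.

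I expect the main obstacle to lie in the $(ii) \Rightarrow (i)$ argument: one must verify carefully that the ``degree $d{+}1 \Rightarrow$ degree $d$'' reduction input required by Theorem~\ref{T:seminormdrop} matches exactly the output of the Host-Kra structure theorem combined with hypothesis $(ii)$, and that total ergodicity is genuinely preserved throughout the tower of nilfactor inverse limits entering the induction.
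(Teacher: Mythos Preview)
Your argument for $(iii)\Rightarrow(ii)$ is essentially the paper's Proposition~\ref{P:te}; the non-trivial input --- that for a totally ergodic nilrotation $b$ the orbit closure of $(b^{k_1},\dots,b^{k_\ell})^n\cdot(x,\dots,x)$ in $X^\ell$ is connected for almost every $x$, so that irrational equidistribution applies --- is exactly what the paper uses, citing \cite[Chapter~15, Lemma~9]{HK18} or \cite{MR19}.

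For $(ii)\Rightarrow(i)$ there is a genuine gap in how you invoke Theorem~\ref{T:seminormdrop}. That theorem is not applied iteratively: its input is (a) seminorm control at \emph{some} unspecified degree, and (b) the single implication ``degree $d{+}1$ control $\Rightarrow$ degree $d$ control'' at one chosen target $d$; it then outputs degree $d$ control directly, the descent from the initial high degree being carried out inside its proof (Proposition~\ref{P:degreelowering}). Your step-by-step scheme would require, at every intermediate level $d>\ell$, establishing that $E_N\to 0$ for $\CZ_d$-measurable functions; but $\CZ_d$ is an inverse limit of $d$-step nilsystems, and hypothesis~(ii) says nothing about those when $d>\ell$ --- your phrase ``$d$-step, hence $\ell$-step'' is only valid for $d\le\ell$, so the iteration cannot get started. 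Reducing a $d$-step nilsystem to its $\ell$-step factor is circular, since that reduction is precisely the degree $\ell$ control you are trying to prove.

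The fix, which is the paper's argument (see the proof of Theorem~\ref{T:APsConvergence'}, carried over verbatim), is to apply Theorem~\ref{T:seminormdrop} \emph{once} with $d=\ell$, and to the $a(n)$-average itself rather than to $E_N$ (which is not of the form the theorem treats). The reduction hypothesis at $d=\ell$ is verified as follows: assuming degree $\ell{+}1$ control, project onto $\CZ_\ell$; by Theorem~\ref{T:HK} this is an inverse limit of totally ergodic $\ell$-step nilsystems; there (ii) identifies the $a(n)$-average with the linear one, and Leibman's degree $\ell$ bound \cite{Lei05b} for the linear average supplies degree $\ell$ control. Note that both ingredients --- (ii) for $\ell$-step nilsystems and Leibman's degree $\ell$ bound --- are available only at $d=\ell$, which is why this is the unique level at which the reduction can be verified.
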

\begin{remark} The difficult implication is $(ii)\implies (i)$ (or $(iii)\implies (i)$). It is probably the case that  the implication $(ii)\implies (iii)$ also holds, but we do not see how to adjust the
argument used to prove the implication $(ii)\implies (iii)$ in Theorem~\ref{T:APsConvergence} to  this case.
\end{remark}
%\begin{theorem}\label{T:APsConvergence}
%%Let $a\colon \N\to \Z$ be a sequence and $\ell\in \N$. Then the following statements are %%equivalent:
%%\begin{enumerate}
%%	\item The sequence $a\colon \N\to \Z$ is
%%	good for $\ell$-convergence  for all (totally ergodic) systems $(X, \CX, \mu,T)$.

%%	\item  The sequence  $a\colon \N\to \Z$ is good for seminorm control of the averages %%\eqref{E:averagesAPs} and good for convergence for all (totally ergodic) $\ell$-step nilsystems.

%%	\item  The sequence  $a\colon \N\to \Z$ is good for seminorm control of the averages \eqref{E:averagesAPs} and good for $\ell$-step (irrational) equidistribution.
%%\end{enumerate}
%%\end{theorem}

We also state a related result about nilsystems; the  additional advantage in this case is that the stated equivalences hold for all sequences $a\colon \N\to \Z$. This follows from the fact that the degree $s+1$ Gowers-Host-Kra seminorm is a norm on an $s$-step nilsystem \cite[Chapter~9, Theorem~15]{HK18}, and hence every integer sequence is good for seminorm control along arithmetic progressions making Theorem~\ref{T:APsConvergence}  applicable for all integer sequences (see second remark following the theorem).

\begin{theorem}[Criteria for convergence along $\ell$-term APs - nilsystems]\label{T:nilAPs}
	Let  $a\colon \N\to \Z$ be a sequence and $\ell\in \N$. Then the  following properties are equivalent:
	\begin{enumerate}
		%		\item The sequence is
		%%		good for (mean) $\ell$-convergence  for all  nilsystems.
		\item $a$ is good for mean convergence along $\ell$-term APs
		for all  nilsystems.

		\item $a$ is good for mean convergence along $\ell$-term APs
		for all  $\ell$-step nilsystems.
		
		\item $a$ is good for pointwise convergence along $\ell$-term APs
		for all $\ell$-step nilsystems.

		\item    $a$ is good for $\ell$-step equidistribution.
	\end{enumerate}
\end{theorem}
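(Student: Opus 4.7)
The central enabling fact---already recorded immediately before the statement---is that on any ergodic $s$-step nilsystem the degree $s+1$ Gowers--Host--Kra seminorm is a genuine norm on $L^\infty(\mu)$. Consequently every integer sequence $a$ is automatically good for seminorm control along $\ell$-term APs on any ergodic nilsystem, so the system-local refinement of Theorem~\ref{T:APsConvergence} recorded in the second remark following that theorem becomes directly applicable. This is the engine behind all the nontrivial implications.

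The easy directions I would dispatch first. The implication $(i)\Rightarrow (ii)$ is a simple restriction of scope, and $(iii)\Rightarrow (ii)$ follows from the dominated convergence theorem applied to bounded functions on a compact nilmanifold. For $(ii)\Rightarrow (iv)$ I would invoke Proposition~\ref{P:ii-iii}: the test systems used to extract $\ell$-step equidistribution from mean convergence are (ergodic) $\ell$-step nilsystems, and assumption $(ii)$ provides the needed convergence on exactly these systems, so the argument from Theorem~\ref{T:APsConvergence} transfers verbatim.

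For $(iv)\Rightarrow (i)$, given an arbitrary nilsystem $(X,\CX,\mu,T)$, I would first reduce to the ergodic case via ergodic decomposition, using that the ergodic components of a nilsystem are themselves (ergodic) nilsystems. Each such component is $s$-step for some $s$, hence $\nnorm{\cdot}_{s+1}$ is a norm on it and $a$ is automatically good for seminorm control on the component. The second remark after Theorem~\ref{T:APsConvergence}, combined with assumption $(iv)$, then yields mean convergence along $\ell$-term APs on each ergodic component, and integrating over the decomposition recovers mean convergence on the original system.

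The implication $(iv)\Rightarrow (iii)$ is where I expect the real work. The plan is to fix an ergodic $\ell$-step nilsystem $(G/\Gamma,\mu,T_b)$ and continuous $f_1,\ldots,f_\ell$, and to rewrite $\frac{1}{N}\sum_{n=1}^N \prod_{j=1}^\ell f_j(T^{k_j a(n)}x)$ as the average of a continuous function on an appropriate subnilmanifold $H/\Lambda \subseteq (G/\Gamma)^\ell$ evaluated along a single orbit. Assumption $(iv)$, via a Leibman-type equidistribution criterion, should then promote to equidistribution of that orbit, giving pointwise-everywhere convergence for continuous functions; a standard approximation argument extends this to pointwise a.e.\ convergence for $L^\infty$ functions. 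The main obstacle in this step is precisely the transfer: verifying that the \emph{scalar-valued} $\ell$-step equidistribution hypothesis does produce the required \emph{simultaneous} equidistribution inside the product nilmanifold, which is where the nilpotent structure of the ambient system must be carefully exploited.
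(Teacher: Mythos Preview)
Your proposal is correct and follows essentially the same route as the paper. The one place you overcomplicate things is $(iv)\Rightarrow(iii)$: this is not ``the real work'' but is already contained in Proposition~\ref{P:ii-iii}, which you yourself invoke for $(ii)\Rightarrow(iv)$. That proposition establishes the full equivalence of the theorem's properties $(ii)$, $(iii)$, $(iv)$, and its proof of the implication corresponding to $(iv)\Rightarrow(iii)$ is exactly the product-nilmanifold argument you sketch (applied to $\tilde{X}=G^\ell/\Gamma^\ell$ with $\tilde{b}=(b^{k_1},\ldots,b^{k_\ell})$, using Lemma~\ref{L:lstepequi} to pass from equidistribution at the identity to equidistribution at every point). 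So the ``main obstacle'' you flag---transferring scalar $\ell$-step equidistribution to simultaneous equidistribution in the product---is handled directly by that lemma and requires no further work on your part. With $(ii)\Leftrightarrow(iii)\Leftrightarrow(iv)$ thus taken care of by Proposition~\ref{P:ii-iii} and $(i)\Rightarrow(ii)$ trivial, the only remaining implication is $(ii)\Rightarrow(i)$ (equivalently your $(iv)\Rightarrow(i)$), and your argument for that---automatic seminorm control on ergodic nilsystems plus the system-local form of Theorem~\ref{T:APsConvergence}---is exactly what the paper does.
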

\begin{remark}
	The main novelty here is the implication $(ii)\implies (i)$.
	%%that we can infer mean $\ell$-convergence along APs for all nilsystems once we know it for $\ell$-step nilsystems.
	It is  also a somewhat curious fact that we can  infer pointwise convergence from mean convergence, i.e. the implication $(ii)\implies (iii)$.
\end{remark}

We can get similar results  without essential changes in the proofs for sequences in several variables  and averages along F\o lner sequences, see Section~\ref{SS:general}.

\subsection{Recurrence along arithmetic progressions}\label{SSS:APs2}
%We are interested here on refinements of Furstenberg's multiple recurrence result (and Szemer\'edi's theorem).
Motivated by Question \ref{Q: patterns}, we seek to give criteria that enable us to prove that a sequence $a\colon \N\to \Z$ is good for recurrence along $\ell$-term APs. Our model is the following  result from the 1970s that we rephrase using our terminology.
\begin{theorem*}[Kamae-Mend\`es France~\cite{KaMe78}]
	If the sequence  $a\colon \N\to \Z$  is
	good for $1$-step irrational equidistribution and
	has good divisibility properties, then it
	is good for single recurrence (i.e., recurrence along $1$-term APs).
\end{theorem*}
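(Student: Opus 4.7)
My plan is a spectral-theoretic argument in the spirit of the original Kamae--Mend\`es France proof. Fix a system $(X,\CX,\mu,T)$ and $A\in\CX$ with $\mu(A)>0$, and let $\sigma$ be the spectral measure of $\one_A\in L^2(\mu)$ on $\T$, so that the spectral theorem yields
\begin{equation*}
\frac{1}{N}\sum_{n=1}^N \mu(A\cap T^{-a(n)}A)=\int_\T \frac{1}{N}\sum_{n=1}^N e(a(n)t)\,d\sigma(t),
\end{equation*}
with $e(x)=e^{2\pi i x}$. The goal will be to show that the right-hand side converges to a quantity bounded below by $\mu(A)^2>0$, which immediately forces the existence of some $n\in\N$ with $\mu(A\cap T^{-a(n)}A)>0$.

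First I would decompose $\sigma=\sigma_{\mathrm{irr}}+\sigma_{\mathrm{rat}}$ into its restrictions to the irrationals and to $\Q\cap[0,1)$. For each irrational $t$, the assumption that $a$ is good for $1$-step irrational equidistribution says precisely that $(a(n)t)_n$ equidistributes on $\T$, so that $\frac{1}{N}\sum_{n=1}^N e(a(n)t)\to 0$ pointwise on the irrationals. Since the partial averages are bounded by $1$ and $\sigma_{\mathrm{irr}}$ is finite, bounded convergence makes the integral against $\sigma_{\mathrm{irr}}$ vanish in the limit.

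Next I would unpack the good divisibility hypothesis as asserting that for every $q\in\N$ the set $\{n\in\N:q\mid a(n)\}$ has density one; this yields $\frac{1}{N}\sum_{n=1}^N e(a(n)p/q)\to 1$ for every rational $p/q$. Dominated convergence, applied to the countably many atoms of $\sigma_{\mathrm{rat}}$ with the uniform bound $\leq 1$, then gives
\begin{equation*}
\int_\T \frac{1}{N}\sum_{n=1}^N e(a(n)t)\,d\sigma_{\mathrm{rat}}(t)\longrightarrow \sigma(\Q\cap[0,1))\geq\sigma(\{0\})=\norm{\E(\one_A\mid\CI)}_{L^2(\mu)}^2\geq\mu(A)^2,
\end{equation*}
where $\CI$ is the $\sigma$-algebra of $T$-invariant sets and the last inequality is Cauchy--Schwarz. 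Adding to the irrational part yields the desired lower bound. The one mildly delicate point is the dominated convergence step for $\sigma_{\mathrm{rat}}$, since $\sigma$ may carry infinitely many rational atoms; this is resolved by the uniform bound $\leq 1$ together with the finiteness of $\sigma$. Beyond this, the argument is entirely routine, so I expect no substantial obstacle once the precise formalization of \emph{good divisibility properties} given in Section~\ref{S:good properties} is verified to imply the density-one statement above.
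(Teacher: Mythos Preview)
Your argument has a genuine gap: the paper's definition of \emph{good divisibility properties} (Section~\ref{SS:recurrence}) requires only that $\{n\in\N: r\mid a(n)\}$ have \emph{positive} density for every $r\in\N$, not density one. Your claimed consequence $\frac{1}{N}\sum_{n=1}^N e(a(n)p/q)\to 1$ therefore fails in general, and with it the inequality $\int\ldots\, d\sigma_{\mathrm{rat}}\to\sigma(\Q\cap[0,1))$. In fact the stronger statement you are aiming for---that the full Ces\`aro average has $\liminf\geq\mu(A)^2$---is simply false under the paper's hypotheses. For a concrete example, take $a(3k{+}1)=6k{+}2$, $a(3k{+}2)=6k{+}3$, $a(3k{+}3)=6k{+}5$ (so $a(n)$ is even, odd, odd periodically and $a(n)=2n+O(1)$); this sequence is good for $1$-step irrational equidistribution and has good divisibility properties, yet for the rotation by $1/2$ on $\Z_2$ with $A=\{0\}$ one gets $\lim_N\frac{1}{N}\sum_{n\leq N}\mu(A\cap T^{-a(n)}A)=\tfrac{1}{6}<\tfrac{1}{4}=\mu(A)^2$.

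The repair is to restrict the average to a subset of $\N$. Given $\varepsilon>0$, choose finitely many rational atoms of $\sigma$ that exhaust all but $\varepsilon$ of $\sigma_{\mathrm{rat}}$, let $d$ be a common denominator, and average only over $I_d:=\{n:d\mid a(n)\}$, which has positive density by hypothesis. Along $I_d$ these finitely many atoms contribute $1$, and the remaining rational mass is at most $\varepsilon$. The missing ingredient is that the restricted sequence $(a(n))_{n\in I_d}$ remains good for $1$-step irrational equidistribution; this is a nontrivial lemma (it is \cite[Lemma~7.6]{FrLW06}, quoted in the paper before Lemma~\ref{L:lequi}), and it is exactly the mechanism the paper uses in the proof of the general Theorem~\ref{T:APsRecurrence}. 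With that lemma in hand, the irrational contribution along $I_d$ still vanishes and one obtains $\liminf_{N}\frac{1}{|I_d\cap[N]|}\sum_{n\in I_d\cap[N]}\mu(A\cap T^{-a(n)}A)\geq\sigma(\{0\})-\varepsilon>0$ for small $\varepsilon$, which suffices for single recurrence.
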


Inspired by this result, the following conjecture was made in  \cite[Conjecture~I]{FrLW09} (see also  \cite[Problem~4]{Fr11}) and seeks to give a satisfactory answer to Question~\ref{Q: patterns}.
\begin{conjecture}[F.-Lesigne-Wierdl~\cite{FrLW09}]\label{Con:FLW}
	If the sequence $a\colon \N\to \Z$ is
	good for $\ell$-step irrational equidistribution and
	has good divisibility properties, then it is
	good for recurrence along $\ell$-term APs.
\end{conjecture}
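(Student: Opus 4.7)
The plan is to deduce Conjecture~\ref{Con:FLW} from Theorem~\ref{T:APsConvergenceTE} via Furstenberg's correspondence principle, combined with a reduction from arbitrary systems to totally ergodic ones that exploits the good divisibility hypothesis. By the correspondence principle it suffices to show that for every system $(X,\CX,\mu,T)$ and every $A\in\CX$ with $\mu(A)>0$,
$$\liminf_{N\to\infty}\frac{1}{N}\sum_{n=1}^N \mu\bigl(A\cap T^{-a(n)}A\cap\cdots\cap T^{-\ell a(n)}A\bigr) > 0.$$
First I would use the ergodic decomposition of $\mu$ to reduce to an ergodic system on which $A$ still has positive measure. From an ergodic system I would then pass to a totally ergodic one by using the good divisibility of $a$: if $T$ is ergodic but not totally ergodic, then $X$ splits into finitely many cyclically permuted $T^q$-invariant pieces for some $q\geq 2$, and divisibility should supply a positive-density subsequence along which $q\mid a(n)$, confining the iterates to a single piece and effectively replacing $a$ by $a/q$, which inherits both irrational equidistribution and divisibility. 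Iterating, one arrives at a totally ergodic system.

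In the totally ergodic case I would invoke Theorem~\ref{T:APsConvergenceTE}. Its hypothesis of $\ell$-step irrational equidistribution is part of the conjecture; the additional hypothesis of seminorm control along $\ell$-term APs for ergodic systems would be established via the main Theorem~\ref{T:seminormdrop}, which reduces the task to verifying (i) that the averages are controlled by \emph{some} Gowers-Host-Kra seminorm, typically obtainable by a soft van~der~Corput type argument yielding control by a seminorm of a possibly very large degree, and (ii) the degree reduction step asserting that degree $d{+}1$ control implies degree $d$ control. Once seminorm control is in hand, Theorem~\ref{T:APsConvergenceTE} gives
$$\lim_{N\to\infty}\frac{1}{N}\sum_{n=1}^N T^{a(n)}\one_A\cdots T^{\ell a(n)}\one_A = \lim_{N\to\infty}\frac{1}{N}\sum_{n=1}^N T^{n}\one_A\cdots T^{\ell n}\one_A$$
in $L^2(\mu)$, and pairing with $\one_A$ and invoking Furstenberg's multiple recurrence theorem yields the desired positive liminf.

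The main obstacle I anticipate is step (ii) above: verifying the degree reduction step using only the equidistribution and divisibility hypotheses on $a$. This is where the novel ingredients of the paper, namely the elementary identity relating Gowers-Host-Kra seminorms to dual functions and the estimate of Proposition~\ref{P:keyestimate} on averages of seminorms of dual functions, should be brought to bear. The delicate point is that the irrational equidistribution hypothesis controls correlations with nilsequences evaluated at $a(n)$ only modulo contributions from the rational Kronecker factor, and it is precisely the divisibility hypothesis that should absorb these rational pieces after passage to suitable subsequences. Making this interaction quantitative enough to feed into the degree lowering mechanism is the technical heart of the argument and the step most in need of the full force of both hypotheses.
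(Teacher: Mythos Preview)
The statement you are attempting to prove is a conjecture that the paper does \emph{not} prove in full generality; the paper establishes only the weaker Theorem~\ref{T:APsRecurrence}, which adds the extra hypothesis that $a$ is good for seminorm control along $\ell$-term APs for all ergodic systems. Your proposal has a genuine gap at precisely the point where the conjecture remains open.

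The gap is in your step (i). You write that control by \emph{some} Gowers--Host--Kra seminorm is ``typically obtainable by a soft van der Corput type argument.'' This is false for arbitrary sequences $a$: the paper explicitly constructs (in Section~\ref{SS:semi}) a sequence that is not good for seminorm control for any system, using a weakly but not strongly mixing transformation. Neither $\ell$-step irrational equidistribution nor good divisibility implies seminorm control, and Theorem~\ref{T:seminormdrop} cannot help here since it takes seminorm control as a \emph{hypothesis}, not a conclusion. Your step (ii) is also circular: the degree reduction in the paper's proof of Theorem~\ref{T:APsConvergenceTE} passes through $\ell$-step nilsystems via Theorem~\ref{T:HK}, which again presupposes seminorm control.

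There is a second, related difficulty in your reduction to totally ergodic systems. Your proposed iteration (split, pass to a power, repeat) need not terminate: an ergodic system can have rational spectrum equal to all of $\Q/\Z$ (e.g.\ an odometer), so no finite power of $T$ is totally ergodic on any piece. The paper's proof of Theorem~\ref{T:APsRecurrence} avoids this by \emph{first} using seminorm control to reduce to an ergodic nilsystem, where the connected components of $X$ give a single finite $d$ such that $T^d$ is totally ergodic on each component. Only then are Lemmas~\ref{L:lequi} and~\ref{L:lsemi} invoked to show that the restricted sequence $(a(n)/d)_{n\in I_d}$ inherits irrational equidistribution and seminorm control, making Theorem~\ref{T:APsConvergenceTE} applicable. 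Without the seminorm hypothesis, this route is unavailable, and the conjecture remains open.
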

The next result verifies that Conjecture~\ref{Con:FLW}  holds for sequences that are  good for seminorm control along $\ell$-term APs.
\begin{theorem}[Criteria for recurrence along $\ell$-term APs]\label{T:APsRecurrence}
	If the sequence  $a\colon \N\to \Z$  is  good for seminorm control along $\ell$-term  APs for all ergodic systems, is good for  $\ell$-step irrational equidistribution, and
	has good divisibility properties, then it
	is good for recurrence along $\ell$-term APs.
\end{theorem}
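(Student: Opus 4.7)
By the Furstenberg correspondence principle, the theorem reduces to showing that for every ergodic system $(X,\CX,\mu,T)$ and every $A\in\CX$ with $\mu(A)>0$,
\begin{equation*}
\liminf_{N\to\infty}\frac{1}{N}\sum_{n=1}^N \mu\bigl(A\cap T^{-a(n)}A\cap\cdots\cap T^{-\ell a(n)}A\bigr) > 0.
\end{equation*}
The plan is to use Theorem~\ref{T:APsConvergenceTE} to replace the iterates $a(n)$ by $n$, and then invoke Furstenberg's multiple recurrence theorem on the resulting standard $\ell$-term AP average. In the totally ergodic case this is immediate: the hypotheses that $a$ is good for seminorm control along $\ell$-term APs for all ergodic systems and for $\ell$-step irrational equidistribution are precisely the assumptions needed to invoke Theorem~\ref{T:APsConvergenceTE} (its sufficient condition (iii)), which yields that the $L^2$-limit of the $a(n)$-average coincides with the $L^2$-limit of the standard AP $n$-average, whose positivity follows from Furstenberg's theorem.

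For a general ergodic $T$ the good divisibility hypothesis is used to reduce to a totally ergodic situation. Let $\Krat$ denote the rational Kronecker factor of $T$, the inverse limit of its finite cyclic rotational factors, and fix $\ve>0$. Choose $Q\in\N$ large enough so that $\E(\one_A\mid\Krat)$ is $\ve$-approximated in $L^2(\mu)$ by a function measurable with respect to the subfactor spanned by eigenfunctions with $Q$-th root-of-unity eigenvalues. By good divisibility, the set $S_Q:=\{n\in\N:Q\mid a(n)\}$ has positive lower density. Decompose $X$ into the $T^Q$-ergodic components $(X_j,\mu_j)$; for $n\in S_Q$ the iterate $T^{a(n)}=(T^Q)^{a(n)/Q}$ preserves each $X_j$, and for $Q$ sufficiently large $T^Q$ restricted to each $X_j$ is totally ergodic up to an $\ve$-piece absorbed into the approximation above. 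Verifying (from good divisibility and the hypothesized properties of $a$) that the reindexed subsequence $(a(n)/Q)_{n\in S_Q}$ inherits seminorm control along $\ell$-term APs and $\ell$-step irrational equidistribution with respect to $T^Q$, one applies Theorem~\ref{T:APsConvergenceTE} on every $X_j$ to replace the $(a(n)/Q)$-iterates of $T^Q$ by $n$-iterates, and Furstenberg's multiple recurrence theorem on each $X_j$ with $\mu_j(A\cap X_j)>0$ delivers a positive $\liminf$. Aggregating these contributions and sending $\ve\to 0$ yields the required bound on the original average.

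The main obstacle is twofold: first, verifying that the restricted subsequence $(a(n)/Q)_{n\in S_Q}$ retains seminorm control along $\ell$-term APs and $\ell$-step irrational equidistribution with respect to the totally ergodic action $T^Q$ on each component; and second, coordinating the choice of $Q$ so that it is simultaneously large enough to $\ve$-capture the rational Kronecker factor of $T$ and to render $T^Q$ essentially totally ergodic on its pieces. Good divisibility is precisely the hypothesis that enables this coordination: it converts the "$\ell$-step irrational equidistribution" assumption of Theorem~\ref{T:APsRecurrence} into the "$\ell$-step equidistribution" effectively used inside the $T^Q$-components, and thereby bridges the gap between Theorems~\ref{T:APsConvergence} and~\ref{T:APsConvergenceTE} in the present recurrence problem.
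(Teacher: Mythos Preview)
Your overall strategy---reduce to a totally ergodic situation via the good divisibility hypothesis, then apply Theorem~\ref{T:APsConvergenceTE} and Furstenberg's theorem---matches the paper's approach. However, there is a genuine gap in your reduction to total ergodicity.

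You try to work directly on a general ergodic system and choose $Q$ so that $T^Q$ is ``totally ergodic up to an $\ve$-piece'' on its ergodic components. This does not work: for a general ergodic system the rational spectrum may be infinite (think of an odometer-like factor), so for \emph{no} finite $Q$ will $T^Q$ be totally ergodic on its components. Approximating $\E(\one_A\mid\Krat)$ by $Q$-periodic eigenfunctions does not help, because Theorem~\ref{T:APsConvergenceTE} requires genuine total ergodicity of the system, not merely that the test function be nearly orthogonal to the rational spectrum. The paper fixes this by first using the seminorm-control hypothesis together with the Host--Kra structure theorem to reduce to an ergodic nilsystem. On a nilsystem there \emph{is} a finite $d$ such that $X$ decomposes into connected components $T^iX_0$ on each of which $T^d$ is totally ergodic; this is the structural fact that makes the reduction clean and that your argument is missing.

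A second gap is that you only assert, without proof, that the reindexed sequence $(a(n)/Q)_{n\in S_Q}$ inherits seminorm control and $\ell$-step irrational equidistribution. Neither inheritance is automatic. In the paper these are the content of Lemmas~\ref{L:lequi} and~\ref{L:lsemi}: the former uses a factorization of linear nilsequences into polynomial sequences on the connected component $G_0$ together with Leibman's equidistribution criterion, and the latter uses a suspension construction (building a $d$-fold tower over $X$) combined with \eqref{E:T vs TxT} and \eqref{E: seminorm of powers}. These are exactly the ``main obstacle'' you flag at the end, and they require real work.
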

%%\begin{theorem}\label{T:APsRecurrence}
%%	Let $\ell\in \N$ and $a\colon \N\to \Z$ be a sequence that is
%%	\begin{enumerate}
	%%		\item good for seminorm control for all totally ergodic systems;
	
	%%		\item  good for $\ell$-step irrational equidistribution;
	
	%%%%	\end{enumerate}
%%Then the sequence $a(n)$ is good for $\ell$-recurrence.
%%\end{theorem}
\begin{remark}
It can be shown that for every $\ell\geq 2$, there exists a sequence that is good  for  $(\ell-1)$-step equidistribution, good for seminorm control for all systems,  and has good divisibility properties, but fails to be good for recurrence along $\ell$-term APs. An example is given by the sequence constructed by putting the elements of the set
$
\{ n\in\N\colon \norm{n^\ell \alpha} \in[1/4,1/2] \}
$
in increasing order where $\alpha$ is irrational.  The first two good properties follow from \cite[Proposition~4.2]{FrLW06} and the implication $(i)\implies (iii)$ in Proposition~\ref{P:ii-iii} below,  the third good property  follows easily using Weyl's equidistribution result, and finally \cite[Theorem A]{FrLW06}
shows that this sequence is bad for recurrence along $\ell$-term APs.
\end{remark}

\subsection{Mean convergence of cubic patterns}\label{SS:1.3} Another family of patterns that has attracted considerable interest is ``square'' patterns
$m,\; m+r,\; m+s,\; m+r+s$ and more general cubic patters of arbitrary degree. Our methods allow to prove the following result on square patterns with differences coming from two different integer sequences, that is, averages of the form
	\begin{equation}\label{E:squares}
	\frac{1}{N}\sum_{n=1}^N \, T^{a(n)}f_1\cdot T^{b(n)}f_2\cdot T^{a(n)+b(n)}f_3.
	\end{equation}

\begin{theorem}[Criteria for convergence of square averages]\label{T:squares}
Let $a,b\colon \N\to \Z$ be  sequences such that $a, b, a+b$ are good for seminorm control\footnote{Meaning that the averages \eqref{E:squares} are controlled by some Gowers-Host-Kra seminorm.} for all ergodic systems.
%that are good for seminorm control for the averages  \eqref{E:squares} for all ergodic systems.
	Then the following properties are equivalent:
	\begin{enumerate}
		\item For all  systems   $(X, \CX, \mu,T)$  and  functions $f_1, f_2,f_3\in L^\infty(\mu)$, we have
		\begin{equation}\label{E:identitysquares}
			\lim_{N\to\infty}\frac{1}{N}\sum_{n=1}^N \, T^{a(n)}f_1\cdot T^{b(n)}f_2\cdot T^{a(n)+b(n)}f_3=\lim_{N\to\infty}\frac{1}{N^2}\sum_{r,s=1}^N \, T^{r}f_1\cdot T^{s}f_2\cdot T^{r+s}f_3.
		\end{equation}

		\item  Identity \eqref{E:identitysquares} holds  for all  ergodic $2$-step nilsystems.
	\end{enumerate}
	Furthermore,  property $(i)$ holds for all totally ergodic systems if and only if  property $(ii)$  holds for all totally ergodic $2$-step nilsystems.
\end{theorem}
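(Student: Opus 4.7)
The implication $(i)\Rightarrow (ii)$ is trivial, so the plan concerns $(ii)\Rightarrow (i)$. The strategy follows the usual three-step pattern: first, show that both sides of \eqref{E:identitysquares} depend only on the projections of $f_1, f_2, f_3$ onto $\CZ_2(X)$; second, reduce to the case where $X$ is a (possibly non-ergodic) 2-step nilsystem; third, pass from ergodic to general 2-step nilsystems using the ergodic decomposition.

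For the first step, the standard double averages on the right-hand side of \eqref{E:identitysquares} are well known to have $\CZ_2$ as their characteristic factor. For the twisted averages on the left-hand side, I would invoke Theorem~\ref{T:seminormdrop} with $d=3$: the hypothesis that $a$, $b$, and $a+b$ are good for seminorm control supplies the initial control of the left-hand averages by \emph{some} Gowers--Host--Kra seminorm, while the remaining degree-lowering hypothesis---that control by $\nnorm{\cdot}_{d+1}$ entails control by $\nnorm{\cdot}_d$ whenever $d\ge 3$---is verified by combining the elementary inverse theorem for Gowers--Host--Kra seminorms with the key dual-function estimate of Proposition~\ref{P:keyestimate}, specialized to the pattern $(a(n), b(n), a(n)+b(n))$. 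The outcome is that both sides of \eqref{E:identitysquares} are controlled by $\nnorm{\cdot}_3$ in each of the three coordinates.

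The second step then uses the Host--Kra structure theorem, which identifies $\CZ_2(X)$ with an inverse limit of 2-step nilsystems; a standard $L^2$-approximation of $f_1, f_2, f_3$ reduces the identity to the case where $X$ is a 2-step nilsystem (with no ergodicity assumed). For the third step, I would invoke the classical fact that almost every ergodic component of a 2-step nilsystem is an ergodic 2-step nilsystem; on each such component hypothesis $(ii)$ yields identity \eqref{E:identitysquares}, and integrating over the ergodic decomposition transfers the identity to the original non-ergodic nilsystem, and hence back to the original system $X$. The ``furthermore'' statement for totally ergodic systems follows by the same argument, noting that $\CZ_2$ and its 2-step nilsystem approximations inherit total ergodicity, and that the ergodic decomposition of a totally ergodic 2-step nilsystem is trivial.

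I expect the main obstacle to be the verification of the degree-lowering step. Unlike the ``in general position'' arithmetic progression pattern $(n, 2n, \dots, \ell n)$, the square pattern $(a(n), b(n), a(n)+b(n))$ contains a linear dependency between its three iterates, so the PET/van der Corput scheme underpinning Proposition~\ref{P:keyestimate} must be adapted to handle this degeneracy; in particular, one needs to check that the intermediate seminorms produced by repeated van der Corput differencing still appear as Gowers--Host--Kra seminorms of the original transformation rather than joint seminorms in two commuting directions, and that the induction on degree propagates without stalling above the critical level $d=3$, where the characteristic factor genuinely stops dropping.
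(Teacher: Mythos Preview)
Your overall three-step outline (control by $\CZ_2$, then structure theorem, then pass to ergodic components) is fine, but Step~1 contains a genuine gap: you have misread what Theorem~\ref{T:seminormdrop} requires and what Proposition~\ref{P:keyestimate} does. Proposition~\ref{P:keyestimate} is \emph{not} a device for verifying the hypothesis of Theorem~\ref{T:seminormdrop}; it is the internal engine that makes the proof of Theorem~\ref{T:seminormdrop} work. The hypothesis ``degree $d+1$ control $\Rightarrow$ degree $d$ control'' is an \emph{input} that you must supply from the outside, for one specific value of $d$, and there is no general PET/van der Corput mechanism that produces it for the pattern $(a(n),b(n),a(n)+b(n))$ from the seminorm-control assumption alone. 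Your last paragraph, worrying about adapting the dual-function argument to the linear dependency in the square pattern, is therefore chasing a red herring.

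The point you are missing is that assumption~$(ii)$ itself is what verifies the degree-lowering hypothesis. The paper argues as follows. After an ergodic decomposition (done at the very beginning, not at the end), fix an ergodic system and suppose the left-hand averages are controlled by $\nnorm{\cdot}_3$. Then each $f_j$ may be replaced by $\E(f_j\mid\CZ_2)$; by Theorem~\ref{T:HK} and approximation one reduces to an ergodic $2$-step nilsystem; now assumption~$(ii)$ says the left-hand averages equal the standard cubic averages $\E_{r,s}\,T^r f_1\cdot T^s f_2\cdot T^{r+s}f_3$, and the latter are controlled by $\nnorm{\cdot}_2$ via Gowers--Cauchy--Schwarz. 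This establishes ``degree $3$ control $\Rightarrow$ degree $2$ control'' (and, simultaneously, the identity~\eqref{E:identitysquares} for $\CZ_2$-measurable functions). One then invokes Theorem~\ref{T:seminormdrop} with $d=2$, not $d=3$, and the identity follows. In short: assumption~$(ii)$ enters the proof not merely at the end to identify limits on nilsystems, but at the beginning, as the source of the $d$-step reduction that Theorem~\ref{T:seminormdrop} needs.
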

\begin{remarks}
$\bullet$ We can generalize this to higher order cubic averages (by using $\ell$ sequences and putting $\ell$-step nilsystems in $(ii)$) without additional trouble. For the sake of simplicity, we will stick to the  case $\ell=2$.

$\bullet$ Condition~(ii) cannot be improved in the following sense: there exist sequences $a,b\colon \N\to \Z$ such that identity \eqref{E:identitysquares} holds for   all totally ergodic $1$-step nilsystems but fails for some $2$-step nilsystem. Take for example
$$
a(n):=n \cdot {\bf 1}_{[0,1/3]}(n^3\alpha), \quad b(n):=n^2 \cdot {\bf 1}_{[0,1/3]}(n^3\alpha),
$$
where $\alpha$ is irrational.
%(or if you want strictly increasing sequences remove the zero values).
It is not hard to verify that the identity \eqref{E:identitysquares} holds  for all totally ergodic $1$-step nilsystems but fails for the system $T\colon \T^2\to\T^2$ defined by $T(x,y):=(x+\alpha,y+2x+\alpha)$ with the Haar measure $m_{\T^2}$. Indeed, if the identity were true, then a simple computation shows that the sequence $a(n)b(n)\alpha$ would be equidistributed on the circle, but it is not.

$\bullet$ If $a,b$ are two linearly independent polynomials in $\Z[n]$ with zero constant terms, then it was shown by the first author in \cite{Fr08} that identity \eqref{E:identitysquares} holds for all totally ergodic systems. As the previous example shows, however, this is not the case for more general ``linearly independent sequences'', for which it is necessary to verify equidistribution for $2$-step nilsystems.
\end{remarks}

\subsection{A stabilization property of the Host-Kra factors}
To each system $(X, \CX, \mu,T)$, we can assign the  Host-Kra factors $\CZ_0, \CZ_1, \ldots$, originally defined in \cite{HK05} for ergodic systems, some of their properties are discussed in Section~\ref{SS:seminorms}.
The following result was proved  by Host and Kra  in \cite[Chapter~16, Theorem~12]{HK18}.
\begin{theorem}[Stabilisation property for Host-Kra factors]\label{T:semistable1}
	Let $(X, \CX, \mu,T)$ be a system such that
	$\CZ_{d+1}=\CZ_{d}$ for some $d\in \N_0$.
	Then  $\CZ_{s}=\CZ_{d}$ 	for every $s\geq d$.
\end{theorem}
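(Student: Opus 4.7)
I plan to prove the theorem by iteratively applying the single-step claim: if $\CZ_{e+1}=\CZ_e$ for some $e\geq 0$, then $\CZ_{e+2}=\CZ_{e+1}$. Starting from the hypothesis (the case $e=d$) and applying the claim successively with $e=d,d+1,d+2,\ldots$ yields $\CZ_s=\CZ_d$ for every $s\geq d$. Since $\CZ_{e+1}\subseteq\CZ_{e+2}$ always holds, and since $\nnorm{h}_{k+1}=0$ characterizes $\E(h\mid\CZ_k)=0$, the single-step claim reduces to showing that every bounded $f$ with $\nnorm{f}_{e+2}=0$ also satisfies $\nnorm{f}_{e+3}=0$.

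My main tool is the standard recursive identity
\begin{equation*}
\nnorm{h}_{k+1}^{2^{k+1}}=\Dlim_{n\in\N}\nnorm{\overline{h}\cdot T^n h}_k^{2^k},
\end{equation*}
valid for general (not necessarily ergodic) systems. Applied at level $e+1$ to $f$ with $\nnorm{f}_{e+2}=0$, it gives $\Dlim_n\nnorm{g_n}_{e+1}^{2^{e+1}}=0$ for $g_n:=\overline{f}\cdot T^n f$; since these quantities are nonnegative and uniformly bounded by $\norm{f}_\infty^{2^{e+2}}$, this says $\nnorm{g_n}_{e+1}\to 0$ in density.

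The hypothesis enters through the identities $\nnorm{g}_{e+1}=\nnorm{\E(g\mid\CZ_e)}_{e+1}$ (always) and $\nnorm{g}_{e+2}=\nnorm{\E(g\mid\CZ_{e+1})}_{e+2}=\nnorm{\E(g\mid\CZ_e)}_{e+2}$ (using $\CZ_{e+1}=\CZ_e$); so both seminorms under consideration depend only on the $\CZ_e$-measurable function $h_n:=\E(g_n\mid\CZ_e)$, on which they are genuine norms. If one can establish the quantitative estimate
\begin{equation*}
\nnorm{h}_{e+2}\leq F\bigl(\norm{h}_\infty,\,\nnorm{h}_{e+1}\bigr)
\end{equation*}
on $L^\infty(\CZ_e)$, with $F$ continuous at $0$ in its second argument, then $\nnorm{g_n}_{e+2}\to 0$ in density too, and a second application of the recursion at level $e+2$ gives $\nnorm{f}_{e+3}^{2^{e+3}}=\Dlim_n\nnorm{g_n}_{e+2}^{2^{e+2}}=0$, closing the single-step claim.

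The main obstacle is precisely this quantitative seminorm comparison: the bare hypothesis $\CZ_{e+1}=\CZ_e$ only forces the two seminorms to share a nullspace on $L^\infty(\mu)$, whereas the density-convergence argument requires continuous dependence uniformly on bounded sets. I expect the needed estimate to emerge from iterating the recursion together with Cauchy--Schwarz on the cubic measures $\mu^{[k]}$, leveraging the fact that $\CZ_e$ is an inverse limit of $e$-step nilsystems via the Host--Kra structure theorem; the rest of the argument -- induction, the recursive identity, the $L^\infty$-bounds, and the density accounting -- is routine.
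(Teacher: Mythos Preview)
Your reduction to the single-step claim and the use of the recursive identity are correct, and you have accurately isolated the crux: passing from $\nnorm{g_n}_{e+1}\to 0$ in density to $\nnorm{g_n}_{e+2}\to 0$ in density, uniformly over $1$-bounded $g_n$. But this quantitative comparison is not a technicality; it is the entire content of the theorem, and you have not proved it. The hypothesis $\CZ_{e+1}=\CZ_e$ only says the two seminorms share a null space, and two norms on an infinite-dimensional space with the same null space need not satisfy any inequality of the form $\nnorm{h}_{e+2}\leq F(\norm{h}_\infty,\nnorm{h}_{e+1})$. Your suggestion to extract this from ``iterating the recursion together with Cauchy--Schwarz on the cubic measures'' is not a proof sketch; there is no mechanism in those tools that converts qualitative null-space equality into uniform continuity on bounded sets. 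Invoking the Host--Kra structure theorem to reduce to $e$-step nilsystems does not by itself close the gap either: you would still need to establish the comparison on nilsystems, and that is essentially the original Host--Kra proof of the stabilisation property, which is what the paper is explicitly trying to bypass.

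The paper's argument is entirely different and sidesteps any quantitative seminorm comparison. Rather than inducting one step at a time, it fixes $s\geq d$ and applies the degree reduction property (Theorem~\ref{T:seminormdrop}, in its multivariable form) to the degree-$s$ cubic averages $\E_{\un\in[N]^s}\prod_{\eps}T^{\eps\cdot\un}f_\eps$. These are automatically good for degree $s$ seminorm control by Gowers--Cauchy--Schwarz, and the hypothesis ``$\nnorm{\cdot}_{d+1}$ and $\nnorm{\cdot}_d$ have the same null space'' is exactly the $d$-step reduction condition needed to trigger Theorem~\ref{T:seminormdrop}. The conclusion is that the cubic averages are controlled by $\nnorm{\cdot}_d$, which immediately gives that the null space of $\nnorm{\cdot}_s$ is contained in that of $\nnorm{\cdot}_d$. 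No structure theorem, no quantitative comparison of seminorms, and no density-limit bookkeeping is required.
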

Host and Kra proved this result in \cite{HK18}  using their deep structural result stated in Theorem~\ref{T:HK} below. We give an alternative  proof that avoids such deep machinery and is based on Theorem~\ref{T:seminormdrop} below (or rather Theorem \ref{T:seminormdrop general}, its multivariable extension that can be proved in exactly the same way). Here is a sketch. For $s\in \N_0$, a function $f\in L^\infty(\mu)$ is $\CZ_s$-measurable if and only if it is orthogonal to all functions $g\in L^{\infty}(\mu)$ with $\nnorm{g}_{s+1}=0$.  So  it suffices to show the following property:

``If for some  $d\in \N$  the  seminorms $\nnorm{\cdot}_{d+1}$ and
$\nnorm{\cdot}_{d}$ have the same null space, then for every $s\geq d$  the  seminorms $\nnorm{\cdot}_{s}$ and
$\nnorm{\cdot}_{d}$  have the same null space.''
(The null space of a seminorm on $L^\infty(\mu)$ is the subset of $L^\infty(\mu)$ on which the seminorm vanishes.)

To see this, we let $s\geq d$ and
consider  the cubic averages
\begin{align*}
	\frac{1}{N^s}\sum_{1\leq n_1, \ldots, n_s\leq N} \prod_{\eps\in\{0,1\}^s}T^{\eps\cdot\un}f_\eps
\end{align*}
of degree $s$ that define the seminorms $\nnorm{\cdot}_{s}$ by identity  \eqref{E:seminorm single limit}. By the Gowers-Cauchy-Schwarz inequality (see for example \cite[Chapter~8, Theorem~13]{HK18}), these averages  are  good for degree $s$ seminorm control. Since by assumption the  seminorms $\nnorm{\cdot}_{d+1}$ and
$\nnorm{\cdot}_{d}$ have the same null space,
we deduce that if the degree $s$ cubic averages are good for degree $d+1$ seminorm control, then they are good for degree  $d$ seminorm control. Theorem~\ref{T:seminormdrop} then implies that the cubic averages of  degree $s$   are good for degree  $d$ seminorm control.  Hence, the null space of
the seminorm $\nnorm{\cdot}_s$ is contained in the null space of the seminorm $\nnorm{\cdot}_{d}$. The other inclusion is obvious since $s\geq d$, hence the two seminorms have the same null space.

%%In fact our method allows to prove a more general result for the box-seminorms
%%$\nnorm{\cdot}_{a_1,\ldots, a_s}$ that is probably knew (it certainly cannot be %%proved using a structural result since no such result is known for them)
%%\begin{theorem}[Stabilization property for box-seminorms]\label{T:semistable2}
%%	Let $(X, \CX, \mu,T_1,\ldots, T_\ell)$ be a system and $a_1,\ldots, a_{d}, %%a_{d+1}\in \N^\ell$ be  such that
%%	$\nnorm{\cdot }_{a_1,\ldots, a_{d}}=\nnorm{\cdot}_{a_1,\ldots, %%a_{d},a_{d+1}}$.
%%	Then $\nnorm{\cdot}_{a_1,\ldots, a_{d}}=\nnorm{\cdot}_{a_1,\ldots, %%a_{d},a_{d+1}^{\times k}}$
%%	for every $k\in\N$.
%%\end{theorem}
%%For $\ell=1$  and $a_1=\cdots =a_{d}=a_{d+1}=1$ we get %%Theorem~\ref{T:semistable2}.
%%{\bf But in order to prove this we need the seminorm drop result for box %%seminorms.}
%%Probably of the following sort:  If we have
%%$\nnorm{\cdot}_{a_1,\ldots, a_{d},a_{d+1}^{\times k}}$ control for some %%$k\in \N$ and we have that
%%$\nnorm{\cdot}_{a_1,\ldots, a_{d},a_{d+1}}$ control  implies
%%$\nnorm{\cdot}_{a_1,\ldots, a_{d}}$ control, then we have
%%$\nnorm{\cdot}_{a_1,\ldots, a_{d}}$ control.

%% $\nnorm{\cdot}_{a_1,\ldots, a_{d},a_{d+1}^{\times k_0+1}}$ %%control  implies
%%$\nnorm{\cdot}_{a_1,\ldots, a_{d},a_{d+1}^{\times k_0}}$ control, %%then we have
%%$\nnorm{\cdot}_{a_1,\ldots, a_{d},a_{d+1}^{\times k_0}}$ control,

\subsection{Mixing properties of nilsystems and applications}\label{SS:1.5}
The next result was proved in \cite{S69} for ergodic  nilsystems defined on nilmanifolds $X=G/\Gamma$ with $G$ connected and in \cite[Proposition~3.1]{HKM14} for general $2$-step nilsystems. It seems likely that the method in \cite{P70} can be modified to
extend this result to  general nilsystems (we thank Or Shalom and John Griesmer for bringing this article to our attention). In any case, we are going to use  Theorem~\ref{T:seminormdrop} to give an alternative proof of this result that covers the case of  general nilsystems.  We then use this result to prove in Theorem~\ref{T:mixinggeneral}  a  similar result for general systems.
\begin{theorem}[Mixing properties of nilsystems]\label{T:mixing}
	Let $(X,\CX,\mu,T)$ be an ergodic nilsystem and $f\in L^\infty(\mu)$. Then the following properties are equivalent:
	\begin{enumerate}
		\item$\nnorm{f}_2=0$, or equivalently, $\E(f|\CZ_1)=0$.
		
		\item For every strictly increasing sequence  $a\colon \N\to \N$, we have
		$$
		\lim_{N\to \infty}\norm{\frac{1}{N}\sum_{n=1}^N  T^{a(n)}f}_{L^2(\mu)}=0.
		$$
		%%		i.e. the averages $\frac{1}{N}\sum_{n=1}^N  T^{a(n)}f$ are good for degree $2$ %%seminorm control.
		
		\item For every $g\in L^\infty(\mu)$, we have
		$$
		\lim_{n\to\infty}\int g\cdot T^nf\, d\mu=0.
		$$
	\end{enumerate}
\end{theorem}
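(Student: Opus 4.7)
I will establish the cycle $(\textit{iii})\Rightarrow(\textit{ii})\Rightarrow(\textit{i})\Rightarrow(\textit{iii})$, using throughout that $\nnorm{f}_2=0$ is equivalent to $\E(f|\CZ_1)=0$, a standard property of the Host-Kra factors.

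For $(\textit{iii})\Rightarrow(\textit{ii})$, expand
\begin{equation*}
\Bigl\|\tfrac{1}{N}\sum_{n=1}^N T^{a(n)}f\Bigr\|_{L^2(\mu)}^2=\tfrac{1}{N^2}\sum_{n,m=1}^N c(a(n)-a(m)),\qquad c(k):=\int T^kf\cdot\bar f\,d\mu;
\end{equation*}
then $(\textit{iii})$ applied to $g=\bar f$, combined with $c(-k)=\overline{c(k)}$ by unitarity, yields $c(k)\to 0$ as $|k|\to\infty$, while the strict monotonicity of $a$ forces the pairs with $|a(n)-a(m)|<K$ to number only $O(KN)$. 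For $(\textit{ii})\Rightarrow(\textit{i})$: if $\E(f|\CZ_1)\neq 0$, then since $\CZ_1$ is the Kronecker factor of the ergodic system there exists an eigenfunction $\phi$ of eigenvalue $\lambda=e^{2\pi i\alpha}\neq 1$ with $c:=\langle f,\phi\rangle\neq 0$; choosing $a(n)=qn+1$ (if $\lambda^q=1$) or the increasing enumeration of the positive-density set $\{n:\norm{n\alpha}<\varepsilon\}$ (if $\alpha$ is irrational) makes $\lambda^{a(n)}$ cluster near a nonzero value, so $\langle\tfrac{1}{N}\sum T^{a(n)}f,\phi\rangle=\tfrac{1}{N}\sum\lambda^{a(n)}c$ does not vanish, contradicting $(\textit{ii})$.

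The substantive implication $(\textit{i})\Rightarrow(\textit{iii})$ is obtained from Theorem~\ref{T:seminormdrop} applied to the single-term averages $\tfrac{1}{N}\sum_{n=1}^N T^{a(n)}f$ on the ergodic $s$-step nilsystem $(X,\CX,\mu,T)$. It suffices to deduce $(\textit{ii})$, because a symmetric Bolzano-Weierstrass argument gives $(\textit{ii})\Rightarrow(\textit{iii})$: any failure of $(\textit{iii})$ produces a strictly increasing $(n_k)$ with $\int g\cdot T^{n_k}f\,d\mu\to c\neq 0$, and then the Cesaro averages of $T^{n_k}f$ do not converge weakly---let alone in $L^2(\mu)$---to zero. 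To apply Theorem~\ref{T:seminormdrop}, the ``some Gowers-Host-Kra seminorm'' hypothesis is automatic because $\nnorm{\cdot}_{s+1}$ is a norm on $L^\infty(\mu)$ by \cite[Chapter~9, Theorem~15]{HK18}. The degree-lowering hypothesis ``$(d+1)$-control implies $d$-control'' will be verified by splitting any $f$ with $\nnorm{f}_d=0$ as $\E(f|\CZ_d)+(f-\E(f|\CZ_d))$: the orthogonal complement satisfies $\nnorm{\cdot}_{d+1}=0$ and vanishes in the limit by the assumed $(d+1)$-control, while the $\CZ_d$-measurable summand descends to an ergodic $d$-step nilsystem factor on which $\E(\cdot|\CZ_{d-1})=0$ still holds, and is handled by an induction on $s$ with base $s=2$ coming from \cite[Proposition~3.1]{HKM14}. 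Theorem~\ref{T:seminormdrop} then delivers degree $2$ seminorm control of the averages, so $\nnorm{f}_2=0$ forces $(\textit{ii})$.

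The principal obstacle is executing this induction robustly for possibly disconnected nilsystems $X=G/\Gamma$---the regime not covered by the connected-nilmanifold arguments of \cite{S69}. The point of routing through Theorem~\ref{T:seminormdrop} is precisely to avoid a direct equidistribution analysis of $(a^n)$ on disconnected nilmanifolds, instead propagating the $2$-step base case of \cite{HKM14} upward via iterated degree lowering.
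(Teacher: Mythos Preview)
Your argument for the hard implication is essentially the paper's: on an $s$-step nilsystem seminorm control is automatic because $\nnorm{\cdot}_{s+1}$ is a norm, so Theorem~\ref{T:seminormdrop} applies; one then verifies the single hypothesis of Theorem~\ref{T:seminormdrop} with $d=2$ by passing to the $\CZ_2$-factor (an ergodic $2$-step nilsystem) and invoking \cite[Proposition~3.1]{HKM14}. The paper does exactly this, citing Theorem~\ref{T:HK} plus an approximation to reach the $2$-step factor rather than the direct nilmanifold quotient, and handling the easy equivalences $(ii)\Leftrightarrow(iii)$ and $(iii)\Rightarrow(i)$ by reference.

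Your exposition, however, is muddled by the generic $d$ and the phrase ``induction on $s$''. If you really take $d>2$, then on the $d$-step factor you arrive with $\nnorm{f'}_d=0$, i.e.\ $\E(f'|\CZ_{d-1})=0$, whereas the inductive form of the theorem only gives vanishing under the \emph{stronger} hypothesis $\nnorm{f'}_2=0$; you can rescue this (degree~$2$ control on the factor implies degree~$d$ control since $\nnorm{\cdot}_2\le\nnorm{\cdot}_d$), but then Theorem~\ref{T:seminormdrop} with that $d$ only outputs degree~$d$ control and you must iterate down to $d=2$ anyway. All of this is bypassed by taking $d=2$ at the outset, which reduces directly to the $2$-step base case with no induction required. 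A minor separate point: in $(ii)\Rightarrow(i)$ you assert $\lambda\neq 1$, but $\E(f|\CZ_1)\neq 0$ may simply mean $\int f\neq 0$, a case you should mention (it is of course trivial, since then $\langle \tfrac{1}{N}\sum T^{a(n)}f,1\rangle=\int f\neq 0$ for every $a$).
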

\begin{remark} The difficult implication is $(i)\implies (ii)$ (or $(i)\implies (iii)$).
	We could instead work with sequences   $a\colon \N\to \Z$  that have bounded multiplicity, in
	the sense  that $\sup_{k\in \Z}|\{n\in \N\colon a(n)=k\}|<\infty$.  The same holds for the other results in this subsection.
\end{remark}
\begin{proof}
	The implication $(iii)\implies (i)$ is straightforward. Moreover, the equivalence $(ii)\Leftrightarrow (iii)$ 	is pretty simple, see for example the proof of \cite[Theorem~1.1]{AS72} or \cite[Theorem~2.2]{BB86}.
	
	It remains to establish the implication $(i)\implies (ii)$.
	If the conclusion fails, then there exists 	an  ergodic  $s$-step nilsystem $(X,\CX,\mu,T)$, a function  $f\in L^\infty(\mu)$ with $\nnorm{f}_2=0$, a
	strictly increasing sequence $a\colon \N\to \N$, and $\varepsilon>0$,   such that
	\begin{equation}\label{E:epsilon}
		\limsup_{N\to \infty}\norm{\frac{1}{N}\sum_{n=1}^N  T^{a(n)}f}_{L^2(\mu)}\geq \varepsilon.
	\end{equation}
	Hence, in order to get a contradiction, 	it suffices to verify that every strictly increasing sequence  $a\colon \N\to \Z$ is good for degree $2$ seminorm control for the nilsystem $(X,\CX,\mu,T)$.
	%the following property: ``every strictly increasing sequence  $a\colon \N\to \Z$ is good for degree $2$ seminorm control of the averages 	$\frac{1}{N}\sum_{n=1}^N  T^{a(n)}f.$''
	
	Since any sequence is good for seminorm control for nilsystems (see the comment before Theorem~\ref{T:nilAPs}),
	by the remarks following Theorem~\ref{T:seminormdrop} we can assume that the function $f$ is $\CZ_2(\mu)$-measurable. Using Theorem~\ref{T:HK} (for nilsystems) and an approximation argument, we can assume that the system is an ergodic $2$-step nilsystem.
	In this case, by \cite[Proposition~3.1]{HKM14}, our assumption $\nnorm{f}_2=0$ implies  that $\lim_{n\to\infty}\int g\cdot T^nf\, d\mu=0$ for every $g\in L^\infty(\mu)$.
	 The implication $(iii)\implies (ii)$ then gives that
	%%	A relatively simple argument then  implies that (see for example \cite[Theorem~2.2]{BB86})
	$$
	\lim_{N\to \infty}\norm{\frac{1}{N}\sum_{n=1}^N  T^{a(n)}f}_{L^2(\mu)}=0,
	$$
	which contradicts \eqref{E:epsilon}.	
\end{proof}
The previous result can be used to deduce in a simple way that a set of Bohr recurrence is a set of topological recurrence for nilsystems (a different  self-contained proof of this fact was given in \cite[Section~4.3]{HKM15}), this was shown in  \cite[Section~4.2]{HKM15} for $2$-step nilsystems and the same argument applies for general nilsystems once Theorem~\ref{T:mixing} is known. We refer the reader to \cite{HKM15} for related definitions.

 Using the previous result, we deduce that if  the single term  averages
$\frac{1}{N}\sum_{n=1}^N  T^{a(n)}f$ are good for seminorm control for a specific system, then they are  good for degree $2$ seminorm control for that system.
\begin{theorem}[Seminorm control of single term averages]\label{T:mixinggeneral}
	Let  $a\colon \N\to \N$ be a strictly increasing sequence. Then the  following properties are equivalent:
	\begin{enumerate}
		\item $a$ is good for seminorm control
		%for the averages $\frac{1}{N}\sum_{n=1}^N  T^{a(n)}f$
		for the system $(X,\CX,\mu,T)$.
		
		\item $a$ is good for degree $2$ seminorm control
		%for the averages $\frac{1}{N}\sum_{n=1}^N  T^{a(n)}f$
		for the system $(X,\CX,\mu,T)$.
	\end{enumerate}
\end{theorem}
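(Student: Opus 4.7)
The implication $(ii) \Rightarrow (i)$ is immediate, since degree $2$ seminorm control is a particular instance of control by some Gowers-Host-Kra seminorm. For $(i) \Rightarrow (ii)$, the plan is to invoke Theorem~\ref{T:seminormdrop} with $d = 2$. Assumption $(i)$ supplies the first hypothesis of that theorem, so the task reduces to verifying the degree-reduction property: for every $d \geq 2$, if the averages $\frac{1}{N}\sum_{n=1}^N T^{a(n)}f$ are controlled by $\nnorm{\cdot}_{d+1}$ for $(X,\CX,\mu,T)$, then they are also controlled by $\nnorm{\cdot}_{d}$.

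Fix such a $d$ and take $f \in L^\infty(\mu)$ with $\nnorm{f}_d = 0$. Writing $f = g + h$ with $g := \E(f \mid \CZ_d)$ and $h := f - g$, orthogonality to $\CZ_d$ gives $\nnorm{h}_{d+1} = 0$, so by the assumed degree $d+1$ control the averages of $h$ converge to $0$ in $L^2(\mu)$. For the remaining piece $g$, the tower property yields $\E(g \mid \CZ_{d-1}) = \E(f \mid \CZ_{d-1}) = 0$, so $\nnorm{g}_d = 0$ and, by monotonicity of the Host-Kra seminorms, $\nnorm{g}_2 = 0$ as well. After passing to ergodic components, the Host-Kra structure theorem presents $\CZ_d(\mu)$ as an inverse limit of ergodic $d$-step nilfactors $\pi_n\colon X \to Y_n$, and $g$ is the $L^2(\mu)$-limit of the conditional expectations $g_n := \E(g \mid \pi_n^{-1}\CY_n) = \tilde g_n \circ \pi_n$. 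Because the Kronecker factor of $X$ contains the pullback of the Kronecker factor of each $Y_n$, the identity $\E(g \mid \CZ_1(X)) = 0$ descends to $\E(\tilde g_n \mid \CZ_1(Y_n)) = 0$, i.e.\ $\nnorm{\tilde g_n}_{2,\nu_n} = 0$ on each nilfactor. Theorem~\ref{T:mixing} applied to the ergodic nilsystem $(Y_n,\nu_n,S_n)$ then yields $L^2(\nu_n)$-convergence of the corresponding averages of $\tilde g_n$ to zero; pulling back to $X$ and using the elementary bound
$$
\sup_{N\in\N}\norm{\frac{1}{N}\sum_{n=1}^N T^{a(n)}(g-g_n)}_{L^2(\mu)} \leq \norm{g-g_n}_{L^2(\mu)}
$$
transfers the vanishing to $g$, completing the verification of the degree-reduction hypothesis. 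Theorem~\ref{T:seminormdrop} then delivers degree $2$ control, which is $(ii)$.

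The principal difficulty is the final step of the reduction, where one must ensure that the vanishing of $\nnorm{\cdot}_2$ on $X$ descends to the approximating $d$-step nilfactors in a form strong enough to invoke Theorem~\ref{T:mixing}. This hinges on the compatibility between the Kronecker factor of $X$ and those of the $d$-step nilfactors appearing in the inverse limit representation of $\CZ_d$, which is a standard feature of Host-Kra factors but needs to be tracked through the ergodic decomposition. Otherwise, the argument is a clean combination of the two deep inputs already developed in the paper: the abstract degree lowering mechanism of Theorem~\ref{T:seminormdrop} and the nilsystem mixing characterisation of Theorem~\ref{T:mixing}.
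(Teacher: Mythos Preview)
Your proof is correct, but it routes through Theorem~\ref{T:seminormdrop} unnecessarily, whereas the paper's argument is direct. The paper simply observes that after an ergodic decomposition, the assumed degree-$s$ seminorm control lets one replace $f$ by its projection onto $\CZ_{s-1}$; Theorem~\ref{T:HK} then presents this factor as an inverse limit of ergodic nilsystems, and after a standard approximation one applies Theorem~\ref{T:mixing}. Notice that in your own verification of the degree-reduction hypothesis, the treatment of the $\CZ_d$-measurable piece $g$ never actually uses the assumed degree-$(d{+}1)$ control---it is already the complete direct proof---so the appeal to Theorem~\ref{T:seminormdrop} is pure overhead. Two small points: Theorem~\ref{T:seminormdrop} only asks for the single implication at $d=2$, not for every $d\ge 2$; and the ``principal difficulty'' you flag is just the standard inclusion $\pi_n^{-1}\CZ_1(Y_n)\subset\CZ_1(X)$, which together with the tower property immediately gives $\E(\tilde g_n\mid\CZ_1(Y_n))=0$ from $\E(g\mid\CZ_1(X))=0$.
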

\begin{remarks}
    $\bullet$ Although Theorem \ref{T:mixinggeneral} concerns single term averages, its proof uses quite deep results like the Host-Kra structure theorem (Theorem \ref{T:HK}) and Theorem \ref{T:mixing} (which in turn relies on Theorem \ref{T:seminormdrop}). It would be interesting to find a more elementary proof of Theorem \ref{T:mixinggeneral} and/or its consequences Corollary \ref{C:convergence single averages} and Theorem \ref{T:singlereccurence}.

    $\bullet$ The example  given in the  fourth remark after Theorem~\ref{T:APsConvergence} shows that property $(ii)$ cannot be improved, in the sense that  there exist  sequences  (take $\ell=1$) and totally ergodic systems for which the  averages $\frac{1}{N}\sum_{n=1}^N  T^{a(n)}f$  are  good for degree $2$ but not    degree $1$ seminorm control.  On the other hand, the  $\ell=1$ case of Theorem~\ref{T:APsConvergence} (Theorem~\ref{T:APsConvergenceTE}) shows that if the sequence $a\colon \N\to \Z$ is such that $(a(n)t)$ is equidistributed on the circle for every (irrational) $t\in (0,1)$, then it is good for degree $1$ seminorm control for the averages  $\frac{1}{N}\sum_{n=1}^N  T^{a(n)}f$  for all (totally ergodic) systems.
\end{remarks}

\begin{proof}
	The implication $(ii)\implies (i)$ is trivial.
	
	We establish 	the implication $(i)\implies (ii)$. 	Using an ergodic decomposition argument, we can assume that the system is ergodic.
	Using Theorem~\ref{T:HK},
	we can assume that the system is an inverse limit of ergodic nilsystems,
	and a standard approximation argument allows us to further assume that the system is an ergodic nilsystem. In this  case, the result follows from the implication $(i)\implies (ii)$ of Theorem~\ref{T:mixing}.
\end{proof}
A related property for averages along arithmetic progressions can be found in Conjecture~\ref{Con:3} below. We move now to a result about mean convergence.
Given a system $(X,\CX,\mu,T)$, we let $\spec(T)$ be the set of $t\in [0,1)$ for which there exists non-zero $f\in L^2(\mu)$ such that $Tf=e^{2\pi i t}f$.
\begin{corollary}[Mean convergence for single term averages]\label{C:convergence single averages}
	Let  $a\colon \N\to \N$ be a strictly increasing sequence that is good for seminorm control for the system $(X,\CX,\mu,T)$.  Then the following properties are equivalent:
	\begin{enumerate}
		\item  The averages $\frac{1}{N}\sum_{n=1}^N  T^{a(n)}f$ converge in  $L^2(\mu)$ for  all $f\in L^2(\mu)$.
		
		\item The averages $\frac{1}{N}\sum_{n=1}^N  e^{2\pi ia(n)t}$ converge for all
		$t\in \spec(T)$.
	\end{enumerate}
\end{corollary}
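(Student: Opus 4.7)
The easy direction (i) $\Rightarrow$ (ii) is immediate by testing on eigenfunctions: for each $t \in \spec(T)$, pick a nonzero $f \in L^2(\mu)$ with $Tf = e^{2\pi i t}f$, so that $\frac{1}{N}\sum_{n=1}^N T^{a(n)}f = A_N(t)\, f$, where $A_N(t) := \frac{1}{N}\sum_{n=1}^N e^{2\pi i a(n)t}$. Since $\|f\|_{L^2(\mu)}>0$, convergence of the left-hand side in $L^2(\mu)$ forces the scalar sequence $(A_N(t))_N$ to converge.

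For the substantive direction (ii) $\Rightarrow$ (i), the key move is to invoke Theorem~\ref{T:mixinggeneral} to upgrade the hypothesis ``good for seminorm control'' to ``good for degree $2$ seminorm control'' for the particular system $(X,\CX,\mu,T)$. Armed with this, split any $f\in L^2(\mu)$ as $f = \E(f|\CZ_1) + (f - \E(f|\CZ_1))$. The second summand is orthogonal to $\CZ_1$ and so has $\nnorm{\cdot}_2 = 0$, whence degree $2$ seminorm control yields
\begin{align*}
\lim_{N\to\infty}\norm{\frac{1}{N}\sum_{n=1}^N T^{a(n)}\bigl(f - \E(f|\CZ_1)\bigr)}_{L^2(\mu)} = 0.
\end{align*}

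It remains to treat $g := \E(f|\CZ_1)$. A standard ergodic decomposition argument reduces to the case in which $T$ is ergodic, so that $L^2(X,\CZ_1,\mu)$ is the closed linear span of the eigenfunctions $\{\phi_t\}_{t\in\spec(T)}$ of $T$; expanding $g = \sum_t c_t \phi_t$ with $\sum_t |c_t|^2 < \infty$, we obtain
\begin{align*}
\frac{1}{N}\sum_{n=1}^N T^{a(n)}g = \sum_{t\in\spec(T)} c_t A_N(t)\, \phi_t.
\end{align*}
Hypothesis (ii) provides pointwise convergence $A_N(t)\to A(t)$ for each $t\in\spec(T)$; combined with the trivial bound $|A_N(t)-A(t)|^2\leq 4$ and the summability of $(|c_t|^2)$, the dominated convergence theorem applied in $\ell^2(\spec(T))$ yields $L^2(\mu)$ convergence of the above averages to $\sum_t c_t A(t)\phi_t$. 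The only non-routine step is the appeal to Theorem~\ref{T:mixinggeneral}; once degree $2$ control is at hand, the rest is essentially the spectral-theoretic observation that mean convergence on the Kronecker factor is governed by the scalar averages $A_N(t)$ along the spectrum.
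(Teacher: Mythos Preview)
Your argument is the intended one: the paper states Corollary~\ref{C:convergence single averages} without proof, as an immediate consequence of Theorem~\ref{T:mixinggeneral}, and your route---upgrade to degree~$2$ control via Theorem~\ref{T:mixinggeneral}, kill the part orthogonal to $\CZ_1$, then expand the $\CZ_1$-part in eigenfunctions---is exactly what is meant. For ergodic systems your proof is complete and correct.

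There is, however, a genuine subtlety in your sentence ``A standard ergodic decomposition argument reduces to the case in which $T$ is ergodic.'' Hypothesis~(ii) concerns $\spec(T)$ computed with respect to the \emph{original} measure $\mu$. Under the ergodic decomposition $\mu=\int\mu_\omega\,d\nu(\omega)$, an ergodic component $(X,\mu_\omega,T)$ may have eigenvalues that are \emph{not} in $\spec(T)$: for instance, for the non-ergodic system $T(x,y)=(x,y+x)$ on $\T^2$ one has $\spec(T)=\{0\}$, yet the component at $x$ has eigenvalues $\{nx:n\in\Z\}$. Consequently you cannot invoke~(ii) on each component, and the identification of $L^2(\CZ_1,\mu)$ with the closed span of global eigenfunctions fails outright in the non-ergodic case (in the example just given, $\CZ_1$ is the whole space while the global eigenfunctions only span the functions of $x$). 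If you want the result for general systems, the ergodic reduction has to be handled differently (or the statement restricted to ergodic $T$); as written, this step is a gap. The rest of the argument is fine.
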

\begin{remark}
	We stress that the spectral theorem cannot be  used to obtain this result, since the  convergence  property  in (ii)  applies  only to a countable number of $t\in [0,1)$.
\end{remark}
Lastly, we record a  result about single  recurrence.  If $(G,+)$ is a compact abelian group with a metric $d$,
we denote the \textit{distance from identity} of  $g\in G$ by $\norm{g}_G:=d(0,g)$.

\begin{definition}   A sequence $a\colon \N\to \Z$ is {\em good for averaged Bohr recurrence} if for every $d\in \N$, $\alpha \in \T^d$, and $\varepsilon >0$, the set
	%$\{n\in \N\colon  a(n)\alpha \in D(0,\varepsilon) \}$ has positive upper density.
	 $\{n\in \N\colon  \norm{a(n)\alpha}_{\T^d}\leq \varepsilon\}$ has positive upper density.
	\end{definition}
\begin{remark}

By considering the rotation by $(1/r,\alpha)\in \T^{d+1}$, where	 $r,d\in \N$,
we deduce  that if the sequence $a\colon \N\to \Z$ is good for Bohr recurrence, then for every $\beta\in \Z_r\times \T^d$  and $\varepsilon>0$, the set $\{n\in \N\colon  \norm{a(n)\beta}_{\Z_r\times \T^d}\leq \varepsilon\}$ has positive upper density. Here, we think of $\Z_r$ as embedded in $\T$ in the natural way, hence the distance function on $\Z_r$ is the pullback of the distance function on $\T$, and the distance on $\Z_r\times \T^d$ can be taken to be the sum of distances on $\Z_r$ and $\T^d$.
\end{remark}

\begin{theorem}[Recurrence for single term averages]\label{T:singlereccurence}
	Suppose that the   strictly increasing sequence $a\colon \N\to \N$ is good for averaged Bohr recurrence and seminorm control
	%of the averages $\frac{1}{N}\sum_{n=1}^N  T^{a(n)}f$
	for the system
	$(X,\CX,\mu,T)$. Then the sequence is good for single recurrence for this system, in particular, for every $A\in \CX$ with $\mu(A)>0$, we have
	\begin{equation}\label{E:AApos}
		\limsup_{N\to\infty}\frac{1}{N}\sum_{n=1}^N\mu(A\cap T^{-a(n)}A)>0.
	\end{equation}
\end{theorem}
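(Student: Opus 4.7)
The plan is to reduce the problem to the Kronecker factor $\CZ_1$ via degree~$2$ seminorm control and then to extract positivity by combining the atomic spectral decomposition of $\E(\one_A|\CZ_1)$ with averaged Bohr recurrence. By a standard ergodic decomposition argument (as used in the proof of Theorem~\ref{T:mixinggeneral}), I would first reduce to the case that $T$ is ergodic, since \eqref{E:AApos} integrates over the ergodic components and it suffices to establish it on a positive-measure set of them. The seminorm control hypothesis then upgrades, via Theorem~\ref{T:mixinggeneral}, to degree~$2$ seminorm control, and the vanishing of $\nnorm{f}_2$ is equivalent to $\E(f|\CZ_1)=0$; this gives
$$\lim_{N \to \infty} \Bigl\| \frac{1}{N}\sum_{n=1}^{N} T^{a(n)} \one_A - \frac{1}{N}\sum_{n=1}^{N} T^{a(n)} g \Bigr\|_{L^2(\mu)} = 0,\quad g := \E(\one_A | \CZ_1).$$
Since $T^{a(n)} g$ is $\CZ_1$-measurable, integrating against $\one_A$ reduces \eqref{E:AApos} to showing
$$\limsup_{N \to \infty} \frac{1}{N}\sum_{n=1}^{N} \int g \cdot T^{a(n)} g \, d\mu > 0,$$
where $0 \leq g \leq 1$ and $\int g \, d\mu = \mu(A)$.

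For ergodic $T$, the Kronecker factor $\CZ_1$ is isomorphic to an ergodic group rotation on a compact abelian group, so the spectral measure of $g$ is purely atomic and supported on $\spec(T) \subseteq [0,1)$, yielding an expansion
$$\int g \cdot T^n g \, d\mu = \sum_{t \in \spec(T)} c_t\, e^{2\pi i n t}, \qquad c_t \geq 0, \quad \sum_{t} c_t = \|g\|_{L^2(\mu)}^2 \geq \mu(A)^2,$$
with the last inequality by Cauchy-Schwarz. Given $\varepsilon > 0$, I would choose a finite set $\{t_1, \ldots, t_k\} \subseteq \spec(T)$ carrying all but $\varepsilon$ of the total mass, and apply averaged Bohr recurrence — combined with the remark following its definition, which absorbs any rational frequencies into a finite-cyclic factor $\Z_r$ — to the vector of these frequencies, producing a set $E \subseteq \N$ of positive upper density $\delta > 0$ on which $\cos(2\pi a(n) t_i) \geq 1 - O(\varepsilon^2)$ for every $i \leq k$. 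For $n \in E$ this gives $\int g \cdot T^{a(n)} g \, d\mu \geq \mu(A)^2 - O(\varepsilon)$, whereas for $n \notin E$ the integrand is non-negative because $g \geq 0$. The $\limsup$ is therefore at least $\delta(\mu(A)^2 - O(\varepsilon))$, which is positive once $\varepsilon$ is small enough.

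The main obstacle, which is precisely what Theorem~\ref{T:mixinggeneral} overcomes, is the upgrade from arbitrary seminorm control to degree~$2$ control; without it the reduction to the Kronecker factor would not be available and the spectral argument could not begin. A small but essential technical point is the non-negativity of $g$, which eliminates the potentially negative contributions from $n \notin E$; without this one would be left only with the trivial lower bound $-\|g\|_\infty^2$ and no positivity could be extracted.
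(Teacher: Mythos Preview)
Your proof is correct and follows the same blueprint as the paper: reduce to the Kronecker factor via degree~$2$ seminorm control and then combine averaged Bohr recurrence with the non-negativity of $g=\E(\one_A\mid\CZ_1)$. The execution differs in two minor respects. The paper, instead of citing Theorem~\ref{T:mixinggeneral}, essentially inlines its proof: after ergodic decomposition it uses the seminorm-control hypothesis, Theorem~\ref{T:HK}, and an approximation argument to pass to an ergodic nilsystem, and then invokes Theorem~\ref{T:mixing} to dispose of the $\CZ_1$-orthogonal piece. On the Kronecker factor the paper exploits that for a nilsystem this factor is a compact abelian Lie group $\Z_r\times\T^d$, so the correlation $t\mapsto\int f_1(x)\,f_1(x+t)\,dm_G(x)$ is continuous and stays close to $\int f_1^2\,dm_G$ on a small Bohr neighbourhood; you instead use the purely atomic spectral measure of $g$ and truncate to finitely many frequencies. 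Your route is slightly more streamlined in that the spectral argument works on the Kronecker factor of any ergodic system and does not require the intermediate nilsystem reduction, while the paper's continuity argument avoids the explicit Fourier-tail bookkeeping.
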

\begin{remark}
	If   we replace upper  with lower density in the definition of the property ``good for averaged Bohr recurrence'', then we can replace the $\limsup$ with $\liminf$ in $\eqref{E:AApos}$.	\end{remark}
\begin{proof}
	Using an ergodic decomposition argument, we can assume that the system is ergodic.
	Theorem~\ref{T:HK} then gives that the system is an inverse limit of ergodic nilsystems,
	and using an approximation argument (as in \cite[Lemma~2.1]{BLL08} or \cite[Lemma~3.2]{FuK79}), we can assume that it is an ergodic nilsystem.
	We decompose
	$$
	{\bf 1}_A=f_1+f_2\quad \mathrm{for}\quad f_1:=\E(f|\CZ_1) \quad \mathrm{and}\quad f_2\, \bot\,  \CZ_1
	$$
	where $\CZ_1$ is the Kronecker factor of the system.
	By Theorem~\ref{T:mixing},  we have
	$$
	\lim_{N\to\infty}\norm{\frac{1}{N}\sum_{n=1}^N T^{a(n)}f_2}_{L^2(\mu)}=0,
	$$
	hence it suffices to show that
	\begin{equation}\label{E:Nk}
		\lim_{k\to\infty}\frac{1}{N_k}\sum_{n=1}^{N_k} \int f_1\cdot T^{a(n)}f_1\, d\mu>0
	\end{equation}
	for some sequence $N_k\to\infty$ that will be determined later.
	Since we assumed our system to be an ergodic nilsystem, the factor system on $\CZ_1$  is isomorphic to an ergodic  rotation on a compact abelian Lie group. To prove~\eqref{E:Nk}, we can therefore assume that $T$ is an ergodic rotation on  $G:=\Z_r\times \T^d$ for some $d,r\in \N_0$ with the Haar measure $\mu=m_G$. Since the  function
	$$
	F(t): = \int f_1(x)\cdot f_1(x+t)\, dm_G(x), \quad t\in G,
	$$  is continuous, there exists   $\varepsilon_0>0$ such that
	\begin{equation}\label{E:continuity}
		%\sup_{t\in D(0,\varepsilon_0)}\Big|\int f_1(x)\cdot f_1(x+t)\, dx- \int (f_1(x))^2\, dx \Big|\leq\frac{1}{2} \int (f_1(x))^2\, dx.
		\sup_{\norm{t}_G\leq \veps_0}\Big|\int f_1(x)\cdot f_1(x+t)\, dm_G(x)- \int (f_1(x))^2\, dm_G(x) \Big|\leq\frac{1}{2} \int (f_1(x))^2\, dm_G(x).
	\end{equation}
	(Note that $\int (f_1(x))^2\, dm_G(x) \geq (m_G(A))^2 >0$.)
	Since $a\colon \N\to \N$ is good for averaged Bohr recurrence, we can choose  the sequence  $N_k$ so that the limit in \eqref{E:Nk} exists and
	$$
	%\lim_{k\to \infty}  \frac{\big|\{n\in [N_k]\colon  a(n)\alpha \in D(0,\varepsilon_0)\} \big|}{N_k}>0.
	\lim_{k\to \infty}  \frac{\big|\{n\in [N_k]\colon\  \norm{a(n)\alpha}_G\leq \veps_0\} \big|}{N_k}>0.
	$$
	We deduce from this and \eqref{E:continuity} that \eqref{E:Nk} holds. This completes the proof.
\end{proof}

\subsection{Key seminorm control result} The key new ingredient used in the proofs of the previous results is a new result that enables us to find the optimal degree of seminorm control in various cases. Our argument works without additional trouble in a more general setting than the one needed for averages along APs and cubic patterns, so we describe this more general case here.
Given a system $(X, \CX, \mu,T)$ and sequences $a_1,\ldots, a_\ell\colon \N\to\Z$, we consider the following averages
\begin{equation}\label{E:average}
	\frac{1}{N}\sum_{n=1}^N   \, T^{a_1(n)}f_1\cdots T^{a_\ell(n)}f_\ell
\end{equation}
for $f_1,\ldots, f_\ell\in L^\infty(\mu)$.

%%\begin{theorem}[Degree reduction property]\label{T:seminormdrop}
%%	Let $(X, \CX, \mu,T_1,\ldots, T_\ell)$ be a system and $d\in \N$.
%%	Suppose that for this system the averages \eqref{E:averages} are good for seminorm %%control  and   degree $d+1$ seminorm control implies  degree $d$ seminorm control. %%Then the averages \eqref{E:averages} are good for  degree $d$ seminorm control for this %%system.
%%\end{theorem}

\begin{theorem}[Degree reduction property]\label{T:seminormdrop}
%	Let $a_1,\ldots, a_\ell\colon \N\to\Z$ and suppose that the averages \eqref{E:average} are good for seminorm control for the system $(X, \CX, \mu,T)$.  	Suppose also that for some $d\in \N$, degree $d+1$ seminorm control implies  degree $d$ seminorm control of the averages \eqref{E:average} for this system. Then the averages \eqref{E:average} are good for  degree $d$ seminorm control for this system.
Let $\ell\in\N$ and suppose that the sequences $a_1,\ldots, a_\ell\colon \N\to\Z$ are good for seminorm control for the system $(X, \CX, \mu,T)$.  	Suppose also that for some $d\in \N$, degree $d+1$ seminorm control implies  degree $d$ seminorm control of the averages \eqref{E:average} for this system. Then the sequences $a_1, \ldots, a_\ell$ are good for  degree $d$ seminorm control for this system.
\end{theorem}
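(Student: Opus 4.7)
The plan is to prove the conclusion by downward induction on the degree of seminorm control. By the first hypothesis, fix some $D \geq d+1$ such that the averages
\[
A_N := \frac{1}{N}\sum_{n=1}^N T^{a_1(n)}f_1 \cdots T^{a_\ell(n)}f_\ell
\]
are controlled by $\nnorm{\cdot}_D$ for this system. The heart of the argument is a single \emph{degree-lowering step}, valid for every integer $s \geq d+1$: if $A_N$ is controlled by $\nnorm{\cdot}_{s+1}$, then it is controlled by $\nnorm{\cdot}_s$. Iterating from $s = D-1$ down to $s = d+1$ delivers degree $(d+1)$ control, and one application of the second hypothesis then gives the desired degree $d$ control.

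To carry out one degree-lowering step, I would suppose that there exist bounded functions $f_1,\ldots,f_\ell$, an $\varepsilon > 0$, and a subsequence $N_k\to\infty$ with $\|A_{N_k}\|_{L^2(\mu)}\geq \varepsilon$; by the inductive assumption of degree $(s+1)$ control, this forces $\nnorm{f_j}_{s+1}\geq c(\varepsilon)>0$ for each $j$, and by symmetric arguments it suffices to treat $j=1$. The elementary inverse theorem announced in the abstract takes the form of the identity
\[
\nnorm{f_1}_{s+1}^{2^{s+1}} = \int f_1 \cdot \overline{\CD_s f_1}\, d\mu,
\]
where $\CD_s f_1$ is the degree-$s$ dual function of $f_1$. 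Combining this with Cauchy--Schwarz and van der Corput manipulations on $A_N$, one can replace the function $f_1$ inside the average by shifts of the dual function $\CD_s f_1$ and thereby reduce the question of $\nnorm{f_1}_s$ control to controlling averaged $\nnorm{\cdot}_s$-seminorms of these shifted dual functions.

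The main obstacle is to obtain a usable upper bound on that averaged seminorm expression, of the form ``small whenever $\nnorm{f_1}_s$ is small''. This is precisely what Proposition \ref{P:keyestimate} is designed to deliver; once it is in hand, smallness of $\nnorm{f_1}_s$ contradicts the lower bound $\nnorm{f_1}_{s+1}\geq c(\varepsilon)$ derived above, finishing the degree-lowering step. A notable feature of this approach, to be preserved throughout, is that it avoids the Host--Kra structure theorem entirely; the inputs are only the recursive definition of the seminorms, the dual-function identity, and Proposition \ref{P:keyestimate}. Consequently the downward induction from $D$ down to $d+1$ runs uniformly, and the extra hypothesis of the theorem is invoked only once, at the very last step.
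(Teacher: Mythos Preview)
Your proposed induction scheme has a structural gap that makes the argument break down. You claim a degree-lowering step of the form ``degree $(s+1)$ control of the averages implies degree $s$ control'' for every $s\geq d+1$, to be proved \emph{without} invoking the theorem's second hypothesis. But that statement is false in general: if it held, then any sequences good for seminorm control would automatically be good for degree $d+1$ control for every $d$, contradicting the examples in the remarks after Theorem~\ref{T:APsConvergence} (sequences that are good for degree $\ell+1$ but not degree $\ell$ control). So the step you call ``the heart of the argument'' cannot be carried out as stated, and the hypothesis must enter earlier than ``only once, at the very last step.'' (Incidentally, your dual identity is off by one: $\nnorm{f}_{s}^{2^{s}}=\int f\cdot\CD_{s}f\,d\mu$, not $\nnorm{f}_{s+1}^{2^{s+1}}=\int f\cdot\overline{\CD_s f}\,d\mu$.)

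The paper's proof is organized quite differently. It does not lower the degree of control of the averages directly. Instead it reformulates the conclusion as a chain of properties $(P_m)$, $m=0,\ldots,\ell$, where $(P_m)$ asserts degree $d$ control under the extra assumption that $f_{m+1},\ldots,f_\ell$ are $\CZ_d$-measurable; $(P_0)$ is exactly the $d$-step reduction hypothesis, and $(P_\ell)$ is the theorem. To pass from $(P_{m-1})$ to $(P_m)$, one replaces $f_m$ by an auxiliary function $\tilde f_m$ built from the average itself (Proposition~\ref{P:dual replacement}), and the degree-lowering is performed on the \emph{seminorm of $\tilde f_m$}: one shows $\nnorm{\tilde f_m}_{s+d+1}>0\Rightarrow\nnorm{\tilde f_m}_{s+d}>0$ (Proposition~\ref{P:degreelowering}). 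The proof of that implication uses the dual-difference interchange to produce functions $f_{m,\uh,\uh'}$ that are $\CZ_d$-measurable, then invokes $(P_{m-1})$---which already encodes the $d$-step reduction hypothesis---together with the soft quantitative bound (Proposition~\ref{P:soft quantitative}) to get $\nnorm{f_{m,\uh,\uh'}}_d>0$ on a dense set, and only then applies Proposition~\ref{P:keyestimate} to conclude $\nnorm{\tilde f_m}_{s+d}>0$. Thus the hypothesis is used at every pass of both the $m$-induction and the inner degree-lowering, not just at the end; and Proposition~\ref{P:keyestimate} acts on seminorms of the specific dual-function products $f_{\uh,\uh'}$, not directly on the control of $A_N$.
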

%%OR
%%\begin{theorem}\label{T:seminormcontrol}
%%	Suppose that the averages \eqref{E:averages} are good for seminorm control %%for the system $(X, \CX, \mu,T)$. Suppose also that for some $d\in \N$  level  %%$d+1$ seminorm control for these averages implies level $d$ seminorm %%control. Then we have level $d$ seminorm control for the averages  %%\eqref{E:averages}.
%%\end{theorem}
To put this differently, assume that for a fixed system, the sequences $a_1, \ldots, a_\ell$ are good for seminorm control, i.e. the averages \eqref{E:average} are controlled by Host-Kra seminorms of some (possibly very high) degree. In order  to prove degree $d$ seminorm control for some $d\in\N$, we can take for granted that we have degree $d+1$ seminorm control, or equivalently that all the functions are $\CZ_d$-measurable.
%we can control them by the degree $d$ seminorm if we can control them by the degree $d$ seminorm for all $\CZ_d$-measurable functions.
If degree $d$ control follows for all $\CZ_d$-measurable functions, we will say that the sequences $a_1, \ldots, a_\ell$ are \textit{good for $d$-step reduction}, a property that will be rigorously defined in the next section.
%%, which practically  allows us to assume  that the system is a $d$-step nilsystem.

In various settings, this additional information simplifies matters quite a bit. For instance, Theorem~\ref{T:seminormdrop}  easily implies the  joint ergodicity result \cite[Theorem~1.1]{Fr21}, since verifying degree $1$ seminorm control assuming  degree $2$ seminorm control (in which case we can assume that the system is a rotation on a compact abelian Lie group) is an easy consequence of the given good equidistribution assumptions.
%%With a bit more work, it also implies \cite[Theorem~2.1]{FrKu22a}.

The proof of Theorem~\ref{T:seminormdrop} is inspired by the degree lowering approach taken in \cite{Fr21}, which in turn was inspired by finitary works of
Peluse~\cite{P19, P20} and Peluse and Prendiville~\cite{PP19}. However, there are severe technical difficulties in adapting the argument from \cite{Fr21} to our setting. The main challenge has been to find a suitable replacement for \cite[Proposition 3.2]{Fr21}, the inverse theorem for degree 2 seminorm that relates $\nnorm{f}_2$ to correlations of $f$ with eigenfunctions of $T$. Our first instinct for higher degree seminorms was to use the Host-Kra structure theorem (see Theorem~\ref{T:HK} below) to  relate $\nnorm{f}_d$ to correlations of $f$ with nilcharacters, which enjoy similar orthogonality properties to eigenfunctions. An argument of this form has recently been performed in the finitary work of Leng on finding bounds for subsets of finite fields lacking complexity 1 polynomial progressions \cite{Le22}. However, various technical obstacles encountered on the way pushed us to abandon this approach.

Instead, we use the trivial inverse theorem for degree $d$ seminorms, one that expresses $\nnorm{f}_d$ as a correlation of $f$ with its level $d$ dual function. We then carry out the degree lowering argument similarly to \cite{Fr21} but with level $d$ dual functions in place of eigenfunctions. The utility of eigenfunctions for the argument in \cite{Fr21} comes from their orthogonality properties. Dual functions lack orthogonality, but they possess strong cancellative properties that we exploit to our advantage in Proposition \ref{P:keyestimate}, the key novel technical input in this paper. Roughly speaking, Proposition \ref{P:keyestimate} states that if $\nnorm{f}_{d+s}=0$, then degree $d$ seminorms of certain dual functions of level $d+1$ associated with degree $s$ multiplicative derivatives of $f$ vanish on average.

In proving Theorem \ref{T:seminormdrop}, we also make crucial use of \cite[Proposition A.2]{FrKu22a}, a general functional analytic result that allows us to boost a qualitative seminorm control of a multilinear functional into a soft quantitative one.

\subsection{Generalizations to other averaging schemes}~\label{SS:general} Theorem \ref{T:seminormdrop} can be generalized in a straightforward way to weighted averages, sequences in several variables,  and averages along F\o lner sequences as follows.
\begin{theorem}\label{T:seminormdrop general}
	Let $k\in\N$, $(I_N)$ be a F\o lner sequence on $\Z^k$, $(w_\un)$ be bounded complex weights on $\Z^k$, and $a_1, \ldots, a_\ell\colon\Z^k\to\Z$ be sequences. Suppose that the sequences $a_1, \ldots, a_\ell$ 	are good for seminorm control for the system $(X, \CX, \mu,T)$ along $(I_N)$ with weights $(w_\un)$, i.e. the weighted averages
	\begin{equation}\label{E:weighted average}
		\frac{1}{|I_N|}\sum_{\un\in I_N}   w_\un\cdot T^{a_1(\un)}f_1\cdots T^{a_\ell(\un)}f_\ell
	\end{equation}
	are controlled by some Gowers-Host-Kra seminorm. Suppose also that for some $d\in \N$, degree $d+1$ seminorm control implies  degree $d$ seminorm control of the averages \eqref{E:weighted average} along $(I_N)$ with weights $(w_\un)$ for this system. Then $a_1, \ldots, a_\ell$ are good for  degree $d$ seminorm control for this system along $(I_N)$ with weights $(w_\un)$.
\end{theorem}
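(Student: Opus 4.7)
My plan is to prove Theorem~\ref{T:seminormdrop general} by following the strategy sketched for its scalar version Theorem~\ref{T:seminormdrop}, and verifying at each step that the argument extends to the multivariable Følner setting with bounded weights $(w_\un)$. Since Theorem~\ref{T:seminormdrop} is the special case $k=1$, $I_N=[N]$, $w_\un\equiv 1$, the task is primarily to check that each tool used in the one-dimensional unweighted case is insensitive to the change of averaging scheme.

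The main argument will be a downward induction on the seminorm degree. By hypothesis the weighted averages \eqref{E:weighted average} are controlled by $\nnorm{\cdot}_D$ for some possibly very large $D\geq d+1$. I would show that if these averages are controlled by $\nnorm{\cdot}_{s+1}$ for any $s\geq d$, then they are also controlled by $\nnorm{\cdot}_s$; iterating descends to degree $d$. The base case $s=d$ is exactly the standing assumption. For the inductive step $s>d$, I would execute the degree lowering argument: pass from qualitative to quantitative degree $s+1$ control via \cite[Proposition~A.2]{FrKu22a}; use the identity $\nnorm{f_j}_{s+1}^{2^{s+1}}=\int f_j\cdot \CD_{s+1}f_j\,d\mu$ to replace the small-seminorm $f_j$ by its correlation with the level-$(s+1)$ dual function; apply Gowers-Cauchy-Schwarz to transfer the averaging in $\un$ onto multiplicative derivatives of $f_j$; and finally invoke Proposition~\ref{P:keyestimate} to show that the resulting level-$(s+1)$ duals have vanishing degree $s$ seminorm on average, from which degree $s$ control of the original averages follows.

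The adaptation to Følner averaging with bounded weights is structural rather than substantive. The Gowers-Cauchy-Schwarz inequality, van der Corput, and the trivial inverse identity for the seminorm are pointwise or algebraic statements, \cite[Proposition~A.2]{FrKu22a} is already formulated for arbitrary multilinear functionals in Hilbert space, and Proposition~\ref{P:keyestimate} concerns multiplicative derivatives whose definition is insensitive to the ambient group. The weights $w_\un$ survive each Cauchy-Schwarz application, where they are replaced by bounded products of the form $w_\un\overline{w_{\un+\bh}}$, and the Følner property ensures that bounded shifts of the indexing set introduce negligible error, which is precisely what is needed for the van der Corput type estimates to close.

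The main obstacle, as I anticipate it, is not conceptual but notational: one must consistently track multi-index weights through nested dualizations and differentiations without letting them be absorbed into the dual functions in an uncontrolled way, and verify that each ingredient (the definition of dual functions, the Gowers-Host-Kra seminorms, and Proposition~\ref{P:keyestimate}) is formulated in a way compatible with the Følner-weighted averaging on $\Z^k$. Once this bookkeeping is handled, the proof of Theorem~\ref{T:seminormdrop general} reduces to that of Theorem~\ref{T:seminormdrop} with only notational changes.
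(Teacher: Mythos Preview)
Your meta-level claim --- that Theorem~\ref{T:seminormdrop general} follows from the proof of Theorem~\ref{T:seminormdrop} with only notational changes to accommodate multivariable F\o lner averaging and bounded weights --- is correct and is exactly what the paper asserts. The paper does not give a separate proof; it simply remarks that the argument for Theorem~\ref{T:seminormdrop} goes through unchanged in this setting. Your observations about why each ingredient (soft quantitative control, the dual identity, Proposition~\ref{P:keyestimate}, van der Corput) is insensitive to the averaging scheme are on point.

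However, your sketch of \emph{how} the proof of Theorem~\ref{T:seminormdrop} itself proceeds is structurally off, and as written contains a gap. You frame the argument as a downward induction on the seminorm degree $s$: show that degree $s+1$ control implies degree $s$ control for every $s\geq d$, with the standing $d$-step reduction hypothesis serving only as the base case $s=d$. But for $s>d$ your inductive step never invokes the $d$-step reduction hypothesis at all --- if the step worked as you describe, the hypothesis would be redundant and one could descend to degree $1$ unconditionally. The paper's architecture is different: the induction (Proposition~\ref{P:Main}) is on the function index $m$, via a chain of properties $(P_m)$ asserting degree $d$ control when the last $\ell-m$ functions are already $\CZ_d$-measurable. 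To pass from $(P_{m-1})$ to $(P_m)$ one introduces an auxiliary function $\tilde f_m$ (Proposition~\ref{P:dual replacement}), shows $\nnorm{\tilde f_m}_s>0$ for some large $s$ by the seminorm control hypothesis, and then iterates the degree-lowering step Proposition~\ref{P:degreelowering} to obtain $\nnorm{\tilde f_m}_d>0$. That step uses the inductive hypothesis $(P_{m-1})$ --- hence the $d$-step reduction assumption --- at \emph{every} stage of the descent, together with Proposition~\ref{P:keyestimate}. So the degree lowering is applied to $\tilde f_m$, not directly to the averages, and the $d$-step reduction is not a base case but is woven into each drop. Your list of ingredients is right; the way they are assembled needs to follow the $(P_m)$ induction rather than a bare induction on the seminorm degree.
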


When used in place of Theorem \ref{T:seminormdrop}, Theorem \ref{T:seminormdrop general} allows us to extend Theorems \ref{T:APsConvergence}-\ref{T:squares} to averages along general F\o lner sequences on $\Z^k$ and sequences $a\colon \Z^k\to \Z$ in the natural way (it is trickier, however, to modify the statements in order to get interesting results for weighted averages).
%%{\bf For weighted averages the statements have to change quite a bit. For instance in %%Theorem~\ref{T:APsConvergence} we should probably use  in (ii) $(\ell+1)$-step %%nilsystems and in (iii) $(\ell+1)$-step equidistribution. Even this may not work for %%example for $\ell=1$ in order to get some reduction using the natural asserted identity %%$ \E_{n\in \N} w_n e(a(n)\alpha)=\E_{n\in \N} w_n e(n\alpha)=0$ we need to know that %%$\E_{n\in \N} w_n e(n\alpha)=0$, which may not be the case. Also in the multivariable %%result we should mention what is the analogue of averages along APs. Probably the %%easiest thing to do is that the results 1.1-1.5 can be extended to Folner sequences in %%the obcious way. }

\subsection{Further directions}
Theorems~\ref{T:APsConvergence} and \ref{T:APsConvergenceTE} give optimal seminorm control (of degree $\ell$) for averages along $\ell$-term APs for sequences that satisfy some good $\ell$-step equidistribution properties. In the absence of such good equidistribution properties, the example given in the  fourth remark after Theorem~\ref{T:APsConvergence} shows that  there exist  sequences for which the  averages along $\ell$-term APs are good for degree $\ell+1$ seminorm control but not   degree $\ell$ seminorm control. On the other hand, we have not been able to find  an example where seminorm control of degree higher than $\ell+1$ is needed for these averages. This motivates the next conjecture.
\begin{conjecture}\label{Con:3}
	Let $a\colon \N\to \Z$ be strictly increasing. If the  sequence  is good for seminorm control along $\ell$-term APs for the system $(X, \CX, \mu,T)$, then it is  good for  degree $\ell+1$ seminorm control along $\ell$-term APs for this system.
\end{conjecture}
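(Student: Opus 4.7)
The plan is to mirror the two-step strategy used to prove Theorem~\ref{T:mixinggeneral}, which is precisely the $\ell=1$ case of the conjecture: first reduce the question to a ``last-step'' degree-lowering claim on nilsystems via Theorem~\ref{T:seminormdrop}, and then settle that claim using the structure of top-order nilcharacters.

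First, I would apply Theorem~\ref{T:seminormdrop} to the sequences $a_j(n):=j\,a(n)$, $j=1,\ldots,\ell$, with $d=\ell+1$. Since $a$ is by hypothesis good for seminorm control along $\ell$-term APs for $(X,\CX,\mu,T)$, the theorem reduces the conjecture to the following implication: if each $f_j$ is $\CZ_{\ell+1}$-measurable and some $f_j$ satisfies $\nnorm{f_j}_{\ell+1}=0$, then the averages $\frac{1}{N}\sum_{n=1}^N T^{a(n)}f_1\cdots T^{\ell a(n)}f_\ell$ converge to $0$ in $L^2(\mu)$. An application of Theorem~\ref{T:HK} to the factor $\CZ_{\ell+1}$, together with the standard ergodic decomposition and approximation arguments, further reduces the problem to the case where $(X,\CX,\mu,T)$ is an ergodic $(\ell+1)$-step nilsystem $G/\Gamma$.

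On such a nilsystem, I would decompose $f_j = \E(f_j\mid\CZ_\ell) + g_j$ with $g_j \perp \CZ_\ell$. The contributions involving only conditional expectations are supported on an $\ell$-step nilfactor, where the standard van der Corput/PET treatment of $\ell$-term AP averages already yields control by $\nnorm{\cdot}_{\ell+1}$. The remaining multilinear terms involve at least one $g_j$ with $\nnorm{g_j}_{\ell+1}=0$, which on an $(\ell+1)$-step nilsystem forces $g_j$ to be spectrally supported on nilcharacters of genuine step $\ell+1$.

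The main obstacle, and the reason this is only a conjecture, is to show that these top-order contributions cancel in the average. For $a(n)=n$, this cancellation is delivered by Leibman's equidistribution theorem applied to the degree $\ell+1$ polynomial phases produced by iterating a step-$(\ell+1)$ nilcharacter along an $\ell$-term AP. For an arbitrary strictly increasing $a$ that is merely good for seminorm control, one needs a nilpotent mixing-type statement analogous to \cite[Proposition~3.1]{HKM14} (the key input in Theorem~\ref{T:mixing}), but adapted to $\ell$-term AP averages against step-$(\ell+1)$ nilcharacters rather than to single-term averages against step-$2$ ones. Producing such a statement, presumably via a polynomial-exhaustion or PET-style argument tailored to the seminorm control hypothesis, is where I expect the bulk of the technical work to lie.
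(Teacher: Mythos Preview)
This statement is Conjecture~\ref{Con:3}, which the paper leaves open for $\ell\ge 2$; only the case $\ell=1$ is proved (as Theorem~\ref{T:mixinggeneral}). Your proposal is therefore not a proof but a strategy outline, and you correctly flag the open step. The reduction you describe---apply Theorem~\ref{T:seminormdrop} with $d=\ell+1$, then use ergodic decomposition and Theorem~\ref{T:HK} to pass to an ergodic $(\ell+1)$-step nilsystem---is exactly the reduction the paper itself sketches in the paragraph following the conjecture. The paper then formulates the residual problem as Conjecture~\ref{Con:4} (higher-order mixing for nilsystems), which is essentially the weak-convergence version of what you call the ``nilpotent mixing-type statement''; the paper notes this remains open even for $\ell=2$, $k_1=1$, $k_2=2$.

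Two small points where your write-up drifts. First, the definition of ``good for seminorm control along $\ell$-term APs'' involves arbitrary distinct non-zero $k_1,\ldots,k_\ell$, not just $k_j=j$; this is cosmetic but should be stated correctly. Second, your decomposition $f_j=\E(f_j\mid\CZ_\ell)+g_j$ does not actually reduce anything: since some $f_{j_0}$ already satisfies $\nnorm{f_{j_0}}_{\ell+1}=0$, we have $\E(f_{j_0}\mid\CZ_\ell)=0$, so every surviving term in the multilinear expansion still contains a function orthogonal to $\CZ_\ell$---you are back to the original hypothesis. The sentence about ``standard van der Corput/PET treatment'' on the $\ell$-step factor is a red herring (the all-conditional-expectations term is simply zero), and the hard content is entirely in the top-order piece, as you acknowledge.
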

Theorem~\ref{T:mixinggeneral} establishes Conjecture~\ref{Con:3} for $\ell=1$.
In order to establish  the conjecture  in full generality, using an ergodic decomposition argument we can assume that the system is ergodic.  Using the Host-Kra structure theorem (see Theorem~\ref{T:HK} below), it suffices to verify the conjecture  when the system is an ergodic nilsystem.  In this case,  a closely related problem is the following
%% arguing
%%as in the proof of \cite[Theorem~1.1]{AS72} or \cite[Theorem~2.2]{BB86}, we get that it %%suffices to establish the following
higher order mixing property for nilsystems that would imply Conjecture~\ref{Con:3} for weak instead of strong convergence. It is also not hard to show that  Conjecture~\ref{Con:3} implies  Conjecture~\ref{Con:4}.
\begin{conjecture}\label{Con:4}
	Let $(X, \CX, \mu,T)$ be an ergodic   nilsystem and $k_1,\ldots, k_\ell\in \Z$ be non-zero and distinct.  Then
	$$
	\lim_{n\to\infty} \int f_0\cdot T^{k_1n}f_1\cdots T^{k_\ell n}f_\ell\, d\mu=0
	$$
	as long as $f_0,\ldots, f_\ell\in L^\infty(\mu)$ are such that $\nnorm{f_j}_{\ell+1}=0$ for some $j\in \{0,\ldots, \ell\}$.
\end{conjecture}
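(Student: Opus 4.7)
My plan is to attack Conjecture~\ref{Con:4} by combining a subsequence-contradiction argument with the degree-reduction machinery of Theorem~\ref{T:seminormdrop general}, in the spirit of how the base case $\ell=1$ is handled in Theorem~\ref{T:mixing}. By Theorem~\ref{T:HK} and a standard approximation argument, first reduce to the case where $(X,\CX,\mu,T)$ is an ergodic $s$-step nilmanifold $G/\Gamma$; we may assume $s \geq \ell+1$, since otherwise $\nnorm{\cdot}_{\ell+1}$ is a norm and the hypothesis forces some $f_j$ to vanish. Set $u_n := \int f_0 \cdot T^{k_1 n}f_1\cdots T^{k_\ell n}f_\ell\, d\mu$ and suppose for contradiction that $u_n \not\to 0$. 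Extracting a subsequence, we obtain an increasing $(n_k)$ and a non-zero constant $c$ with $u_{n_k} \to c$, whence $\tfrac{1}{K}\sum_{k=1}^{K} u_{n_k} \to c$. It is therefore enough to show that the averages $\tfrac{1}{K}\sum_{k=1}^{K} T^{k_1 n_k}f_1\cdots T^{k_\ell n_k}f_\ell$ converge to $0$ in $L^2(\mu)$.

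To obtain this $L^2$-vanishing, I would apply Theorem~\ref{T:seminormdrop general} to the sequences $a_j(k) := k_j n_k$ on the $s$-step nilsystem (with $I_K = \{1,\ldots,K\}$ and unit weights). Since $\nnorm{\cdot}_{s+1}$ is a norm on an $s$-step nilsystem, degree $s+1$ seminorm control holds trivially; iteratively descending through Theorem~\ref{T:seminormdrop general}, I would reach degree $\ell+1$ seminorm control, which combined with $\nnorm{f_j}_{\ell+1}=0$ closes the argument. Each descent from degree $d+1$ to degree $d$ (for $d \in \{\ell+1,\ldots,s\}$) requires verifying that, when all $f_j$ are $\CZ_d$-measurable, the averages above are controlled by $\nnorm{f_j}_d$ for some $j$.

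The main obstacle is precisely this per-level degree-reduction check. For it, I would try to use Leibman's equidistribution theorem for polynomial orbits on nilmanifolds together with the vertical character decomposition on a $d$-step factor with respect to the last piece of the central filtration: a non-trivial vertical character in $f_j$ produces polynomial phases in $n_k$ which, by the distinctness of $k_1,\ldots,k_\ell$ and an orthogonality argument, should contribute only through the degree $d$ seminorm of $f_j$. Because the subsequence $(n_k)$ is arbitrary, this likely requires uniform versions of Leibman's theorem in the style of Green-Tao. An alternative would be to develop a suitable higher analogue of Proposition~\ref{P:keyestimate} adapted to $\ell$-term AP correlations on nilsystems; this would bypass explicit equidistribution arguments but still demands a substantial new technical ingredient beyond those developed in the present paper.
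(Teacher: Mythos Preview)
Your plan recovers exactly the partial progress the paper makes on Conjecture~\ref{Con:4}, and correctly identifies where the argument stops: the paper does \emph{not} prove this conjecture for $\ell\geq 2$. What the paper does (in the paragraph following the conjecture) is precisely your reduction: using Theorem~\ref{T:seminormdrop} and arguing as in the proof of Theorem~\ref{T:mixing}, one reduces to the case where the system is an $(\ell+1)$-step nilsystem, and that case is stated to be open even for $\ell=2$, $k_1=1$, $k_2=2$. So your proposal is not a proof, but a correct outline of the known reduction plus an honest acknowledgment of the missing ingredient.

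Two minor corrections to your write-up. First, the statement already assumes $(X,\CX,\mu,T)$ is an ergodic nilsystem, so the initial appeal to Theorem~\ref{T:HK} is unnecessary. Second, and more importantly, you do not need to iterate Theorem~\ref{T:seminormdrop general} through all levels $d\in\{\ell+1,\ldots,s\}$: the theorem already performs the full descent in one shot. Given that any sequence is good for seminorm control on a nilsystem, a \emph{single} application with $d=\ell+1$ reduces everything to checking that degree $\ell+2$ control implies degree $\ell+1$ control, i.e.\ that the conclusion holds when all functions are $\CZ_{\ell+1}$-measurable. By Theorem~\ref{T:HK} and approximation this is exactly the $(\ell+1)$-step case of the conjecture. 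Your suggested attacks on that remaining case (vertical characters plus quantitative equidistribution, or a higher-order analogue of Proposition~\ref{P:keyestimate}) are plausible directions, but as you note they require genuinely new input; the paper confirms only that the unipotent affine case can be done by direct computation.
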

\begin{remark}
	A related problem is mentioned in  \cite{L15}  (see the remark after Proposition~1.1 there).
\end{remark}

Theorem~\ref{T:mixing} establishes Conjecture~\ref{Con:4} for $\ell=1$. Using Theorem~\ref{T:seminormdrop} and arguing as in the proof of  the implication $(i)\implies (ii)$ in Theorem~\ref{T:mixing}, we get that to prove Conjecture~\ref{Con:4}, it suffices to establish the conjecture under the additional assumption that the system is an $(\ell+1)$-step nilsystem.
If the $(\ell+1)$-step nilsystem is given by a unipotent affine transformation on a torus, then using direct computation, we can  verify the conjecture for all  $\ell\in \N$.   On the other hand,  the problem is open for general   nilsystems even when $\ell=2$ and $k_1=1,k_2=2$.

%%One can also state  the previous two problems in a more general form, replacing %%the averages over APs with the more general averages \eqref{E:averageseveral}. %%Here are two related problems.
%%\begin{problem}
%%	Let $a_1,\ldots, a_\ell\colon \N\to \Z$ with $|a_i(n)|\to \infty$ and %%$|a_i(n)-a_j(n)|\to \infty$, for all $i,j\in\{1,\ldots, \ell\}$ with $i\neq j$. %%If these sequences are good for seminorm control of the averages %%\eqref{E:averageseveral}  for the system $(X, \CX, \mu,T)$, then they are also good %%for  level $(\ell+1)$-seminorm control
%%	of the averages \eqref{E:averageseveral} for this system.
%%\end{problem}
%%The last problem is open even when $T$ is given by a general unipotent affine %%transformation on a torus and the sequences are given by polynomials, in fact a %conjecture of Bergelson and Leibman \cite{BL} predicts that in this case we have
%%level $\ell$ seminorm control.
%%Arguing as above we get the following   closely related problem.
%%\begin{problem}
%%	Let $(X, \CX, \mu,T)$ be an ergodic $(\ell+1)$-step nilsystem. Then
%%	$$
%%	\lim_{n_i,n_i-n_j \to\infty} \int f_0\cdot T^{n_1}f_1\cdots %%T^{n_\ell}f_\ell\, d\mu=0
%%	$$
%%	as long as $f_0,\ldots, f_\ell\in L^\infty(\mu)$ are such that %%$\nnorm{f_i}_{\ell+1}=0$ for some $i\in \{0,\ldots, \ell\}$.
%%\end{problem}
%%For general $\ell\in \N$ this last problem is widely open even when the %%nilsystem is given by the general unipotent affine transformation on a torus.

\section{Basic notation and background on seminorms}
In this section, we present various notions from ergodic theory together with some basic results.
%We also state and prove basic results about these concepts, some of which are novel: this includes an inverse theorem for the degree 2 seminorm of a nonergodic transformation (Proposition \ref{U^2 inverse}) and a version of \cite[Proposition 3.2]{Fr21} in which the rational spectrum is finite.
\subsection{Basic notation}\label{SS:notation}
We start with explaining basic notation used throughout the paper.

The letters $\C, \R, \Z, \N, \N_0$ stand for the set of complex numbers, real numbers, integers, positive integers, and non-negative integers.    With $\T$, we denote the one dimensional torus, and we often identify it with $\R/\Z$ or  with $[0,1)$. We let $[N]:=\{1, \ldots, N\}$ for any $N\in\N$.  With $\Z[n]$, we denote the collection of polynomials with integer coefficients.

%With  $\Re(z)$, we denote the real part of the complex number $z$.

For an element $t\in \R$, we let $e(t):=e^{2\pi i t}$.

If $a\colon \N^s\to \C$ is a  bounded sequence for some $s\in \N$ and $A$ is a non-empty finite subset of $\N^s$,  we let
$$
\E_{n\in A}\,a(n):=\frac{1}{|A|}\sum_{n\in A}\, a(n).
$$

%We commonly use the letter $\ell$ to denote the number of transformations in our system or the number of functions in an average while the letter $s$ usually stands for the degree of ergodic seminorms. We normally write  tuples of length $\ell$ in bold, e.g. $\b\in\Z^\ell$, and we underline tuples of length $s$ (or $s+1$, or $s-1$) that are typically used for averaging, e.g. $\uh\in\Z^s$. For a vector $\b=(b_1,\ldots, b_\ell)\in \Z^\ell$ and a system $(X, \CX, \mu, T)$, we let $$ T^{\b}:=T^{b_1}\cdots T^{b_\ell}, $$ and we denote the $\sigma$-algebra of $T^\b$ invariant functions by $\CI(T^\b)$. For $j\in[\ell]$, we set $\be_j$ to be the unit vector in $\Z^\ell$ in the $j$-th direction, and we let $\be_0 = \mathbf{0}$, so that $T^{\be_j} = T$ for $j\in[\ell]$ and $T^{\be_0}$ is the identity transformation.

We often write $\eps\in\{0,1\}^s$ for a vector of 0s and 1s of length $s$. For $\eps\in\{0,1\}^s$ and $\uh, \uh'\in\Z^s$, we set
\begin{itemize}
	\item $\eps\cdot \uh:=\eps_1 h_1+\cdots+ \eps_s h_s$;
	\item $\abs{\uh} := |h_1|+\cdots+|h_s|$;
	\item $\uh^\eps := (h_1^{\eps_1}, \ldots, h_s^{\eps_s})$, where $h_j^0:=h_j$ and $h_j^1:=h_j'$ for $j=1,\ldots, s$.
\end{itemize}

We write $\CC z := \overline{z}$ for the complex conjugate of $z\in \C$.

%For a tuple $\eta\in \N_0^\ell$ and $I\subset[\ell]$, we define the restriction $\eta|_I := (\eta_i)_{i\in I}$.

%When we deal with a collection of polynomials $p_1, \ldots, p_\ell\in\Z[n]$, we sometimes use the label $p_0 = 0$ for the zero polynomial.
\subsection{Ergodic seminorms}\label{SS:seminorms}
%We review some basic facts about the Gowers-Host-Kra  seminorms.
Given a system $(X, \CX, \mu,T)$, we will use the family of ergodic seminorms $\nnorm{\cdot}_s$,  also known as \emph{Gowers-Host-Kra seminorms}, which were originally introduced in  \cite{HK05} for ergodic systems. A detailed exposition of their basic properties can be found in  \cite[Chapter~8]{HK18}.
% If we work with several measures at a time, we will also use the notation $\nnorm{\cdot}_{s, T, \mu}$ to specify the  measure with respect to which the seminorm is defined.
These seminorms are inductively defined for  $f\in L^\infty(\mu)$ as follows (for convenience, we also define $\nnorm{\cdot}_0$, which is   not a seminorm):
\begin{align}\label{E: degree 0 seminorm}
	\nnorm{f}_{0}:=\int f\, d\mu,
\end{align}
and for $s\in \N$, we let
\begin{equation}\label{E:seminorm1}
	\nnorm{f}_{s+1}^{2^{s+1}}:=\lim_{H\to\infty}\E_{h\in [H]} \nnorm{\Delta_{h}f}_s^{2^{s}},
\end{equation}
where
$$
\Delta_{h}f:=f\cdot T^h\overline{f}, \quad h\in \Z,
$$
is the \emph{multiplicative derivative of $f$}. If we deal with several transformations at a time, we may also denote the seminorm as $\nnorm{f}_{s, T}$ for clarity.
The limit can be shown to exist by successive applications of the mean ergodic theorem, and for $f\in L^\infty(\mu)$ and $s\in \N_0$, we have $\nnorm{f}_s\leq \nnorm{f}_{s+1}$ (see \cite{HK05} or  \cite[Chapter~8]{HK18}).
It follows immediately from the definition and the mean ergodic theorem that
%%$$
%%\nnorm{f}_s^{2^s}=\E_{n_1\in\N}\cdots \E_{n_s\in \N} \int \Delta_{n_1,\ldots, n_s}f\, d\mu.
%%$$
$$
\nnorm{f}_{1}=\norm{\E(f|\CI)}_{L^2(\mu)},
$$
where $\CI:=\{f\in L^2(\mu)\colon Tf=f\}$, and
\begin{equation}\label{E:seminorm2}
	\nnorm{f}_s^{2^s}=\lim_{H_1\to\infty}\cdots \lim_{H_s\to\infty}\E_{h_1\in [H_1]}\cdots \E_{h_s\in [H_s]} \int \Delta_{ \uh}f\, d\mu,
\end{equation}
where  for $\uh=(h_1,\ldots, h_s)\in \Z^s$, we let
$$
\Delta_{\uh}f:=\Delta_{h_1}\cdots \Delta_{h_s}f=\prod_{\eps\in \{0,1\}^s}\mathcal{C}^{|\eps|}T^{\eps\cdot \uh}f
$$
be the \emph{multiplicative derivative of $f$ of degree $s$} with respect to $T$.

It can be shown that the previous limits exist and we can take any $s'\leq s$  of the  iterative limits to be simultaneous limits (i.e. average over $[H]^{s'}$ and let $H\to\infty$)
without changing the value of the limit in \eqref{E:seminorm2}. This was originally proved in \cite{HK05} using  the main structural result of \cite{HK05}; a more ``elementary'' proof  can be deduced from \cite[Lemma~1.12]{BL15} once the  convergence of the uniform Ces\`aro averages is known.
For $s':=s$, this gives the identity
\begin{equation}\label{E:seminorm single limit}
	\nnorm{f}_s^{2^s}=\lim_{H\to\infty}\E_{\uh\in [H]^s} \int \Delta_{\uh}f\, d\mu.
\end{equation}
Moreover,  for $1\leq s'\leq s$, we have
\begin{equation}\label{E:seminorm general single limit}
	\nnorm{f}_s^{2^{s}}=\lim_{H\to\infty}\E_{\uh\in [H]^{s-s'}} \nnorm{\Delta_{\uh}f}_{s'}^{2^{s'}}.
\end{equation}
For every $s\in\N_0$ and $d\geq 1$, we also have the inequality
\begin{align}\label{E: seminorm of powers}
    \nnorm{f}_{s, T}\leq \nnorm{f}_{s, T^d}
\end{align}
proved in \cite[Lemma 3.1]{FrKu22a}.

% Ergodic seminorms behave well under the ergodic decomposition: if $\mu = \int \mu_x \,d\mu(x)$ is the ergodic decomposition of $\mu$ with respect to $T$, then
%\begin{equation}\label{E:seminonerg}
%    \nnorm{f}_{s, T, \mu}^{2^s} = \int \nnorm{f}_{s, T, \mu_x}^{2^s} d\mu(x)
%\end{equation}
%for any $f\in L^\infty(\mu)$ and $s\in\N$. For the proof of this fact, see e.g. %%\cite[Section 2.2]{CFH11} or \cite[Chapter~8, Proposition~18]{HK18}.

%It has been established in \cite{HK05} for ergodic systems and in \cite[Chapter~8, Theorem~14]{HK18} for general systems that
The seminorms are intimately connected with a certain family of factors of the system. Specifically, for every $s\in\N$ there exists a factor $\CZ_s\subseteq\CX$, known as the \emph{Host-Kra factor} of \emph{degree} $s$, with the property that
\begin{equation}\label{E:semifactor}
	\nnorm{f}_s = 0  \text{ if and only if } f \text{ is orthogonal to } \CZ_{s-1}.
\end{equation}
Equivalently, $\nnorm{\cdot}_s$ defines a norm on the space $L^2(\CZ_{s-1})$ \cite[Chapter~9, Theorem~15]{HK18}.

Lastly, we record the following simple inequality relating the seminorms of $f$ (with respect to $T$) to the seminorms of $f\otimes\overline{f}$ (with regards to $T\times T$ and $\mu\times \mu$) for all $s\in\N_0$:
\begin{align}\label{E:T vs TxT}
	\nnorm{f\otimes\overline{f}}_{s}\leq \nnorm{f}_{s+1}^2.
\end{align}
\subsection{Dual functions and sequences}\label{SS:dual}
Let $s\in\N$ and $\{0,1\}^s_*:= \{0,1\}^s\setminus\{\underline{0}\}$. For a system $(X, \CX, \mu, T)$, a function $f\in L^\infty(\mu)$, and $\um\in\N^s$, we define
$$
\Delta_{\um}^*f:=\prod_{\epsilon\in \{0,1\}^s_*} \CC^{|\ueps|}T^{\epsilon\cdot \um}f
$$
and
\begin{equation}\label{E:Dual}
\mathcal{D}_s(f):= \lim_{M\to\infty}\E_{\um\in [M]^s}\,\Delta_{\um}^*f = \lim_{M\to\infty}\E_{\um\in [M]^s}\prod_{\ueps\in\{0,1\}^s_*}\CC^{|\ueps|}T^{\ueps\cdot\um}f,
\end{equation}
 where the limit exists in $L^2(\mu)$ by \cite{HK05} (or \cite[Chapter~8, Theorem~28]{HK18}).
%% or pointwise $\mu$-a.e. by \cite{As10}).
We call $\CD_s(f)$ the \emph{dual function of $f$ of level} $s$.
%In some cases, when the underlying measure is not clearly defined, we write $\CD_{s,T,\mu}(f)$.
The name is motivated by the identity
\begin{align}\label{dual identity}
	\nnorm{f}_s^{2^s} = \int f \cdot \CD_s(f)\, d\mu,
\end{align}
a consequence of which is that the span of dual functions of level $s$ is dense in $L^1(\CZ_{s-1})$.

Rewriting $\um = (\um', m_s)$ and observing that $\Delta_{\um}^*f = \Delta_{\um'}^*f\cdot T^{m_s}\Delta_{\um'}\overline{f}$, we can express the dual function $\CD_s(f)$ as
\begin{align*}
	\mathcal{D}_s(f) & = \lim_{M\to\infty}\E_{\um'\in [M]^{s-1}}\lim_{M_s\to\infty}\E_{m_s\in [M_s]} \,\Delta_{\um', m_s}^*f\\
	&= \lim_{M\to\infty}\E_{\um'\in [M]^{s-1}}\brac{\Delta_{\um'}^*f \cdot \lim_{M_s\to\infty}\E_{m_s\in [M_s]}\, T^{m_s}\Delta_{\um'}\overline{f}}
\end{align*}
where our use of iterated limits is justified as in the case of formula~\eqref{E:seminorm2}.
In particular, for ergodic $T$, we deduce the identity
\begin{align}\label{E: dual identity}
	\mathcal{D}_s(f) = \lim_{M\to\infty}\E_{\um'\in [M]^{s-1}}\, c_{\um'} \cdot \Delta_{\um'}^*f,
\end{align}
where for $\um'\in \N^{s-1}$, we let
\begin{align*}
	c_{\um'} := \int \Delta_{\um'}\overline{f}\, d\mu.
\end{align*}

\section{Preliminary results for the proof of Theorem~\ref{T:seminormdrop}}\label{S:lemmas}
The proof of Theorem \ref{T:seminormdrop} provided in the next section uses a number of various technical results, most of which have previously appeared in various forms in our earlier works \cite{Fr21, FrKu22a}. We gather them in this section.

%In this section, we list various results of technical nature needed in the proof of Theorem \ref{T:seminormdrop}. Most of them have previously appeared in various forms in our earlier works \cite{Fr21, FrKu22a}.
%%, FrKu22b}.

In order  to  apply  Proposition~\ref{P:soft quantitative} later on, we need to know that our averages converge in the mean. The next lemma enables us to work under this additional property.
\begin{lemma}[Producing mean convergence {\cite[Corollary~5.5]{FrKu22a}}]\label{L: strong limit}
	Let $(X, \CX, \mu, T)$ be a system, $a_1, \ldots, a_\ell\colon\N\to\Z$ be sequences, and $N_k\to\infty$. Then there exists a subsequence $(N'_k)$ of $(N_k)$ such that for every $f_1, \ldots, f_\ell\in L^\infty(\mu)$, the averages
	\begin{align}\label{E: strong limit}
		A_K := \E_{k\in[K]}\E_{n\in[N'_k]}\, T^{a_1(n)}f_1\cdots T^{a_\ell(n)}f_\ell
	\end{align}
	converge in $L^2(\mu)$ as $K\to\infty$.
\end{lemma}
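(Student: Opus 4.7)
The plan is to combine a diagonal argument with the Banach--Saks theorem. The main point is that once convergence of the Cesàro averages in \eqref{E: strong limit} is secured for each tuple from a sufficiently rich countable family of test functions, it extends to all of $L^\infty(\mu)$ by a routine telescoping estimate, provided the approximation preserves $L^\infty$ bounds.

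I would first exploit the separability of $L^2(\mu)$: for each $M\in\N$, I choose a countable subset $\CD_M$ of the $L^\infty$-ball $\{f\in L^\infty(\mu):\norm{f}_\infty\le M\}$ that is dense in it with respect to the $L^2$ norm, and set $\CD:=\bigcup_{M\in\N}\CD_M$. Enumerating the $\ell$-tuples from $\CD$ as $(\bg^{(m)})_{m\in\N}$, for each $m$ the sequence
\[
B_k^{(m)}:=\E_{n\in[N_k]}T^{a_1(n)}g_1^{(m)}\cdots T^{a_\ell(n)}g_\ell^{(m)}
\]
is bounded in $L^2(\mu)$, so by the Banach--Saks theorem it has a subsequence whose Cesàro averages converge in norm. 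Applying this iteratively for $m=1,2,\ldots$ and passing to a diagonal subsequence $(N'_k)=(N_{j_k})$ of $(N_k)$ yields a single subsequence along which $\E_{k\in[K]}B_{j_k}^{(m)}$ converges in $L^2(\mu)$ as $K\to\infty$ for every $m\in\N$.

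To extend the conclusion to arbitrary $f_1,\ldots,f_\ell\in L^\infty(\mu)$, I would fix $M$ with $\norm{f_i}_\infty\le M$ for all $i$ and, given $\varepsilon>0$, choose $g_i\in\CD_M$ with $\norm{f_i-g_i}_{L^2(\mu)}<\varepsilon$. Denote the averages in \eqref{E: strong limit} corresponding to $(f_i)$ and $(g_i)$ by $A_K$ and $A_K^{(g)}$. A telescoping decomposition that swaps $f_i$ for $g_i$ one coordinate at a time, combined with the unitarity of $T$ and the uniform bounds $\norm{f_i}_\infty,\norm{g_i}_\infty\le M$, gives
\[
\norm{A_K-A_K^{(g)}}_{L^2(\mu)}\le \ell M^{\ell-1}\varepsilon
\]
uniformly in $K$. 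Since $(A_K^{(g)})$ is Cauchy by the previous paragraph, so is $(A_K)$ up to an error of order $\varepsilon$; as $\varepsilon$ is arbitrary, $(A_K)$ converges in $L^2(\mu)$.

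The only delicate point is the construction of $\CD$: the approximating tuple must satisfy the same $L^\infty$ bound as the original, otherwise the telescoping estimate above is not uniform and the extension from a countable to an arbitrary family breaks down. This is precisely why $\CD$ is built as a union of countable $L^2$-dense subsets of each $L^\infty$-ball rather than simply as a countable $L^2$-dense subset of $L^\infty(\mu)$.
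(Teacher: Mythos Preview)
The paper does not prove this lemma but imports it from \cite[Corollary~5.5]{FrKu22a}, so there is no in-paper argument to compare against. Your approach---Banach--Saks on a countable family of test tuples, diagonalization, then a telescoping approximation to pass to all of $L^\infty(\mu)$---is the natural one and is essentially correct; the care you take to build $\CD$ as a union of $L^2$-dense subsets of each $L^\infty$-ball is exactly the right thing to make the telescoping estimate uniform.

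One point is glossed over and is worth making explicit: the diagonal argument does not go through with the bare Banach--Saks theorem. If at stage $m$ you extract a subsequence $(N_k^{(m)})$ along which the Ces\`aro averages of $(B_k^{(m)})$ converge, and at stage $m+1$ you pass to a further subsequence $(N_k^{(m+1)})$, there is no a priori reason the Ces\`aro averages of $(B_k^{(m)})$ along $(N_k^{(m+1)})$ still converge---Ces\`aro convergence is not stable under thinning. The standard remedy in Hilbert space is the hereditary form of Banach--Saks: after first passing to a weakly convergent subsequence with weak limit $x$, choose a further subsequence $(y_k)$ with $|\langle y_j-x,\,y_k-x\rangle|<1/k$ for all $j<k$; then the Ces\`aro averages of \emph{every} subsequence of $(y_k)$ converge in norm to $x$. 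Using this strengthened extraction at each stage, the diagonal subsequence $(N'_k)$ inherits Ces\`aro convergence of $(B^{(m)})$ for every $m$, and the rest of your argument goes through verbatim.
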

%%\begin{proof}
%%	Since $X$ is a compact metrizable metric space, it admits a countable %%collection $\CF$ of continuous functions that is dense in $L^2(\mu)$. Let
%%	\begin{align*}
%%		\tilde{A}_N(f_1, \ldots, f_\ell) = \E_{n\in[N]}\, T^{a_1(n)}f_1\cdots %%T^{a_\ell}f_\ell.
%%	\end{align*}
%%	By the weak sequential compactness of $L^2(\mu)$, for every $f_1, \ldots, %%f_\ell\in L^\infty(\mu)$ there exists a subsequence $(M_k)\subseteq(N_k)$ %%depending on $f_1, \ldots, f_\ell$ such that $\tilde{A}_{M_k}(f_1, \ldots, %%f_\ell)$ converges weakly to some limit $L(f_1, \ldots, f_\ell)$. Since $\CF$ is %%countable, we can pick a single sequence $(M_k)$ working for all $f_1, \ldots, %%f_\ell\in\CF$ via a diagonal argument. By \cite[Lemma 5.4]{FrKu22a}, there %%exists a subsequence $(N'_k)\subseteq(M_k)$ such that \eqref{E: strong limit} %%converges strongly for every $f_1, \ldots, f_\ell\in \CF$. The statement for %%arbitrary functions follows from the density of $\CF$ in $L^2(\mu)$ and %%completeness of $L^2(\mu)$.
	
	% Let $f_1, \ldots, f_\ell\in L^\infty(\mu)$ be arbitrary functions. Without loss of generality, we can assume that they are 1-bounded. By the density of $\CF$ inside $L^2(\mu)$, for every $\veps>0$ there exists $g_1, \ldots, g_\ell\in \CF$ such that $\max_{j\in[\ell]}\norm{f_j-g_j}_{L^2(\mu)}<\veps/\ell$ and the completeness of $L^2(\mu)$ imply that
%%\end{proof}

Similarly to our arguments from \cite{Fr21, FrKu22a}, we will replace one of the functions $f_m$ in the average by a more structured term $\tilde{f}_m$ that encodes information about the original average. This will be achieved using the result below.
%This trick originates from the finitary works on the polynomial Szemer\'edi theorem by Peluse and Prendiville \cite{P19, P19b, PP19}. This will be done using the following result.
\begin{proposition}[Introducing 	$\tilde{f}_m$  {\cite[Proposition 5.6]{FrKu22a}}]\label{P:dual replacement}
	Let $a_1,\ldots, a_\ell\colon \N\to \Z$ be sequences, $(X, \CX, \mu, T)$ be a system, and  $f_1,\ldots, f_\ell\in L^\infty(\mu)$ be $1$-bounded functions, such that
	\begin{equation}\label{E:positivea}
		\limsup_{N\to\infty}\norm{\E_{n\in[N]} \,  T^{a_1(n)}f_1\cdots T^{a_\ell(n)}f_\ell}_{L^2(\mu)}>0.
	\end{equation}
	Let also $m\in[\ell]$. There exists a sequence of integers $N_k\to\infty$  such that for
	\begin{equation}\label{E:gk}
	g_k:=\E_{n\in [N_k]}\,  T^{a_1(n)}f_1\cdots T^{a_\ell(n)}f_\ell, \quad k\in \N,
	\end{equation}
	 the following holds: If
	\begin{equation}\label{E:averaged}
		A_N:=  \E_{n\in [N]} \, T^{-a_m(n)}\overline{g}_k\cdot  \prod_{j\in [\ell], j\neq m}T^{a_j(n)-a_m(n)}\overline{f}_j
	\end{equation}
    for $N\in\N$, then $A_{N_k}$ converges weakly to some $1$-bounded  function, and if we set
	\begin{equation}\label{E:ftildeweak}
		\tilde{f}_m:=\lim_{k\to\infty}\, A_{N_k},
	\end{equation}
	for $m\in \N$, where the limit is a weak limit, then additionally for every subsequence $(N'_k)$ of $(N_k)$ we have
	\begin{equation}\label{E:ftilde}
		\tilde{f}_m=\lim_{K\to\infty}\E_{k\in [K]}\, A_{N'_k} \, \text{ strongly in } L^2(\mu)
		%	\tilde{f}_m=\lim_{K\to\infty}\E_{k\in [K]}\, A_{N'_k} \, \text{ strongly in } L^2(\mu)\, \text{ for every subsequence } (N_k') \text{ of } (N_k),
	\end{equation}
	and
	\begin{equation}\label{E:notzero1}
		\limsup_{k\to \infty} \norm{\E_{n\in[N_k']} \,   T^{a_m(n)}\tilde{f}_m \cdot \prod_{j\in [\ell],j\neq m}T^{a_j(n)}f_j}_{L^2(\mu)}>0.
	\end{equation}
\end{proposition}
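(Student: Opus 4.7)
My plan is as follows. First, assumption \eqref{E:positivea} furnishes a sequence $N_k\to\infty$ along which $\|g_k\|_{L^2(\mu)}\geq c>0$ uniformly for some absolute $c>0$, where $g_k$ is defined by \eqref{E:gk}. Observe next that $A_N$ in \eqref{E:averaged} is a $1$-bounded multilinear average in $n$ whose inputs are the $1$-bounded functions $\overline{g}_k$ and $\overline{f}_j$ (for $j\ne m$) acted upon by integer shifts of $T$. Applying Lemma~\ref{L: strong limit} to the average $A_N$ — treating $k$ as an extra outer parameter and noting that the $\overline{g}_k$ are uniformly bounded in $L^\infty$ — we extract a subsequence of $(N_k)$, still denoted $(N_k)$, along which the Cesàro averages $\E_{k\in[K]}A_{N_k}$ converge strongly in $L^2(\mu)$ as $K\to\infty$. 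Call the limit $\tilde{f}_m$.

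To upgrade this to \eqref{E:ftilde} for \emph{every} subsequence $(N'_k)$ of $(N_k)$ and simultaneously to establish the weak convergence \eqref{E:ftildeweak}, I would run a standard diagonal refinement on $(N_k)$: pick a countable dense set $\{h_i\}\subset L^2(\mu)$ and successively thin $(N_k)$ so that each scalar sequence $\langle A_{N_k},h_i\rangle$ converges. After this diagonalization, $(A_{N_k})$ has a unique weak cluster point in the unit ball of $L^2(\mu)$; since this weak cluster point must coincide with the already-established strong Cesàro limit $\tilde{f}_m$, we get $A_{N_k}\rightharpoonup\tilde{f}_m$ weakly. Every subsequence $(N'_k)$ inherits the same weak cluster point, and reapplying Lemma~\ref{L: strong limit} to the sequence $(A_{N'_k})$ promotes the weak Cesàro convergence to a strong one with the same limit $\tilde{f}_m$, yielding \eqref{E:ftilde}.

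To prove \eqref{E:notzero1}, the idea is to expand $\|g_k\|_{L^2(\mu)}^2=\int g_k\cdot\overline{g_k}\,d\mu$ by substituting the definition of $g_k$ (or $\overline{g_k}$), applying $T^{-a_m(n)}$ to the entire integrand (a measure-preserving operation), and isolating the factor $\overline{f}_m$. What remains is precisely the $n$-average appearing in $A_{N_k}$, giving an identity
\[
\|g_k\|_{L^2(\mu)}^2=\int \overline{f}_m\cdot A_{N_k}\,d\mu.
\]
Passing $k\to\infty$ and using the weak convergence $A_{N_k}\rightharpoonup \tilde{f}_m$ yields $\int \overline{f}_m\cdot\tilde{f}_m\,d\mu\geq c^2>0$. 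By the symmetric manipulation — pairing the average in \eqref{E:notzero1} with $\overline{g_k}$, applying $T^{a_m(n)}$ to unpack it and recognizing the inner average along $n$ once more as $A_{N'_k}$ — this pairing equals $\int \overline{f}_m\cdot\tilde{f}_m\,d\mu$ in the limit, hence is bounded below. By Cauchy–Schwarz, the $L^2(\mu)$-norm of the average in \eqref{E:notzero1} must then stay bounded away from zero along $(N'_k)$.

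The main obstacle is the second step: constructing a subsequence $(N_k)$ robust enough that every further sub-subsequence yields strong Cesàro convergence to a \emph{common} limit. This is the delicate technical point and is where Lemma~\ref{L: strong limit} does its essential work — the soft quantitative strengthening it provides must be combined with a careful diagonal extraction in the weak topology of $L^2(\mu)$, rather than being applied in a single shot.
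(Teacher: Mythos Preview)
The paper does not prove this proposition; it is quoted verbatim from \cite[Proposition~5.6]{FrKu22a}, so there is no in-paper argument to compare against. Judged on its own merits, your outline follows the standard route (pick $N_k$ with $\|g_k\|\ge c$, pass to a weak limit of the diagonal sequence $A_{N_k}$, then read off positivity from an inner-product identity), and the computation for \eqref{E:notzero1} is essentially correct: pairing the average in \eqref{E:notzero1} with the corresponding $g'_k$ and shifting by $T^{-a_m(n)}$ gives $\langle B_k,g'_k\rangle=\langle\tilde f_m,A_{N'_k}\rangle\to\|\tilde f_m\|^2>0$. Two small corrections: your displayed identity should read $\|g_k\|_{L^2}^2=\langle f_m,A_{N_k}\rangle=\int f_m\cdot\overline{A_{N_k}}\,d\mu$ (you have the conjugate on the wrong factor), and the limit of the second pairing is $\|\tilde f_m\|^2$, not $\int\overline{f_m}\,\tilde f_m\,d\mu$.

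There is, however, a genuine gap in your justification of \eqref{E:ftilde}. You claim that once $A_{N_k}\rightharpoonup\tilde f_m$ weakly, ``reapplying Lemma~\ref{L: strong limit}'' yields strong Ces\`aro convergence of $A_{N'_k}$ for \emph{every} subsequence $(N'_k)$. This fails on two counts. First, Lemma~\ref{L: strong limit} is stated for averages with \emph{fixed} functions $f_1,\dots,f_\ell$, whereas $A_{N_k}$ contains $\overline{g_k}$, which varies with $k$; you would at least need to expand $g_k$ to view $A_{N_k}$ as a two-parameter average over $[N_k]^2$ with fixed inputs and invoke a two-variable version of the lemma. Second, and more seriously, even granting that, the lemma only produces a \emph{further} subsequence along which Ces\`aro averages converge strongly --- it does not give strong convergence along the given $(N'_k)$. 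Weak convergence of a bounded Hilbert-space sequence does \emph{not} in general imply strong Ces\`aro convergence along every subsequence (take $x_k=e_j$ for $k\in[2^j,2^{j+1})$ with $(e_j)$ orthonormal: $x_k\rightharpoonup0$, yet $\|\E_{k\le 2^{J+1}}x_k\|\to 1/\sqrt3$). So the passage from \eqref{E:ftildeweak} to \eqref{E:ftilde} needs a more careful construction of $(N_k)$ than a diagonal weak-compactness extraction; this is exactly the delicate point handled in \cite{FrKu22a}.
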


An application of Proposition~\ref{P:dual replacement} in the proof of Theorem~\ref{T:seminormdrop} will be followed by an application of the result below for $f:=\tilde{f}_m$ (a proof of the variant given can be easily extracted
from the argument used to prove \cite[Proposition 5.7~(iii)]{FrKu22a}).
\begin{proposition}[Dual-difference interchange]\label{dual-difference interchange}
	Let $s, d\in \N$, $(X, \CX, \mu,T)$ be a system,
	$(f_{n,k})_{n,k\in\N}\subseteq L^\infty(\mu)$ be 1-bounded, and $f\in L^\infty(\mu)$ be defined by
	\begin{align*}
		f:=\, \lim_{K\to\infty}\E_{k\in[K]}\E_{n\in [N_k]}\, f_{n,k}
	\end{align*}
	for some $N_k\to\infty$ where the limit is taken in $L^2(\mu)$.
	%, where the average is assumed to converge weakly.
	If $\nnorm{f}_{s+d}>0$, then
	\begin{align}\label{E: dd interchange}
		%            \liminf_{H\to\infty}\E_{\uh, \uh'\in [H]^s}\lim_{K\to\infty} \E_{k\in[L_K]} \E_{n\in[N_k]} \int\Delta_{ \uh-\uh'}f_{n,k} \cdot f_{\uh, \uh'} \, d\mu >0
		\liminf_{H\to\infty}\E_{\uh, \uh'\in [H]^s}\limsup_{K\to\infty} \E_{k\in[K]} \E_{n\in[N_k]} \int\Delta_{ \uh-\uh'}f_{n,k} \cdot f_{\uh, \uh'} \, d\mu >0
	\end{align}
	for
	\begin{align*}
		f_{\uh, \uh'} := T^{-|\uh|}\prod_{\eps\in\{0,1\}^s}C^{|\epsilon|}\CD_d(\Delta_{\uh^\eps}f).
	\end{align*}
	%and some increasing sequence $(L_K)$.
\end{proposition}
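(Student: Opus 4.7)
The strategy is an iterated Cauchy--Schwarz argument combined with the strong $L^2$-convergence defining $f$. Start from the identity
\begin{align*}
\nnorm{f}_{s+d}^{2^{s+d}} = \lim_{H\to\infty} \E_{\uh\in [H]^s} \int \Delta_\uh f \cdot \CD_d(\Delta_\uh f)\, d\mu,
\end{align*}
which combines the recursive formula \eqref{E:seminorm general single limit} with the dual identity \eqref{dual identity}, and is strictly positive by the hypothesis $\nnorm{f}_{s+d}>0$.

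The main step is to perform $s$ successive Cauchy--Schwarz steps, one for each coordinate $h_j$ of $\uh$, in order to duplicate the averaging variable $\uh$ into $(\uh,\uh')$. In each step, I would pull out an appropriate factor of $f$ (or $\bar f$) from the cube $\Delta_\uh f$, Cauchy--Schwarz in the variable $h_j$, and regroup the resulting terms via the $T$-invariance of $\mu$. After all $s$ iterations, the factors remaining from $\Delta_\uh f$ together with its duplicate should reassemble into $\Delta_{\uh-\uh'}f$, the duplicated dual terms should stack into the product $\prod_{\ueps\in\{0,1\}^s}\CC^{|\ueps|}\CD_d(\Delta_{\uh^\ueps}f)$, and the composition of the individual shifts should collapse into the single overall translation $T^{-|\uh|}$. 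This produces the intermediate inequality
\begin{align*}
0 \;<\; \nnorm{f}_{s+d}^{c_s} \;\leq\; C_s\cdot \liminf_{H\to\infty} \E_{\uh,\uh'\in [H]^s} \int \Delta_{\uh-\uh'}f\cdot f_{\uh,\uh'}\, d\mu,
\end{align*}
for explicit constants $c_s,C_s>0$ depending on $\norm{f}_\infty$.

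It then remains to transfer the derivative $\Delta_{\uh-\uh'}$ from $f$ onto $f_{n,k}$. Exploiting $f=\lim_K\E_{k\in [K]}\E_{n\in [N_k]}f_{n,k}$ strongly in $L^2(\mu)$, I would swap one of the $2^s$ factors of $f$ in $\Delta_{\uh-\uh'}f$ for $f_{n,k}$ (the remaining factors together with $f_{\uh,\uh'}$ forming a fixed $L^2$ function for each $\uh,\uh'$). Further Cauchy--Schwarz steps combined with the analogous substitution $\bar f=\lim_K\E_k\E_n\bar f_{n,k}$ would bring in the remaining factors, and the averaging over $\uh,\uh'$ would supply the cancellation needed to collapse the $(n,k)$-indices introduced into a single common one, at the cost of replacing $\lim_K$ by $\limsup_K$. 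The implementation mirrors the proof of \cite[Proposition 5.7~(iii)]{FrKu22a}, from which the bookkeeping can be extracted directly.

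The main obstacle is this final diagonalization step: naively, iterating the substitution $f\mapsto \E_k\E_n f_{n,k}$ in the $2^s$ distinct factors of $\Delta_{\uh-\uh'}f$ produces $2^s$ independent $(n_\ueps,k_\ueps)$-indices rather than a single shared $(n,k)$, and it is only the interaction between the Cauchy--Schwarz steps and the averaging in $\uh,\uh'$ that suppresses the off-diagonal contributions in the $\liminf_H$ limit. Once handled, combining both stages yields the desired lower bound~\eqref{E: dd interchange}.
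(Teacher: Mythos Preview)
The paper does not actually give a proof here; it simply cites \cite[Proposition~5.7~(iii)]{FrKu22a}, which you also cite. In that narrow sense your proposal matches the paper. However, your elaboration of the mechanism has the two stages in the wrong order, and the ``obstacle'' you identify is entirely an artifact of this.

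In the argument of \cite{FrKu22a} one \emph{first} substitutes a single copy of $f$ (the $\ueps=0$ factor in $\Delta_{\uh}f$) using the $L^2$-convergence $f=\lim_K\E_{k}\E_{n}f_{n,k}$, and \emph{then} performs the $s$ Cauchy--Schwarz steps in $h_1,\dots,h_s$. Before applying Cauchy--Schwarz in $h_j$ one composes with $T^{-h_j}$; this makes the substituted factor $T^{-h_j}f_{n,k}$ depend on $h_j$, so the Cauchy--Schwarz doubling outputs $T^{-h_j}f_{n,k}\cdot T^{-h_j'}\overline{f_{n,k}}$ with the \emph{same} index $(n,k)$ (while the factors $\bar f, T^{\cdots}f$ that do not depend on $h_j$ are absorbed into the outer $L^2$-bound). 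After $s$ iterations the $2^s$ copies of $f_{n,k}$, all carrying a common $(n,k)$, assemble into $\Delta_{\uh-\uh'}f_{n,k}$, and the doubled dual factors form $f_{\uh,\uh'}$ with the global shift $T^{-|\uh|}$. No diagonalisation or ``collapsing of off-diagonal $(n_\ueps,k_\ueps)$'' is required.

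Your ordering --- Cauchy--Schwarz first to reach $\int\Delta_{\uh-\uh'}f\cdot f_{\uh,\uh'}\,d\mu$, and only afterwards substituting into the $2^s$ copies of $f$ in $\Delta_{\uh-\uh'}f$ --- does genuinely produce $2^s$ independent index pairs, and your proposed remedy (``further Cauchy--Schwarz steps'' together with cancellation from the $\uh,\uh'$-averaging) does not have a clear implementation: additional Cauchy--Schwarz in $\uh,\uh'$ would introduce yet more differencing variables rather than identify the indices. So this is a real gap in your sketch, fixed simply by reversing the order of the two stages.
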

%The proof of Proposition \ref{dual-difference interchange} runs similarly to the proof of \cite[Proposition 5.6]{FrKu22a}; the sequence $(L_K)$ is chosen via a diagonal argument in such a way that the inner limit in \eqref{E: dd interchange} converges; this can be accomplished since the set
%\begin{align*}    \CF = \{\Delta_{ \uh-\uh'}f_{n,k}:\ n,k\in\N, \uh, \uh'\in\N^s\}\cup\{f_{\uh, \uh'}:\  \uh, \uh'\in\N^s\} \end{align*}
%is countable.

We also need the following result from \cite[Lemma~3.4]{Fr21},
which states  that if $\Delta_\uh f$ is twisted by low-complexity functions, then the average of the product is still bounded by Gowers-Host-Kra seminorms.
\begin{lemma}[Removing low-complexity functions]\label{L:lower}
	Let $s\in\N$,  $(X,\CX, \mu, T)$ be a  system, and  $f\in L^\infty(\mu)$ be a function.
	For $j\in [s]$ and $\uh\in [H]^s$, let  $c_{j,\uh}\in L^\infty(\mu)$ be 1-bounded functions such that %%for every $j\in [s]$ and $H\in \N$,
	the sequence of functions $\uh\mapsto c_{j,\uh}$ does not depend on the variable $h_j$.
	If
	\begin{align}\label{positive limit in L:lower}
		\limsup_{H\to\infty}  	\norm{\E_{\uh\in [H]^s}\, \prod_{j=1}^s c_{j,\uh}\cdot \Delta_{\uh}f}_{L^2(\mu)}>0,
	\end{align}
	then $\nnorm{f}_{s} > 0$.
\end{lemma}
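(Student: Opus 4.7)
The plan is to prove the contrapositive by induction on $s$: if $\nnorm{f}_{s} = 0$, then $\limsup_{H\to\infty} \norm{\E_{\uh \in [H]^s} \prod_{j=1}^s c_{j, \uh} \cdot \Delta_\uh f}_{L^2(\mu)} = 0$. For the induction to close cleanly, I would actually aim for the quantitative strengthening that this limsup is bounded by $C_s \cdot \norm{f}_{L^\infty(\mu)}^{a_s} \cdot \nnorm{f}_{s}$ for positive constants $C_s, a_s$ depending only on $s$.

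The base case $s = 1$ is a direct application of the mean ergodic theorem: since $c_{1, h_1}$ does not depend on $h_1$, it is a fixed function $c_{1} \in L^\infty(\mu)$, and $\E_{h \in [H]} c_1 \cdot \Delta_h f = c_1 \cdot f \cdot \E_{h \in [H]} T^h \overline{f}$ converges in $L^2(\mu)$ to $c_1 \cdot f \cdot \E(\overline{f} \mid \CI)$, whose norm is at most $\norm{f}_{L^\infty(\mu)} \cdot \nnorm{f}_{1}$.

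For the inductive step $s - 1 \to s$, I split $\uh = (\um, h_s)$ with $\um = (h_1, \ldots, h_{s-1}) \in \Z^{s-1}$ and use that $c_{s, \uh}$ does not depend on $h_s$ to treat it as $c_{s, \um}$. After a Cauchy-Schwarz in the outer $\um$-average (which absorbs the factor $|c_{s, \um}|^2 \leq 1$) followed by a van der Corput doubling Cauchy-Schwarz in $h_s$, the key algebraic identity
\[
\Delta_{(\um, h_s)} f \cdot \overline{\Delta_{(\um, h_s')} f} \;=\; \Delta_{\um}\!\bigl(\Delta_{h_s} f \cdot \overline{\Delta_{h_s'} f}\bigr) \;=:\; \Delta_{\um}(g_{h_s, h_s'}),
\]
a direct consequence of $\Delta_{\um}(g_1 \overline{g_2}) = \Delta_{\um}(g_1) \cdot \overline{\Delta_{\um}(g_2)}$, reduces the bound on $\norm{A}_{L^2(\mu)}^2$ to an expression of the form appearing in the lemma at level $s - 1$: namely, $\E_{h_s, h_s' \in [H]} \int \E_{\um \in [H]^{s-1}} \prod_{j < s} d_{j} \cdot \Delta_{\um}(g_{h_s, h_s'})\, d\mu$ for the function $g_{h_s, h_s'} := \Delta_{h_s} f \cdot \overline{\Delta_{h_s'} f}$ and new $1$-bounded weights $d_j := c_{j, (\um, h_s)} \cdot \overline{c_{j, (\um, h_s')}}$, which still do not depend on $h_j$ for each $j < s$. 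The inductive hypothesis then controls the inner $L^2$-norm by a positive power of $\nnorm{g_{h_s, h_s'}}_{s-1}$.

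To close the induction, I verify the quantitative link
\[
\limsup_H \E_{h_s, h_s' \in [H]} \nnorm{g_{h_s, h_s'}}_{s-1}^{2^{s-1}} \;\leq\; \norm{f}_{L^\infty(\mu)}^{2^s} \cdot \nnorm{f}_{s}^{2^s},
\]
which follows by expanding $\nnorm{g_{h_s, h_s'}}_{s-1}^{2^{s-1}} = \lim_H \E_{\um \in [H]^{s-1}} \int \Delta_{\um}(g_{h_s, h_s'}) \, d\mu$, re-applying the algebraic identity, evaluating the averages in $h_s$ and $h_s'$ with the mean ergodic theorem (yielding $|\E(\Delta_{\um} f \mid \CI)|^2$), and invoking the iterated seminorm formula $\nnorm{f}_{s}^{2^s} = \lim_H \E_{\um \in [H]^{s-1}} \nnorm{\Delta_{\um} f}_{1}^{2}$. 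Jensen's inequality then converts this $2^{s-1}$-th power average into the target bound on $\norm{A}_{L^2(\mu)}$. The main obstacle I anticipate is the bookkeeping in the doubling step, in particular verifying that the new weights $d_j$ inherit the structural independence-from-$h_j$ property, and securing enough uniformity in the quantitative inductive hypothesis so that the averaging over $(h_s, h_s')$ at the end is valid.
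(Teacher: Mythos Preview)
The paper does not prove this lemma; it is quoted from \cite[Lemma~3.4]{Fr21}. Your overall strategy --- induction on $s$, a Cauchy--Schwarz step to absorb the weight $c_{s,\um}$, expanding the square in $h_s$, and the identity $\Delta_{\um}(g_1\overline{g_2}) = \Delta_{\um}g_1\cdot\overline{\Delta_{\um}g_2}$ --- is the standard one, and the base case as well as the ``quantitative link'' $\limsup_H \E_{h_s,h_s'\in[H]}\nnorm{g_{h_s,h_s'}}_{s-1}^{2^{s-1}}\leq \|f\|_{L^\infty}^{2^s}\nnorm{f}_s^{2^s}$ are both correctly verified.

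The uniformity concern you flag at the end is, however, a genuine gap in the argument as stated. Your inductive hypothesis bounds $\limsup_{H\to\infty}$ of the level-$(s-1)$ average for a \emph{fixed} function $g$ and fixed weights; but after one Cauchy--Schwarz step you have
\[
\|A_H\|_{L^2}^2 \;\leq\; \E_{h_s,h_s'\in[H]}\, \Bigl\|\E_{\um\in[H]^{s-1}} \prod_{j<s} d_j \cdot \Delta_{\um} g_{h_s,h_s'}\Bigr\|_{L^2},
\]
where the function $g_{h_s,h_s'}$ and the weights $d_j$ both depend on $h_s,h_s'$ ranging over $[H]$, and the inner average is over $[H]^{s-1}$ with the \emph{same} $H$. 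You cannot pass $\limsup_H$ inside the outer average over $(h_s,h_s')$, so the inductive hypothesis does not apply. Your quantitative link, while correct, does not bridge this gap: it controls $\E_{h_s,h_s'}\nnorm{g_{h_s,h_s'}}_{s-1}^{2^{s-1}}$, but you have no finite-$H$ inequality relating the inner $L^2$-norm to $\nnorm{g_{h_s,h_s'}}_{s-1}$.

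The fix is to restructure so that each step is a finite-$H$ inequality. Rather than invoking the inductive hypothesis after one Cauchy--Schwarz, perform all $s$ doublings at the fixed scale $H$ (each one removes one weight, exactly as you describe), arriving at a weight-free bound of the form
\[
\|A_H\|_{L^2}^{2^s} \;\leq\; \E_{\uh,\uh'\in[H]^s}\int \prod_{\epsilon\in\{0,1\}^s}\CC^{|\epsilon|}\Delta_{\uh^\epsilon}f\,d\mu
\]
valid for every $H$. Only then take $\limsup_{H\to\infty}$; the right-hand side no longer involves any weights or auxiliary functions varying with $H$, and the computation showing it vanishes when $\nnorm{f}_s=0$ is essentially the one you already carry out for the quantitative link.
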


The proposition below enables a transition between qualitative and soft quantitative results; it is a version of \cite[Proposition A.1]{FrKu22a}.
\begin{proposition}[Soft quantitative control]\label{P:soft quantitative}
	Let $\ell, s\in\N$, $m\in[\ell]$, $a_1, \ldots, a_\ell\colon \N\to\Z$ be sequences,  and $(X, \CX, \mu, T)$ be a system. Let $\CY_1, \ldots, \CY_\ell\subseteq\CX$ be sub-$\sigma$-algebras. Suppose that there exists a sequence $N_k\to\infty$ such that for all functions $f_j\in L^\infty(\CY_j, \mu)$, $j=1,\ldots, \ell$,  the averages
	\begin{equation}\label{E:averages}
		\E_{k\in[K]}\E_{n\in[N_k]}  \, T^{a_1(n)}f_1\cdots T^{a_\ell(n)}f_\ell
	\end{equation}
	converge in $L^2(\mu)$ as $K\to\infty$, and moreover their limit is 0 whenever $\nnorm{f_m}_s = 0$. Then for every	$\varepsilon>0$	there exists  $\delta>0$, such that if the functions $f_j\in L^\infty(\CY_j, \mu)$  are $1$-bounded and  $\nnorm{f_m}_s\leq \delta$, then
	$$
	\lim_{K\to\infty} \norm{\E_{k\in[K]}\E_{n\in[N_k]} \,  T^{a_1(n)}f_1\cdots T^{a_\ell(n)}f_\ell}_{L^2(\mu)}\leq \varepsilon.
	$$
\end{proposition}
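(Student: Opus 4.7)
I would argue by contradiction combined with a weak-$*$ compactness argument. Suppose the conclusion fails, so that there exist $\veps>0$ and, for each $r\in\N$, $1$-bounded functions $f_j^{(r)}\in L^\infty(\CY_j,\mu)$ with $\nnorm{f_m^{(r)}}_s\leq 1/r$, yet
$$\lim_{K\to\infty}\Bigl\|\E_{k\in[K]}\E_{n\in[N_k]}\,T^{a_1(n)}f_1^{(r)}\cdots T^{a_\ell(n)}f_\ell^{(r)}\Bigr\|_{L^2(\mu)}>\veps.$$
Denote the $L^2$-limit in $K$ by $\Phi(f_1^{(r)},\ldots,f_\ell^{(r)})$. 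Reducing to separable sub-$\sigma$-algebras generated by the countable family $\{f_j^{(r)}\}$ (and their shifts), the unit ball of $L^\infty(\CY_j,\mu)$ becomes weak-$*$ compact and metrizable. By Banach--Alaoglu and a diagonal extraction, I can pass to a subsequence $r_i\to\infty$ along which $f_j^{(r_i)}\to g_j$ weak-$*$ in $L^\infty(\CY_j,\mu)$ for every $j\in[\ell]$, with each $g_j$ still $1$-bounded.

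\textbf{Step A: the limit $g_m$ satisfies $\nnorm{g_m}_s=0$.} Since the span of dual functions $\CD_s(F)$ with $F\in L^\infty(\mu)$ $1$-bounded is dense in $L^1(\CZ_{s-1},\mu)$ (a consequence of identity \eqref{dual identity}), it suffices to show $\int g_m\cdot h\,d\mu=0$ for every finite linear combination $h=\sum_k c_k\CD_s(F_k)$. By the Gowers--Cauchy--Schwarz inequality, one has the bound $\bigabs{\int f\cdot\CD_s(F)\,d\mu}\leq \nnorm{f}_s\cdot\nnorm{F}_s^{2^s-1}$, which gives
$$\Bigabs{\int f_m^{(r_i)}\cdot h\,d\mu}\leq \nnorm{f_m^{(r_i)}}_s\sum_k|c_k|\nnorm{F_k}_s^{2^s-1}\to 0.$$
By weak-$*$ convergence $f_m^{(r_i)}\to g_m$, the left-hand side also converges to $\int g_m\cdot h\,d\mu$; thus this integral is $0$. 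Density yields $\E(g_m|\CZ_{s-1})=0$, equivalently $\nnorm{g_m}_s=0$.

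\textbf{Step B: upgrading weak-$*$ convergence to the averages.} The hypothesis applied to $(g_1,\ldots,g_\ell)$ gives $\Phi(g_1,\ldots,g_\ell)=0$ in $L^2(\mu)$, so the desired contradiction would follow from $\|\Phi(f_1^{(r_i)},\ldots,f_\ell^{(r_i)})\|_{L^2}\to\|\Phi(g_1,\ldots,g_\ell)\|_{L^2}=0$. To obtain this, I would telescope
$$\Phi(\bf{f}^{(r_i)})-\Phi(\bf{g})=\sum_{j=1}^\ell\Phi\bigl(g_1,\ldots,g_{j-1},f_j^{(r_i)}-g_j,f_{j+1}^{(r_i)},\ldots,f_\ell^{(r_i)}\bigr),$$
so that each term has a single input converging weak-$*$ to zero. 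Passing to a further subsequence (using metrizability of the weak-$*$ topology on bounded sets) where, for each fixed $K$, the finite averages converge in $L^2$, the uniformity in $K$ provided by the mean-convergence hypothesis lets me interchange the limits in $K$ and $i$ and conclude that each telescoping term tends to $0$ in $L^2(\mu)$.

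\textbf{Main obstacle.} Step B is the technical heart, since multilinear ergodic averages are not weak-$*$ continuous: pointwise products of weak-$*$ convergent sequences need not converge weak-$*$. The remedy is the ergodic-theoretic feature that averaging over $n$ introduces cancellation: for a single coordinate tending weak-$*$ to $0$ with the others bounded, the average $\E_{n\in[N_k]}\,T^{a_1(n)}\cdot\ldots$ can be rewritten (by pulling the weak-$*$ variable through a shift) as an inner product against a $K$- and $r$-uniformly $L^2$-bounded dual-type function, which kills the term in the weak-$*$ limit. Carefully combining this telescoping with a diagonal extraction—so that the $K\to\infty$ limit in the definition of $\Phi$ and the $i\to\infty$ limit in the subsequence commute—is the most delicate bookkeeping, but no further deep ingredient beyond the hypothesis, weak-$*$ compactness, and Gowers--Cauchy--Schwarz is required.
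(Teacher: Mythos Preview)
The paper does not prove this proposition; it simply cites \cite[Proposition~A.1]{FrKu22a}. So there is no in-paper argument to compare against, only your attempt.

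Your Step~A is correct: weak-$*$ limits do inherit the vanishing of $\nnorm{\cdot}_s$ via pairing with dual functions and Gowers--Cauchy--Schwarz.

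Step~B, however, has a genuine gap that you have identified but not closed. The claim that ``for each fixed $K$, the finite averages converge in $L^2$'' is false. Weak-$*$ convergence $f_j^{(r_i)}-g_j\to 0$ means convergence against \emph{fixed} $L^1$ test functions; it does not imply that $\bigl\|T^{a_j(n)}(f_j^{(r_i)}-g_j)\cdot\prod_{j'\neq j}T^{a_{j'}(n)}h_{j'}\bigr\|_{L^2(\mu)}\to 0$. A concrete obstruction: on $\T$ with irrational rotation, $f^{(r)}(x)=e(rx)$ tends to $0$ weak-$*$, yet $|f^{(r)}|\equiv 1$, so any product with $f^{(r)}$ keeps full $L^2$ norm. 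Your proposed rescue --- rewriting the $L^2$ norm as an inner product of $f_j^{(r_i)}-g_j$ against a ``dual-type function'' --- fails for two reasons. First, that dual object depends on the other coordinates $f_{j'}^{(r_i)}$, which are themselves varying with $i$, so you are not testing against a fixed $L^1$ function and weak-$*$ convergence gives nothing. Second, even if you could fix the dual object, you would obtain only weak $L^2$ convergence of $\Phi(f^{(r_i)})$, and weak limits are only norm-\emph{lower}-semicontinuous, so the bound $\|\Phi(f^{(r_i)})\|>\varepsilon$ need not pass to the limit. The appeal to ``uniformity in $K$ from the mean-convergence hypothesis'' is also misplaced: that hypothesis gives uniformity in $K$ for each \emph{fixed} tuple of functions, not uniformity over the family $\{f^{(r_i)}\}$.

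In short, multilinear ergodic averages are not jointly (or even separately, with the other slots ranging over a bounded set) weak-$*$-to-norm continuous, and no amount of diagonal extraction repairs this. The argument in \cite{FrKu22a} proceeds differently; you should consult that appendix rather than try to force the compactness route.
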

\begin{remark} The important point in the previous result is that $\delta$ does not depend on the $1$-bounded functions $f_1,\ldots, f_\ell$.
\end{remark}

We also record the following standard trick that we shall use a few times, and which allows us to get rid of the absolute value and weights from an average at the cost of passing to the product space.
%%{\bf  Part $(i)$ will also enable  us in the sequel to claim convergence  of certain averages %%of squares of absolute values of integrals.}
\begin{lemma}[Product trick]\label{L:product trick}
	Let $\ell\in\N$,   $(X, \CX, \mu, T)$ be a system, and  $f_1, \ldots, f_\ell\in L^\infty(\mu)$ be functions. Then
	\begin{enumerate}
\item 	 For all $k_1,\ldots, k_\ell\in \Z$, we have
	\begin{multline*}
		\abs{\int T^{k_1}f_1\cdots T^{k_\ell}f_\ell\, d\mu}^2=\int (T\times T)^{k_1}(f_1\otimes\overline{f_1})\cdots (T\times T)^{k_\ell}(f_\ell\otimes\overline{f_\ell})\, d(\mu\times \mu).
	\end{multline*}

\item
For all sequences $a_1,\ldots, a_\ell\colon \N\to \Z$, 1-bounded weights  $w\colon \N\to \C$, and $N\in \N$,  we have
\begin{multline*}
	\norm{\E_{n\in[N]}\, w_n \cdot  T^{a_1(n)}f_1\cdots T^{a_\ell(n)}f_\ell}^2_{L^2(\mu)}\leq  \\
	\norm{\E_{n\in [N]} \, (T\times T)^{a_1(n)}(f_1\otimes\overline{f_1})\cdots (T\times T)^{a_\ell(n)}(f_\ell\otimes\overline{f_\ell})}_{L^2(\mu\times\mu)}.
\end{multline*}
	\end{enumerate}
\end{lemma}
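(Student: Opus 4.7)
The plan is to prove part (i) by a direct Fubini computation, and then leverage part (i) together with Cauchy--Schwarz to deduce part (ii). No deep ingredient is needed; the main subtlety is just bookkeeping, in particular correctly applying the absolute value (hence the square) when passing from $\mu$ to $\mu\times\mu$.

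For part (i), I would start by writing $\abs{\int T^{k_1}f_1\cdots T^{k_\ell}f_\ell\, d\mu}^2$ as the product of the integral and its complex conjugate, so the second factor becomes $\int T^{k_1}\overline{f_1}\cdots T^{k_\ell}\overline{f_\ell}\, d\mu$. Fubini on the product space then rewrites this product of integrals as a single integral over $\mu\times\mu$ of $\prod_{j=1}^{\ell}\bigl(T^{k_j}f_j(x)\cdot T^{k_j}\overline{f_j}(y)\bigr)$, which is exactly $\prod_{j=1}^{\ell}(T\times T)^{k_j}(f_j\otimes\overline{f_j})$ since $(T\times T)^{k}(f\otimes g)(x,y)=T^kf(x)\, T^kg(y)$.

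For part (ii), I would first expand the squared left-hand side as
\[
\E_{n,m\in[N]}\, w_n\overline{w_m}\int \prod_{j=1}^{\ell}T^{a_j(n)}f_j\cdot \prod_{j=1}^{\ell}T^{a_j(m)}\overline{f_j}\, d\mu,
\]
apply the triangle inequality using $\abs{w_n\overline{w_m}}\leq 1$ to pull the absolute value inside, and then Cauchy--Schwarz in the variables $(n,m)$ to bound the resulting quantity by
\[
\Bigl(\E_{n,m\in[N]}\Bigabs{\int \prod_{j=1}^{\ell}T^{a_j(n)}f_j\cdot \prod_{j=1}^{\ell}T^{a_j(m)}\overline{f_j}\, d\mu}^{2}\Bigr)^{1/2}.
\]
Part (i), applied with the $2\ell$ functions $f_1,\dots,f_\ell,\overline{f_1},\dots,\overline{f_\ell}$ and shifts $a_1(n),\dots,a_\ell(n),a_1(m),\dots,a_\ell(m)$, rewrites the squared integrand as an integral over $\mu\times\mu$ of $\prod_{j}(T\times T)^{a_j(n)}(f_j\otimes\overline{f_j})\cdot (T\times T)^{a_j(m)}(\overline{f_j}\otimes f_j)$. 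Recognizing the result as $\norm{\E_{n\in[N]}\prod_{j}(T\times T)^{a_j(n)}(f_j\otimes\overline{f_j})}_{L^2(\mu\times\mu)}^{2}$ (i.e.\ the expansion of this squared $L^2$-norm in $(n,m)$, with the appropriate complex-conjugate pairing $\overline{f_j\otimes\overline{f_j}}=\overline{f_j}\otimes f_j$) and then taking square roots yields the claimed bound.

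The only step requiring any care is checking that the inner-product/conjugate pairing on $L^2(\mu\times\mu)$ matches the expression produced by part (i); once one verifies that $\overline{f_j\otimes\overline{f_j}}=\overline{f_j}\otimes f_j$, everything lines up. The weights $w_n$ play no structural role beyond supplying the trivial estimate $\abs{w_n\overline{w_m}}\leq 1$, so removing them is automatic.
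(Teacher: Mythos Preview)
Your proof is correct and follows essentially the same approach as the paper's: the paper simply says ``expand the square inside the integral on the left hand side, use the triangle inequality to eliminate the weights, and then use the Cauchy-Schwarz inequality and the identity in (i),'' which is precisely what you carry out in detail. Your explicit verification that $\overline{f_j\otimes\overline{f_j}}=\overline{f_j}\otimes f_j$ so that the double sum reassembles into the squared $L^2(\mu\times\mu)$-norm is exactly the bookkeeping the paper suppresses.
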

\begin{proof}
	The identity in (i) is obvious. To prove the estimate in (ii), we expand the square inside the integral on the left hand side, use the triangle inequality to eliminate the weights, and then use the Cauchy-Schwarz inequality and the identity in (i) to arrive at the right hand side.
\end{proof}

\section{The degree lowering argument - reduction to a key seminorm estimate}
\subsection{Initial maneuvers}
In order to prove Theorem \ref{T:seminormdrop}, it is convenient to rephrase it in the following inductive form. We remind the reader that the notion of  sequences good for $d$-step reduction is defined in Section~\ref{SS:semi}.

\begin{proposition}\label{P:Main}
	Let $d\in\N_0$,  $\ell\in \N$, and $a_1,\ldots, a_\ell\colon \N\to \Z$ be sequences that are good for seminorm control and $d$-step reduction for the system $(X, \CX, \mu, T)$.
	Then for every $m\in \{0,1,\ldots, \ell\}$, the following property holds:
	
	$(P_m)$ If $f_1,\ldots,f_\ell\in L^\infty(\mu)$ and the functions  $f_{m+1}, \ldots, f_\ell$ are $\CZ_d$-measurable, then
	\begin{equation}\label{E:zeroo}
		\lim_{N\to\infty}\E_{n\in[N]}\,   T^{a_1(n)}f_1\cdots T^{a_\ell(n)}f_\ell=0
	\end{equation}
	in  $L^2(\mu)$  if $\nnorm{f_j}_d=0$ for some $j\in [\ell]$.
\end{proposition}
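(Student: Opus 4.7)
The plan is to induct on $m\in\{0,1,\ldots,\ell\}$. The base case $m=0$ is immediate: the hypothesis of $(P_0)$ requires all of $f_1,\ldots,f_\ell$ to be $\CZ_d$-measurable, so $(P_0)$ is precisely the $d$-step reduction assumption. For the inductive step $(P_m)\Rightarrow(P_{m+1})$, suppose $f_{m+2},\ldots,f_\ell$ are $\CZ_d$-measurable and $\nnorm{f_j}_d=0$ for some $j\in[\ell]$. First reduce to $j=m+1$ by splitting $f_{m+1}=\E(f_{m+1}\mid\CZ_d)+f'_{m+1}$: the first summand yields an instance of $(P_m)$ and hence contributes $0$, while $f'_{m+1}\perp\CZ_d$ gives $\nnorm{f'_{m+1}}_{d+1}=0$ by \eqref{E:semifactor}, and monotonicity forces $\nnorm{f'_{m+1}}_d=0$. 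So we may assume $\nnorm{f_{m+1}}_d=0$.

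Argue by contradiction, assuming \eqref{E:positivea}. Apply Proposition \ref{P:dual replacement} at slot $m+1$ to produce $N_k\to\infty$ and a $1$-bounded function $\tilde f_{m+1}$ satisfying \eqref{E:ftildeweak}--\eqref{E:notzero1}; pass to a subsequence via Lemma \ref{L: strong limit} so that all averages appearing below converge in $L^2(\mu)$. The seminorm control hypothesis applied to \eqref{E:notzero1} furnishes some $s_0$ with $\nnorm{\tilde f_{m+1}}_{s_0}>0$; by the monotonicity of the seminorms we may assume $s_0>d$, and the goal becomes to push $s_0$ down to $d$.

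The heart of the proof is a downward induction showing that $\nnorm{\tilde f_{m+1}}_{s+1}>0\Rightarrow\nnorm{\tilde f_{m+1}}_s>0$ for every $d\leq s<s_0$. Given such an $s$, feed the weak-limit representation \eqref{E:ftilde} into \eqref{E:notzero1} and apply the dual-difference interchange Proposition \ref{dual-difference interchange} to convert positivity of $\nnorm{\tilde f_{m+1}}_{s+1}$ into positivity of an average whose integrand couples $\Delta_{\uh-\uh'}\tilde f_{m+1}$ to products of dual functions associated with multiplicative derivatives of $\tilde f_{m+1}$. Combine Proposition \ref{P:soft quantitative} (backed by the inductive hypothesis $(P_m)$) with the product trick of Lemma \ref{L:product trick} and the key estimate Proposition \ref{P:keyestimate}, which drives the average degree-$d$ seminorms of the dual factors to zero; this strips away the low-complexity factors and leaves an average in $\Delta_{\uh-\uh'}\tilde f_{m+1}$ to which Lemma \ref{L:lower} (after a change of variables absorbing $\uh'$ into low-complexity weights) delivers $\nnorm{\tilde f_{m+1}}_s>0$. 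Iterating drops the degree to $d$, so $\nnorm{\tilde f_{m+1}}_d>0$.

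Finally, the explicit averaged representation of $\tilde f_{m+1}$ from \eqref{E:ftilde}, combined with Lemma \ref{L:product trick} and \eqref{E:T vs TxT}, transfers positivity of $\nnorm{\tilde f_{m+1}}_d$ back to positivity of $\nnorm{f_{m+1}}_d$, contradicting $\nnorm{f_{m+1}}_d=0$. The principal obstacle is the one-step degree drop inside the downward induction: it rests entirely on the novel dual-function estimate Proposition \ref{P:keyestimate}, which is the higher-degree substitute for the eigenfunction-based inverse theorem of \cite[Proposition~3.2]{Fr21} and is what lets the argument bypass the Host--Kra structure theorem.
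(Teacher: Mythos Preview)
Your overall architecture matches the paper's: induction on $m$, dual replacement at the ``free'' slot, a degree-lowering loop powered by Proposition~\ref{dual-difference interchange}, Proposition~\ref{P:soft quantitative} (backed by the inductive hypothesis), and the key estimate Proposition~\ref{P:keyestimate}. The base case and the reduction to $j=m+1$ via the splitting $f_{m+1}=\E(f_{m+1}\mid\CZ_d)+f'_{m+1}$ are fine.

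The genuine gap is your final step. You assert that Lemma~\ref{L:product trick} together with \eqref{E:T vs TxT} ``transfers positivity of $\nnorm{\tilde f_{m+1}}_d$ back to positivity of $\nnorm{f_{m+1}}_d$''. Neither tool does this. Recall that $\tilde f_{m+1}$ is built from $g_k$ and the functions $f_j$ for $j\neq m+1$; the function $f_{m+1}$ appears only indirectly, buried inside $g_k$. There is no seminorm inequality that extracts $\nnorm{f_{m+1}}_d$ from $\nnorm{\tilde f_{m+1}}_d$ through a product trick. Concretely, from $\nnorm{\tilde f_{m+1}}_d>0$ and the dual identity \eqref{dual identity} one obtains positivity of
\[
\limsup_{k}\Bigl\|\E_{n\in[N_k]}\, T^{a_{m+1}(n)}\CD_d(\tilde f_{m+1})\cdot\prod_{j\neq m+1} T^{a_j(n)}f_j\Bigr\|_{L^2(\mu)},
\]
and applying $(P_m)$ (the slots $m+1,\ldots,\ell$ being $\CZ_d$-measurable) yields $\nnorm{f_j}_d>0$ for every $j\neq m+1$ --- but says nothing about $\nnorm{f_{m+1}}_d$, which is precisely the slot you need.

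The paper closes this gap by a two-stage argument you have not reproduced. First it proves $(P_1)$ separately: when $m=1$, \emph{all} of $f_2,\ldots,f_\ell$ are $\CZ_d$-measurable, so the auxiliary function $F_k:=\E_{n}T^{a_1(n)}\CD_d(\tilde f_1)\prod_{j\geq 2}T^{a_j(n)}f_j$ is itself $\CZ_d$-measurable; expanding $g_k$ against $F_k$ one may replace $f_1$ by $\E(f_1\mid\CZ_d)$ via conditional expectation, and then the $d$-step reduction hypothesis applies directly to give $\nnorm{f_1}_d>0$. Second, for $m\geq 2$, the argument above shows the original average is controlled by $\nnorm{f_j}_d$ for all $j\neq m$, so one may replace each such $f_j$ by $\E(f_j\mid\CZ_{d-1})$ without changing the limit (identity~\eqref{E:Krat}); now every slot except $m$ is $\CZ_d$-measurable, and after a permutation $(P_1)$ delivers $\nnorm{f_m}_d>0$. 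Your proposal is missing exactly this $\CZ_d$-measurability/conditional-expectation maneuver that makes the $m=1$ case special, and the subsequent reduction of the general case to it.

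A minor secondary point: in your description of the degree-lowering loop, the dual--difference interchange produces $\Delta_{\uh-\uh'}f_{n,k}$ (with $f_{n,k}$ as in \eqref{E:fnk}), not $\Delta_{\uh-\uh'}\tilde f_{m+1}$; after composing with $T^{a_{m+1}(n)}$ and applying Cauchy--Schwarz one obtains a positive average of $\bigl\|\E_{k,n}\prod_j T^{a_j(n)}f_{j,\uh,\uh'}\bigr\|_{L^2(\mu)}$, and Proposition~\ref{P:keyestimate} is applied to the resulting lower bound on $\E_{\uh,\uh'}\nnorm{f_{m+1,\uh,\uh'}}_d$ --- Lemma~\ref{L:lower} is used only inside the proof of Proposition~\ref{P:keyestimate}, not at this outer layer.
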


In the base case $m=0$,  property $(P_0)$ is equivalent to being good for $d$-step reduction (see definition in Section~\ref{SS:semi}). For $m\geq 1$, we will prove $(P_m)$ assuming $(P_{m-1})$. The case $(P_\ell)$ is equivalent to Theorem \ref{T:seminormdrop}.

%\subsection{Replacing $f_m$ with an averaged function}\label{SS:replace}

We deduce from Proposition \ref{P:dual replacement} the following result.
\begin{proposition}\label{P:fmreplace}
	Let $d\in\N_0, \ell\in \N,$ and $a_1,\ldots, a_\ell\colon \N\to \Z$ be sequences that are good for seminorm control
	for the system $(X, \CX, \mu, T)$. Then  there exists $s\in\N$ such that the following holds: If $m\in [\ell]$  and   \eqref{E:zeroo} fails for some $f_1,\ldots, f_\ell\in L^\infty(\mu)$,
	%% where  $f_{m+1}, \ldots, f_\ell$ are $\CZ_d$-measurable,
	then
	\begin{equation}\label{E:notzero2}
		\nnorm{\tilde{f}_m}_{s}>0
	\end{equation}
	where $\tilde{f}_m$ is given by \eqref{E:ftildeweak} (as a weak limit)
	and  satisfies \eqref{E:ftilde}.
\end{proposition}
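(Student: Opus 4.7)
\textbf{Proof proposal for Proposition~\ref{P:fmreplace}.} The plan is to combine Proposition~\ref{P:dual replacement} with the seminorm control hypothesis in an essentially direct way: Proposition~\ref{P:dual replacement} converts the failure of \eqref{E:zeroo} into a positivity statement \eqref{E:notzero1} in which $f_m$ has been replaced by $\tilde f_m$, and the seminorm control hypothesis, applied in the $m$-th slot, then forces $\nnorm{\tilde f_m}_s > 0$ for an appropriate $s$.

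More concretely, I would argue as follows. First, by rescaling we may assume that $f_1,\ldots,f_\ell$ are $1$-bounded, which lets us invoke Proposition~\ref{P:dual replacement}. Since \eqref{E:zeroo} fails by assumption, the hypothesis \eqref{E:positivea} of that proposition holds, so we obtain a sequence $N_k\to\infty$ and a $1$-bounded function $\tilde f_m$ defined by \eqref{E:ftildeweak}, satisfying \eqref{E:ftilde} for every subsequence $(N_k')$ of $(N_k)$, together with the positivity statement \eqref{E:notzero1}. Next, let $s\in\N$ be the degree guaranteed by the good seminorm control assumption, i.e.\ such that
\[
\lim_{N\to\infty}\E_{n\in[N]}\,T^{a_1(n)}g_1\cdots T^{a_\ell(n)}g_\ell = 0 \quad \text{in } L^2(\mu)
\]
whenever $g_1,\ldots,g_\ell\in L^\infty(\mu)$ are $1$-bounded and $\nnorm{g_j}_s=0$ for some $j\in[\ell]$.

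Now suppose for contradiction that $\nnorm{\tilde f_m}_s = 0$. Applying the seminorm control hypothesis with $g_m := \tilde f_m$ and $g_j := f_j$ for $j\neq m$ (all of which are $1$-bounded), we obtain
\[
\lim_{N\to\infty}\norm{\E_{n\in[N]}\,T^{a_m(n)}\tilde f_m \cdot \prod_{j\in[\ell],\,j\neq m}T^{a_j(n)}f_j}_{L^2(\mu)} = 0.
\]
Since $(N_k')$ is a subsequence of $\N$, the $\limsup$ along any such subsequence also vanishes, directly contradicting \eqref{E:notzero1}. Therefore $\nnorm{\tilde f_m}_s > 0$, which is the desired conclusion \eqref{E:notzero2}, and the value of $s$ depends only on the system and sequences (not on the functions $f_1,\ldots,f_\ell$ or on $m$).

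\textbf{Main obstacle.} There is essentially no obstacle beyond correctly invoking the two inputs: the substantial technical work is already packaged into Proposition~\ref{P:dual replacement} (which produces $\tilde f_m$ and the positivity \eqref{E:notzero1}) and into the definition of good seminorm control. The only subtlety to watch is that the positivity \eqref{E:notzero1} is a $\limsup$ along a subsequence, whereas seminorm control gives convergence to $0$ along the full sequence; the former is weaker, so the implication goes through without issue.
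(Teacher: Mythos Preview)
The proposal is correct and takes essentially the same approach as the paper, which simply states that Proposition~\ref{P:fmreplace} is deduced from Proposition~\ref{P:dual replacement} without writing out the details. Your argument makes explicit precisely the two steps the paper leaves implicit: invoke Proposition~\ref{P:dual replacement} to obtain $\tilde f_m$ and the positivity \eqref{E:notzero1}, then apply the seminorm control hypothesis in the $m$-th slot to contradict $\nnorm{\tilde f_m}_s = 0$.
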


The goal then is to establish the following statement.

\begin{proposition}\label{P:degreelowering}
	Let $d\in\N_0$, $\ell\in \N$,  and $a_1,\ldots, a_\ell\colon \N\to \Z$ be sequences that are good for seminorm control and $d$-step reduction for the system $(X, \CX, \mu, T)$. Suppose  that the	property $(P_{m-1})$  of Proposition~\ref{P:Main} holds for some $m\in [\ell]$ and  $\tilde{f}_m$ is given by  \eqref{E:ftildeweak} and  satisfies \eqref{E:ftilde} where all functions involved are $1$-bounded and $f_{m+1}, \ldots, f_\ell$ are $\CZ_d$-measurable.  Lastly,    suppose that $\nnorm{\tilde{f}_m}_{s+d+1}>0$ for some $s\in\N_0$.  Then   $\nnorm{\tilde{f}_m}_{s+d}>0$.
\end{proposition}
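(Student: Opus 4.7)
The strategy adapts the degree-lowering scheme of \cite{Fr21} to arbitrary degree $d$, replacing the role played there by eigenfunctions (which are $\CZ_1$-measurable) by level-$(d+1)$ dual functions (which are $\CZ_d$-measurable). The plan is to first invoke the dual-difference interchange to convert the hypothesis $\nnorm{\tilde{f}_m}_{s+d+1}>0$ into a positive average involving such dual functions, then apply the inductive hypothesis $(P_{m-1})$ via soft quantitative control, and finally use the key dual-function estimate (Proposition~\ref{P:keyestimate}) to translate the resulting averaged positivity back to $\nnorm{\tilde{f}_m}_{s+d}>0$.

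Applying Proposition~\ref{dual-difference interchange} with the decomposition $s+d+1=s+(d+1)$ to the representation $\tilde{f}_m=\lim_K\E_{k\in[K]}\E_{n\in[N_k']}f_{n,k}$ from \eqref{E:ftilde}, with $f_{n,k}$ as in \eqref{E:averaged}, the hypothesis $\nnorm{\tilde{f}_m}_{s+d+1}>0$ yields
\begin{equation*}
\liminf_{H\to\infty}\E_{\uh,\uh'\in[H]^s}\limsup_{K\to\infty}\E_{k\in[K]}\E_{n\in[N_k']}\int\Delta_{\uh-\uh'}f_{n,k}\cdot f_{\uh,\uh'}\,d\mu>0,
\end{equation*}
where $f_{\uh,\uh'}=T^{-|\uh|}\prod_{\eps\in\{0,1\}^s}\CC^{|\eps|}\CD_{d+1}(\Delta_{\uh^\eps}\tilde{f}_m)$ is $\CZ_d$-measurable (the edge case $s=0$ is handled directly via the dual identity \eqref{dual identity}). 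Expanding $\Delta_{\uh-\uh'}f_{n,k}$ via the product formula for $f_{n,k}$, using the multiplicativity of $\Delta_{\uh-\uh'}$ and its commutation with $T$, and shifting by $T^{a_m(n)}$ rewrites the integrand as
\begin{equation*}
\Delta_{\uh-\uh'}\overline{g}_k\cdot\Bigl(\prod_{j\in[\ell]\setminus\{m\}}T^{a_j(n)}\Delta_{\uh-\uh'}\overline{f}_j\Bigr)\cdot T^{a_m(n)}f_{\uh,\uh'}.
\end{equation*}
Since $\Delta_{\uh-\uh'}\overline{g}_k$ is $n$-independent, Cauchy--Schwarz reduces the displayed positive liminf to a lower bound, averaged over $k$ and $(\uh,\uh')$, on the $L^2(\mu)$-norm of
\begin{equation*}
B_k(\uh,\uh'):=\E_{n\in[N_k']}\Bigl(\prod_{j\in[\ell]\setminus\{m\}}T^{a_j(n)}\Delta_{\uh-\uh'}\overline{f}_j\Bigr)\cdot T^{a_m(n)}f_{\uh,\uh'}.
\end{equation*}

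The average $B_k(\uh,\uh')$ fits the hypothesis of $(P_{m-1})$: the function $f_{\uh,\uh'}$ in position $m$ is $\CZ_d$-measurable, and the $\Delta_{\uh-\uh'}\overline{f}_j$ for $j>m$ are $\CZ_d$-measurable as products of $\CZ_d$-measurable functions. Hence $(P_{m-1})$ forces the $L^2(\mu)$ limit of $\E_{k\in[K]}B_k(\uh,\uh')$ to vanish whenever $\nnorm{f_{\uh,\uh'}}_d=0$. Using Lemma~\ref{L: strong limit} to pass to a subsequence along which the relevant mean limits exist, and then invoking Proposition~\ref{P:soft quantitative}, this qualitative vanishing upgrades to a soft quantitative statement: for every $\veps>0$ there is $\delta>0$ so that $\nnorm{f_{\uh,\uh'}}_d\leq\delta$ forces the corresponding limit to have $L^2(\mu)$-norm at most $\veps$. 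Taking the contrapositive and combining with the positive liminf above, there exists $\delta_0>0$ for which $\nnorm{f_{\uh,\uh'}}_d>\delta_0$ on a positive-density set of $(\uh,\uh')\in[H]^s\times[H]^s$ as $H\to\infty$.

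The remaining and most demanding step is to convert this averaged lower bound on the seminorm of a product of level-$(d+1)$ dual functions of the multiplicative derivatives $\Delta_{\uh^\eps}\tilde{f}_m$ into $\nnorm{\tilde{f}_m}_{s+d}>0$. This is precisely what Proposition~\ref{P:keyestimate} is designed to deliver (via its contrapositive), with Lemma~\ref{L:lower} used along the way to strip off those product factors that depend on only proper subsets of the difference variables. I expect this closing step to be the main obstacle: the $2^s$ dual factors indexed by $\eps\in\{0,1\}^s$ interact in ways that cannot be disentangled by naive Cauchy--Schwarz, so genuine cancellation among averages of seminorms of dual functions must be exploited. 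All earlier steps are close in spirit to \cite{Fr21,FrKu22a}, with the level-$(d+1)$ dual functions $\CD_{d+1}(\Delta_{\uh^\eps}\tilde{f}_m)$ playing the role that eigenfunctions play in the degree-$1$ argument of \cite{Fr21}.
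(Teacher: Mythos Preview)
Your proof plan is correct and follows essentially the same approach as the paper: apply the dual--difference interchange (Proposition~\ref{dual-difference interchange}) to the representation \eqref{E:ftilde}, shift by $T^{a_m(n)}$ and use Cauchy--Schwarz to isolate an average matching the hypotheses of $(P_{m-1})$, upgrade $(P_{m-1})$ to a soft quantitative statement via Proposition~\ref{P:soft quantitative} (after fixing a convergent subsequence using Lemma~\ref{L: strong limit}), and conclude via the contrapositive of Proposition~\ref{P:keyestimate}. The only cosmetic difference is that the paper fixes the subsequence $(N_k')$ at the outset rather than mid-argument, and phrases the pigeonholing step slightly more explicitly; your invocation of Lemma~\ref{L:lower} at the end is also accurate in spirit, though in the paper it is absorbed into the proof of Proposition~\ref{P:keyestimate} rather than called separately.
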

Before proceeding to the proof of Proposition~\ref{P:degreelowering}, we show how it can be used to prove
Proposition~\ref{P:Main} (and hence Theorem~\ref{T:seminormdrop}).
\begin{proof}[Proof of Proposition~\ref{P:Main} assuming Proposition~\ref{P:degreelowering}]
	Property $(P_0)$ is a trivial consequence of the assumption that the sequences $a_1,\ldots, a_\ell$ are good for $d$-step reduction for the system. To prove $(P_m)$ for $m\geq 1$, we split into the cases $m=1$ and $m\geq 2$; we need to do this because we will invoke $(P_1)$ while proving $(P_m)$ for $m\geq 2$.
	
	\medskip
	
	\textbf{The case $m=1$.}
		To prove property $(P_1)$, we recall from \eqref{E:gk} that \begin{align*}
		\tilde{f}_1 = \lim_{k\to\infty}\E_{n\in[N_k]}\, T^{-a_1(n)}\overline{g_k}\cdot\prod_{j=2}^\ell T^{a_j(n)-a_1(n)}f_j,
	\end{align*}
	where
	\begin{align*}
		g_k = \E_{n\in[N_k]}\, T^{a_1(n)}f_1\cdots T^{a_\ell(n)}f_\ell, \quad k\in \N.
	\end{align*}
	Propositions \ref{P:fmreplace} and \ref{P:degreelowering} then imply that if \eqref{E:zeroo} fails, then $\nnorm{\tilde{f}_1}_d>0$,  from which it follows using \eqref{dual identity} that
	\begin{align*}
		\lim_{k\to\infty}\int \overline{g}_k \cdot \E_{n\in[N_k]}\,  T^{a_1(n)}\CD_{d}(\tilde{f}_1)\cdot \prod_{j=2}^\ell T^{a_j(n)}f_j\, d\mu>0.
	\end{align*}
	The key point now is that since the functions $f_2, \ldots, f_\ell$  and  $\CD_{d}(\tilde{f}_1)$ are $\CZ_d$-measurable, so is the function $$F_k:=\E_{n\in[N_k]}\,  T^{a_1(n)}\CD_{d}(\tilde{f}_1)\cdot \prod_{j=2}^\ell T^{a_\ell(n)}f_j.$$
	Expanding the definition of $g_k$, we thus see that
	\begin{align*}
		\lim_{k\to\infty} \E_{n\in[N_k]}\int T^{a_1(n)}f_1\cdots T^{a_j(n)}f_\ell\cdot \overline{F_k}\, d\mu >0.
	\end{align*}
	Since for $n,k\in\N$, the function
	\begin{align*}
		F_{n,k}:= T^{a_2(n)}f_2\cdots T^{a_\ell(n)}f_\ell\cdot \overline{F_k}
	\end{align*}
	is $\CZ_d$-measurable, we get from the definition of conditional expectation that
	\begin{align*}
		\lim_{k\to\infty} \E_{n\in[N_k]}\int T^{a_1(n)}\E(f_1|\CZ_d)\cdot T^{a_2(n)}f_2\cdots T^{a_\ell(n)}f_\ell\cdot \overline{F_k}\, d\mu >0.
	\end{align*}
	By the Cauchy-Schwarz inequality, we have
	\begin{align*}
		\limsup_{k\to\infty} \norm{\E_{n\in[N_k]}\, T^{a_1(n)}\E(f_1|\CZ_d)\cdot T^{a_2(n)}f_2\cdots T^{a_\ell(n)}f_\ell}_{L^2(\mu)} >0.
	\end{align*}
	All the functions inside the $L^2(\mu)$-norm are $\CZ_{d}$-measurable, therefore the fact that $a_1, \ldots, a_\ell$ are good for $d$-step reduction (which is equivalent to the $(P_0)$ property) yields $\nnorm{\E(f_1|\CZ_d)}_d>0$, and hence also $\nnorm{f_1}_d=\nnorm{\E(f_1|\CZ_d)}_d>0$ by \eqref{E:semifactor}. It also yields $\nnorm{f_j}_d>0$ for $j\in \{2,\ldots, \ell\}$, completing the proof of $(P_1)$.
	
	\medskip
	
	\textbf{The case $m\geq 2$.}
	We  fix $m\geq 2$  and assume that property $(P_{m-1})$ holds.	Our goal is to show that property $(P_m)$ also holds.

	Arguing by contradiction, suppose  that \eqref{E:zeroo} does not hold, or equivalently, that \eqref{E:positivea} holds for some $f_1, \ldots, f_\ell\in L^\infty(\mu)$ where $f_{m+1}, \ldots, f_\ell$ are $\CZ_d$-measurable. It suffices to show that then $\nnorm{f_j}_d>0$ for every $j\in [\ell]$.
	%$\E(f_j| \CZ_{d-1})\neq 0$ for every $j\in [\ell]$.

	Since the sequences $a_1,\ldots, a_\ell$ are good for seminorm control, by combining  Proposition~\ref{P:fmreplace} and   \eqref{E:positivea}, we deduce  that if $\tilde{f}_m$ is as in \eqref{E:ftildeweak}, then
	$\nnorm{\tilde{f}_m}_s>0$ for some $s\in \N$. If $s\leq d$, then this immediately implies $\nnorm{\tilde{f}_m}_d>0$ by the monotonicity property of Gowers-Host-Kra seminorms. Otherwise we iterate the conclusion of Proposition~\ref{P:degreelowering} exactly $s-d$ times, getting $\nnorm{\tilde{f}_m}_d>0$. Together with \eqref{dual identity}, this implies that
	$$
	\int \tilde{f}_m\cdot \CD_{d}(\tilde{f}_m)\, d\mu>0.
	$$
	We use  the definition \eqref{E:ftildeweak} of $\tilde{f}_m$, compose with  $T^{a_m(n)}$,  and then apply the Cauchy-Schwarz inequality to deduce  that
	$$
	\limsup_{k\to\infty}\norm{\E_{n\in[N_k]} \,T^{a_m(n)}\CD_{d}(\tilde{f}_m) \prod_{j\in [\ell], j\neq m} T^{a_j(n)}f_j}_{L^2(\mu)}>0.
	$$
	Since the functions $\CD_{d}(\tilde{f}_m), f_{m+1}, \ldots, f_\ell$ are $\CZ_{d}$-measurable,  property $(P_{m-1})$ implies that  $\nnorm{f_j}_d>0$
	%$\E(f_j| \CZ_{d-1})\neq 0$
	for all $j\neq m$.
	
	It remains to show that $\nnorm{f_m}_d>0$. To do this, we first note that the argument above implies the identity
	\begin{equation}\label{E:Krat}
		\lim_{N\to\infty}\norm{ \E_{n\in[N]}\, \prod_{j\in[\ell]} T^{a_j(n)}f_j -  \E_{n\in[N]}\,    T^{a_m(n)}f_m \prod_{j\in [\ell], j\neq m}  T^{a_j(n)}\E(f_j|\CZ_{d-1})}_{L^2(\mu)}=0.
	\end{equation}
	Hence, the statement $\nnorm{f_m}_d>0$
	%$\E(f_m| \CZ_{d-1})\neq 0$
	follows by invoking the case $(P_1)$ after an appropriate permutation of the sequences $a_1, \ldots, a_\ell$. This completes the proof.
\end{proof}

We remark that the argument for $(P_1)$ would quickly break if instead we considered averages of commuting transformations, i.e. each $a_j$ would be an iterate of a different transformation $T_j$. This is the only point in the proof of Theorem \ref{T:seminormdrop} in which we make essential use of the fact that we are only dealing with one transformation.

\subsection{Proof of Proposition \ref{P:degreelowering}: reduction to a key seminorm estimate}\label{SS:degree lowering}
Having derived Proposition \ref{P:Main} from Proposition~\ref{P:degreelowering}, we move on to prove Proposition \ref{P:degreelowering}. We will show that Proposition \ref{P:degreelowering} can be reduced to a seminorm estimate whose proof will occupy the entirety of the next section.

%The proof goes as follows. Starting with the assumption $\nnorm{\tilde{f}_m}_{s+d+1}> 0$, we deduce that ...

Let $(N_k)$ be the sequence that defines the function $\tilde{f}_m$ in Proposition \ref{P:dual replacement}, and let $(N'_k)$ the subsequence of $(N_k)$ given in Lemma \ref{L: strong limit} along which the sequence
\begin{equation}\label{E:AK}
	A_K(f_1, \ldots, f_\ell) = \E_{k\in[K]}\E_{n\in[N'_k]}\, T^{a_1(n)}f_1\cdots T^{a_\ell(n)}f_\ell
\end{equation}
converges in $L^2(\mu)$ as $K\to \infty$ for all $f_1, \ldots, f_\ell\in L^\infty(\mu)$. By Proposition \ref{P:dual replacement}, the function $\tilde{f}_m$ is also equal to the   strong limit
\begin{align*}
	\tilde{f}_m=\lim_{K\to\infty}\E_{k\in[K]}\E_{n\in[N_k']}\, f_{n,k}
\end{align*}
for
\begin{equation}\label{E:fnk}
	f_{n,k}:=T^{-a_m(n)}\overline{g}_k\cdot  \prod_{j\in [\ell], j\neq m}T^{a_j(n)-a_m(n)}\overline{f}_j.
\end{equation}
Using the assumption $\nnorm{\tilde{f}_m}_{s+d+1}> 0$, we deduce from Proposition \ref{dual-difference interchange} that
\begin{align*}
	\liminf_{H\to\infty} \limsup_{K\to\infty}\E_{k\in [K]}\E_{n\in [N'_k]}
	\E_{\uh,\uh'\in  [H]^{s}}\,  \int\Delta_{ \uh-\uh'}f_{n,k}\cdot f_{m, \uh, \uh'}\; d\mu>0
\end{align*}
%for some increasing sequence $(L_K)$ and the function
for the $1$-bounded functions
\begin{equation}\label{E:hh'}
	f_{j, \uh, \uh'}:=\begin{cases} \Delta_{ \uh-\uh'}f_j,\; &j\neq m;\\
		T^{-|\uh|}\prod\limits_{\eps\in \{0,1\}^{s}}\mathcal{C}^{|\eps|}\CD_{d+1}(\Delta_{\uh^\eps}\tilde{f}_m),\; &j=m.
	\end{cases}
\end{equation}
For each $j=m+1,\ldots, \ell$, the function $f_j$ is $\CZ_d$-measurable, hence so is $f_{j,\uh,\uh'}$. Likewise, $f_{m, \uh, \uh'}$ is $\CZ_d$-measurable since each function $\CD_{d+1}(\Delta_{\uh^\eps}\tilde{f}_m)$ is.

%and  $\Lambda'$ be defined by $$ \Lambda':=\{(\uh,\uh')\in \N^{2s} \colon \uh^\eps\in \Lambda \text{ for all } \eps \in \{0,1\}^{s}\}.$$
Using the previous facts, the form of $f_{n,k}$ from \eqref{E:fnk}, and the mean convergence of $A_K$, we deduce after composing with $T^{a_m(n)}$ and  applying  the Cauchy-Schwarz inequality  that
\begin{align*}
	\liminf_{H\to\infty}\E_{\uh,\uh'\in[H]^{s}}
	%{\bf 1}_{\Lambda'}(h,h')\\
	%\limsup_{K\to\infty}\norm{\E_{k\in[K]}\E_{n\in [N_k]} \, T^{a_m(n)}f_{\uh, \uh'}\cdot \prod_{j\in [\ell], j\neq m} T^{a_j(n)}f_{j,\uh,\uh'}}_{L^2(\mu)}>0.
	\lim_{K\to\infty}\norm{\E_{k\in[K]}\E_{n\in [N'_k]} \, \prod_{j\in [\ell]} T^{a_j(n)}f_{j,\uh,\uh'}}_{L^2(\mu)}>0.
\end{align*}
In particular, an elementary pigeonholing argument guarantees that there exist a subset $\Lambda\subset\N^{2s}$ of positive lower density and $\varepsilon_0>0$ such that
\begin{align*}
	\lim_{K\to\infty}\norm{\E_{k\in[K]}\E_{n\in [N'_k]} \, \prod_{j\in [\ell]} T^{a_j(n)}f_{j,\uh,\uh'}}_{L^2(\mu)}>\varepsilon_0
	%T^{a_m(n)}f_{\uh, \uh'} \cdot \prod_{j\in [\ell], j\neq m} T^{a_j(n)}f_{j,\uh,\uh'}}_{L^2(\mu)}>\varepsilon_0
\end{align*}
for all $(\uh, \uh')\in \Lambda$.

Using the fact that the functions $f_{j, \uh, \uh'}$ are $\CZ_d$-measurable for $j=m, \ldots, \ell$, we invoke the induction hypothesis $(P_{m-1})$ to deduce that
\begin{align*}
%    \lim_{K\to\infty}\norm{\E_{k\in[L_K]}\E_{n\in [N_k]} \, T^{a_m(n)}f_{\uh, \uh'} \cdot\prod_{j\in [\ell], j\neq m} T^{a_j(n)}f_{j,\uh,\uh'}}_{L^2(\mu)} = 0
\lim_{K\to\infty}\norm{\E_{k\in[K]}\E_{n\in [N_k']} \, \prod_{j\in [\ell]} T^{a_j(n)}f_{j,\uh,\uh'}}_{L^2(\mu)} = 0
\end{align*}
whenever $\nnorm{f_{m, \uh, \uh'}}_d=0$. We boost this statement using Proposition~\ref{P:soft quantitative} (applied for $\CY_j := \CX$ if $j\in[m-1]$ and $\CY_j := \CZ_d$ otherwise) to deduce that for every $\varepsilon>0$, there exists $\delta>0$ such that
\begin{align*}
%    \lim_{K\to\infty}\norm{\E_{k\in[L_K]}\E_{n\in [N_k]} \, T^{a_m(n)}f_{\uh, \uh'}\cdot  \prod_{j\in [\ell], j\neq m} T^{a_j(n)}f_{j,\uh,\uh'}}_{L^2(\mu)} >\varepsilon
\lim_{K\to\infty}\norm{\E_{k\in[K]}\E_{n\in [N_k']} \, \prod_{j\in [\ell]} T^{a_j(n)}f_{j,\uh,\uh'}}_{L^2(\mu)} >\varepsilon
\end{align*}
implies $\nnorm{f_{m,\uh, \uh'}}_d>\delta$. Note that the mean convergence property of the averages~\eqref{E:AK} was crucial  here in order to be able to apply Proposition~\ref{P:soft quantitative}.  Picking $\varepsilon = \varepsilon_0$ as above, we conclude that
\begin{align*}
\liminf_{H\to\infty}\E_{\uh,\uh'\in[H]^{s}}\nnorm{f_{m,\uh, \uh'}}_d>0.
\end{align*}
(Recall the functions $f_{m,\uh, \uh'}$ were defined in \eqref{E:hh'}.)
That this implies $\nnorm{\tilde{f}_m}_{s+d}>0$ is the content of Proposition~\ref{P:keyestimate} in the next section.

\section{Key seminorm estimate - concluding the proof of Theorem~\ref{T:seminormdrop}}
We now prove the following key seminorm estimate needed to complete the degree lowering argument and hence the proof of Theorem~\ref{T:seminormdrop}.
\begin{proposition}\label{P:keyestimate}
Let $(X, \CX, \mu,T)$ be a system and $f\in L^\infty(\mu)$ be such that
$\nnorm{f}_{d+s}=0$, for some $d,s\in \N_0$.  Then
$$
\lim_{H\to\infty}\E_{\uh,\uh'\in [H]^s} \, \nnorm{f_{\uh, \uh'}}_{d}=0
$$
where
$$
f_{\uh, \uh'}:=\prod_{\epsilon\in \{0,1\}^{s}}\CC^{|\eps|} \mathcal{D}_{d+1}(\Delta_{\uh^\epsilon}  f)
$$
for $\uh,\uh'\in \N^s$.
\end{proposition}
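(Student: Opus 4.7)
The plan is to reduce this estimate to a multilinear identity for $f$ and then bound it by a positive power of $\nnorm{f}_{d+s}$ via iterated Cauchy--Schwarz. Since $x\mapsto x^{1/2^d}$ is concave on $[0,\infty)$, Jensen's inequality reduces the claim to
\[
\lim_{H\to\infty}\E_{\uh,\uh'\in[H]^s}\nnorm{f_{\uh,\uh'}}_d^{2^d}=0.
\]
Using $\nnorm{g}_d^{2^d}=\lim_K\E_{\uk\in[K]^d}\int\Delta_\uk g\,d\mu$ (from \eqref{E:seminorm single limit}) and the multiplicativity $\Delta_\uk(\prod_\eps h_\eps)=\prod_\eps\Delta_\uk h_\eps$, one obtains $\Delta_\uk f_{\uh,\uh'}=\prod_\eps\CC^{|\eps|}\Delta_\uk\CD_{d+1}(\Delta_{\uh^\eps}f)$. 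Unwinding each dual function via \eqref{E:Dual} (with an independent dummy variable $\um^{(\eps)}\in[M]^{d+1}$ for each $\eps$) and each derivative $\Delta_{\uh^\eps}f=\prod_\sigma\CC^{|\sigma|}T^{\sigma\cdot\uh^\eps}f$, after swapping the various averages and limits the quantity $\E_{\uh,\uh'}\nnorm{f_{\uh,\uh'}}_d^{2^d}$ is exhibited as a multilinear integral of $f$ over the combinatorial cube indexed by $(\eta,\eps,\delta,\sigma)\in\{0,1\}^d\times\{0,1\}^s\times\{0,1\}_*^{d+1}\times\{0,1\}^s$ with translations $\eta\cdot\uk+\delta\cdot\um^{(\eps)}+\sigma\cdot\uh^\eps$.

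The heart of the argument is to bound this multilinear integral by a positive power of $\nnorm{f}_{d+s}$. My approach would be induction on $s$, with the base case $s=0$ amounting to showing $\nnorm{\CD_{d+1}(f)}_d=0$ when $\nnorm{f}_d=0$, which follows from the identity $\nnorm{g}_d^{2^d}=\int g\cdot\CD_d(g)\,d\mu$ combined with the Gowers--Cauchy--Schwarz inequality applied to $\int\CD_{d+1}(f)\cdot\CD_d(\CD_{d+1}(f))\,d\mu$. For the inductive step, split $\uh=(\uh_-,h_s)$, $\uh'=(\uh_-',h_s')$ and use the factorization
\[
f_{\uh,\uh'}=g_1^{\uh_-,\uh_-'}\cdot\overline{g_2^{\uh_-,\uh_-'}},\quad g_1:=\Delta_{h_s}f,\ g_2:=\Delta_{h_s'}f,
\]
where $g_i^{\uh_-,\uh_-'}$ denotes the $(s-1)$-variable analog of $f_{\uh,\uh'}$ applied to $g_i$. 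Applying Cauchy--Schwarz to decouple the two factors (for instance by passing to the product system $(X\times X,T\times T)$ and using the tensor identity $\CD_{d+1,T\times T}(g\otimes\overline g)=\CD_{d+1}(g)\otimes\overline{\CD_{d+1}(g)}$) and invoking the inductive hypothesis on each $g_i$, the bound reduces to control of $\E_{h_s\in[H]}\nnorm{\Delta_{h_s}f}_{d+s-1}^{2^{d+s-1}}$, which by \eqref{E:seminorm general single limit} equals $\nnorm{f}_{d+s}^{2^{d+s}}=0$ under the hypothesis.

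The main obstacle is the inductive Cauchy--Schwarz step itself: $\nnorm{AB}_d^{2^d}$ does not factor as $\nnorm{A}_d^{2^d}\nnorm{B}_d^{2^d}$ in general, so decoupling $g_1^{\uh_-,\uh_-'}$ from $\overline{g_2^{\uh_-,\uh_-'}}$ in the factorization of $f_{\uh,\uh'}$ requires a careful application of Cauchy--Schwarz together with control of cross terms (this is where the passage to $X\times X$ and the tensor identity for dual functions is crucial). A viable alternative that avoids the induction is to apply the Cauchy--Schwarz--Gowers inequality directly to the multilinear integral displayed in the first paragraph, treating $(\uk,\uh,\uh')$ as the variables generating the effective $(d+s)$-dimensional Gowers box structure and the $(\um^{(\eps)},\delta)$ variables as an anti-uniform dual-function layer absorbed by the defining identity $\int g\cdot\CD_{d+1}(g)\,d\mu=\nnorm{g}_{d+1}^{2^{d+1}}$.
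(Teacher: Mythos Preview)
Your proposal has two genuine gaps.

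First, the base case $s=0$ is not dispatched by a single application of Gowers--Cauchy--Schwarz to $\int \CD_{d+1}(f)\cdot\CD_d(\CD_{d+1}(f))\,d\mu$. That inequality only bounds the Gowers inner product of a $d$-cube by the product of the $d$-seminorms of its vertices; applied here with all vertices equal to $\CD_{d+1}(f)$, it gives the tautology $\nnorm{\CD_{d+1}(f)}_d^{2^d}\le\nnorm{\CD_{d+1}(f)}_d^{2^d}$. Expanding everything yields an average over many independent $\um^{(\eta)}$-variables with $2^d(2^{d+1}-1)$ copies of $f$, and there is no evident $d$-dimensional Gowers box in those variables giving $\nnorm f_d$. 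The paper in fact proves $\nnorm{\CD_{d+1}(f)}_d=0\Leftarrow\nnorm f_d=0$ by a separate induction on $d$: reduce to ergodic systems, use the ergodic identity $\CD_{d+1}(f)=\lim_M\E_{\um\in[M]^d}c_\um\,\Delta_\um^*f$, strip the constants $c_\um$ via a Cauchy--Schwarz lemma (Lemma~\ref{L: deleting constants}), pass to $(X\times X,T\times T)$ to remove the square, and recognise the result as $\nnorm{\CD_d(f\otimes\overline f)}_{d-1}^{2^{d-1}}$.

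Second, the tensor identity $\CD_{d+1,T\times T}(g\otimes\overline g)=\CD_{d+1}(g)\otimes\overline{\CD_{d+1}(g)}$ on which your inductive-on-$s$ decoupling relies is false. Already for $d=0$ and ergodic $T$, take $g$ a nontrivial eigenfunction with $\int g=0$; then $\CD_1(g)=\int g=0$, so the right side vanishes, while $g\otimes\overline g$ is $T\times T$-invariant, hence $\CD_{1,T\times T}(g\otimes\overline g)=g\otimes\overline g\ne 0$. Without this identity the passage to the product system no longer decouples $g_1^{\uh_-,\uh_-'}$ from $\overline{g_2^{\uh_-,\uh_-'}}$, and your inductive hypothesis (which is qualitative, requiring $\nnorm{\Delta_{h_s}f}_{d+s-1}=0$ rather than merely small on average) cannot be invoked. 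The paper circumvents this entirely by inducting on $d$ rather than $s$: one expands $\nnorm{f_{\uh,\uh'}}_d$, uses Lemma~\ref{L: deleting constants 2} to remove the constants from the ergodic dual formula, and the product trick then collapses the resulting squared integral to the $(d-1)$-statement for $f\otimes\overline f$ on the product system, where $\nnorm{f\otimes\overline f}_{d+s-1}\le\nnorm f_{d+s}^2=0$ supplies the hypothesis. Your closing ``viable alternative'' via a direct Cauchy--Schwarz--Gowers on the full multilinear form is not developed enough to assess; the difficulty is precisely that the $\um^{(\eps)}$-averages are not aligned with a single Gowers cube in $f$.
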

To get a better sense of this result, it is useful to consider some special cases.

 The case  $s=0$ and  $d\in\N_0$    states that if
$\nnorm{f}_d=0$, then $\nnorm{\mathcal{D}_{d+1}f}_d=0$; this may  seem odd at first since the conclusion involves more differencing than the assumption.
%% But on  second thought one realizes that the conclusion holds if we boost our assumption to $\nnorm{f}_{d+1}=0$, since in this case  $\mathcal{D}_{d+1}f=0$ in $L^2(\mu)$.}
For $d=1$ we want to show that if $\int f\, d\mu=0$ and the system is ergodic, then  $\int\mathcal{D}_{2}f\, d\mu=0$, which can be verified by a direct computation. Indeed, the identity \eqref{E: dual identity} implies that
\begin{align*}
\int \CD_2f\, d\mu = \lim_{M\to\infty}\E_{m\in[M]}\, c_m\int T^m f\, d\mu,
\end{align*}
for some uniformly bounded constants $c_m$, and so the claim readily follows from the assumption. The statement is trickier to prove for $d\geq 2$, and a proof of this special case is given in Section~\ref{SS:d0}.
Note also that if we want to strengthen the conclusion to $\nnorm{\mathcal{D}_{d+1}f}_{d+1}=0$, then we are forced to assume that
 $\nnorm{f}_{d+1}=0$ (which implies $\mathcal{D}_{d+1}f=0$ in $L^2(\mu)$).
 %%So in a sense the choice of constants in  three different places in this statement %%($d$ on the two seminorms and $d+1$ on the dual function) is optimal.

On the other hand, the case  $d=0$ and $s\in \N_0$ follows from Lemma \ref{L:lower},  stating that cubic averages of order $s$ twisted by low complexity weights are controlled by the seminorm $\nnorm{\cdot}_s$, so again here the choice of constants is optimal.

The proof of Proposition~\ref{P:keyestimate} is given in Subsection~\ref{SS:ds}. For a better understanding, we include two additional examples that are notationally lighter.
%For the rest of the section, we also assume that all the functions are real-valued so as to avoid adding complex conjugates to our already cumbersome expressions. This assumption does not however impact the validity of the arguments.

\subsection{The case $d=2$, $s=1$}
We will show that if $f\in L^\infty(\mu)$ satisfies $\nnorm{f}_3=0$, then
\begin{align}\label{E:ds21 claim}
\lim_{H\to\infty}\E_{h,h'\in [H]} \, \nnorm{{f_h}\cdot \overline{f_{h'}}}_{2}=0
\end{align}
where $f_h:= \mathcal{D}_3(\Delta_h f)$ for $h\in \N$.
Using an ergodic decomposition argument, it suffices to prove this when the system is ergodic, which we assume. The identity \eqref{E: dual identity} allows us to express the functions $f_h$ as
$$
f_h= \mathcal{D}_3(\Delta_h f)=\lim_{M\to\infty}\E_{m_1,m_2\in[M]}\, c_{m_1,m_2, h}\cdot \Delta^*_{m_1,m_2}\Delta_h f
$$
for the uniformly bounded constants $c_{m_1,m_2, h} := \int \Delta_{m_1, m_2, h} \overline{f}\, d\mu$.

The identity \eqref{E:seminorm general single limit} implies that the claimed identity \eqref{E:ds21 claim} is equivalent to
$$
\lim_{H\to\infty}\E_{h,h'\in[H]}\,  \lim_{N\to\infty}\E_{n\in[N]} \nnorm{\Delta_n(f_h\cdot \overline{f_{h'}})}^2_1=0.
$$
Plugging in the formulas for $f_h, f_{h'}$ and expanding, we see that it suffices to show that
\begin{multline*}
%    \lim_{H\to\infty}\E_{h,h',n\in [H]}\nnorm{\E_{\eps\in\{0,1\}^2}\lim_{M\to\infty}\E_{\substack{m_{1\eps},m_{2\eps}\in[M]}}\, C_{\um, h, h'}\cdot \Delta^*_{m_{100},m_{200}}\Delta_h f \cdot  \overline{\Delta^*_{m_{101},m_{201}}\Delta_{h'} f}\\  T^n(\overline{\Delta^*_{m_{110},m_{210}}\Delta_h f}	\cdot  \Delta^*_{m_{111},m_{211}}\Delta_{h'} f)}_1=0
\lim_{H\to\infty}\E_{h,h'\in [H]}\, \lim_{N\to\infty}\E_{n\in[N]}  \nnorm{\lim_{M\to\infty}\E_{\substack{m_i,m_i'\in[M]}}\, C_{\um, h, h'}\cdot\\ \Delta^*_{m_{1},m_{2}}\Delta_h f \cdot  \overline{\Delta^*_{m_{3},m_{4}}\Delta_{h'} f}\cdot T^n(\overline{\Delta^*_{m_{1}',m_{2}'}\Delta_h f}	\cdot  \Delta^*_{m_{3}',m_{4}'}\Delta_{h'} f)}^2_1=0,
\end{multline*}
where the averages $\E_{m_i,m_i'\in[M]}$ are taken over all $m_i,m_i', i=1,\ldots, 4,$ and the constants
\begin{align*}
C_{\um, h, h'}:=c_{m_{1}, m_{2}, h}\,  \overline{c_{m_{3}, m_{4}, h'}\, c_{m_{1}', m_{2}', h}}\,  c_{m_{3}', m_{4}', h'}
\end{align*}
are uniformly bounded.
Since the system is ergodic, the ergodic theorem gives  $\nnorm{g}_1=\abs{\int g\, d\mu}$, and so  using the Cauchy-Schwarz inequality,  it suffices to show that
%\footnote{It is important to use the triangle inequality after we reduce to the 1-seminorms if we do it before, when we deal with 2-seminorms, the argument is not going to work. All these are not hard to verify when $d=2$.}
\begin{multline*}
\lim_{H\to\infty}\E_{h,h'\in [H]}\, \lim_{N\to\infty}\E_{n\in[N]}\,  \lim_{M\to\infty}\E_{m_i,m_i'\in[M]}\\ \Big|\int  \Delta^*_{m_1,m_2}\Delta_h f \cdot  \overline{\Delta^*_{m_3,m_4}\Delta_{h'} f}\cdot
T^n\big(\overline{\Delta^*_{m_1',m_2'}\Delta_h f}	\cdot  \Delta^*_{m_3',m_4'}\Delta_{h'} f\big)\, d\mu\Big|^2=0.
\end{multline*}
 Note that the limits over $M$ and $N$ can be shown to exist, but we can  also work with the limsups without trouble.
Using Lemma~\ref{L:product trick}(i) as well as the identities  \eqref{E:T vs TxT} and  $\Delta_h(f_1\cdot f_2)=\Delta_hf_1\cdot \Delta_hf_2$,  we observe that upon replacing $T$ by $T\times T$ and $f$ by $f\otimes \overline{f}$, it suffices to show the following: if $\nnorm{f}_2=0$, then
\begin{multline*}
\lim_{H\to\infty}\E_{h,h'\in [H]}\, \lim_{N\to\infty}\E_{n\in[N]}\,  \lim_{M\to\infty}\E_{m_i,m_i'\in[M]} \\
\int  \Delta^*_{m_1,m_2}\Delta_h f \cdot  \overline{\Delta^*_{m_3,m_4}\Delta_{h'} f}\cdot  T^n\big(\overline{\Delta^*_{m_1',m_2'}\Delta_h f}\cdot  \Delta^*_{m_3',m_4'}\Delta_{h'} f\big)\, d\mu=0.
\end{multline*}
Bringing the averages of $m_i, m_i'$ inside, we rewrite the equality above as
\begin{multline*}
\lim_{H\to\infty}\E_{h,h'\in [H]}\,  \lim_{N\to\infty}\E_{n\in[N]}\,  \int  \lim_{M\to\infty}\E_{m_1,m_2\in[M]}\,  \Delta^*_{m_1,m_2}\Delta_h f \cdot  \lim_{M\to\infty}\E_{m_3,m_4\in[M]}\, \\ \overline{\Delta^*_{m_3,m_4}\, \Delta_{h'} f} \cdot
T^n\Big(\lim_{M\to\infty}\E_{m_1',m_2'\in[M]}\,  \overline{\Delta^*_{m_1',m_2'}\Delta_{h} f}
\cdot  \lim_{M\to\infty}\E_{m_3',m_4'\in[M]} \, \Delta^*_{m_3',m_4'}\Delta_{h'} f\Big)\, d\mu=0.
\end{multline*}
Using the definition of dual functions of level 2 we conclude that it suffices to show that
$$
\lim_{H\to\infty}\E_{h,h'\in [H]}\, \lim_{N\to\infty}\E_{n\in[N]}\,   \int  \mathcal{D}_2(\Delta_h f) \cdot   \overline{\mathcal{D}_2(\Delta_{h'} f)}  \cdot \\
T^n\big(\overline{\mathcal{D}_2(\Delta_h f)}\cdot  \mathcal{D}_2(\Delta_{h'} f) \big)\, d\mu=0,
$$
or that
$$
\lim_{H\to\infty}\E_{h,h'\in [H]}\nnorm{\mathcal{D}_2(\Delta_h f) \cdot   \overline{\mathcal{D}_2(\Delta_{h'} f)}}^2_1   =0
$$
by the definition of the Host-Kra seminorm of degree 1.
We have now effectively reduced our problem  to a simpler one, as we replaced the seminorm  $\nnorm{\cdot}_3$ in our assumption with $\nnorm{\cdot}_2$ and in the conclusion, we replaced the seminorm
$\nnorm{\cdot}_2$ with $\nnorm{\cdot}_1$ and the dual function $\mathcal{D}_3(\Delta_h f)$
with $\mathcal{D}_2(\Delta_h f)$. Repeating this argument one more time gives us a statement that can be verified directly using the ergodic theorem.

\subsection{Identities and estimates involving  multiplicative derivatives}\label{SS:identities}
Before we derive another special case of Proposition \ref{P:keyestimate}, we state several tedious but important identities that will show up in the proof. First we note  the identity
\begin{align}\label{E:product-average}
\prod_{\epsilon\in \{0,1\}^d}\sum_{m\in I} a_{\epsilon,m}=\sum_{m_{\epsilon}\in I, \epsilon\in \{0,1\}^d}
\prod_{\epsilon\in \{0,1\}^d}a_{\epsilon,m_{\epsilon}}
\end{align}
that holds for arbitrary finite sets $I$ and allows us to commute products and averages.
For instance, when $d=1$, $I=\{0,1\}$, the identity takes the form $(a_{0,0}+a_{0,1})(a_{1,0}+a_{1,1})=\sum_{m_0,m_1\in \{0,1\}} a_{0,m_0}\cdot a_{1,m_1}$.
Since  for fixed
$\un\in \N^{d-1}$  we have $\Delta_{\un}f =\prod_{\epsilon\in \{0,1\}^{d-1}}\CC^{|\eps|}T^{\epsilon\cdot \un}f$,
%(again, under the simplifying assumption that all functions are real-valued),
the previous identity implies that
\begin{align}\label{E:dualaverage}
\Delta_{\un}\brac{\lim_{M\to\infty}\E_{\um\in [M]^{d}}f_\um}&= \prod_{\epsilon\in \{0,1\}^{d-1}} \CC^{|\eps|}T^{\epsilon\cdot \un}\brac{\lim_{M\to\infty}\E_{\um\in [M]^{d}}f_\um}\\ \notag
&= \lim_{M\to\infty}\E_{\um_{\epsilon}\in [M]^d,\epsilon\in \{0,1\}^{d-1}} \prod_{\epsilon\in \{0,1\}^{d-1}}\CC^{|\eps|}T^{\epsilon\cdot \un}f_{\um_{\epsilon}}.
\end{align}
Plugging the functions $f_{\um}:=c_\um \cdot \Delta^*_{\um}f$ with $c_\um := \int \Delta_{\um} \overline{f}\, d\mu$ into the identity above and recalling that for ergodic systems $(X,\CX,\mu,T)$ we have
\begin{equation}\label{E:Dd+1}
\mathcal{D}_{d+1}f=\lim_{M\to\infty}\E_{\um\in [M]^{d+1}}\,  \Delta^*_{\um}f =\lim_{M\to\infty}\E_{\um\in [M]^{d}}\, c_\um \cdot \Delta^*_{\um}f,
\end{equation}
we obtain
\begin{align*}
\Delta_{\un} ( \mathcal{D}_{d+1}f)
&= \Delta_{\un} \brac{\lim_{M\to\infty}\E_{\um\in [M]^{d}}\, c_\um \cdot \Delta^*_{\um}f}\\
&=  \lim_{M\to\infty}\E_{\um_{\epsilon}\in [M]^d,\epsilon\in \{0,1\}^{d-1}} \prod_{\epsilon\in \{0,1\}^{d-1}}\CC^{|\eps|} T^{\epsilon\cdot \un}(c_{\um_{\epsilon}} \cdot \Delta^*_{\um_{\epsilon}}f)\\
&=\lim_{M\to\infty}\E_{\um_{\epsilon}\in [M]^d,\epsilon\in \{0,1\}^{d-1}}\, C_{(\um_\eps)}\cdot  \prod_{\epsilon\in \{0,1\}^{d-1}}\CC^{|\eps|} T^{\epsilon\cdot \un}( \Delta^*_{\um_{\epsilon}}f)
\end{align*}
for  $C_{(\um_\eps)} = \prod_{\eps\in\{0,1\}^{d-1}} \CC^{|\eps|}c_{\um_\eps}$.
%$d\geq 2$, and
%\begin{align*}
%\Delta_{\un} ( \mathcal{D}_{d+1}f) &= \mathcal{D}_{d+1}f= \lim_{M\to\infty}\E_{\um\in [M]^d}\, c_{\um}\cdot  \Delta^*_{\um}f
%\end{align*}
%for $d=1$.
Integrating over $X$, and applying the triangle and the Cauchy-Schwarz  inequalities gives the following estimate.
\begin{lemma}\label{L: deleting constants}
Let $d\in\N$, $\un\in \N^{d-1}$, $(X, \CX, \mu, T)$ be an ergodic system, and $f\in L^\infty(\mu)$ be 1-bounded. Then
%%\begin{align*}
%%	\abs{\int \Delta_{\un} ( \mathcal{D}_{d+1}f)\, d\mu}^2\leq \lim_{M\to\infty}\E_{\um_{\epsilon}\in [M]^d,\epsilon\in \{0,1\}^{d-1}}\abs{\int \Delta_{\un}( \Delta^*_{\um_{\epsilon}}f)\, d\mu}^2.
%%\end{align*}
\begin{align*}
	\abs{\int \Delta_{\un} ( \mathcal{D}_{d+1}f)\, d\mu}^2\leq \lim_{M\to\infty}\E_{\um_{\epsilon}\in [M]^d,\epsilon\in \{0,1\}^{d-1}}\abs{\int \prod_{\epsilon\in \{0,1\}^{d-1}}\CC^{|\eps|} T^{\epsilon\cdot \un}( \Delta^*_{\um_{\epsilon}}f)\, d\mu}^2.
\end{align*}
\end{lemma}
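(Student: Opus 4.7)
The plan is to proceed by direct computation starting from the $L^2$-formula
\begin{align*}
\CD_{d+1}f = \lim_{M\to\infty}\E_{\um\in[M]^d}\, c_\um\cdot \Delta^*_\um f
\end{align*}
from identity \eqref{E:Dd+1}, which is valid on ergodic systems, where the scalars $c_\um := \int\Delta_\um\overline{f}\,d\mu$ satisfy $|c_\um|\leq 1$ by the $1$-boundedness of $f$. (The general case reduces to this one by ergodic decomposition.)

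The first step is to apply $\Delta_\un = \prod_{\eps\in\{0,1\}^{d-1}}\CC^{|\eps|}T^{\eps\cdot\un}$ to both sides of this formula and invoke the product-average commutation identity \eqref{E:dualaverage} to pull the limit and averages outside the product. This introduces $2^{d-1}$ independent averaging variables $\um_\eps\in[M]^d$ indexed by $\eps\in\{0,1\}^{d-1}$, yielding
\begin{align*}
\Delta_\un(\CD_{d+1}f) = \lim_{M\to\infty}\E_{\um_\eps\in[M]^d,\,\eps\in\{0,1\}^{d-1}}\, \prod_\eps \CC^{|\eps|}T^{\eps\cdot\un}\brac{c_{\um_\eps}\,\Delta^*_{\um_\eps}f}.
\end{align*}
Since each scalar $c_{\um_\eps}$ commutes with $T^{\eps\cdot\un}$ and $\CC^{|\eps|}$, I would factor the product of scalars $\prod_\eps\CC^{|\eps|}(c_{\um_\eps})$ out of the integrand. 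Integrating against $d\mu$, applying the triangle inequality to bring the absolute value inside the limit and average, and using $|c_{\um_\eps}|\leq 1$ to eliminate the scalar prefactor then yields
\begin{align*}
\Bigabs{\int \Delta_\un(\CD_{d+1}f)\,d\mu} \leq \lim_{M\to\infty}\E_{\um_\eps,\eps}\Bigabs{\int \prod_\eps\CC^{|\eps|}T^{\eps\cdot\un}(\Delta^*_{\um_\eps}f)\,d\mu}.
\end{align*}
Squaring both sides and applying the Cauchy-Schwarz inequality to the outer average (against the constant function $1$) produces the asserted bound, once the mixed product $\prod_\eps\CC^{|\eps|}T^{\eps\cdot\un}(\Delta^*_{\um_\eps}f)$ is identified with the notation $\Delta_\un(\Delta^*_{\um_\eps}f)$ used on the right-hand side.

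The main delicate point is the notational identification in the final step: the expression obtained from \eqref{E:dualaverage} is a product with a potentially different $\um_\eps$ in each factor indexed by $\eps$, whereas the lemma compresses this into the compact notation $\abs{\int\Delta_\un(\Delta^*_{\um_\eps}f)\,d\mu}^2$ together with the averaging $\E_{\um_\eps,\eps}$ over independent copies $\um_\eps\in[M]^d$. A secondary technical point is justifying the interchange of the $L^2$-limit defining $\CD_{d+1}f$ with the polynomial operator $\Delta_\un$, which follows from dominated convergence and the uniform $1$-boundedness of the iterates. Beyond these two issues, the argument is a routine combination of the product-average identity with the triangle and Cauchy-Schwarz inequalities, exploiting only the scalar nature of the correlation coefficients $c_\um$.
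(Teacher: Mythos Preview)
Your proposal is correct and follows the same route as the paper: expand $\CD_{d+1}f$ via \eqref{E:Dd+1}, push $\Delta_\un$ through using the product--average identity \eqref{E:dualaverage}, integrate, drop the $1$-bounded scalar prefactor via the triangle inequality, and then square with Cauchy--Schwarz. Your observation that the right-hand side of the lemma abbreviates the mixed product $\prod_{\eps}\CC^{|\eps|}T^{\eps\cdot\un}(\Delta^*_{\um_\eps}f)$ by the compact notation $\Delta_\un(\Delta^*_{\um_\eps}f)$ is exactly the convention the paper adopts.
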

Lemma \ref{L: deleting constants} will play an important role in deriving Proposition \ref{P:keyestimate} when $s=0$ in Section \ref{SS:d0}. For the general case handled in Section \ref{SS:ds}, we need the following extension, whose proof follows  by 
%%applying Proposition \ref{P:keyestimate}  for $f:=\prod_{\eps'\in\{0,1\}^s}\CC^{|\eps'|} f_{\eps'}$ and using that $\mathcal{D}_{d+1}(f\cdot g)=\mathcal{D}_{d+1}f\cdot \mathcal{D}_{d+1}g$
%%and  $\Delta^*_\um(f\cdot g)=\Delta^*_\um f\cdot \Delta^*_\um g$.
essentially the same argument.
\begin{lemma}\label{L: deleting constants 2} 
Let $d\in \N, s\in\N_0$, $\un\in \N^{d-1}$, $(X, \CX, \mu, T)$ be an ergodic system, and $f_\eps'\in L^\infty(\mu)$ be 1-bounded for every $\eps'\in\{0,1\}^s$. Then 
\begin{multline*}
	\Big|\int \Delta_{\un} \Big(\prod_{\eps'\in\{0,1\}^s}\CC^{|\eps'|} \mathcal{D}_{d+1}f_{\eps'}\Big)\, d\mu\Big|^2\leq \\
	  \lim_{M\to\infty}\E_{\um_{\eps,\eps'}\in [M]^d,(\epsilon,\epsilon')\in \{0,1\}^{d-1}\times \{0,1\}^{s}}\Big| \int  \prod_{(\epsilon,\epsilon')\in \{0,1\}^{d-1}\times \{0,1\}^{s}}\CC^{|\eps|+|\eps'|} T^{\epsilon\cdot \un}(\Delta^*_{\um_{\eps,\eps'}}f_{\eps'})\, d\mu\Big|^2. 
	 \end{multline*}
\end{lemma}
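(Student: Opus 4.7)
The proof is a direct extension of the argument for Lemma \ref{L: deleting constants}, accommodating the additional product over $\eps\in\{0,1\}^s$. The plan is to first expand each dual function $\mathcal{D}_{d+1}f_\eps$ using formula \eqref{E:Dd+1}, then distribute the multiplicative derivative $\Delta_{\un}$ across the resulting product of averages using the commutation identity \eqref{E:product-average}, and finally strip off the uniformly bounded scalars $c_{\eps,\um}=\int \Delta_{\um}\overline{f_\eps}\, d\mu$ via the triangle and Cauchy--Schwarz inequalities, exactly as in the $s=0$ case.

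Concretely, for ergodic $T$ we have $\mathcal{D}_{d+1}f_\eps = \lim_{M\to\infty}\E_{\um\in [M]^d}\, c_{\eps,\um}\cdot \Delta^*_{\um}f_\eps$, where $|c_{\eps,\um}|\leq 1$ since $f_\eps$ is $1$-bounded. Applying $\Delta_{\un} = \prod_{\epsilon\in\{0,1\}^{d-1}}\CC^{|\epsilon|}T^{\epsilon\cdot\un}$ to $\prod_{\eps\in\{0,1\}^s}\CC^{|\eps|}\mathcal{D}_{d+1}f_\eps$ and substituting the expansion for each $\mathcal{D}_{d+1}f_\eps$ yields a product of $2^{d-1}\cdot 2^s = 2^{d+s-1}$ averages, one for each pair $(\epsilon,\eps)\in\{0,1\}^{d-1}\times\{0,1\}^s$. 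Identity \eqref{E:product-average} lets me commute this product with the averages at the cost of introducing an independent dummy variable $\um_{\epsilon'}$ for each $\epsilon'\in\{0,1\}^{d+s-1}$, under a fixed bijection $\{0,1\}^{d+s-1}\leftrightarrow\{0,1\}^{d-1}\times\{0,1\}^s$. Since the scalars $c_{\eps,\um_{\epsilon'}}$ are constants independent of $x\in X$, I can pull them outside the integral against $d\mu$; the triangle inequality absorbs their absolute values (each at most $1$), and Cauchy--Schwarz in the $\um_{\epsilon'}$ variables converts the squared modulus of an average into an average of squared moduli. Regrouping the resulting product by $\eps$ then recovers the right-hand side of the claimed bound, under the same notational convention for $\prod_{\eps}\CC^{|\eps|}\Delta_{\un}(\Delta^*_{\um_{\epsilon'}}f_\eps)$ already used in Lemma \ref{L: deleting constants}.

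The main obstacle is purely notational: keeping track of the $2^{d+s-1}$ independent dummy variables $\um_{\epsilon'}$ and their interplay with the $2^{d-1}$-fold multiplicative derivative $\Delta_{\un}$ and the $2^s$-fold product of dual functions requires careful bookkeeping. No new technical ingredient beyond those already present in the proof of Lemma \ref{L: deleting constants} is required; the case $s=0$ literally reduces to Lemma \ref{L: deleting constants}, and the steps for general $s$ are a faithful transcription of that template with the extra $\eps$-product carried along throughout.
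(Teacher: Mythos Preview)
Your proposal is correct and follows exactly the approach the paper intends: the paper does not spell out a proof of Lemma~\ref{L: deleting constants 2} but merely notes that it ``follows essentially the same argument'' as Lemma~\ref{L: deleting constants}, and your sketch faithfully carries out that argument---expanding each $\mathcal{D}_{d+1}f_\eps$ via \eqref{E:Dd+1}, using \eqref{E:product-average} to introduce the $2^{d+s-1}$ independent dummy variables $\um_{\epsilon'}$, and then applying the triangle and Cauchy--Schwarz inequalities to remove the bounded scalars $c_{\eps,\um}$. Your remark that the notation $\prod_\eps \CC^{|\eps|}\Delta_{\un}(\Delta^*_{\um_{\epsilon'}}f_\eps)$ on the right-hand side is to be read under the convention inherited from Lemma~\ref{L: deleting constants} (with the bijection $\{0,1\}^{d+s-1}\leftrightarrow\{0,1\}^{d-1}\times\{0,1\}^s$ implicit) is well taken and matches how the paper uses the lemma in Section~\ref{SS:ds}.
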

 Note that the limits on the right hand side of the estimates in Lemma~\ref{L: deleting constants} and \ref{L: deleting constants 2} exist,  as they can be reinterpreted as  limits of cubic averages.
\subsection{ The case of general $d$ and $s=0$} \label{SS:d0}
We move to another special case, namely when there is no additional averaging over $\N^{2s}$ and differencing over $\N^s$.
We will induct on $d\in \N_0$ to prove the following statement:

\medskip

\textit{$(Q_{0, d})$:  For every system $(X, \CX, \mu, T)$ and function $f\in L^\infty(\mu)$, we have $\nnorm{\mathcal{D}_{d+1}f}_{d}=0$ whenever $\nnorm{f}_d = 0$.}

\begin{example}
	Before we proceed to the general case, we illustrate how a more computational proof works for $d=2$ and the affine system  $T(x_1, x_2)=(x_1+\alpha, x_2+2x_1+\alpha)$ on $X=\T^2$ for irrational $\alpha$. We give this example  just for reassurance, as this computational approach is not appropriate for extension.  We take our function to be   $f(x_1,x_2)=\sum_{l=1}^k\, c_l \, e(l x_2)$, where $c_1,\ldots, c_k\in \C$, noting that  $\nnorm{f}_2=0$.
	The identity $$e(l\cdot T^n(x_1,x_2))=e(l(x_2+2nx_1+n^2\alpha))$$ and a simple computation show that  $\mathcal{D}_{3}f$
	is a finite linear combination of functions of the form
	$$
	f_{l_1,l_2,l_3}(x_1,x_2)=\lim_{M\to\infty}\E_{m_1,m_2\in [M]} \, c_{m_1,m_2}\cdot e((l_1+l_2+l_3)x_2+(m_1(l_1+l_3)+m_2(l_2+l_3))x_1  )
	$$
	for some constants $c_{m_1,m_2}$ and  non-zero $l_1,l_2,l_3\in \Z$.
	Note that  if $l_1+l_2+l_3\neq 0$,  then 	$f_{l_1,l_2,l_3}$ depends non-trivially on the variable $x_2$, hence $\nnorm{f_{l_1,l_2,l_3}}_{2}=0$. On the other hand, if $l_1+l_2+l_3=0$, we cannot have
	$l_1+l_3=l_2+l_3=0$ (since one of the $l_1,l_2,l_3$ is non-zero),  hence the average defining 	the function $f_{l_1,l_2,l_3}(x_1,x_2)$  is zero a.e. (in fact for every $x_2$ as long as  $x_1$ is irrational). It follows that   $\nnorm{f_{l_1,l_2,l_3}}_{2}=0$ and hence $\nnorm{\CD_3f}_2=0$.
\end{example}

We move now to the proof of property $(Q_{0,d})$.

For $d=0$, the result follows from the identity
 $$\nnorm{\CD_1f}_0=\int \E(f|\CI(T))\, d\mu=\int f\, d\mu=
\nnorm{f}_0.
$$
So we assume that $d\geq 1$.  In proving property $(Q_{0,d})$ for a function $f$ and a system $(X, \CX, \mu, T)$, we will invoke property $(Q_{0, d-1})$ for $f\otimes \overline{f}$ and the system $(X\times X, \CX\otimes \CX, \mu\times \mu, T\times T)$. We recall from formula \eqref{E:T vs TxT} that $\nnorm{f}_d = 0$ implies $\nnorm{f\otimes \overline{f}}_{d-1} = 0$. This gives us a chain of implications
$$
\nnorm{f}_d =0\; \Rightarrow\; \nnorm{f\otimes\overline{f}}_{d-1} =0\; \Rightarrow\; \nnorm{\mathcal{D}_{d}(f\otimes \overline{f})}_{d-1}=0\;\Rightarrow\; \nnorm{\mathcal{D}_{d+1}(f)}_{d}=0,
$$
where the first implication follows from \eqref{E:T vs TxT} as explained above, the second implication is the induction hypothesis, and the third one we are about to prove.

Using an ergodic decomposition argument, it suffices to prove $\nnorm{\mathcal{D}_{d+1}(f)}_{d}=0$ whenever $\nnorm{f}_d = 0$ for ergodic systems. Using the iterative formula for Gowers-Host-Kra seminorms \eqref{E:seminorm1}, it is enough to show that
$$
%\lim_{H\to\infty}\E_{\uh\in [H]^{d-1}} \nnorm{\Delta_{\uh} ( \mathcal{D}_{d+1}f)}_{1}=0,
\lim_{N\to\infty}\E_{\un\in [N]^{d-1}} \nnorm{\Delta_{\un} ( \mathcal{D}_{d+1}f)}^2_{1}=0,
$$
or, since the system is ergodic,  that
$$
\lim_{N\to\infty}\E_{\un\in [N]^{d-1}}  \Big|\int \Delta_{\un} ( \mathcal{D}_{d+1}f)\, d\mu\Big|^2=0.
$$
%%\subsubsection{Argument that does   use the ergodic theorem to simplify %%$\mathcal{D}_2$}
%Replacing $\mathcal{D}_{d+1}f$ with the  second average in \eqref{E:Dd+1},  expanding, and using the triangle inequality to eliminate the constants,  we see that it suffices to show that
By Lemma \ref{L: deleting constants}, this will in return follow from
$$
\lim_{N\to\infty}\E_{\un\in [N]^{d-1}}\lim_{M\to\infty}\E_{\um_{\epsilon}\in [M]^d,\epsilon\in \{0,1\}^{d-1}} \Big|\int \prod_{\epsilon\in \{0,1\}^{d-1}}\CC^{|\eps|} T^{\epsilon\cdot \un}( \Delta^*_{\um_{\epsilon}} f)\, d\mu\Big|^2=0.
$$
%%(Recall that $\Delta_{\un}$  is defined by a product over $\epsilon\in %%\{0,1\}^{d-1}$.)	

Using Lemma~\ref{L:product trick}(i) and \eqref{E:T vs TxT}, we observe that the statement will follow if we show that
$$
\lim_{N\to\infty}\E_{\un\in [N]^{d-1}} \lim_{M\to\infty}\E_{\um_{\epsilon}\in [M]^d,\epsilon\in \{0,1\}^{d-1}}\int \prod_{\epsilon\in \{0,1\}^{d-1}}\CC^{|\eps|} (T\times T)^{\epsilon\cdot \un}( \Delta^*_{\um_{\epsilon}} (f\otimes \overline{f}))\, d(\mu\times \mu)=0
$$
whenever $\nnorm{f\otimes \overline{f}}_{d-1}=0$. Using  the identity \eqref{E:dualaverage}  for $f_\um:=\Delta_\um^*(f\otimes \overline{f})$ in order to bring the averages over  $\um_{\eps}$ inside $\Delta_\un$, we deduce that the identity above is equivalent to
%  Equivalently, it suffices to show that (to get this we used that the $\epsilon$ on the average above ranges over all $\epsilon\in \{0,1\}^{d-1}$ and $\Delta_{\un}$  is defined by a product over $\epsilon\in \{0,1\}^{d-1}$)
$$
\lim_{N\to\infty}\E_{\un\in [N]^{d-1}}  \int \Delta_{\un} \brac{\lim_{M\to\infty}\E_{\um\in [M]^d}\,  \Delta^*_{\um} (f\otimes \overline{f})}\, d(\mu\times \mu)=0,
$$
which in turn holds  by \eqref{E:Dual} if and only if
$$
\lim_{N\to\infty}\E_{\un\in [N]^{d-1}} \int \Delta_{\un}\big(\mathcal{D}_{d}(f\otimes \overline{f})\big)\, d(\mu\times \mu)=0.
$$
Hence, using the iterative formula for Gowers-Host-Kra seminorms \eqref{E:seminorm1} and the formula \eqref{E: degree 0 seminorm}, we deduce that it suffices to show that
$$
\nnorm{ \mathcal{D}_{d}(f\otimes \overline{f})}_{d-1}=0
$$
where the seminorms are taken with respect to the product system.
This follows from the induction hypothesis.
\subsection{ The case of general $d$ and general  $s$} \label{SS:ds}
Finally, we proceed to tackle the general case of Proposition \ref{P:keyestimate}. Since the case $s=0$ was covered in Section~\ref{SS:d0} we can assume that $s\geq 1$. So  let $s\in\N$ be fixed. In analogy with the previous case, we will show the following property by induction on $d\in\N_0$.

\textit{$(Q_{s, d})$:  For every system $(X, \CX, \mu, T)$ and function $f\in L^\infty(\mu)$, we have
$$
\lim_{H\to\infty}\E_{\uh,\uh'\in [H]^s} \, \nnorm{f_{\uh,\uh'}}_d=0
$$
whenever $\nnorm{f}_{d+s}=0$, where
\begin{equation}\label{E:fhh'}
f_{\uh,\uh'}:=\prod_{\epsilon\in \{0,1\}^{s}}\CC^{|\eps|} \mathcal{D}_{d+1}(\Delta_{\uh^\epsilon}  f).
\end{equation}}

Like in Section \ref{SS:d0},  for $d\in \N$ and  fixed $s\in \N$, we will derive  property $(Q_{s,d})$ for a function $f$ and a system $(X, \CX, \mu, T)$ from  property $(Q_{s, d-1})$ for the function $f\otimes\overline{f}$ and the product system $(X\times X, \CX\otimes \CX, \mu\times \mu, T\times T)$.

By an ergodic decomposition argument, it suffices to prove  property $(Q_{s,d})$ for ergodic systems. Under this assumption,
 identity \eqref{dual identity} implies that
\begin{align}
\nonumber
\mathcal{D}_{d+1}(\Delta_{\uh^\epsilon}  f) &=\lim_{M\to\infty}\E_{\um\in [M]^{d+1}}\,  \Delta^*_{\um}(\Delta_{\uh^\epsilon}  f)\\
\label{E: f h,h'}
&= \lim_{M\to\infty}\E_{\um\in [M]^{d}}\, c_{\um,\uh,\uh'} \cdot \Delta^*_{\um}(\Delta_{\uh^\epsilon}  f)
\end{align}
for the uniformly bounded constants $c_{\um,\uh,\uh'}:=\int \Delta_{\um, \uh^\eps}\overline{f}\, d\mu$.

\medskip

{\bf Basis of induction.} For $d=0$, we will show that  property $(Q_{s,0})$ holds for every $s\in \N$.  It suffices to show that if $\nnorm{f}_s=0$, then
$$
\lim_{H\to\infty}\E_{\uh,\uh'\in [H]^s}\, \nnorm{f_{\uh,\uh'}}_0=0,
$$
where
\begin{align*}
f_{\uh,\uh'}:= \prod_{\epsilon\in \{0,1\}^{s}}\CC^{|\eps|} \mathcal{D}_{1}(\Delta_{\uh^\epsilon}  f) =\prod_{\epsilon\in \{0,1\}^{s}}\CC^{|\eps|} \int \Delta_{\uh^\epsilon}  f\, d\mu
\end{align*}
since the system  is ergodic. Hence, the claim will follow if we can show that
$$
\lim_{H\to\infty}\E_{\uh,\uh'\in [H]^s}\,\prod_{\epsilon\in \{0,1\}^{s}}\CC^{|\eps|} \int \Delta_{\uh^\epsilon}  f\, d\mu=0.
$$
In fact, we will show something stronger: if $\nnorm{f}_s=0$, then for every $\uh'\in \N^s$, we have
$$
\lim_{H\to\infty}\E_{\uh\in [H]^s}\prod_{\epsilon\in \{0,1\}^{s}}\CC^{|\eps|} \int \Delta_{\uh^\epsilon}f\, d\mu=0.
$$
Letting
$$
c_{\uh^\epsilon}:= \CC^{|\eps|}\int \Delta_{\uh^\epsilon}f\, d\mu
$$
for $\epsilon\in \{0,1\}^s$, we get that it suffices to show that
\begin{align}\label{E: d=0 final reduction}
\lim_{H\to\infty}\E_{\uh\in [H]^s}\prod_{\epsilon\in \{0,1\}^{s}_*}c_{\uh^\epsilon}\cdot  \int \Delta_{\uh}f\, d\mu=0.
\end{align}
But for $\epsilon \neq 0$, the sequence $\uh\mapsto c_{\uh^\epsilon}$ depends on at most $s-1$ of the variables $h_1,\ldots, h_s$. Hence, \eqref{E: d=0 final reduction} follows from Lemma \ref{L:lower} and the assumption that $\nnorm{f}_s = 0$.

\medskip

{\bf Induction step.} Suppose that
 property $(Q_{s,d-1})$ holds for some  $d\in \N$ and some fixed value of $s\in \N$. We will derive from this that
 property $(Q_{s,d})$ holds. Equivalently, using \eqref{E:seminorm general single limit} (with $d$ in place of $s$ and $s'=1$),  we want to show that
if $\nnorm{f}_{d+s}=0$, then
$$
\lim_{H\to\infty}\E_{\uh,\uh'\in [H]^s}\, \lim_{N\to\infty}\E_{\un\in [N]^{d-1}} \nnorm{\Delta_{\un} f_{\uh,\uh'}}^2_{1}=0,
$$
where $f_{\uh,\uh'}$ are given by \eqref{E:fhh'}.
Since the system is ergodic,  it suffices to show  that
$$
\lim_{H\to\infty}\E_{\uh,\uh'\in [H]^s} \, \lim_{N\to\infty}\E_{\un\in [N]^{d-1}} \Big|\int \Delta_{\un} f_{\uh,\uh'}\, d\mu\Big|^2=0.
$$
Plugging in the formula \eqref{E: f h,h'} for $f_{\uh, \uh'}$, using Lemma \ref{L: deleting constants 2} with $f_\eps := \Delta_{\uh^\epsilon} f$, $\eps \in \{0,1\}^s$, and averaging over $(\uh, \uh')\in\N^{2s}$, allows us to reduce the claim to the identity
\begin{multline*}
\lim_{H\to\infty}\E_{\uh,\uh'\in [H]^s}\lim_{N\to\infty}\E_{\un\in [N]^{d-1}}   \lim_{M\to\infty}\E_{\um_{\eps,\eps'}\in [M]^d,(\epsilon,\epsilon')\in \{0,1\}^{d-1}\times \{0,1\}^{s}}\\
\Big| \int  \prod_{(\epsilon,\epsilon')\in \{0,1\}^{d-1}\times \{0,1\}^{s}}\CC^{|\eps|+|\eps'|} T^{\epsilon\cdot \un}(\Delta^*_{\um_{\eps,\eps'}}\Delta_{\uh^{\epsilon'}} f)\, d\mu\Big|^2=0. 
\end{multline*}
Using Lemma \ref{L:product trick} and \eqref{E:T vs TxT}, it suffices to show that if $\nnorm{f\otimes \overline{f}}_{d+s-1}=0$, then
\begin{multline*}
\lim_{H\to\infty}\E_{\uh,\uh'\in [H]^s}\lim_{N\to\infty}\E_{\un\in [N]^{d-1}}
\lim_{M\to\infty}\E_{\um_{\eps,\eps'}\in [M]^d,(\epsilon,\epsilon')\in \{0,1\}^{d-1}\times \{0,1\}^{s}} \\\int  \prod_{(\epsilon,\epsilon')\in \{0,1\}^{d-1}\times \{0,1\}^{s}}\CC^{|\eps|+|\eps'|} T^{\epsilon\cdot \un}(\Delta^*_{\um_{\eps,\eps'}}\Delta_{\uh^{\epsilon'}} (f\otimes \overline{f}))\, d(\mu\times \mu)=0.
\end{multline*}

Using identity \eqref{E:product-average}
%%For fixed $\uh^{\epsilon'}$, using the identity \eqref{E:dualaverage} for $f_{\um}:= \Delta^*_{\um}\Delta_{\uh^{\epsilon'}}(f\otimes \overline{f})$ 
in order to bring the averages over $(\um_{\eps,\eps'})_{(\eps,\eps')}$ inside the product $\prod_{(\epsilon,\epsilon')\in \{0,1\}^{d-1}\times \{0,1\}^{s}}$, we deduce that the identity above is equivalent to
\begin{multline*}
\lim_{H\to\infty}\E_{\uh,\uh'\in [H]^s}
\lim_{N\to\infty}\E_{\un\in [N]^{d-1}}\\  \int \prod_{\eps'\in\{0,1\}^s} \CC^{|\eps'|} \Delta_{\un} \brac{\lim_{M\to\infty}\E_{\um\in [M]^d}\,  \Delta^*_{\um}\Delta_{\uh^{\epsilon'}} (f\otimes \overline{f})}\, d(\mu\times \mu)=0.
\end{multline*}
%In doing so, we use the fact that $\epsilon'$ ranges over all $ \{0,1\}^{d+s-1}$, while  on the right $\Delta_\uh$  is defined by a product over $\epsilon\in \{0,1\}^{d-1}$ and the product $\prod$ is taken over $\epsilon\in \{0,1\}^{s}$.)
The definition of the dual function and the commutativity of $\prod_{\eps\in\{0,1\}^s}$ and $\Delta_\un$ allow us to write the preceding identity more compactly as
$$
\lim_{H\to\infty}\E_{\uh,\uh'\in [H]^s} \,  \lim_{N\to\infty}\E_{\un\in [N]^{d-1}} \int \Delta_{\un}\Big(\prod_{\eps'\in\{0,1\}^s} \CC^{|\eps'|}\mathcal{D}_{d}(\Delta_{\uh^{\epsilon'}} (f\otimes \overline{f}))\Big)\, d(\mu\times \mu)=0.
$$
Identity  \eqref{E:seminorm single limit}
%$\lim_{N\to\infty}\E_{\un\in [N]^{d-1}} \int \Delta_{\un}g=\nnorm{g}_{s-1}^{2^{s-1}}$
then implies that the claim will follow from
$$
\lim_{H\to\infty}\E_{\uh,\uh'\in [H]^s} \,  \nnorm{\prod_{\eps'\in\{0,1\}^s} \CC^{|\eps'|} \mathcal{D}_{d}(\Delta_{\uh^{\eps'}} (f\otimes \overline{f}))}^{2^{d-1}}_{d-1}=0,
$$
where the seminorm is taken with respect to the product system. This  follows from  our induction hypothesis (for the product system), completing the proof.

\section{Preparation for the proofs of Theorems~\ref{T:APsConvergence}-\ref{T:squares}}

The remainder of the paper is devoted to proving Theorems~\ref{T:APsConvergence}-\ref{T:squares}. We collect here definitions and preliminary lemmas needed for the proofs of  Theorems~\ref{T:APsConvergence}-\ref{T:squares} while the actual proofs are postponed till the next section.
%, as well some basic nil-equidistribution results that will be used later on.
\subsection{Definitions and basic facts about nilsystems}\label{SS:nilbasic}
We start with basic definitions and  facts about nilsystems. Proofs of most of these facts  can be found in \cite{HK18, Lei05a}  and we give precise references below.

\medskip
\noindent {\em Nilmanifolds and nilsystems.}
An {\em $s$-step  nilmanifold} is a compact homogeneous space $X=G/\Gamma$ where $G$ is an $s$-step nilpotent Lie group and $\Gamma$ is a discrete cocompact subgroup.  For $b\in G$, the transformation $T\colon X\to X$ defined by $Tx=bx$ is called an {\em $s$-step  nilrotation} on $X$ and the system $(X,m_X,T)$, where $m_X$ is the projection of the Haar measure of $G$ on $X$,  is called an {\em $s$-step nilsystem.} With $G_0$ we denote the {\em connected component of the identity element in $G$}, which can be easily shown to be a clopen normal subgroup of $G$.

\medskip
\noindent {\em A standing assumption.} A nilmanifold has several representations and depending on our setting we may use a representation that has particular features. For instance, whenever we work with an ergodic nilsystem defined by a rotation $b\in G$, we may and will assume that $G$ is spanned by $G_0$ and $b$.
%%we also work (as we may) under the standing assumption that $G$ is generated %%by $G_0$ and finitely many elements (which elements depends on our context).

\medskip
\noindent {\em Unique ergodicity.} If $X=G/\Gamma$ is a nilmanifold and $b\in G$, $x\in X$, then the closure of the sequence $(b^n\cdot x)$ is (isomorphic to)
a nilmanifold  $Y$ and the action  of $b$ in $Y$ is uniquely ergodic  \cite[Chapter~11, Theorem~17]{HK18} or \cite[Section~2]{Lei05a}.

\medskip
\noindent {\em Ergodic nilsystems on connected nilmanifolds.} The nilmanifold  $X=G/\Gamma$ is connected if and only if $G=G_0\Gamma$ \cite[Chapter~10,  Lemma~11]{HK18}. Furthermore, a nilmanifold $X$ is connected if and only if it admits a totally ergodic nilsystem, and in this case any ergodic nilsystem on $X$ is totally  ergodic \cite[Chapter~11, Corollary~7]{HK18}.

\medskip
\noindent {\em Ergodic nilsystems on disconnected nilmanifolds.}
Let  $X$ be a  nilmanifold and $(X,m_X,T)$ be an ergodic nilsystem.
If $X_0$ is the connected component of $e_X$, then $X_0$ is a nilmanifold, and there exists $d\in \N$ such that
the sets $X_0,TX_0,\ldots, T^{d-1}X_0$ form a partition of $X$,
and $T^d$ leaves each of these sets invariant. Then each nilsystem $(T^iX_0,m_{T^iX_0},T^d)$, $i=0,\ldots, d-1$, is totally ergodic.
For a proof of these basic facts see  \cite[Chapter~11, Corollary~8]{HK18}.

\medskip
\noindent {\em The abelianization of a nilmanifold.} If $X$ is a nilmanifold, then
$Z:=X/[G,G]=G/([G,G]\cdot \Gamma)$ is the factor of $X$ called the {\em abelianization of $X$}, with the factor map $\pi_{X,Z}:X\to Z$ being the canonical projection. The space $Z$  is a compact abelian Lie group and so it can be identified with $H\times \T^k$ for a finite abelian group $H$ and $k\in \N_0$.
%%If $X$ admits an ergodic nilrotation, then one easily gets that necessarily $F=\Z_d$ for %%some $d\in \N$.
%%(in the literature,  it is also called the {\em horizontal torus of $X$}). We denote by $\pi_{X,Z}$ the factor map $X\mapsto Z$.
If $(X, \CX, \mu,T)$ is an ergodic  nilsystem, then necessarily $Z=\Z_d\times \T^k$ for some $d\in \N$ and $k\in \N_0$, and the system $(Z,m_Z,T)$ is the Kronecker factor of the system $(X, \CX, \mu,T)$.

\medskip
\noindent {\em Leibman's equidistribution criterion for linear sequences.}  The nilsystem $(X, \CX, \mu,T)$ is ergodic if and only if the system  $(Z,m_Z,T)$ is ergodic (see \cite[Chapter~11, Theorem~6]{HK18}
%%(or \cite[Chapter~11, Theorem~17]{HK18} if needed for several rotations)
or \cite[Theorem~2.17]{Lei05a}). As a consequence, for $b\in G$, the sequence $(b^nx)$ is
equidistributed on $X$ if and only if the sequence $(b^n\cdot \pi_{X,Z}(x))$ is equidistributed/dense in $Z$.

\medskip
\noindent {\em Polynomial sequences.}  Let   $G$ be   a nilpotent Lie group and  $g(n)=b_1^{p_1(n)} \cdots b_k^{p_k(n)}$, where $b_1,\ldots, b_k\in G$ and $p_1,\ldots, p_k\in \Z[n]$. Let also $X=G/\Gamma$ be a nilmanifold.
We call any  sequence $(g(n))$ of the previous form a {\em polynomial sequence in } $G$ and
$ (g(n)\cdot e_X)$ a {\em polynomial sequence in} $X$.

\medskip
\noindent {\em Leibman's equidistribution criterion for  polynomial sequences.}  Let $X=G/\Gamma$ be a nilmanifold with $G$ connected. Then the  polynomial sequence $(g(n)\cdot e_X)$ is equidistributed in $X$ if and only if the sequence $(g(n)\cdot \pi_{X,Z}(x))$ is equidistributed/dense in $Z$ (see \cite[Chapter 14, Theorem~20]{HK18} or  \cite[Theorem~C]{Lei05a}).
We caution the reader that although for linear sequences this result continues to hold for disconnected $G$, it fails for general polynomial sequences when $G$ is not connected (in which case one has to test equidistribution on the larger nilmanifold $X/[G_0,G_0]$).

\medskip
\noindent {\em Factorization of linear sequences in $G_0$.}  Suppose that  $G=G_0\Gamma$ where $G_0$ is the connected component of $G$. Then for every  $b\in G$ there exist polynomial sequences $g_0(n)$ and $\gamma(n)$ of $G_0$ and $\Gamma$ respectively such that $b^n=g_0(n)\cdot \gamma(n)$ for every $n\in\N$.  This is a rather standard fact and we give a quick proof of this for completeness.

By \cite[Chapter~10, Proposition~25]{HK18}, we can assume that $G$ is a closed normal subgroup of a connected and simply connected nilpotent Lie group $\tilde{G}$ (we do this so that later on, \cite[Proposition~4.1]{BLL08} becomes applicable). Our assumption gives that $b=b_0\gamma$ for some $b_0\in G_0$ and $\gamma\in \Gamma$. Using this factorisation, the fact that  $G_0$ is a normal subgroup of $G$, as well as the inclusion $[G,G]\subset G_0$ (which follows from \cite[Chapter 10, Lemma 5]{HK18}), we easily deduce that $b^n=(b_0\gamma)^n=b_n\gamma^n$ for some $b_n\in G_0$. Hence,
$b_n=b^n\gamma^{-n}$ is a sequence in $G$, and therefore also in $\tilde{G}$.  By  \cite[Proposition~4.1]{BLL08},  the sequence  $b_n$ is a polynomial sequence in $G_0$, completing the proof.

%%\medskip
%%\noindent {\em Lifting property.} MOVE STUFF HERE??

\subsection{Nil-equidistribution results}
We record some convenient equivalent ways to verify $\ell$-step equidistribution.
\begin{lemma}\label{L:lstepequi}
	If  $a\colon \N\to \Z$ is a sequence, then the  following properties are equivalent:
	\begin{enumerate}
		\item  $a$ is good for  $\ell$-step equidistribution.
		
		\item  For every $\ell$-step nilmanifold $X=G/\Gamma$,  $b\in G$, and $F\in C(X)$,  we have
		$$
		\lim_{N\to\infty}\E_{n\in[N]}\, F(b^{a(n)}\cdot e_X)=\lim_{N\to\infty}\E_{n\in[N]} \, F(b^n\cdot e_X).
		$$
		
		\item  For every $\ell$-step nilmanifold $X=G/\Gamma$, $b\in G$,  $x\in X$, and $F\in C(X)$,  we have
		$$
		\lim_{N\to\infty}\E_{n\in[N]} \, F(b^{a(n)}\cdot x)=\lim_{N\to\infty}\E_{n\in[N]} \, F(b^n\cdot x).
		$$
	\end{enumerate}
\end{lemma}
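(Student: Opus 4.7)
The plan is to prove the cyclic chain (iii) $\Rightarrow$ (ii) $\Rightarrow$ (i) $\Rightarrow$ (iii). The implication (iii) $\Rightarrow$ (ii) is immediate by specializing $x = e_X$. For (ii) $\Rightarrow$ (i), I take any $\ell$-step nilmanifold $X = G/\Gamma$ and ergodic $b\in G$; then by the unique ergodicity of ergodic nilrotations (cited in Section~\ref{SS:nilbasic}), the sequence $(b^n \cdot e_X)$ is equidistributed in $X$, so
\[
\lim_{N\to\infty}\E_{n\in[N]}\, F(b^n\cdot e_X) = \int F\, dm_X
\]
for every $F\in C(X)$. Combined with the hypothesis (ii), this forces $\lim_{N\to\infty}\E_{n\in[N]} F(b^{a(n)}\cdot e_X) = \int F\, dm_X$ for all $F\in C(X)$, which is precisely the $\ell$-step equidistribution property of~(i).

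The main work is (i) $\Rightarrow$ (iii). Fix an $\ell$-step nilmanifold $X=G/\Gamma$, an element $b\in G$ (not assumed ergodic), a base point $x\in X$, and $F\in C(X)$. First I will reduce to the case $x = e_X$ by the standard conjugation trick: writing $x = g \cdot e_X$ for some $g\in G$, the identity $b^n \cdot x = g\cdot(g^{-1}bg)^n \cdot e_X$ allows me to replace $(b, F)$ by $(b', F_g)$ where $b' := g^{-1}bg \in G$ and $F_g(y) := F(g\cdot y)$ is continuous on $X$. It therefore suffices to prove the identity of (ii) for $b'$ and $F_g$ starting at $e_X$, for arbitrary (not necessarily ergodic) $b'\in G$.

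To handle the possibly non-ergodic $b'$, I invoke the unique ergodicity result from Section~\ref{SS:nilbasic}: the closure $Y$ of the orbit $\{(b')^n\cdot e_X : n\in\N\}$ is a sub-nilmanifold of $X$, and $b'$ acts uniquely ergodically on $Y$ with $e_Y = e_X$. Since $Y$ is a sub-nilmanifold of the $\ell$-step nilmanifold $X$, it is itself $\ell$-step (its structure group is a subgroup of $G$, hence at most $\ell$-step nilpotent), so property (i) applies to $Y$ and the ergodic element $b'$, yielding that $((b')^{a(n)}\cdot e_Y)$ is equidistributed in $Y$. Unique ergodicity gives the analogous statement for $((b')^n \cdot e_Y)$. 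Evaluating $F_g|_Y$ on both orbits, the two limits coincide with $\int F_g|_Y\, dm_Y$, which establishes (iii). The only potential subtlety, namely confirming that the orbit closure is genuinely a sub-nilmanifold of $X$ on which $b'$ acts ergodically, is exactly the content of the unique ergodicity fact recorded in Section~\ref{SS:nilbasic}, so no additional work is required.
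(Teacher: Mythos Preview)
Your proof is correct and uses the same ingredients as the paper's proof (the conjugation trick to change base points, and the orbit-closure/unique-ergodicity fact to reduce to an ergodic rotation on a sub-nilmanifold). The only difference is organizational: the paper proves the cycle $(i)\Rightarrow(ii)\Rightarrow(iii)\Rightarrow(i)$, whereas you run the cycle in the opposite direction and fold the paper's $(i)\Rightarrow(ii)$ and $(ii)\Rightarrow(iii)$ into a single $(i)\Rightarrow(iii)$ step.
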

\begin{proof}
	We prove that $(i)\implies (ii)$. It is known (see Section~\ref{SS:nilbasic}) that if  $Y$ is the closure
	of the set $\{b^n\cdot e_X\}$, then $Y$ is a subnilmanifold of $X$, $b$ acts ergodically on $Y$, and the sequence $(b^n\cdot e_X)$ is equidistributed in $Y$. Since $a\colon \N\to \Z$ is good for $\ell$-step equidistribution, we have
	$$
	\lim_{N\to\infty}\E_{n\in[N]}\, F(b^{a(n)}\cdot e_X)=\int F\, dm_Y=\lim_{N\to\infty}\E_{n\in[N]} \, F(b^n\cdot e_X),
	$$
	where the first identity follows from part $(i)$ and the second identity because
	$(b^n\cdot e_X)$ is equidistributed in $Y$.

	We prove that $(ii)\implies (iii)$. Let $x=g\cdot e_X$ for some $g\in G$.
	Then $$b^{a(n)}\cdot x=g(g^{-1}bg)^{a(n)}\cdot e_X$$ and
	\begin{align*}
		\lim_{N\to\infty}\E_{n\in[N]} \, F(b^{a(n)}\cdot x)&=
		\lim_{N\to\infty}\E_{n\in[N]} \, F(g(g^{-1}bg)^{a(n)}\cdot e_X)\\
		&=\lim_{N\to\infty}\E_{n\in[N]} \, F(g(g^{-1}bg)^{n}\cdot e_X)\\
		&=\lim_{N\to\infty}\E_{n\in[N]} \, F(b^n\cdot x),
	\end{align*}
	where we used part $(ii)$ for  the continuous function $x\mapsto F(gx)$ and the element $g^{-1}bg$ in place of $b$ in order to justify the second identity.

	We prove that $(iii)\implies (i)$. Let $X=G/\Gamma$ be an $\ell$-step nilmanifold,  $b$ be an ergodic element of $G$, and $F\in C(X)$. Then (see Section~\ref{SS:nilbasic})  we have $$\lim_{N\to\infty}\E_{n\in[N]} \, F(b^n\cdot e_X)=\int F\, dm_X,$$ and part $(iii)$ for $e_X$ in place of $x$  gives that
	$$
	\lim_{N\to\infty}\E_{n\in[N]}\, F(b^{a(n)}\cdot e_X)=\lim_{N\to\infty}\E_{n\in[N]} \, F(b^n\cdot e_X)=\int F\, dm_X.
	$$
	Hence, the sequence $(b^{a(n)}\cdot e_X)$ is equidistributed on $X$, which implies $(i)$.
\end{proof}
Next, we record some convenient equivalent ways to verify
mean convergence properties for nilsystems.
\begin{proposition}\label{P:ii-iii}
	If $a\colon \N\to \Z$ is a sequence and $\ell\in \N$, then the following properties  are equivalent:
	\begin{enumerate}
		\item  $a$ is good for  mean convergence along $\ell$-term APs for all  ergodic $\ell$-step nilsystems.
		
		\item  $a$ is good for pointwise  convergence along $\ell$-term APs for all  $\ell$-step nilsystems.
		
		\item  $a$ is good for   $\ell$-step  equidistribution.
	\end{enumerate}
\end{proposition}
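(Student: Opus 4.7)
I will establish the equivalences by proving the cyclic chain $(iii)\Rightarrow(ii)\Rightarrow(i)\Rightarrow(iii)$.

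For $(iii)\Rightarrow(ii)$, fix an $\ell$-step nilmanifold $X=G/\Gamma$, an element $b\in G$, integers $k_1,\ldots,k_\ell\in\Z$, continuous functions $F_1,\ldots,F_\ell\in C(X)$, and a point $x\in X$. The plan is to form the closure $Y$ of $\{(b^{k_1 n}x,\ldots,b^{k_\ell n}x):n\in\N\}$ in $X^\ell$ and pass to the diagonal. Since $G^\ell$ is $\ell$-step nilpotent, $X^\ell=G^\ell/\Gamma^\ell$ is an $\ell$-step nilmanifold, and by the orbit closure fact recalled in Section~\ref{SS:nilbasic}, $Y$ is a subnilmanifold on which the element $g:=(b^{k_1},\ldots,b^{k_\ell})$ acts ergodically. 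I then apply the equivalent form $(iii)$ of Lemma~\ref{L:lstepequi} to $g$ acting on $Y$, evaluated at $y:=(x,\ldots,x)\in Y$ against the continuous function $F:=(F_1\otimes\cdots\otimes F_\ell)|_Y$, which directly yields the desired identity between the two Ces\`aro averages.

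For $(ii)\Rightarrow(i)$, fix an ergodic $\ell$-step nilsystem $(X,\CX,\mu,T)$, integers $k_1,\ldots,k_\ell$, and functions $f_1,\ldots,f_\ell\in L^\infty(\mu)$. Using the density of $C(X)$ in $L^2(\mu)$, I approximate each $f_j$ in $L^2$-norm by a continuous $F_j$. A multilinear triangle-inequality argument bounds the $L^2$ difference between the two-sided averages formed from the $f_j$'s and those formed from the $F_j$'s by a sum of $L^2$-approximation errors times uniform $L^\infty$-bounds. For the continuous functions $F_j$, property $(ii)$ gives pointwise convergence at every point of the difference between the $a(n)$-averages and the $n$-averages to zero; combining this with the uniform bound $\prod_j\|F_j\|_\infty$ and the dominated convergence theorem promotes it to $L^2$ convergence. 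Letting the approximation error tend to zero yields $(i)$.

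For $(i)\Rightarrow(iii)$, the most delicate direction, I invoke the equivalent form $(ii)$ of Lemma~\ref{L:lstepequi} and reduce to showing $\lim_N\frac{1}{N}\sum_n F(b^{a(n)}\cdot e_X)=\lim_N\frac{1}{N}\sum_n F(b^n\cdot e_X)$ for every $\ell$-step nilmanifold $X=G/\Gamma$, $b\in G$, and continuous $F$. By replacing $X$ with the subnilmanifold $\overline{\{b^n\cdot e_X\}}$ on which $b$ acts ergodically, I reduce to the case where $b$ is ergodic on $X$, so the right-hand side equals $\int F\,dm_X$. Applying $(i)$ to the ergodic $\ell$-step nilsystem $(X,m_X,T_b)$ with $\ell'=1$, $k_1=1$, $f_1=F$, and trivial other terms then yields $\frac{1}{N}\sum T_b^{a(n)}F\to\int F\,dm_X$ in $L^2(m_X)$.

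The main obstacle lies in promoting this $L^2$-convergence to pointwise convergence at the base point $e_X$. I would handle this by induction on the nilpotency step $\ell$, using the vertical Fourier decomposition along the central torus $T_\ell:=G_\ell/(G_\ell\cap\Gamma)$: the zero-frequency component descends to the $(\ell-1)$-step quotient $X/T_\ell$ where the inductive hypothesis applies, while each non-trivial $T_\ell$-isotypic component $F_\xi$ satisfies $F_\xi(tx)=\xi(t)F_\xi(x)$ for $t\in T_\ell$. Since $T_\ell$ lies in the center of $G$ and thus commutes with $b$, the squared modulus $|\phi_N^\xi|^2$ of the corresponding Ces\`aro average descends to $X/T_\ell$; its $L^1$-convergence to zero on the quotient (inherited from the $L^2$-convergence on $X$) can then be promoted to pointwise convergence at the image of $e_X$ by the inductive hypothesis applied to the $(\ell-1)$-step quotient, closing the induction. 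The base case $\ell=1$ reduces, via Fourier analysis on the Kronecker factor $\Z_r\times\T^d$ and Weyl's criterion, to the equidistribution of $(a(n)\beta)$ for all spectral $\beta$, which is precisely what the $L^2$-mean convergence encodes for $1$-step rotations.
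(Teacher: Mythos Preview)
Your arguments for $(iii)\Rightarrow(ii)$ and $(ii)\Rightarrow(i)$ are correct and essentially match the paper's (the paper is terser, citing Lemma~\ref{L:lstepequi} and the bounded convergence theorem respectively).

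For $(i)\Rightarrow(iii)$ there is a genuine gap in the inductive step. You correctly observe that $|\phi_N^\xi|^2$ descends to the $(\ell-1)$-step quotient $X/T_\ell$ and that $\||\phi_N^\xi|^2\|_{L^1(X/T_\ell)}\to 0$. You then claim that the inductive hypothesis---that $a$ is good for $(\ell-1)$-step equidistribution---promotes this to $|\phi_N^\xi([e_X])|^2\to 0$. But $(\ell-1)$-step equidistribution is the assertion that $\E_{n\in[N]}\,G(c^{a(n)}y)\to\int G\,dm_{Y}$ for each \emph{fixed} continuous $G$ on an $(\ell-1)$-step nilmanifold $Y$; it provides no mechanism for upgrading $L^1$-convergence of an $N$-dependent family of functions to convergence at a prescribed point. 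The functions $|\phi_N^\xi|^2$ vary with $N$ and are not Ces\`aro averages along $(c^{a(n)})$ of any fixed function, so the inductive hypothesis simply does not apply. Note also that your argument invokes only the single-term consequence of $(i)$, whereas the statement genuinely couples the number $\ell$ of terms in the AP with the nilpotency step.

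The paper takes a different route that exploits the full $\ell$-fold structure in hypothesis $(i)$. It invokes \cite[Proposition~2.4]{Fr15}: for every $\ell$-step nilsequence $\psi(n)=F(b^n\cdot e_X)$ there exist fixed $k_1,\ldots,k_\ell\in\N$ such that for each $\varepsilon>0$ one can find an $\ell$-step nilsystem $(Y,m_Y,S)$ and $g_0,\ldots,g_\ell\in C(Y)$ with $\|\psi-c_\varepsilon\|_\infty\le\varepsilon$, where $c_\varepsilon(n)=\int g_0\cdot S^{k_1n}g_1\cdots S^{k_\ell n}g_\ell\,dm_Y$. Property $(i)$ applied to $(Y,m_Y,S)$ gives $\lim_N\E_{n\in[N]}c_\varepsilon(a(n))=\lim_N\E_{n\in[N]}c_\varepsilon(n)$ directly (by integrating the $L^2$ identity against $g_0$), and letting $\varepsilon\to 0$ yields the same for $\psi$.
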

\begin{remark}
	In the implication $(i)\implies (ii)$, we only have to assume in $(i)$ weak convergence and we can deduce that \eqref{E:X} folds for all $x_1,\ldots, x_\ell\in X$ in place of $x,\ldots, x$.
\end{remark}
\begin{proof}
	We prove that $(iii)\implies (ii)$. 	Suppose  that the sequence  $a\colon \N\to \Z$ is good for   $\ell$-step  equidistribution. Let $X=G/\Gamma$ be an $\ell$-step nilmanifold, $b\in G$, and $k_1,\ldots, k_\ell\in \Z$.  As explained in Section~\ref{SS:nilbasic}, we can assume that $b$ is an ergodic nilrotation of $X$.
	It suffices to show that if $F_1,\ldots, F_\ell\in C(X)$, then for  every  $x\in X$ we have
	\begin{equation}\label{E:X}
		\lim_{N\to\infty}\E_{n\in[N]} \, F_1(b^{k_1a(n)} x)\cdots 	 F(b^{k_\ell a(n)} x)=
		\lim_{N\to\infty}\E_{n\in[N]} \, F_1(b^{k_1n} x)\cdots 	 F(b^{k_\ell n} x).
	\end{equation}
	%%(In fact, we only have to show that the  $L^2(\mu)$-limits exist and coincide.)
	We let $\tilde{X}:=G^\ell/\Gamma^\ell$, which is an $\ell$-step nilmanifold, and  $\tilde{b}:=(b^{k_1},\ldots,b^{k_\ell})\in G^\ell$.
	%% and
	%%$\Delta_{X^\ell}$ be the diagonal of $X^\ell$, and
	%%	$\tilde{X}_x$ be the closure of the set $\{\tilde{b}^n\cdot (x,\ldots, x), n\in %%\Z\}$. It  is known that $\tilde{X}_x$ is   always an $\ell$-step  sub-nilmanifold of $X^\ell$  (\cite{Leibman}) and for  every $x\in X$ the action of $\tilde{b}$ on  $\tilde{X}_x$  is  uniquelly ergodic (\cite{Leibman}).  Hence, our assumption gives that
	Then  property $(iii)$ combined with  Lemma~\ref{L:lstepequi} (we use the implication $(i)\implies (iii)$ there), gives that for  every $\tilde{F}\in C(\tilde{X})$ and $\tilde{x}\in \tilde{X}$, we have
	\begin{equation}\label{E:tildeX}
		\lim_{N\to\infty}\E_{n\in[N]} \, \tilde{F}(\tilde{b}^{a(n)}\tilde{x})=\lim_{N\to\infty}\E_{n\in[N]} \, F(\tilde{b}^n\tilde{x}).
	\end{equation}
	If we apply this for  $\tilde{F}:=F_1 \otimes \cdots \otimes F_\ell$ and  $\tilde{x}:=(x,\ldots, x)\in \tilde{X}$, $x\in X$, we get that \eqref{E:X} holds for  every $x\in X$.
	
	The implication $(ii)\implies (i)$ follows from the bounded convergence theorem.
	
	We prove that $(i)\implies (iii)$.
	Let $X=G/\Gamma$ be an $\ell$-step nilmanifold, $b\in G$ be ergodic, and  $F\in C(X)$. For each $n\in\N$, we let
	$$
	\psi(n):= F(b^n\cdot e_X).%, \qquad n\in\N.
	$$
	It suffices to show that
	\begin{equation}\label{E:psi}
		\lim_{N\to\infty}\E_{n\in[N]} \, \psi(a(n)) =\lim_{N\to\infty}\E_{n\in[N]} \, \psi(n).
	\end{equation}
	It follows  from \cite[Proposition~2.4]{Fr15} that there exist $k_1,\ldots, k_\ell \in \N$   such that the following holds:  for every $\varepsilon>0$, there exist an $\ell$-step  nilsystem $(Y,m_Y,S)$ (which we may assume to be ergodic if we please) and functions $g_0,g_1,\ldots,g_\ell\in C(Y)$,
	such that  the sequence
	$$
	c_\varepsilon(n):=\int g_0\cdot S^{k_1n}g_1\cdots S^{k_\ell n}g_\ell\, dm_Y, \qquad n\in\N,
	$$
	satisfies
	\begin{equation}\label{E:approx}
		\norm{\psi-c_\varepsilon}_\infty\leq \varepsilon.
	\end{equation}
	Then property $(i)$  implies that both limits below exist and we have
	$$
	\lim_{N\to\infty}\E_{n\in[N]}\, c_\varepsilon(a(n))= \lim_{N\to\infty}\E_{n\in[N]}\, c_\varepsilon(n).
	$$
	Combining this with the fact that  \eqref{E:approx} holds for every $\varepsilon>0$, we get that \eqref{E:psi} holds.
	%% There exist $k_1,\ldots, k_\ell \in \N$ an ergodic  $\ell$-step  nilsystem %$(Y,m_Y,S)$ and $F\in C(Y^\ell)$ such that
	%% $$
	%%\psi(n)=\int F(y,S^{k_1n}y,\ldots, S^{k_\ell}y)\, dm_Y.
	%%$$
	%%There exists $d\in \N$ such that the system  $(Y,m_Y,S^d)$ has non-trivial %%rational spectrum. Hence for $i=0,\ldots, d-1$ the sequence
	%%$$
	%%\psi(dn+i)= \int F_i(y,S_d^{k_1n}y,\ldots, S_d^{k_\ell n}y)\, dm_Y
	%%$$
	%%for some $F_i\in C(Y^\ell)$ and the nilsystem $(Y,m_Y,S_d)$ has non-trivial %%rational spectrum.  	
\end{proof}
We will also need a variant of the implication $(iii)\implies (i)$ of the previous statement that works under the assumption of  $\ell$-step irrational equidistribution.
\begin{proposition}\label{P:te}
	If the sequence $a\colon \N\to \Z$   is good for   $\ell$-step irrational equidistribution, then
	it is good for  mean convergence along $\ell$-term APs for all totally ergodic $\ell$-step nilsystems.
\end{proposition}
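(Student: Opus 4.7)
The plan is to adapt the proof of the implication $(iii) \Rightarrow (ii)$ in Proposition~\ref{P:ii-iii} to the totally ergodic setting; the essential new feature is that the auxiliary nilmanifold on which we invoke the equidistribution hypothesis can be chosen connected, so that $\ell$-step irrational equidistribution suffices.

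Let $(X,\CX,\mu,T)$ be a totally ergodic $\ell$-step nilsystem. Since $X$ is connected (Section~\ref{SS:nilbasic}), we may write $X = G/\Gamma$ with $G$ connected and simply connected (after passing to the universal cover as in Section~\ref{SS:nilbasic}) and $T = L_b$ for some $b \in G$. By density of continuous functions in $L^2(\mu)$, it suffices to establish, for every $k_1,\ldots,k_\ell \in \Z$, every $F_1,\ldots,F_\ell \in C(X)$, and every $x\in X$, the pointwise identity
\[
\lim_{N\to\infty}\E_{n\in[N]}\, \tilde F(\tilde b^{a(n)} \tilde x) = \lim_{N\to\infty}\E_{n\in[N]}\, \tilde F(\tilde b^n \tilde x),
\]
where $\tilde X := G^\ell/\Gamma^\ell$, $\tilde b := (b^{k_1},\ldots,b^{k_\ell})$, $\tilde F := F_1 \otimes \cdots \otimes F_\ell$, and $\tilde x := (x,\ldots,x)$. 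Writing $\tilde x = \tilde g \cdot e_{\tilde X}$ and conjugating as in the proof of Lemma~\ref{L:lstepequi}$(ii) \Rightarrow (iii)$, we reduce further to $\tilde x = e_{\tilde X}$ with $\tilde b$ replaced by $\tilde b' := \tilde g^{-1} \tilde b \tilde g$; crucially, $\tilde b'$ has the same image in the abelianization of $\tilde X$ as $\tilde b$.

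The orbit closure $\tilde Y := \overline{\{\tilde b'^n e_{\tilde X} : n \in \Z\}}$ is a subnilmanifold of $\tilde X$ on which $\tilde b'$ acts uniquely ergodically, and the orbit is equidistributed in $\tilde Y$. The decisive step -- and the main obstacle -- is verifying that $\tilde Y$ is connected, so that the $\ell$-step irrational equidistribution hypothesis is applicable. The strategy is to show that $\tilde b'$ acts totally ergodically on $\tilde Y$: by the characterization recalled in Section~\ref{SS:nilbasic}, a nilmanifold admits a totally ergodic rotation precisely when it is connected. For total ergodicity, note that total ergodicity of $T$ on $X$ makes $\{nr \bar b : n \in \Z\}$ dense in $Z = \T^d$ for every $r \geq 1$ (here $Z$ denotes the abelianization of $X$, which is a torus since $G$ is connected), and hence $\{nr (k_1 \bar b, \ldots, k_\ell \bar b) : n \in \Z\}$ is dense in the connected subtorus $T' := \{(k_1 z, \ldots, k_\ell z) : z \in Z\}$ of $\tilde Z := Z^\ell$. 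Combined with Leibman's equidistribution criterion (Section~\ref{SS:nilbasic}) applied to $\tilde Y$, this yields ergodicity of $\tilde b'^r$ on $\tilde Y$ for every $r \geq 1$, i.e.\ total ergodicity of $\tilde b'$ on $\tilde Y$.

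With $\tilde Y$ a connected $\ell$-step nilmanifold and $\tilde b'$ ergodic on it, the hypothesis of $\ell$-step irrational equidistribution gives that $(\tilde b'^{a(n)} e_{\tilde X})$ is equidistributed in $\tilde Y$, while $(\tilde b'^n e_{\tilde X})$ is equidistributed in $\tilde Y$ by construction. Both limits thus equal $\int_{\tilde Y} \tilde F \, dm_{\tilde Y}$, completing the identity. The hardest part will be the connectedness argument for $\tilde Y$: $\tilde b$ itself typically fails to be ergodic on $\tilde X$ (for instance when some $k_j$ coincide), so one must carefully isolate the orbit closure and exploit the total ergodicity of $T$ -- rather than mere ergodicity -- to rule out any disconnected components.
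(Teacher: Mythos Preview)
Your overall outline matches the paper's, but there is a genuine gap in the connectedness argument for $\tilde Y$. You assert that density of $\{nr(k_1\bar b,\ldots,k_\ell\bar b)\}$ in the subtorus $T'\subset\tilde Z$, together with Leibman's criterion applied to $\tilde Y$, forces $\tilde b'^r$ to act ergodically on $\tilde Y$. But Leibman's criterion refers to the abelianization $\tilde Y_{\mathrm{ab}}=H/([H,H]\Delta)$ of the orbit closure $\tilde Y=H/\Delta$, \emph{not} to the image $T'$ of $\tilde Y$ in $\tilde Z=G^\ell/([G^\ell,G^\ell]\Gamma^\ell)$. In general $[H,H]$ is strictly smaller than $H\cap[G^\ell,G^\ell]$, so the natural map $\tilde Y_{\mathrm{ab}}\to T'$ is merely a surjection whose kernel can contain a non-trivial finite group; ergodicity of the rotation on the quotient $T'$ then says nothing about ergodicity on $\tilde Y_{\mathrm{ab}}$. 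Hence your deduction of total ergodicity of $\tilde b'$ on $\tilde Y$, and with it the connectedness of $\tilde Y$, is unjustified. In fact the paper explicitly remarks that $\tilde X_x$ can fail to be connected for some $x\in X$, so the pointwise identity you aim to prove for \emph{every} $x$ is false in general.

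The paper's proof proceeds along the same lines but establishes the pointwise identity only for \emph{almost every} $x\in X$, which is all that is needed for mean convergence. The decisive input---that for almost every $x$ the action of $\tilde b$ on the orbit closure $\tilde X_x$ is totally ergodic (equivalently, $\tilde X_x$ is connected)---is not argued from scratch via Leibman's criterion, but is quoted from \cite[Chapter~15, Lemma~9]{HK18} or \cite[Revised Theorem~7.1]{MR19}. That external result is exactly the piece your argument is missing.
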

\begin{remark}
	The converse implication is probably  also be true but it does not seem easy to prove. The problem is that  we cannot claim that the nilsystem $(Y,m_Y,S)$ constructed in  \cite[Proposition~2.4]{Fr15} is totally ergodic when $b$ is totally ergodic.
\end{remark}
\begin{proof}
	The argument is similar with the one used to establish the implications $(iii)\implies (ii)\implies (i)$ in Proposition~\ref{P:ii-iii}  but   we  need to make some non-trivial adjustments and use an additional result, so we give the details.
	
	Suppose  that the sequence  $a\colon \N\to \Z$ is good for   $\ell$-step  irrational equidistribution. Let $(X,m_X,T)$ be an   ergodic $\ell$-step nilsystem, where $X=G/\Gamma$ is a connected  $\ell$-step nilmanifold,  and $Tx=bx$  for some $b\in G$.
	As remarked in Section~\ref{SS:equi}, the nilsystem  $(X,m_X,T)$ is totally ergodic.
	It suffices to show that if $F_1,\ldots, F_\ell\in C(X)$, then for almost  every  $x\in X$ and all  $k_1,\ldots, k_\ell\in \Z$, we have
	\begin{equation}\label{E:X'}
		\lim_{N\to\infty}\E_{n\in[N]} \, F_1(b^{k_1a(n)} x)\cdots 	 F(b^{k_\ell a(n)} x)=
		\lim_{N\to\infty}\E_{n\in[N]} \, F_1(b^{k_1n} x)\cdots 	 F(b^{k_\ell n} x).
	\end{equation}
	%%(In fact, we only have to show that the  $L^2(\mu)$-limits exist and coincide.)
	We let $\tilde{X}:=G^\ell/\Gamma^\ell$, which is an $\ell$-step nilmanifold, and  $\tilde{b}:=(b^{k_1},\ldots,b^{k_\ell})\in G^\ell$. We also let
	$\tilde{X}_x$ be the closure of the set $\{\tilde{b}^n\cdot (x,\ldots, x)\colon n\in \Z\}$. It  is known that $\tilde{X}_x$ is  an $\ell$-step  sub-nilmanifold of $X^\ell$  for every $x\in X$  (see Section~\ref{SS:nilbasic}). Moreover, for almost every   $x\in X$, the action of $\tilde{b}$ on  $\tilde{X}_x$  is  totally ergodic (see \cite[Chapter~15, Lemma~9]{HK18} or \cite[Revised Theorem~7.1]{MR19}), or equivalently, $\tilde{X}_x$ is connected and $\tilde{b}$ acts ergodically on $\tilde{X}_x$ (see Section~\ref{SS:nilbasic}; importantly, this is not true for every $x\in X$).
	Since the sequence $a\colon \N\to \Z$ is  good for $\ell$-step irrational equidistribution,
	we deduce that for almost every $x\in X$, the following holds:
	for  every $\tilde{F}\in C(\tilde{X}_x)$ and $\tilde{x}\in \tilde{X}_x$, we have
	$$
	\lim_{N\to\infty}\E_{n\in[N]} \, \tilde{F}(\tilde{b}^{a(n)}\tilde{x})=\int \tilde{F} \, dm_{\tilde{X}_x}=\lim_{N\to\infty}\E_{n\in[N]} \, F(\tilde{b}^n\tilde{x}),
	$$
	where the first identity follows because  the sequence $a\colon \N\to \Z$ is  good for $\ell$-step irrational equidistribution (see also the first remark after the relevant definition) and the second because $\tilde{b}$ is an ergodic rotation of $\tilde{X}_x$ and hence uniquely ergodic  (see Section~\ref{SS:nilbasic}).
	If we apply this for  $\tilde{F}:=F_1 \otimes \cdots \otimes F_\ell$ and  $\tilde{x}:=(x,\ldots, x)\in \tilde{X}_x$, we get that \eqref{E:X'} holds for almost every $x\in X$.
\end{proof}
\section{Proof of Theorems~\ref{T:APsConvergence}-\ref{T:squares}}
We conclude the paper by deriving Theorems~\ref{T:APsConvergence}-\ref{T:squares} from Theorem~\ref{T:seminormdrop}.
%In this section, we will use Theorem~\ref{T:seminormdrop} in order to prove Theorems~\ref{T:APsConvergence}-\ref{T:squares}. 
While the proof of Theorem \ref{T:seminormdrop} made no use of the structure of the Host-Kra factors $\CZ_s$ other than the elementary equivalence \eqref{E:semifactor}, the proofs of Theorems~\ref{T:APsConvergence}-\ref{T:squares} will utilise in an essential way the main structural result from \cite{HK05} (see also  \cite[Chapter~16, Theorem~2]{HK18}), which we now state.
\begin{theorem}[Host-Kra structure theorem]\label{T:HK}
	Let $(X, \CX, \mu,T)$ be an ergodic system. Then for every $s\in \N$, the factor $\CZ_s$ is an inverse limit of $s$-step ergodic nilsystems.
\end{theorem}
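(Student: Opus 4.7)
The plan is to proceed by induction on $s$. The base case $s=1$ is classical: since $\nnorm{\cdot}_2$ vanishes precisely on functions orthogonal to $\CZ_1$, and a direct computation using \eqref{E:seminorm2} identifies $\CZ_1$ with the Kronecker factor of $(X,\CX,\mu,T)$, the Halmos--von Neumann theorem represents it as a rotation on a compact abelian group. Any such group is an inverse limit of compact abelian Lie groups, and each factor in that inverse system is a $1$-step nilsystem.

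For the inductive step, assume $\CZ_{s-1}$ is an inverse limit of $(s-1)$-step ergodic nilsystems. The first substantive step is to show that $\CZ_s$ is a compact abelian extension of $\CZ_{s-1}$: one verifies that $\CZ_s$ is an isometric extension of $\CZ_{s-1}$ in the sense of Furstenberg--Zimmer, and then uses the Mackey group together with the cubical definition of $\nnorm{\cdot}_{s+1}$ to identify the fiber as a compact abelian group $U$. Thus, up to isomorphism of factors, $\CZ_s \cong \CZ_{s-1} \times_\sigma U$ for some cocycle $\sigma\colon X \to U$ measurable with respect to $\CZ_{s-1}$.

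The heart of the argument is analyzing $\sigma$. Translating the condition $\nnorm{f}_{s+1}=0$ for $f$ orthogonal to $\CZ_s$ into a statement about $\sigma$ via the $(s+1)$-cube construction produces a ``relative Conze--Lesigne equation'' of the form
\begin{equation*}
\Delta_h \sigma = \partial F_h + c_h
\end{equation*}
for every $h$, where $\partial$ is the coboundary operator on $\CZ_{s-1}$, $F_h$ is a measurable family, and $c_h$ is a constant depending measurably on $h$. The decisive step is to classify cocycles satisfying such an equation: after reducing $U$ to a compact abelian Lie group by a projective limit argument and cutting it into tori and finite cyclic factors, one shows that $\sigma$ is cohomologous to a cocycle of ``polynomial type'' of degree $s$ taking values in an appropriate extension of $\CZ_{s-1}$. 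This allows one to realize $\CZ_{s-1} \times_\sigma U$ as a quotient of an $s$-step nilmanifold, from which the inverse-limit representation of $\CZ_s$ follows after approximating arbitrary $U$ by Lie quotients.

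The main obstacle is the polynomial classification of $\sigma$ described in the third paragraph: it rests on Moore's cohomology of Polish groups, on a careful analysis of the Mackey group of the extension, and on propagating nilpotent structure through successive abelian extensions. This is the technically most demanding portion of Host--Kra's argument, and in practice one invokes \cite{HK05} or \cite[Chapter~16, Theorem~2]{HK18} as a black box rather than reproducing it, which is precisely the route the present paper takes.
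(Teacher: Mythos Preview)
Your proposal is a reasonable high-level outline of the original Host--Kra argument, and your final paragraph is exactly right: the present paper does not prove Theorem~\ref{T:HK} at all. It is stated with attribution to \cite{HK05} (see also \cite[Chapter~16, Theorem~2]{HK18}) and invoked as a black box in the proofs of Theorems~\ref{T:APsConvergence}--\ref{T:squares} and in Section~\ref{SS:1.5}. So there is no ``paper's own proof'' to compare against; the paper simply cites the result.

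Your sketch of the inductive structure (Kronecker base case, abelian extension $\CZ_s \cong \CZ_{s-1} \times_\sigma U$, Conze--Lesigne-type functional equation for the cocycle $\sigma$, reduction to Lie $U$, and the polynomial/nilpotent classification of $\sigma$) tracks the architecture of \cite{HK05} faithfully. The one caveat is that the sketch is necessarily schematic at the hardest point---the cohomological classification of cocycles of ``type $s$'' and the passage from that to an actual nilmanifold realization---but you flag this yourself. For the purposes of this paper, citing \cite{HK05} is the intended and sufficient move.
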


\subsection{Proof of Theorem~\ref{T:APsConvergence}}
We first prove Theorem~\ref{T:APsConvergence} that we restate for convenience.
\begin{theorem} \label{T:APsConvergence'}
	Let  $\ell\in \N$   and $a\colon \N\to \Z$ be a sequence that is good for seminorm control along $\ell$-term APs for all ergodic systems. Then the following properties are equivalent:
	\begin{enumerate}
		\item $a$ is
		good for mean convergence  along $\ell$-term APs for all  systems.
		
		\item  $a$ is good for  mean convergence along $\ell$-term APs for all  ergodic $\ell$-step nilsystems.
		
		\item  $a$ is good for   $\ell$-step  equidistribution.
	\end{enumerate}
\end{theorem}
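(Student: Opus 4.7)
The implications $(i)\Rightarrow(ii)$ and $(ii)\Rightarrow(iii)$ come quickly: $(i)\Rightarrow(ii)$ is trivial because ergodic $\ell$-step nilsystems are systems, and $(ii)\Rightarrow(iii)$ is the implication $(i)\Rightarrow(iii)$ of Proposition~\ref{P:ii-iii}. The substantive content is the implication $(iii)\Rightarrow(i)$.

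Assume $(iii)$. I first upgrade it to $(ii)$ using the implication $(iii)\Rightarrow(i)$ of Proposition~\ref{P:ii-iii}, so from now on both $(ii)$ and $(iii)$ are available; an ergodic decomposition argument further restricts attention to ergodic systems. My plan then has two phases. In the first phase I show that, for each ergodic system $(X,\CX,\mu,T)$, the sequence $a$ is good for degree $\ell$ seminorm control for the averages~\eqref{E:averagesGeneral}, by applying Theorem~\ref{T:seminormdrop} with $d=\ell$. Since $a$ is assumed good for seminorm control, the only thing left to verify is $\ell$-step reduction: if $f_1,\ldots,f_\ell\in L^\infty(\CZ_\ell,\mu)$ and $\nnorm{f_j}_\ell=0$ for some $j$, then the limit along $a$ is zero. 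Because the functions are $\CZ_\ell$-measurable, Theorem~\ref{T:HK} combined with a routine approximation argument lets me reduce to the case of an ergodic $\ell$-step nilsystem. On such a system, $(ii)$ identifies the limit along $a$ with the limit of the standard AP average~\eqref{E: AP average}, and the latter is controlled by $\nnorm{\cdot}_\ell$ by \cite[Theorem~8]{Lei05b}, so it vanishes.

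In the second phase I use the degree $\ell$ control obtained in phase one to derive $(i)$. Both averages are now controlled by $\nnorm{\cdot}_\ell$, so replacing each $f_j$ by $\E(f_j|\CZ_{\ell-1})$ alters neither limit. The projected functions are $\CZ_{\ell-1}$-measurable and in particular $\CZ_\ell$-measurable, so Theorem~\ref{T:HK} and approximation once again reduce the problem to an ergodic $\ell$-step nilsystem, on which $(ii)$ equates the two limits; the equality then transfers back to the original system. Degenerate tuples $(k_1,\ldots,k_\ell)$ with zero or repeated entries are handled by induction on the number of distinct nonzero coordinates, by absorbing $T^{k_ia(n)}f_i\cdot T^{k_ja(n)}f_j$ into $T^{k_ia(n)}(f_if_j)$ when $k_i=k_j$ and by pulling out $f_i$ when $k_i=0$.

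The main obstacle is the first phase: without Theorem~\ref{T:seminormdrop} the hypothesis would only deliver seminorm control for $a$ at some possibly very large degree, with no direct route to the sharper degree $\ell$ control. Theorem~\ref{T:seminormdrop} converts that sharpening into the tractable $\ell$-step reduction statement, which $(ii)$, combined with Host-Kra structure and Leibman's classical seminorm control of the standard AP averages, is precisely designed to verify.
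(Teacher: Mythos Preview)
Your proposal is correct and follows essentially the same approach as the paper. Both arguments reduce to ergodic systems, use Theorem~\ref{T:seminormdrop} (with $d=\ell$) to obtain degree $\ell$ seminorm control by verifying $\ell$-step reduction via Theorem~\ref{T:HK}, property $(ii)$, and \cite[Theorem~8]{Lei05b}, and then read off the identity~\eqref{E:identityAPs}. The only organizational differences are that the paper treats $\ell=1$ separately (to note that the seminorm control hypothesis is superfluous there), and that your Phase~2 repeats an argument already implicit in Phase~1: once you have shown, for $\CZ_\ell$-measurable functions, that the $a$-average limit equals the standard AP average limit (which is what your approximation step in Phase~1 really establishes), the identity follows immediately from degree $\ell$ control without a second pass through Theorem~\ref{T:HK}.
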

\begin{proof}
	%%[Proof of Theorem~\ref{T:APsConvergence}]
	Since the implication $(i)\implies (ii)$ is obvious and the equivalence $(ii)\Leftrightarrow (iii)$ was established in Proposition~\ref{P:ii-iii}, it suffices to show that $(ii)\implies (i)$.
	
	We first prove that the implication $(ii)\implies (i)$ holds for $\ell=1$.
	If we apply  property $(ii)$ for circle rotations, we get that
	$$
	\lim_{N\to\infty}\E_{n\in[N]} \, e(a(n)t)=0
	$$
	for all $t\in (0,1)$. Using the spectral theorem for unitary operators (or more specifically the Herglotz theorem on positive definite sequences), this easily implies that the sequence $a\colon \N\to \Z$ is good for $1$-convergence for all systems. (Note  that in this case, we did not have to assume good seminorm control, but we will for $\ell\geq 2$.)

	So it remains to prove that the implication $(ii)\implies (i)$ holds for  $\ell\geq 2$. Using an ergodic decomposition argument, we can assume that the system is ergodic. So let $(X, \CX, \mu,T)$ be an ergodic system and $k_1,\ldots,k_\ell\in \Z$. We can assume that the integers $k_1,\ldots, k_\ell$ are non-zero and  distinct.  Our first goal is to use Theorem~\ref{T:seminormdrop} in order to establish that the    averages \eqref{E:averagesGeneral} are good for degree $\ell$ seminorm control.
	Suppose that the   averages \eqref{E:averagesGeneral} are good for  degree $\ell+1$ seminorm control for this system. We claim that they are then good for degree $\ell$ seminorm control.  To see this, note first that our degree $\ell+1$  seminorm control  assumption implies that
	$$
	\lim_{N\to\infty}\brac{\E_{n\in[N]} \, T^{k_1a(n)}f_1  \cdots T^{k_\ell a(n)}f_\ell-
		\E_{n\in[N]} \, T^{k_1a(n)}\tilde{f}_1  \cdots T^{k_\ell a(n)}\tilde{f}_\ell}=0
	$$
	where the limit is taken in $L^2(\mu)$ and $\tilde{f}_j:=\E(f_j|\CZ_{\ell})$ for $j\in[\ell]$.
	So in order to prove degree $\ell$ seminorm control, it suffices to show that if $\nnorm{f_j}_\ell=0$ for some $j\in [\ell]$, then
	$$
	\lim_{N\to\infty}  \E_{n\in[N]} \, T^{k_1a(n)}\tilde{f}_1  \cdots T^{k_\ell a(n)}\tilde{f}_\ell=0
	$$
	where the limit is taken in $L^2(\mu)$.
	By Theorem~\ref{T:HK}, the factor $\CZ_\ell$ is an inverse limit of  $\ell$-step nilsystems, so using an approximation argument,
	we can assume that the system $(X, \CX, \mu,T)$ is an ergodic $\ell$-step nilsystem. In this case,  property $(ii)$
	gives that the last limit is equal to
	$$
	\lim_{N\to\infty}  \E_{n\in[N]} \, T^{k_1 n}\tilde{f}_1  \cdots T^{k_\ell n}\tilde{f}_\ell.
	$$
	But it is known \cite[Theorem~8]{Lei05b} that if $\ell\geq 2$, then  these averages are good for degree $\ell$ seminorm control, hence the claim follows.
	
	Thus, we have established that degree $\ell+1$ seminorm control for the averages \eqref{E:averagesGeneral} implies degree $\ell$ seminorm control and the identity \eqref{E:identityAPs}.  Theorem~\ref{T:seminormdrop} then implies that the averages \eqref{E:averagesGeneral} are good for degree $\ell$ seminorm control,  and as we just mentioned, we obtain the identity \eqref{E:identityAPs}.
	%%In this case arguing as before (using the main result from %%\cite{HK05}), in establishing identity \eqref{E:identityAPs} we can %%assume that the system $(X, \CX, \mu,T)$ is an   ergodic $\ell$-step %%nilsystem. But then the identity follows from the property $(ii)$.
	This completes the proof.
\end{proof}

\subsection{Proof of Theorem~\ref{T:APsConvergenceTE}}
We next  prove Theorem~\ref{T:APsConvergenceTE} that we  now restate.
\begin{theorem}\label{T:APsConvergenceTE'}
	Let $\ell\in \N$ $a\colon \N\to \Z$ be a sequence that is good for seminorm control along $\ell$-term APs for all ergodic systems. Then the following properties  are equivalent:
	\begin{enumerate}
		\item $a$ is
		good for mean convergence  along $\ell$-term APs for all  totally ergodic systems.
		
		\item  $a$ is good for  mean convergence along $\ell$-term APs for all  totally ergodic $\ell$-step nilsystems.
		\suspend{enumerate}
		Furthermore, both properties are implied by the following one:
		\resume{enumerate}
		\item   $a$ is good for   $\ell$-step irrational equidistribution.
	\end{enumerate}
\end{theorem}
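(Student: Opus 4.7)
The plan is to adapt the proof of Theorem~\ref{T:APsConvergence'} to the totally ergodic setting, exploiting the fact that total ergodicity descends to factors. The implication $(i)\implies(ii)$ is immediate, and $(iii)\implies(ii)$ is the content of Proposition~\ref{P:te}, so the substance of the theorem is $(ii)\implies(i)$.

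For $\ell=1$, I would argue via the spectral theorem. If $(X,\CX,\mu,T)$ is totally ergodic, then $T^k$ is ergodic for every $k\in\N$, which forces the point spectrum of $T$ to avoid all nonzero rationals; hence the spectral measure of $f-\int f\,d\mu$ is supported on the irrationals in $(0,1)$. Property $(ii)$ applied to irrational rotations of the circle (themselves totally ergodic $1$-step nilsystems) gives $\frac{1}{N}\sum_{n=1}^N e(a(n)t)\to 0$ for every irrational $t$, and bounded convergence combined with the spectral identity yields $L^2$ convergence of $\frac{1}{N}\sum_{n=1}^N T^{a(n)}f$ to $\int f\,d\mu$, matching the mean ergodic limit of $\frac{1}{N}\sum_{n=1}^N T^n f$.

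For $\ell\geq 2$, the strategy is to use Theorem~\ref{T:seminormdrop} to promote the hypothesised seminorm control to degree $\ell$ seminorm control of the averages \eqref{E:averagesGeneral} for distinct non-zero $k_1,\ldots,k_\ell\in\Z$. Assuming degree $\ell+1$ control, one may replace each $f_j$ by $\E(f_j|\CZ_\ell)$ without affecting the limit. Since $\CZ_\ell$ inherits total ergodicity from $T$, Theorem~\ref{T:HK} together with a standard approximation argument reduces matters to the case of a totally ergodic $\ell$-step nilsystem (each ergodic nilsystem in the inverse limit representation of $\CZ_\ell$ is totally ergodic, being a factor of a totally ergodic system). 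Property $(ii)$ then allows us to replace the $a(n)$-iterates by $n$-iterates, and Leibman's result \cite[Theorem~8]{Lei05b} ensures degree $\ell$ seminorm control for the linear averages, establishing both degree $\ell$ control and the identity \eqref{E:identityAPs} in this reduced setting. Theorem~\ref{T:seminormdrop} now upgrades this to unconditional degree $\ell$ control on $(X,\CX,\mu,T)$, and rerunning the projection-and-approximation argument one more time---this time onto $\CZ_{\ell-1}$ and totally ergodic $(\ell-1)$-step nilsystems, which are in particular $\ell$-step nilsystems so that $(ii)$ remains applicable---converts the control into the identity in $(i)$.

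The main technical point, and the only place where the totally ergodic hypothesis is really essential, is the persistence of total ergodicity under the Host-Kra factorisation. This follows from the elementary observation that $T^k$ ergodic on $X$ forces $T^k$ ergodic on every factor of $X$, so each nilsystem in the inverse limit representation of $\CZ_s$ (for any $s$) is automatically totally ergodic, keeping the totally ergodic version of property $(ii)$ applicable at every reduction step.
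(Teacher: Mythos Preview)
Your proposal is correct and follows essentially the same route as the paper, which simply notes that the proof of $(ii)\implies(i)$ is ``similar to the one used to establish the implication $(ii)\implies(i)$ in Theorem~\ref{T:APsConvergence'}'' and omits the details. You have correctly identified the one genuine adaptation required---that total ergodicity passes to the Host--Kra factors $\CZ_s$ and hence to the approximating nilsystems---and your handling of the $\ell=1$ case via the spectral theorem and the irrationality of the point spectrum is exactly right; the final ``rerunning onto $\CZ_{\ell-1}$'' step is harmless but slightly redundant, since degree $\ell$ control already implies degree $\ell+1$ control and the identity was established under that assumption.
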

\begin{proof}
	The implication $(i)\implies (ii)$ is trivial and the implication $(iii)\implies (ii)$ follows from Proposition~\ref{P:te}. So it remains to establish the implication $(ii)\implies (i)$. The  argument is similar to
	the one used to establish the implication $(ii)\implies (i)$ in Theorem~\ref{T:APsConvergence'}, so we omit the details.
\end{proof}

\subsection{Proof of Theorem~\ref{T:nilAPs}}
We restate  and prove Theorem~\ref{T:nilAPs}.
\begin{theorem}[Criteria for convergence along $\ell$-term APs - nilsystems]\label{T:nilAPs'}
	Let  $a\colon \N\to \Z$ be a sequence and $\ell\in \N$. Then the  following properties are equivalent:
	\begin{enumerate}
		%		\item The sequence is
		%%		good for (mean) $\ell$-convergence  for all  nilsystems.
		\item $a$ is good for mean convergence along $\ell$-term APs
		for all  nilsystems.

		\item $a$ is good for mean convergence along $\ell$-term APs
		for all  $\ell$-step nilsystems.
		
		\item $a$ is good for pointwise convergence along $\ell$-term APs
		for all $\ell$-step nilsystems.

		\item    $a$ is good for $\ell$-step equidistribution.
	\end{enumerate}
\end{theorem}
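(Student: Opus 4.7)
The plan is to deduce everything from Proposition~\ref{P:ii-iii} combined with the local version of Theorem~\ref{T:APsConvergence} recorded in its second remark. The implication $(i)\Rightarrow(ii)$ is immediate, and Proposition~\ref{P:ii-iii} already supplies the equivalence $(iii)\Leftrightarrow(iv)$ together with the equivalence of (iv) with mean convergence along $\ell$-term APs for \emph{ergodic} $\ell$-step nilsystems. In particular, $(ii)\Rightarrow(iv)$ follows by restriction to the ergodic case, and $(iii)\Rightarrow(ii)$ follows by a routine density/dominated convergence argument (continuous functions are dense in $L^2(m_X)$, the averages are pointwise convergent, and they are uniformly bounded). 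It therefore remains to prove the main implication $(iv)\Rightarrow(i)$; the curious implication $(ii)\Rightarrow(iii)$ flagged in the remark will then drop out via the chain $(ii)\Rightarrow(iv)\Rightarrow(iii)$.

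For $(iv)\Rightarrow(i)$, I would fix a nilsystem $(X, m_X, T)$ on an $s$-step nilmanifold $X = G/\Gamma$. After applying ergodic decomposition --- noting that each orbit closure in a nilsystem is a uniquely ergodic sub-nilmanifold (see Section~\ref{SS:nilbasic}), so the ergodic components of $(X, m_X, T)$ are themselves nilsystems --- we may assume that the nilsystem is ergodic. The key observation is that $\nnorm{\cdot}_{s+1}$ is in fact a \emph{norm} on an ergodic $s$-step nilsystem by \cite[Chapter~9, Theorem~15]{HK18}, so the condition $\nnorm{f}_{s+1}=0$ forces $f=0$ in $L^2(m_X)$. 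Consequently, every sequence is automatically good for (degree $s+1$) seminorm control along $\ell$-term APs for this system. Combining this with the $\ell$-step equidistribution of $a$ furnished by (iv), the local version of Theorem~\ref{T:APsConvergence} yields mean convergence along $\ell$-term APs for this nilsystem, establishing (i).

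The main obstacle has in essence already been dispatched by Theorem~\ref{T:seminormdrop}, which fuels Theorem~\ref{T:APsConvergence}; the novelty of Theorem~\ref{T:nilAPs'} over Proposition~\ref{P:ii-iii} is the passage from ergodic $\ell$-step nilsystems to arbitrary nilsystems of arbitrary step (ergodic or not). What makes this passage painless is the triviality of seminorm control for nilsystems, which removes the only substantive hypothesis on $a$ in Theorem~\ref{T:APsConvergence} and allows the general framework to apply verbatim on each nilsystem.
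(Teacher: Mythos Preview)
Your proof is correct and follows essentially the same route as the paper: both rely on Proposition~\ref{P:ii-iii} for the equivalence of (ii)--(iv), and both close the loop by observing that $\nnorm{\cdot}_{s+1}$ is a norm on an $s$-step nilsystem (so seminorm control is automatic) and then invoking the argument behind Theorem~\ref{T:APsConvergence}. You are a bit more explicit than the paper about the passage from (iii) to (ii) and about ergodic decomposition on nilsystems, but there is no substantive difference in strategy.
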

\begin{proof}
	The equivalence of the last three properties was established in  Proposition~\ref{P:ii-iii}  	and  the implication $(i)\implies (ii)$ is trivial.
	
	So it remains to establish the implication $(ii) \implies (i)$.  Since  the seminorm $\nnorm{\cdot}_{s+1}$ is a norm on an $s$-step nilsystem  \cite[Chapter~12, Theorem~17]{HK18}, all sequences are  good for seminorm control along $\ell$-term APs for every nilsystem. Taking into account this property, the argument is identical to the one used to  establish the implication   $(ii)\implies (i)$ of Theorem~\ref{T:APsConvergence} (see previous subsection).
\end{proof}

\subsection{Proof of Theorem~\ref{T:squares}.}
We restate  and prove Theorem~\ref{T:squares}.
\begin{theorem}[Criteria for convergence of square averages]\label{T:squares'}
Let $a,b\colon \N\to \Z$ be  sequences such that $a, b, a+b$ are good for seminorm control for all ergodic systems.
	Then the following properties are equivalent:
	\begin{enumerate}
		\item For all  systems   $(X, \CX, \mu,T)$  and  functions $f_1, f_2,f_3\in L^\infty(\mu)$, we have
		\begin{equation}\label{E:identitysquares'}
			\lim_{N\to\infty}\frac{1}{N}\sum_{n=1}^N \, T^{a(n)}f_1\cdot T^{b(n)}f_2\cdot T^{a(n)+b(n)}f_3=\lim_{N\to\infty}\frac{1}{N^2}\sum_{r,s=1}^N \, T^{r}f_1\cdot T^{s}f_2\cdot T^{r+s}f_3.
		\end{equation}

		\item  Identity \eqref{E:identitysquares'} holds  for all  ergodic $2$-step nilsystems.
	\end{enumerate}
	Furthermore,  property $(i)$ holds for all totally ergodic systems if and only if  property $(ii)$  holds for all totally ergodic $2$-step nilsystems.
\end{theorem}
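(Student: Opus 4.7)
The implication (i) $\implies$ (ii) is immediate, so the plan focuses on (ii) $\implies$ (i), adapting the strategy used for Theorem~\ref{T:APsConvergence'}. After reducing to ergodic systems via ergodic decomposition, I would set $a_1:=a$, $a_2:=b$, $a_3:=a+b$ and invoke Theorem~\ref{T:seminormdrop} with $\ell=3$ and target degree $d=2$. Since by hypothesis these three sequences are good for seminorm control for every ergodic system, the only input still required to apply Theorem~\ref{T:seminormdrop} is the degree reduction step: degree $3$ seminorm control of the averages \eqref{E:squares} implies degree $2$ seminorm control.

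To establish this degree reduction, I would start from $1$-bounded $f_1,f_2,f_3\in L^\infty(\CZ_2,\mu)$ with $\nnorm{f_j}_2=0$ for some $j\in\{1,2,3\}$ and aim to prove that the averages \eqref{E:squares} vanish in $L^2(\mu)$. The Host--Kra structure theorem (Theorem~\ref{T:HK}) together with a standard $L^2$-approximation argument would let me assume that the underlying system is an ergodic $2$-step nilsystem. At this point the assumed property (ii) converts the limit of \eqref{E:squares} into the cube average on the right-hand side of \eqref{E:identitysquares'}, and I would conclude by invoking the classical fact that this $2$-dimensional cube average is controlled by $\nnorm{\cdot}_2$ in each of its three arguments, a consequence of the Gowers--Cauchy--Schwarz inequality \cite[Chapter~8, Theorem~13]{HK18}. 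Hence the limit vanishes, completing the degree reduction input needed for Theorem~\ref{T:seminormdrop}.

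Having thus obtained degree $2$ seminorm control of the averages \eqref{E:squares} on every ergodic system, the conclusion of (i) is reached by a standard reduction: both sides of \eqref{E:identitysquares'} depend only on the conditional expectations $\E(f_j|\CZ_2)$ for $j=1,2,3$ (the cube average is likewise degree $2$-controlled), so a further application of Theorem~\ref{T:HK} together with $L^2$-approximation reduces the identity \eqref{E:identitysquares'} to the case of an ergodic $2$-step nilsystem, where it holds by (ii). For the totally ergodic addendum the identical scheme goes through because any factor of a totally ergodic system is totally ergodic, so the ergodic $2$-step nilsystems produced by Theorem~\ref{T:HK} in the inverse limit representation of $\CZ_2$ are in fact totally ergodic, making the totally ergodic form of (ii) applicable at the final step. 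The main obstacle in this plan is the verification of the degree reduction hypothesis of Theorem~\ref{T:seminormdrop} --- i.e., packaging the cube-average identity on nilsystems and the degree $2$ control of cube averages --- everything else in the argument is assembled from machinery already established in the paper.
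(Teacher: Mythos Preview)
Your proposal is correct and follows essentially the same route as the paper: reduce to ergodic systems, verify the $d$-step reduction hypothesis of Theorem~\ref{T:seminormdrop} with $d=2$ by passing to $\CZ_2$, approximating by ergodic $2$-step nilsystems via Theorem~\ref{T:HK}, applying (ii), and invoking the degree~$2$ control of cube averages from the Gowers--Cauchy--Schwarz inequality. Your treatment of the totally ergodic addendum (observing that the nilsystems arising in the inverse limit of $\CZ_2$ are themselves totally ergodic) is slightly more explicit than the paper's, which simply asserts that the argument goes through unchanged.
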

\begin{proof}	
	The implication $(i)\implies (ii)$ is obvious.
	
	We establish the implication $(ii)\implies (i)$. 	Using an ergodic decomposition argument we can assume that the system is ergodic.
	So lets fix  an ergodic system  $(X, \CX, \mu,T)$ and suppose that the   averages on the left hand side of  \eqref{E:identitysquares'} are good for degree $3$ seminorm control for this system. We claim that they are then good for degree $2$ seminorm control.  To see this, note first that our assumption implies that
	$$
	\lim_{N\to\infty}\Big( \E_{n\in[N]} \,  T^{a(n)}f_1\cdot T^{b(n)}f_2\cdot T^{a(n)+b(n)}f_3-
	\E_{n\in[N]} \,  T^{a(n)}\tilde{f}_1\cdot T^{b(n)}\tilde{f}_2\cdot T^{a(n)+b(n)}\tilde{f}_3\Big)=0
	$$
	where the limit is taken in $L^2(\mu)$ and $\tilde{f}_j:=\E(f_j|\CZ_{2})$ for $j=1,2,3$.
	So it suffices to show that if $\nnorm{f_j}_2=0$ for some $j=1,2,3$, then
	$$
	\lim_{N\to\infty}  \E_{n\in[N]} \, T^{a(n)}\tilde{f}_1\cdot T^{b(n)}\tilde{f}_2\cdot T^{a(n)+b(n)}\tilde{f}_3=0
	$$
	where the limit is taken in $L^2(\mu)$.
	By Theorem~\ref{T:HK} the factor $\CZ_2$ is an inverse limit of  ergodic nilsystems, so using an approximation argument
	we can assume that the system $(X, \CX, \mu,T)$ is an ergodic  $2$-step nilsystem. In this case,  property $(ii)$
	gives that the last limit is equal to
	$$
	\lim_{N\to\infty}\E_{r,s\in [N]} \, T^rf_1\cdot T^sf_2\cdot T^{r+s}f_3.
	$$
	These averages are good for degree $2$ seminorm control, which follows by a simple application of the Gowers-Cauchy-Schwarz inequality (see for example \cite[Chapter~8, Theorem~13]{HK18}),
	%% \eqref{E:GCS},
	 hence the claim follows.
	
	Thus, we have established that degree $3$ seminorm control for the averages on the left hand side of  \eqref{E:identitysquares'}  implies degree $2$ seminorm control and identity~\eqref{E:identitysquares'} holds. Recall also that by assumption the averages on the left hand side of \eqref{E:identitysquares'}  are good for seminorm control for the system $(X,\CX,\mu,T)$.  Hence, Theorem~\ref{T:seminormdrop} applies and gives that  the averages on the left hand side of \eqref{E:identitysquares'} are good for degree $2$ seminorm control  for this system and identity~\eqref{E:identitysquares'} holds.
	
	The equivalence in the case where we restrict to  totally ergodic systems in $(i)$
	and $(ii)$ is proved in a similar fashion. We omit the details.
\end{proof}

\subsection{Proof of Theorem~\ref{T:APsRecurrence}}
For the proof of Theorem~\ref{T:APsRecurrence} we need a few preparatory results.

\begin{lemma}\label{L:equig}
	Let $a\colon \N\to \Z$ be a sequence such that for every $\ell$-step nilmanifold $Y=H/\Delta$ with  $H$  connected and simply connected and all equidistributed polynomial sequences $(g(n)\cdot e_Y)$ in $Y$,  the sequence  $(g(a(n))\cdot e_Y)$ is equidistributed in $Y$.
	Then  the sequence $a\colon \N\to \Z$ is good for $\ell$-step irrational equidistribution.
\end{lemma}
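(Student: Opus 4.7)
The plan is to reduce to the setting of the hypothesis in two steps: first replace the linear orbit of $b$ on $X$ by a polynomial orbit on the connected subgroup $G_0$, and then lift the whole picture to the universal cover so as to land on a connected and simply connected nilmanifold.

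Fix a connected $\ell$-step nilmanifold $X=G/\Gamma$ and an ergodic $b\in G$; we must show that $(b^{a(n)}\cdot e_X)$ is equidistributed in $X$. Since $X$ is connected, we have $G=G_0\Gamma$, so $G_0$ acts transitively on $X$ and $X=G_0/\Gamma_0$ with $\Gamma_0:=G_0\cap\Gamma$. Using the factorization recalled in Section~\ref{SS:nilbasic}, I would write $b^n=g_0(n)\gamma(n)$ with $g_0(n)$ a polynomial sequence in $G_0$ and $\gamma(n)\in\Gamma$, which gives $b^n\cdot e_X=g_0(n)\cdot e_X$ for every $n$, and in particular $b^{a(n)}\cdot e_X=g_0(a(n))\cdot e_X$. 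Because $b$ is ergodic on the connected nilmanifold $X$ it is uniquely ergodic, so the orbit $(g_0(n)\cdot e_X)=(b^n\cdot e_X)$ is equidistributed in $X$.

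Next, I would pass to the universal cover $\pi\colon \tilde{G}\to G_0$. Then $\tilde{G}$ is a connected, simply connected $\ell$-step nilpotent Lie group (it has the same Lie algebra as $G_0$), the preimage $\tilde{\Gamma}:=\pi^{-1}(\Gamma_0)$ is a discrete cocompact subgroup, and $\pi$ descends to a measure-preserving homeomorphism between the $\ell$-step nilmanifold $Y:=\tilde{G}/\tilde{\Gamma}$ and $X$. Lifting each base element of $g_0$ to $\tilde{G}$ and keeping the same integer exponent polynomials produces a polynomial sequence $\tilde{g}(n)$ in $\tilde{G}$ with $\pi(\tilde{g}(n))=g_0(n)$. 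Via the identification $Y\cong X$ the sequence $(\tilde{g}(n)\cdot e_Y)$ corresponds to $(g_0(n)\cdot e_X)$, so $(\tilde{g}(n)\cdot e_Y)$ is equidistributed in $Y$ by the previous paragraph.

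At this point the hypothesis of the lemma applies to the connected and simply connected $\ell$-step nilmanifold $Y$ and the equidistributed polynomial sequence $\tilde{g}$, yielding that $(\tilde{g}(a(n))\cdot e_Y)$ is equidistributed in $Y$. Projecting through $\pi$ then gives that $(g_0(a(n))\cdot e_X)=(b^{a(n)}\cdot e_X)$ is equidistributed in $X$, which is exactly the $\ell$-step irrational equidistribution we needed. The only points requiring care are the reduction from $G$ to $G_0$, the verification that lifts of polynomial sequences remain polynomial, and the identification of $Y$ with $X$ as measure spaces; the substantive ingredients, namely the factorization of $b^n$ on a connected nilmanifold and the universal cover of a connected nilpotent Lie group, are either stated in the preliminaries or standard, so no genuinely new difficulty arises.
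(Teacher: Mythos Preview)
Your proof is correct and follows essentially the same route as the paper: reduce the linear orbit of $b$ on $X$ to a polynomial orbit $g_0(n)$ in the connected component $G_0$ via the factorization $b^n=g_0(n)\gamma(n)$, identify $X$ with $G_0/\Gamma_0$, and then invoke the hypothesis. The one place where you differ is that you explicitly pass to the universal cover $\tilde G\to G_0$ to obtain a connected and \emph{simply} connected group, whereas the paper simply asserts that $G_0$ is connected and simply connected and applies the hypothesis directly to $X_0=G_0/\Gamma_0$; your extra step makes this reduction honest and is the standard way to justify it.
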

\begin{remark}
	The converse is also true and based on the fact that polynomial sequences can be represented as linear sequences on a larger nilmanifold.
	We will not use this, though, so we do not prove it.
\end{remark}
\begin{proof}
	Let $X=G/\Gamma$ be a connected $\ell$-step nilmanifold.
	%%Note that by  \cite[Chapter~10, Theorem~13]{HK18} the group $G$ maybe assumed to be %%simply connected and
	Let $G_0$ be the connected component of $G$. Then, as explained in Section~\ref{SS:nilbasic}, we have  $G=G_0\Gamma$  and  $X$ can be identified with the nilmanifold $X_0:=G_0/\Gamma_0$ where $\Gamma_0:=G_0\cap \Gamma$
	(see \cite[Chapter~11, Section~4.2]{HK18}). Let $\pi\colon X\mapsto X_0$ denote
	an isomorphism.
	
	Let     $b\in G$ be arbitrary. Since $G=G_0\Gamma$,  as explained in Section~\ref{SS:nilbasic} the sequence $b^n$ can be factorized as $b^n=g_0(n)\gamma(n)$ where $g_0(n)$ and $\gamma(n)$ are polynomial sequences in $G_0$ and $\Gamma$ respectively. It follows
	%%{\bf What we want is that a sequence  $(b^{c(n)}\cdot e_X)$ is %%equidistributed in $X$ iff  $(g_0(c(n))\cdot e_{X_0})$ is equidistributed in %%$X_0$ and then we apply this for $c(n)=n$ and $c(n)=a(n)$.}
	that
	for every $F\in C(X)$, there exists $F_0\in C(X_0)$  (in fact $F_0:=F\circ\pi$) such that
	\begin{equation}\label{E:XX0}
		\int F\, dm_X=\int F_0\, dm_{X_0} \quad \text{ and } \quad
		F(b^n\cdot e_X)= F_0(g_0(n)\cdot e_{X_0}), \quad n\in \N,
	\end{equation}
	and conversely,   for every $F_0\in C(X_0)$ there exists $F\in C(X)$  (in fact $F:=F_0\circ\pi^{-1}$)   such that \eqref{E:XX0} holds.
	Suppose now that   $b\in G$ is ergodic in $X$ in which case by unique ergodicity we have
	$$
	\lim_{N\to\infty}\E_{n\in[N]} \,  F(b^n\cdot e_X)=\int F\, dm_X
	$$
	for every $F\in C(X)$.
	Then \eqref{E:XX0} gives that
	$$
	\lim_{N\to\infty}\E_{n\in[N]} \,  F_0(g_0(n)\cdot e_{X_0})=\int F_0\, dm_{X_0}
	$$
	for every $F_0\in C(X_0)$. Hence,
	the sequence $(g_0(n)\cdot e_{X_0})$ is  equidistributed in $X_0$. Since $X_0=G_0/\Gamma_0$ where  $G_0$ is connected and simply connected, our assumption applies (for $Y:=X_0$, $H:=G_0$, $\Delta:=\Gamma_0$) and gives  that the sequence $(g_0(a(n))\cdot e_{X_0})$ is  equidistributed in $X_0$. We deduce from this and \eqref{E:XX0} that  the sequence $(b^{a(n)}\cdot e_X)$ is equidistributed in $X$. This proves that the sequence $a\colon \N\to \Z$ is good for $\ell$-equidistribution and completes the proof.
\end{proof}
The only use of this result for our purposes is to prove the next lemma. The advantage is that in the case of a connected and simply connected nilpotent Lie group   $G$
the map $g\mapsto g^k$ is surjective for every $k\in \N$.

\begin{lemma}
	Suppose that the sequence $(da(n))$ is good for $\ell$-step irrational equidistribution for some $d\in \N$. Then the sequence $(a(n))$ is also good for $\ell$-step  irrational equidistribution
\end{lemma}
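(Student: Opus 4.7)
The plan is to apply Lemma~\ref{L:equig} to reduce the claim to a polynomial equidistribution property for $(a(n))$: namely, that for every $\ell$-step nilmanifold $Y=H/\Delta$ with $H$ connected and simply connected, and every polynomial sequence $g\colon\Z\to H$ such that $(g(n)\cdot e_Y)$ is equidistributed in $Y$, the sequence $(g(a(n))\cdot e_Y)$ is also equidistributed in $Y$.

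Given such $Y$ and $g$, I would exploit the hint to construct an auxiliary polynomial sequence $\tilde g\colon\Z\to H$ satisfying $\tilde g(dn) = g(n)$ for all $n\in\Z$. Writing $g(n) = b_1^{p_1(n)}\cdots b_r^{p_r(n)}$ with $p_j\in\Z[n]$ of degree at most $s$, I pick $c_j\in H$ with $c_j^{d^s} = b_j$ (possible because $H$ is connected and simply connected) and set $\tilde g(m) := c_1^{q_1(m)}\cdots c_r^{q_r(m)}$, where $q_j(m) := d^s p_j(m/d)$. Each $q_j$ lies in $\Z[m]$ since $s\geq \deg p_j$, and a direct calculation yields $\tilde g(dn) = g(n)$, so that $g(a(n)) = \tilde g(da(n))$.

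Next I would verify that $(\tilde g(n)\cdot e_Y)$ is itself equidistributed in $Y$. By Leibman's criterion for polynomial sequences in connected $H$, this reduces to equidistribution on the abelianization $Z = H/([H,H]\Delta)$, which is a torus since $H$ is connected. An elementary application of Weyl's criterion shows that a polynomial sequence in a torus equidistributes if and only if its $d$-fold dilate does; since $(\tilde g(dn)\cdot \pi_{Y,Z}(e_Y)) = (g(n)\cdot\pi_{Y,Z}(e_Y))$ equidistributes in $Z$ by assumption, so does $(\tilde g(n)\cdot\pi_{Y,Z}(e_Y))$, and hence by Leibman's criterion $(\tilde g(n)\cdot e_Y)$ equidistributes in $Y$.

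The hard part will be the final step: concluding that $(\tilde g(da(n))\cdot e_Y)$ is equidistributed in $Y$ from the hypothesis that $(da(n))$ is good for $\ell$-step irrational equidistribution. This amounts to verifying the polynomial equidistribution property for $(da(n))$, which is the converse direction of Lemma~\ref{L:equig}; as the authors remark, this converse holds via the standard representation of a polynomial sequence in a connected simply connected nilpotent Lie group as a linear sequence on a larger connected nilmanifold, followed by an application of the linear irrational equidistribution hypothesis for $(da(n))$ there. Granting this, the identity $g(a(n)) = \tilde g(da(n))$ gives that $(g(a(n))\cdot e_Y)$ is equidistributed, and Lemma~\ref{L:equig} then yields the desired $\ell$-step irrational equidistribution of $(a(n))$.
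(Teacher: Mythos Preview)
Your proposal is correct and follows essentially the same route as the paper: reduce via Lemma~\ref{L:equig}, build $\tilde g$ with $\tilde g(dn)=g(n)$ using divisibility of the connected simply connected group, verify equidistribution of $(\tilde g(n)\cdot e_Y)$ via Leibman's criterion and Weyl, and then conclude. You are in fact more precise than the paper at the final step: the paper writes ``Lemma~\ref{L:equig} gives that $(\tilde g(da(n))\cdot e_X)$ is equidistributed,'' but this is the \emph{converse} direction of Lemma~\ref{L:equig} (as you correctly identify), which the remark following that lemma asserts but does not prove; both arguments therefore rest on the same unproved converse, justified by the standard lifting of polynomial sequences to linear ones on a larger connected nilmanifold.
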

\begin{proof}
	Let $X=G/\Gamma$ be an $\ell$-step nilmanifold where  $G$ is connected and simply connected and $g(n)$ be a polynomial sequence in $G$ such that 	$(g(n)\cdot e_X)$ is   equidistributed in $X$.
	By Lemma~\ref{L:equig}, it suffices to show that the sequence  $(g(a(n))\cdot e_X)$ is   equidistributed in $X$.
	Note that since $G$ is connected, it is divisible  (see \cite[Chapter~10, Corollary~9]{HK18}), meaning that  every element has a $k$-th root in $G$ for every $k\in\N$. Hence, there exists a sequence $\tilde{g}$ in $G$ such that $\tilde{g}(dn)=g(n)$ for all $n\in\N$. It is easy to see that $\tilde{g}$ is a polynomial sequence: since $g$ can be written down as a product of sequences of the form $a^{n^i}$, the sequence $\tilde{g}$ will be a product of sequences of the form $\tilde{a}^{n^i}$ for $\tilde{a}:=a^{1/d^i}$ a $d^i$-th root of $a$.
	
	We claim that  the sequence
	$(\tilde{g}(n)\cdot e_X)$ is also equidistributed in $X$. Indeed, by Leibman's equidistribution  criterion (see Section~\ref{SS:nilbasic})  it suffices to verify this property when $G$ is a finite dimensional torus. By Weyl's equidistribution criterion,  this amounts to showing that if a $p\in \R[n]$ is a polynomial and the sequence  $(p(dn))$ is equidistributed in $\T$, then so is the sequence $(p(n))$.  This is clearly the case since  the first polynomial has a non-constant irrational coefficient if and only if the second one  does.  This proves the claim.
	
	Since
	the polynomial sequence
	$(\tilde{g}(n)\cdot e_X)$ is  equidistributed in $X$ and by assumption  the sequence $(da(n))$ is good for irrational equidistribution, Lemma~\ref{L:equig} gives that   the sequence  $(\tilde{g}(da(n))\cdot e_X)=(g(a(n))\cdot e_X)$   is equidistributed in $X$. This completes the proof.
\end{proof}
The next lemma was proved in \cite[Lemma~7.6]{FrLW06}.
\begin{lemma}%[\cite{FrLW06}]
	Let $d\in \N$. Suppose that the sequence $(a(n))_{n\in\N}$ is good for $\ell$-step irrational equidistribution  and  the set $I_d:=\{n\in \N\colon d|a(n)\}$ has positive  density. Then the sequence  $(a(n))_{n\in I_d}$ is also good for $\ell$-step  irrational equidistribution.
\end{lemma}

Combining the previous two lemmas, we get the following.
\begin{lemma}\label{L:lequi}
	Let $d\in \N$. Suppose that the sequence $(a(n))_{n\in\N}$ is good for $\ell$-step irrational equidistribution  and  the set $I_d:=\{n\in \N\colon d|a(n)\}$ has positive density. Then the sequence $a_d\colon\N\to\Z$, defined by $(a(n)/d)_{n\in I_d}$,  is also good for $\ell$-step  irrational equidistribution.
\end{lemma}

We also need a similar fact about seminorm control.

\begin{lemma}\label{L:lsemi}
	Let $d, \ell, s\in \N$, $k_1, \ldots, k_\ell\in\Z$ be non-zero and distinct, and $a\colon\N\to\Z$ be a sequence. Suppose that the sequences $k_1 a, \ldots, k_\ell a$ are good for degree $s$ seminorm control for all ergodic systems,   and  the set $I_d:=\{n\in \N\colon d|a(n)\}$ has positive density. Let $a_d\colon\N\to\Z$ be the sequence defined by  $(a(n)/d)_{n\in I_d}$. Then the sequences $k_1 a_d, \ldots, k_\ell a_d$ are  good for degree  $s+1$ seminorm control for all ergodic systems.
	%Suppose that  for some $\ell\in \N$ and $s\geq 2$, the sequence $(a(n))_{n\in\N}$ is good for degree $s$ seminorm control along $\ell$-term APs for all ergodic systems,   and  $d\in \N$ is such that the set $I_d:=\{n\in \N\colon d|a(n)\}$ has positive density. Then the sequence $(a(n)/d)_{n\in I_d}$ is  good for degree  $s$ seminorm control along $\ell$-term APs for all ergodic systems.
\end{lemma}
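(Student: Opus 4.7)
Plan: The natural approach is to realise the restricted averages along $a_d$ as unrestricted averages along $a$ on an ergodic extension of $(X,\CX,\mu,T)$ in which $T$ admits a $d$-th root. I would fix an ergodic system $(X,\CX,\mu,T)$ and construct $\tilde X := X\times \Z/d\Z$ with $\tilde\mu := \mu\times\mathrm{unif}$ and transformation $\tilde T$ given by $\tilde T(x,z) := (x,z+1)$ for $z<d-1$ and $\tilde T(x,d-1) := (Tx,0)$, so that $\tilde T^d(x,z) = (Tx,z)$ acts as $T$ on each slice $X\times\{z\}$. A direct invariant-function check shows that $\tilde T$ is ergodic whenever $T$ is, and for $n\in I_d$ the iterate $\tilde T^{k_ja(n)}$ evaluated at $(x,0)$ equals $f_j(T^{k_ja(n)/d}x)$ after lifting $f_j$ to $F_j(x,z):= f_j(x)$. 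Consequently the target average along $a_d$ on $X$ is the restriction to $X\times\{0\}$ of the $\mathbf{1}_{I_d}$-weighted average of $\prod_j \tilde T^{k_ja(n)}F_j$ on $\tilde X$.

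Next I would encode the indicator via the Fourier identity $\mathbf{1}_{I_d}(n) = \tfrac{1}{d}\sum_{j=0}^{d-1}\omega^{ja(n)}$ with $\omega:= e^{2\pi i/d}$. The character $\chi(x,z):=\omega^{z}$ is an eigenfunction of $\tilde T$ with eigenvalue $\omega$, so $\omega^{ja(n)} = \tilde T^{a(n)}\chi^j\cdot\bar\chi^j$ as functions on $\tilde X$. Because $\chi^j$ is an eigenfunction we can, for each $j$, push the phase through one of the iterates by choosing $c\in\Z/d\Z$ with $ck_1\equiv j\pmod d$: this is solvable when $\gcd(k_1,d)\mid j$, in which case $\omega^{ja(n)}\cdot\tilde T^{k_1 a(n)}F_1 = \bar\chi^c \cdot\tilde T^{k_1 a(n)}(\chi^c F_1)$ up to unit phases. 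The remaining values of $j$ can be handled by permuting the role of $k_1$ among $k_1,\ldots,k_\ell$ (using $\gcd(k_1,\ldots,k_\ell,d)\mid j$ or by an independent vanishing argument when even this fails). After these manipulations the $L^{2}(\mu)$-norm of the target average is dominated by a finite linear combination of norms of $\ell$-linear averages on $(\tilde X,\tilde T)$ with iterates $k_1a,\ldots,k_\ell a$, and with one of the $F_i$ replaced by $\chi^c F_i$. Applying the hypothesis on the ergodic system $(\tilde X,\tilde T)$ yields that these averages vanish in $L^{2}(\tilde\mu)$ provided $\nnorm{\chi^c F_i}_{s,\tilde T}=0$ for some $i$, or $\nnorm{F_i}_{s,\tilde T}=0$ for an index $i\neq 1$.

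The last step is to translate these extension seminorms into $\nnorm{f_i}_{s+1,T}$. A direct computation gives $\Delta_{h_1}(\chi^c F_1) = \bar\omega^{ch_1}\Delta_{h_1}F_1$, and iterating one finds that all higher multi-derivatives $\Delta_{\underline h}(\chi^c F_1)$ reduce to $\Delta_{\underline h}F_1$ up to a unit phase; hence $\nnorm{\chi^c F_1}_{1,\tilde T}=0$ for $c\not\equiv 0\pmod d$ automatically, while for higher degrees the eigenfunction $\chi^c\in \CZ_1(\tilde T)\setminus\{\text{const}\}$ effectively raises the structural complexity of $\chi^c F_1$ by one step, giving an estimate of the form $\nnorm{\chi^c F_1}_{s,\tilde T}\le C\,\nnorm{f_1}_{s+1,T}$. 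Here one uses the inequality $\nnorm{F_1}_{s,\tilde T}\le\nnorm{F_1}_{s,\tilde T^d}$ from \eqref{E: seminorm of powers} together with the identification of $\tilde T^{d}$ with $T\times\mathrm{id}$ on slices, which yields $\nnorm{F_1}_{s,\tilde T^d}=\nnorm{f_1}_{s,T}$, combined with the extra degree coming from the Kronecker factor $(\Z/d\Z,+1)$ of $\tilde T$ to which $\chi^c$ belongs.

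The main obstacle I expect is this final step — calibrating the degree shift correctly when multiplying by the eigenfunction $\chi^c$. The naive bound using only \eqref{E: seminorm of powers} yields degree $s$ in terms of $f_1$ and is one short of what is needed, and one must exploit the fact that $\chi^c$ generates a nontrivial character on the Kronecker factor of the extension — so that $\chi^c F_1$ lives one Host--Kra level higher than $F_1$ — to recover the extra degree. Bookkeeping the cases $\gcd(k_1,d)\nmid j$ and ensuring that the reduction works simultaneously for the various indices $i\in[\ell]$ will also require some care.
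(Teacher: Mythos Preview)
Your construction of the $d$-th root extension $\tilde X=X\times\Z/d\Z$ with $\tilde T^d=(T,\id)$ is exactly the one the paper uses. The divergence is in how you dispose of the weight $\mathbf 1_{I_d}$ and, crucially, in where the extra degree of seminorm control is supposed to come from.

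Your plan to Fourier-expand $\mathbf 1_{I_d}(n)=\tfrac1d\sum_j\omega^{ja(n)}$ and absorb the phase into an eigenfunction does \emph{not} produce the needed degree shift. For $s\ge 2$ one has
\[
\Delta_{h_1}(\chi^c F_i)=\bar\omega^{\,ch_1}\,\Delta_{h_1}F_i,\qquad
\Delta_{h_2}\bigl(\bar\omega^{\,ch_1}\Delta_{h_1}F_i\bigr)=\Delta_{h_1,h_2}F_i,
\]
so $\nnorm{\chi^c F_i}_{s,\tilde T}=\nnorm{F_i}_{s,\tilde T}$ for every $s\ge 2$: multiplying by a unit eigenfunction leaves all Host--Kra seminorms of degree $\ge 2$ unchanged. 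Moreover, a direct description of the eigenfunctions of $\tilde T$ shows that $\nnorm{F_i}_{s,\tilde T}=0$ is equivalent to $\nnorm{f_i}_{s,T}=0$, not to $\nnorm{f_i}_{s+1,T}=0$. Hence your argument only recovers degree $s$ control of the $a_d$-averages, which you yourself flagged as the obstacle; it is genuinely one degree short and cannot be repaired by the eigenfunction heuristic. (The solvability problem for $ck_1\equiv j\pmod d$ is a separate, secondary issue.)

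The paper gains the missing degree by a different mechanism. It first extends the degree-$s$ hypothesis from ergodic to all systems by ergodic decomposition, then removes the weight $\mathbf 1_{I_d}$ via the product trick (Lemma~\ref{L:product trick}(ii)), which replaces the weighted average on $(\tilde X,\tilde T)$ by an unweighted average on the product $(\tilde X\times\tilde X,\tilde T\times\tilde T)$ with functions $g_j\otimes\overline{g_j}$. The hypothesis applied there yields vanishing as soon as $\nnorm{g_j\otimes\overline{g_j}}_{s,\tilde T\times\tilde T}=0$, and the inequality \eqref{E:T vs TxT}, $\nnorm{g_j\otimes\overline{g_j}}_{s,\tilde T\times\tilde T}\le\nnorm{g_j}_{s+1,\tilde T}^2$, is precisely what supplies the extra degree. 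One then finishes with \eqref{E: seminorm of powers} and $\tilde T^d=(T,\id)$ to get $\nnorm{g_j}_{s+1,\tilde T}\le\nnorm{f_j}_{s+1,T}$.
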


%%In what follows, we set $I_{N} :=I\cap[N]$ for every $I\subset\N$ and $N\in\N$.
\begin{proof}

	The argument is similar to the one used in  the proof of \cite[Lemma~5.2]{Fr21}.	
	%We let $(b(n))$ be the sequence constructed by taking the elements of  $(a(n)/d)_{n\in I_d}$ in increasing order.
	
	An ergodic decomposition argument shows that the sequences $k_1 a, \ldots, k_\ell a$ are good
	for degree $s$ seminorm control for all systems  (not necessarily ergodic).

	We consider the system $(X_d, \CX_d, \mu_d,T_d)$ where
	$$
	X_d:=X\times \{0,\ldots, d-1\}, \quad \CX_d: = \CX\otimes \CP(\{0, \ldots, d-1\}), \quad \mu_d:=\mu\times \nu_d%, \quad \nu_d:=\frac{\delta_0+\cdots+\delta_{d-1}}{d},
	$$
	with $\nu_d:=\frac{\delta_0+\cdots+\delta_{d-1}}{d}$, and for $x\in X$, we let
	$$
	T_d(x,i):=\begin{cases}(x,i+1), \, &i=0,\ldots, d-2,\\
		(Tx,0),\; &i=d-1.
	\end{cases}
	$$
 The system $(X_d, \CX_d, \mu_d,T_d)$ is often referred to as the \textit{$d$'th root of $(X, \CX, \mu, T)$}.
	Its key property is that
	$T_d^d(x,i)=(Tx,i)$ for all $x\in X$ and $i\in \{0,\ldots, d-1\}$, hence for $f\in L^\infty(\mu)$, we have
	\begin{equation}\label{E:Td}
		(T_d^d(f\otimes 1))(x,i)=(Tf)(x).%, \quad x\in X, \, i\in  \{0,\ldots, d-1\}.
	\end{equation}
	
	Applying our good seminorm assumption  for the product system $$(X_d\times X_d, \CX_d\otimes\CX_d, \mu_d\times\mu_d,T_d\times T_d),$$
	we get   that 	if  $g_1, \ldots, g_\ell\in L^\infty(\mu_d)$ and $\nnorm{g_j\otimes\overline{g_j}}_{s,T_d\times T_d}=0$ for some $j\in [\ell]$, then
	%that there exists an   $s\in \N$, which we assume without loss of generality to satisfy $s\geq 2$, such that for all distinct $k_1,\ldots, k_\ell\in \Z$, we have
	\begin{align}
		\lim_{N\to\infty}\E_{n\in [N]}\,  (T_d\times T_d)^{k_1a(n)}(g_1\otimes \overline{g_1})\cdots (T_d\times T_d)^{k_\ell a(n)}(g_\ell\otimes\overline{g_\ell})= 0.
	\end{align}
Using the bound \eqref{E:T vs TxT}, Lemma~\ref{L:product trick}(ii) for $w_n:={\bf 1}_{I_d}(n)$, $n\in \N$,  and the fact that $I_d$ has positive density, we deduce that
	%for all distinct $k_1,\ldots, k_\ell\in \Z$, we have
	\begin{equation}\label{E:assumption}
		\lim_{N\to\infty}\E_{n\in I_{d, N}}\,  T_d^{k_1a(n)}g_1\cdots T_d^{k_\ell a(n)}g_\ell= 0
	\end{equation}
	in $L^2(\mu_d)$ whenever $\nnorm{g_j}_{s+1,T_d}=0$ for some $j\in [\ell]$, where for $N\in \N$ we let $I_{d,N}:=I_d\cap [N]$.
	%\footnote{Initially, we get this property for the averages $\lim_{N\to\infty}\E_{n\in[N]}$ in place  of $\lim_{N\to\infty}\E_{n\in I_{d, N}}$, and using the product trick and the fact that $I_d$ has positive density we get the asserted property for the averages $\lim_{N\to\infty}\E_{n\in I_{d, N}}$.}
	%%or equivalently by \eqref{E:Td}, that
	
	We claim that for  this $s\in \N$,  the sequence $(a(n)/d)_{n\in I_d}$
	is good for  degree  $s+1$ seminorm control along $\ell$-term APs   for the system $(X, \CX, \mu,T)$. To see this, let $f_1,\ldots, f_\ell\in L^\infty(\mu)$ be such that  $\nnorm{f_j}_{s+1,T}=0$ for some $j\in [\ell]$, say for $j=j_0$. It suffices to show that
	%for all distinct $k_1,\ldots, k_\ell\in \Z$,  we have
	$$
	\lim_{N\to\infty}\E_{n\in I_{d, N}}\,  T^{k_1a(n)/d}f_1\cdots T^{k_\ell a(n)/d}f_\ell= 0
	$$
	in $L^2(\mu)$.
	Let $g_j\in L^\infty(\mu_d)$ be defined by
	$g_j:=f_j\otimes 1$ for each $j=1,\ldots, \ell$, and recall that $\mu_d=\mu\times \nu_d$. By \eqref{E:Td},  it suffices to show that
	%for all distinct $k_1,\ldots, k_\ell\in \Z$  we have
	$$
	\lim_{N\to\infty}\E_{n\in I_{d, N}}\,  T_d^{dk_1a(n)/d}g_1\cdots T_d^{dk_\ell a(n)/d}g_\ell= 0
	$$
	in $L^2(\mu_d)$, or equivalently that
	$$
	\lim_{N\to\infty}\E_{n\in I_{d, N}}\,  T_d^{k_1a(n)}g_1\cdots T_d^{k_\ell a(n)}g_\ell= 0.
	$$
	By \eqref{E:assumption}, this holds as long as we establish that $\nnorm{g_{j_0}}_{s+1,T_d}=0$.
	To see this, note that
	since $g_{j_0}=f_{j_0}\otimes 1$ and $\mu_d=\mu\times \nu_d$, our assumption $\nnorm{f_{j_0}}_{s+1,T}=0$  and  \eqref{E:Td} give that
	$\nnorm{g_{j_0}}_{s+1,T_d^d}=\nnorm{f_{j_0}}_{s+1,T}=0$. The implication
	$\nnorm{g_{j_0}}_{s+1,T_d}=0$ then follows from \eqref{E: seminorm of powers}. This completes the proof.
\end{proof}
We are now ready to prove Theorem~\ref{T:APsRecurrence},  we restate it for convenience.
\begin{theorem}\label{T:APsRecurrence'}
	If the sequence  $a\colon \N\to \Z$  is  good for seminorm control along $\ell$-term APs, is good for  $\ell$-step irrational equidistribution, and
	has good divisibility properties, then it
	is good for recurrence along $\ell$-term APs.
\end{theorem}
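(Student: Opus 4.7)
By Furstenberg's correspondence principle and an ergodic decomposition argument, it suffices to prove that for every ergodic system $(X, \CX, \mu, T)$, every $A \in \CX$ with $\mu(A) > 0$, and every choice of distinct non-zero $k_1, \ldots, k_\ell \in \Z$,
$$\limsup_{N\to\infty} \frac{1}{N} \sum_{n=1}^N \mu(A \cap T^{-k_1 a(n)} A \cap \cdots \cap T^{-k_\ell a(n)} A) > 0.$$
The good seminorm control hypothesis lets me replace the indicator ${\bf 1}_A$ in positions $2, \ldots, \ell$ by its conditional expectation on $\CZ_{s-1}$ for a suitable $s$; Theorem~\ref{T:HK} realises $\CZ_{s-1}$ as an inverse limit of finite-step ergodic nilsystems, and a standard $L^2$-approximation by a non-negative bounded function reduces the problem to the case where $(X, \CX, \mu, T)$ is itself an ergodic nilsystem and ${\bf 1}_A$ is replaced by a non-negative $f \in L^\infty(\mu)$ with $\int f \, d\mu > 0$.

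In this nilsystem setting, write $X = G/\Gamma$ with $T$ the rotation by $b \in G$, and let $X_0$ denote the connected component of $e_X$. By the facts recalled in Section~\ref{SS:nilbasic}, there exists $d \in \N$ for which $X = X_0 \sqcup TX_0 \sqcup \cdots \sqcup T^{d-1}X_0$ is a $T^d$-invariant partition with each piece totally ergodic under $T^d$. Since $\int f \, d\mu > 0$, I pick $i_0$ so that $Y := T^{i_0} X_0$ satisfies $\int_Y f \, d\mu > 0$, and set $f_Y := f \cdot {\bf 1}_Y$. The good divisibility hypothesis yields that $I_d := \{n : d \mid a(n)\}$ has positive density, and for every $n \in I_d$ one has $T^{k_j a(n)} Y = Y$, so that (writing $S := T^d$, $a_d(n) := a(n)/d$, $\nu := \mu|_Y/\mu(Y)$)
\begin{align*}
\int f \cdot T^{k_1 a(n)} f \cdots T^{k_\ell a(n)} f \, d\mu &\geq \int f_Y \cdot T^{k_1 a(n)} f_Y \cdots T^{k_\ell a(n)} f_Y \, d\mu\\
&= \mu(Y) \int f_Y \cdot S^{k_1 a_d(n)} f_Y \cdots S^{k_\ell a_d(n)} f_Y \, d\nu.
\end{align*}

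Reindexing $(a_d(n))_{n\in I_d}$ as a sequence on $\N$, Lemma~\ref{L:lequi} ensures it is good for $\ell$-step irrational equidistribution, and Lemma~\ref{L:lsemi} ensures it is good for seminorm control along $\ell$-term APs for all ergodic systems. Theorem~\ref{T:APsConvergenceTE} therefore applies to the totally ergodic nilsystem $(Y, \nu, S)$: the averages over $n \in I_d$ of the $\ell$-fold products along $(a_d(n))$ have the same $L^2(\nu)$ limit as the standard AP averages along $n \in \N$. Pairing with $f_Y$ and invoking classical multiple recurrence on the totally ergodic system $(Y, S)$ for distinct non-zero multipliers (a consequence of Furstenberg's theorem), the standard AP average has strictly positive limit. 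Combined with the positive density of $I_d$ and the inequality above, this yields the required positivity of the $\limsup$ of the original averages.

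The main obstacle, in my view, is the initial reduction to an ergodic nilsystem: seminorm control and Theorem~\ref{T:HK} combine straightforwardly to reduce to an inverse limit of finite-step nilsystems, but one must carefully perform the $L^2$-approximation in a way that preserves non-negativity and a positive integral for the approximating function. Once inside the nilsystem, the decomposition into totally ergodic pieces is standard, the transition from $a$ to the auxiliary sequence $a_d$ via divisibility is clean, and the combined force of Theorem~\ref{T:APsConvergenceTE} and classical multiple recurrence finishes the proof.
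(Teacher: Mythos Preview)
Your proposal is correct and follows essentially the same route as the paper's proof: reduce to an ergodic nilsystem via seminorm control, Theorem~\ref{T:HK}, and a positivity-preserving approximation (as in \cite[Lemma~2.1]{BLL08}); pass to a totally ergodic component $T^{i_0}X_0$ under $S=T^d$; use divisibility to form $a_d=(a(n)/d)_{n\in I_d}$, invoke Lemmas~\ref{L:lequi} and~\ref{L:lsemi}, then Theorem~\ref{T:APsConvergenceTE}, and finish with Furstenberg's multiple recurrence. Two cosmetic points: Furstenberg's correspondence principle is not needed (the recurrence statement is already dynamical), and your ``positions $2,\ldots,\ell$'' should read all shifted positions $1,\ldots,\ell$---after which the unshifted $f$ can also be replaced by $\E(f\mid\CZ_{s-1})$ since the remaining factors are $\CZ_{s-1}$-measurable.
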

\begin{proof}	
	Let $(X, \CX, \mu,T)$ be a system and $f\in L^\infty(\mu)$ be such that $f\geq 0$ and  $\int f\, d\mu>0$. Let  also $k_1,\ldots, k_\ell\in \Z$. Our goal is to show that for some subset  $I\subset \N$ with positive density, we have
	\begin{equation}\label{E:positive}
		\lim_{N\to\infty}\E_{n\in I_{N}} \int f\cdot T^{k_1a(n)}f\cdots T^{k_\ell a(n)} f\, d\mu>0,
	\end{equation}
	 where as before, for $N\in \N$ we let  $I_N:=I\cap [N]$.
	We can assume that $k_1,\ldots, k_\ell$ are non-zero and distinct and the system  is ergodic.
	
	Since the sequence $(a(n))$ is good for seminorm control along $\ell$-term APs, and the set $I$ is assumed to have positive density, Lemma~\ref{L:product trick}(ii) for $w_n:={\bf 1}_I(n)$, $n\in \N$, and the bound  \eqref{E:T vs TxT} imply that it is also good for seminorm control of the   averages in \eqref{E:positive}. Hence,  using Theorem~\ref{T:HK}, we can assume that the system is an inverse limit of ergodic nilsystems,
	and using an approximation argument (as in \cite[Lemma~2.1]{BLL08} or \cite[Lemma~3.2]{FuK79}), we can assume that it is an ergodic nilsystem. As explained in Section~\ref{SS:nilbasic}, there exists $d\in \N$ such that $X$ can be partitioned as  $X=\bigcup_{i=0}^{d-1} T^iX_0$, and for $i=0,\ldots, d-1$, the transformation $T^d$ leaves the set $T^iX_0$ invariant and the corresponding system is totally ergodic.
	
	Since $\int f\, dm_{X}>0$ and $X=\bigcup_{i=0}^{d-1} T^iX_0$, there exists 	$i_0\in \{0,\ldots, d-1\}$ such that   $\int f\, dm_{T^{i_0}X_0}>0$. We  let
	$$
	I_d:=\{n\colon d| a(n)\}.
	$$
	%%Then by assumption $I_d$ has positive upper density, hence
	%%$\limsup_{k\to\infty} |I_d\cap [N_k]|/N_k>0$ for some $N_k\to\infty$.
	We claim that
	for  $I:=I_d$ the positivity  property \eqref{E:positive} holds, or equivalently, that
	\begin{equation}\label{E:positiveX}
		\lim_{N\to\infty}\E_{n\in I_{d, N}} \int f\cdot T^{k_1a(n)}
		f\cdots T^{k_\ell a(n)} f\, dm_X>0
	\end{equation}
where for $N\in \N$ we let $I_{d,N}:=I_d\cap [N]$.
	
	Let $S:=T^d$. For $i\in [d]$, the transformation  $S$ leaves the set $T^iX_0$ invariant, and the system $(T^iX_0,m_{T^{i}X_0},S)$ is totally ergodic.
	Let $(a_d(n))$ be the sequence defined by
	$(a(n)/d)_{n\in I_d}$. Since the set $I_d$ has positive density,  it suffices to show that
	\begin{equation}\label{E:positive'}
		\lim_{N\to\infty}\E_{n\in[N]} \int f\cdot S^{k_1a_d(n)}
		f\cdots S^{k_\ell a_d(n)} f\, dm_{T^{i_0}X_0}>0.
	\end{equation}
	
	Our assumptions and Lemma~\ref{L:lequi} and \ref{L:lsemi} imply that the sequence $(a_d(n))$ is good for $\ell$-step irrational equidistribution and
	good for  seminorm control along $\ell$-term APs  for all ergodic systems.
	So we are now in position to use the implication $(iii)\implies (i)$ in  Theorem~\ref{T:APsConvergenceTE'}.  It gives  that the sequence 	$(a_d(n))$ is good for $\ell$-convergence for all totally ergodic systems. We apply this for the totally ergodic nilsystem $(T^{i_0}X_0,m_{T^{i_0}X_0},S)$, and we get that the average in \eqref{E:positive'}  equals
	$$
	\lim_{N\to\infty}\E_{n\in[N]} \int f\cdot S^{k_1n}
	f\cdots S^{k_\ell n} f\, dm_{T^{i_0}X_0}.
	$$
	Since  $f\geq 0$ and $\int f\, dm_{T^{i_0}X_0}>0$, the last limit  	 is positive by Furstenberg's multiple recurrence result \cite{Fu77}.
	This completes the proof.
\end{proof}

\end{document}